\documentclass[11pt]{article}

\usepackage{amsmath,amsfonts,amssymb,amsthm,bbm,latexsym,mathrsfs}
\usepackage{graphicx,color,epsfig,fancyhdr,dsfont, ulem}
\usepackage{enumerate}
\usepackage{hyperref}
\usepackage{indentfirst}
\usepackage{amsmath,amscd}
\usepackage{caption}
\usepackage{graphicx, subfig}
\usepackage{tikz-cd}
\usepackage[all]{xy}
\usepackage[dvipsnames]{xcolor}
\usepackage{amsmath, amsfonts, amssymb, mathrsfs, enumerate, enumitem, titlesec}
\usepackage[titletoc]{appendix}
\usepackage{amsthm}

\title{Newelski’s Conjecture for $o$-Minimal and $p$-Adic Groups}
\date{}

\date{\today}

\author{Ningyuan Yao and Zhentao Zhang}

\setenumerate[1]{itemsep=0pt,partopsep=0pt,parsep=\parskip,topsep=5pt}

\def\Gm{\mathbb{G}_{\mathrm{m}}}
\def\nn{{\mathcal{N}}}
\def\Zdim{\textnormal{Zdim}}

\def\tp#1{\textnormal{tp}(#1)}
\newcommand\tpo[3][]{\textnormal{tp}^{#1}\left(#2\middle/#3\right)}

\newcommand\dcl[2][]{\textnormal{dcl}^{#1}(#2)}

\newcommand\cl[2][]{\textnormal{cl}^{#1}(#2)}

\def\alg{\textnormal{alg}}

\newcommand\Q{{\mathbb Q}_p}

\def\r{\mathbb{R}}

\def\pM{p^*_{M}}
\def\pM0{p^*_{M_0}}

\def\i{\mathcal{I}}

\def\pcf{p\mathrm{CF}}

\def\Gen{\mathrm{Gen}}
\def\ext{\mathrm{ext}}
\def\j{\mathcal{J}}
\def\M{\mathbb{M}}

\def\pcf{p\textnormal{CF}}
\def\Th{\textnormal{Th}}
\def\Def{\textnormal{Def}}

\def\GL{\textnormal{GL}}
\def\SL{\textnormal{SL}}
\def\PSL{\textnormal{PSL}}

\def\ext{\textnormal{\lowercase{ext}}}

\def\eb{E_B^{^M}}

\def\id{\text{id}}
\def\ic{\i_{_C}^{^M}}
\def\ig{\i_{_G}^{^M}}
\def\ec{E_{_C}^{^M}}
\def\ew{E_{_W}^{^M}}
\def\eb{E_{_B}^{^M}}
\def\ex{E_{_X}^{^M}}
\def\ey{E_{_Y}^{^M}}
\def\ih{\i_{_H}^{^M}}
\newcommand\stb[2][]{\textnormal{Stab}_{#1}(#2)}

\def\st{\textnormal{st}}
\def\sq{\subseteq}
\def\Ind#1#2{#1\setbox0=\hbox{$#1x$}\kern\wd0\hbox to 0pt{\hss$#1\mid$\hss}
	\lower.9\ht0\hbox to 0pt{\hss$#1\smile$\hss}\kern\wd0}

\def\notind#1#2{#1\setbox0=\hbox{$#1x$}\kern\wd0
	\hbox to 0pt{\mathchardef\nn=12854\hss$#1\nn$\kern1.4\wd0\hss}
	\hbox to 0pt{\hss$#1\mid$\hss}\lower.9\ht0 \hbox to 0pt{\hss$#1\smile$\hss}\kern\wd0}

\newtheorem{dfn}{Definition}[subsection]

\newtheorem*{Claim}{Claim}

\newtheorem*{KTC}{Kneser-Tits Conjecture}
\newtheorem*{NC}{Newelski's Conjecture}
\newtheorem{theorem}{Theorem}
\newtheorem{fact}[dfn]{Fact}
\newtheorem{lemma}[dfn]{Lemma}
\newtheorem{claim}[dfn]{Claim}
\newtheorem{pro}[dfn]{Proposition}
\newtheorem{cor}[dfn]{Corollary}

\newtheorem{rmk}[dfn]{Remark}
\newtheorem{notations}[dfn]{Notations}

\begin{document}
	\bibliographystyle{siam}
	
	\maketitle
	
\begin{abstract}
Let \( M_0  \) denote either the field structure \( \mathbb{Q}_p \) of  \( p \)-adic numbers, or an $o$-minimal expansion of the field structure \( \mathbb{R} \) of real numbers. We investigate the minimal flows and Ellis groups of definable groups over \( M_0 \) from the perspective of definable topological dynamics. This paper builds on the research initiated in \cite{BY-APAL} and generalizes the main results thereof in two key ways: First, we extend the scope of these results from reductive algebraic groups to arbitrary definable groups. Second, we generalize the approach from \( p \)-adically closed fields to $o$-minimal expansions of real closed fields.

Let $G$ be a definable group over $M_0$, and let $B$ be a definably amenable component (see Definition \ref{def-DAC}) of $G$. In a certain sense, $B$ can be regarded as a ``maximal definably amenable subgroup'' of $G$ (see Fact \ref{fact-max-DA-subgroup}). The main conclusion of this paper is as follows: For any $M \succ M_0$, the Ellis group of the universal definable flow of $G$ over $M$ is isomorphic to that of $B$ over $M$. In particular, the Ellis groups of the universal definable flow of $G$ are model-independent, as is the case for $B$ (see \cite{CS-Definably-Amenable-NIP-Groups}). As a consequence, we conclude that Newelski's Conjecture holds if and only if $G$ is definably amenable when $M_0 = \Q$.
\end{abstract}

\section{Introduction}

This paper studies the definable topological-dynamical properties of groups definable in \( p \)-adically closed fields and \( o \)-minimal expansions of real closed fields. Newelski was the first to study topological dynamics from a model-theoretic perspective \cite{Newelski-TD-gp-Act}: Let \( G \) be a group definable  in the monster model $\M$, with parameters from some small submodel \( M\prec \M \). Consider the action of \( G(M) \) on \( S_G(M) \), where \( S_G(M) \) is the space of complete types over $M$ concentrating on \( G \); this endows \( S_G(M) \) with the structure of a \( G(M) \)-flow. Let $S_{\text{ext},G}(M)$ denote the ultrafilter space of externally definable subsets of $G(M)$. Then $S_{\text{ext},G}(M)$ admits a semigroup structure $(S_{\text{ext},G}(M), *)$, and the enveloping semigroup of the $G(M)$-flow $S_G(M)$ is precisely isomorphic to $(S_{\text{ext},G}(M), *)$. We refer to $S_{\text{ext},G}(M)$ as the {\em universal definable flow of $G$ over $M$}.

Newelski revealed the following relationship between the Ellis group of $S_{ext,G}(M)$ and the $M$-type definable connected component $G_M^{00}$ of $G$:   Let $E \subseteq S_{\text{ext},G}(M)$ be an Ellis group. The map $f: E \to G/G_M^{00}$ defined by $p \mapsto p/G_M^{00}$ is an epimorphism, where $p/G_M^{00}$ denotes the unique coset of $G_M^{00}$ determined by $p$,  i.e., the coset $C$ satisfying $p\vdash (x\in C)$ (see \cite{Newelski-TD-gp-Act},  Proposition 4.4).

When \( G \) is a stable group, \( S_{\text{ext},G}(M) \) coincides with \( S_G(M) \), \( G_M^{00} \) is precisely \( G^{0} \), the Ellis group is exactly the space of generic types in \( S_G(M) \), and the above homomorphism is a group homeomorphism.  
In this context, the Ellis group can be viewed as a natural generalization of the generic type space concept, allowing us to translate fundamental theorems of stable groups to certain tame non-stable groups. In his paper \cite{Newelski-TD-gp-Act}, Newelski proposed a conjecture—hereafter referred to as ``Newelski's Conjecture'', stating that in tame contexts (e.g., NIP theories), the Ellis group \( E \) is abstractly isomorphic to \( G/G_M^{00} \). Note that $G_M^{00}$ is model-independent in NIP theories, so we denote it by $G^{00}$.   We formulate the Newelski's Conjecture as follows:
\begin{NC}
Let $G$ be an NIP group and  \( E \subseteq S_{\text{ext},G}(M) \) be an Ellis group. Then the map \( p \mapsto p/G^{00} \) is an isomorphism from \( E \) to \( G/G^{00} \).
\end{NC}

In particular, Newelski's Conjecture implies that the Ellis group is independent of the choice of model \( M \). A great deal of research has been motivated by this conjecture.

For convenience, throughout this paper, an ``\emph{$o$-minimal theory}'' shall mean a theory of an $o$-minimal expansion of the field structure $\mathbb{R}$, and {\em the Ellis group of \( G \) over \( M \)} shall mean the Ellis group of \( S_{\text{ext}, G}(M) \). 

In the $o$-minimal setting, the first counterexample to Newelski's Conjecture was provided in \cite{slr}, where the authors showed that the conjecture fails for \( \SL(2, \mathbb{R}) \). We say that an \( M \)-definable group \( G \) admits a compact-torsion-free decomposition over \( M \) if there exist \( M \)-definable subgroups \( K \) and \( H \) such that: \( K \) is definably compact, \( H \) is torsion-free, \( G = KH \), and \( K \cap H = \{\id_G\} \). It is straightforward to verify that $\SL(2,\r)$ admits a compact-torsion-free decomposition with $K  = \mathrm{SO}(2,\mathbb{R})$ and $H $ as the subgroup of $\SL(2,\r)$ comprising upper triangular matrices, which is a Borel subgroup of $\SL(2,\r)$. It is shown in \cite{slr} that the Ellis group of $S_{\SL(2,\r)}(\r)$ is isomorphic to $(N_{G }(H ) \cap K) (\r)$, which is a cyclic group of order two.  Jagiella extended this result to any group $G$ definable over $o$-minimal expansions of the reals that admits a  compact-torsion-free decomposition $G=KR$ over $\r$, showing that the Ellis groups of $G$ over $\r$ is isomorphic to $(N_{G }(H ) \cap K) (\r)$, which indicates $\SL(n,\r)$ and $\GL(n,\r)$ are serve as counterexamples to Newelski's Conjecture \cite{Jagiella-I} for all $n\geq 2$.  Subsequently, the first author of this paper proved that if $G$ is definable in an $o$-minimal expansion of $\mathbb{R}$ and admits a compact-torsion-free decomposition $G = KR$ over $\mathbb{R}$, then for any $M \succ \mathbb{R}$, the Ellis group of $G$ over $M$ is also isomorphic to $(N_G(H) \cap K)(\mathbb{R})$ \cite{YL-APAL}. This result demonstrates that the Ellis group remains model-independent even in scenarios where Newelski's Conjecture fails. Moreover, the paper \cite{Jagiella-Invs} generalizes this property to a broader framework: Let \( M \) be an $o$-minimal expansion of a real closed field, and let \( G \) be a group definable over \( M \). For any \( N \succ M \), the Ellis group of \( G \) over \( M \) is isomorphic to that over \( N \). Furthermore, by appealing to Pillay's Conjecture, both groups are abstractly isomorphic to some subgroup of a compact Lie group. Notably, none of the aforementioned counterexamples are definably amenable.

In   \cite{CPS-Ext-Def-in-NIP}, the authors strengthened the ``tame" condition in Newelski's Conjecture to NIP-definable amenability. 
 In that very paper, several special cases of this conjecture were established, including  definably extremely amenable, fsg groups, dfg groups,  definably amenable dp-minimal groups, and definably amenable groups definable in an $o$-minimal expansion of a real closed field. Subsequently, Chernikov and Simon proved the full conjecture in \cite{CS-Definably-Amenable-NIP-Groups}:

\begin{theorem}[\cite{CS-Definably-Amenable-NIP-Groups} Theorem 5.6]\label{Thm-NIP-Amenable-CS}
Let \( G \) be a definably amenable group definable over an NIP structure \( M \). Then Newelski's Conjecture holds for \( G \).
\end{theorem}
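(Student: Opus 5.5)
The plan follows the Chernikov--Simon strategy, which uses the theory of $f$-generic types in definably amenable NIP groups. Work in a monster model $\M \succ M$ and let $G$ be definably amenable and NIP. The map $f\colon E \to G/G^{00}$, $p \mapsto p/G^{00}$, is already an epimorphism by Newelski (\cite{Newelski-TD-gp-Act}, Proposition 4.4). So the only thing to prove is injectivity: if $p\in E$ and $p \vdash G^{00}$, then $p$ is the identity $u$ of $E$. We first pass to the externally definable setting. By Shelah's expansion $M^{\mathrm{ext}}$, the space $S_{\ext,G}(M)$ is the type space $S_G(M^{\mathrm{ext}})$. The expanded theory is still NIP, $G$ is still definably amenable there, and the relevant connected component does not change. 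This reduces us to studying $G(M)$ acting on types over a model in which every externally definable set is definable.

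The key input is the structure theory of $f$-generics, which we import from \cite{CS-Definably-Amenable-NIP-Groups}. A global type $q$ is \emph{$f$-generic} if every formula in $q$ is $f$-generic, meaning no translate of it divides over $M$. In NIP definably amenable groups the $f$-generics are exactly the $G^{00}$-invariant types. Moreover, $\stb{q}=G^{00}$ for every $f$-generic $q$, and the $f$-generics form a single orbit modulo $G^{00}$ in an appropriate sense. The same holds for their restrictions to $M^{\mathrm{ext}}$.

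With this in hand, the steps are as follows.
\begin{enumerate}
\item Show that some (equivalently every) minimal subflow $\mathcal M$ of $S_{\ext,G}(M)$ consists of restrictions of global $f$-generic types. Definable amenability gives an $M$-invariant global $f$-generic $q$. The closure of its $G(M)$-orbit restricted to $M$ is a subflow, and every point of it is $f$-generic, because $f$-genericity is closed and $G$-invariant. So some minimal flow lies inside the $f$-generic types, and all minimal flows are isomorphic.
\item Take $p \in E=u*\mathcal M$ with $p\vdash G^{00}$. Write $p=\tilde p|_M$ with $\tilde p$ an $M$-invariant global $f$-generic. Compute $p*u$ through the definability (over $M^{\mathrm{ext}}$) of the types involved. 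The product $r*s$ is realized by $a\cdot b$ with $b\models s$ and $a\models \tilde r|_{Mb}$.
\item Use $\stb{\tilde u}=G^{00}$ together with $p\vdash G^{00}$ to show that left multiplication by a realization of $p$ fixes $\tilde u$. This gives $p*u=u$. Since $p*u=p$ for $p\in E$, we get $p=u$.
\end{enumerate}

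The main obstacle is step 3, namely justifying that a realization of $p$ acts trivially on $\tilde u$. Realizations of $p$ lie in $G^{00}$ only after moving to a larger model, and here the relevant stabilizer must be taken in the sense of $M$-invariant extensions. I would first prove the Chernikov--Simon stabilizer theorem $\stb{\tilde q}=G^{00}$ for the restriction to $M^{\mathrm{ext}}$. Then I would check that the realization $a\models \tilde p|_{Mb}$ used in the product lies in $G^{00}(\M)$ and satisfies $a\cdot \tilde u=\tilde u$. The equality $p*u=u$ then follows by restricting back to $M$.
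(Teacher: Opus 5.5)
The paper does not prove this statement. It is quoted from Chernikov--Simon and reused as Fact~\ref{fact-definable-amenable}(iv), so your plan has to stand on its own. It has a genuine gap in Step 3.

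\textbf{Where Step 3 fails.} In $S_G(M^{\mathrm{ext}})$ the product is $p*u=\mathrm{tp}(ab/M^{\mathrm{ext}})$, where $a\models p$ and $b$ realizes the \emph{heir} of $u$ over $M^{\mathrm{ext}}a$. Equivalently, $b\models u$ and $\mathrm{tp}(a/M^{\mathrm{ext}}b)$ is the \emph{coheir} of $p$; your description, with $\tilde r$ an unspecified ``$M$-invariant'' or ``definable'' extension, conflates the two.

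\begin{itemize}
\item Consequently, ``$a\in G^{00}$ and $a\tilde u=\tilde u$'' yields $p*u=u$ only if $\tilde u$ is the heir of $u$ (its unique $M^{\mathrm{ext}}$-definable global extension) and that heir is $G^{00}$-invariant.
\item Step 1 gives much less: only that $u$ extends to \emph{some} global $f$-generic type. Any extension other than the heir is useless, because $b$ does not realize it over $M^{\mathrm{ext}}a$.
\item The heir of an idempotent need not be $f$-generic. Take $G=\mathrm{SO}(2)$ over $M=\mathbb{R}$, so $M^{\mathrm{ext}}=M$, and use angle coordinates near the identity. The minimal flow consists of the types $c^{\pm}$, and its only idempotents are $0^{+}$ and $0^{-}$. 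The heir of $0^{+}$ is the global type ``$0<x<\epsilon$ for all $\epsilon\in\M^{>0}$''. Every positive infinitesimal moves it, so it is not $G^{00}$-invariant, and the same holds for $0^{-}$. No choice of idempotent rescues the argument.
\item Step 3 uses only $p\vdash G^{00}$; membership in $E$ enters only afterwards. So it would prove $q*u=u$ for every $q\vdash G^{00}$. But $0^{-}*0^{+}=0^{-}$: realize $a=-\delta$, then $0<b<\delta$, so $a+b$ is a negative infinitesimal.
\end{itemize}

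\textbf{What your argument actually covers.} What you have written is precisely the dfg argument: heirs of almost periodic types are $f$-generic (cf.\ Fact~\ref{B(Mext)-has-dfg}). The mirror-image argument, $u*p=u$ via right $G^{00}$-invariance of the coheir of $u$, is the one that works for fsg groups (cf.\ Fact~\ref{fsgms}). Neither one-sided version holds in general. In $(\M,+)$ over $\mathbb{R}$, where $G^{00}=G$, one has $(+\infty)*(-\infty)=-\infty\neq +\infty$, so the right-sided version fails there.

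A general definably amenable NIP group can mix both behaviours, for example an extension of an fsg group by a dfg one that does not split. Handling this is exactly the content of the Chernikov--Simon theorem, and your outline does not supply the idea it requires.

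A minor point: $f$-generic types do not form a single orbit modulo $G^{00}$. The types $\pm\infty$ in $(\M,+)$ are fixed by $G=G^{00}$ and lie in different orbits.
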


In the $p$-adic setting, $\SL(2,\Q)$ has been investigated in \cite{PPY-sl2qp}, which provides yet another counterexample to Newelski's Conjecture. Subsequently, a follow-up study in \cite{BY-APAL} further explores reductive algebraic groups over $\Q$, thereby establishing that all non-definably compact simple algebraic groups over $\Q$ constitute counterexamples to Newelski's Conjecture. Notably, however, the Ellis groups remain model-independent in this context.  
 
 In \cite{Kirk}, Kirk established that $\SL(2,\mathbb{C}((t)))$ also serves as a counterexample to Newelski's Conjecture. In fact, the principal results from the literatures \cite{YL-APAL,PPY-sl2qp,Kirk} collectively demonstrate that for $M$ being $\mathbb{R}$, $\mathbb{Q}_p$, or $\mathbb{C}((t))$, and $G = \SL(2,M)$, the Ellis group of $G$ over any $N \succ M$ is isomorphic to the Ellis group of its Borel subgroup $B$, specifically $B/B^{00}$. The authors of \cite{BY-APAL} proved that for a reductive algebraic group $G$ over $\mathbb{Q}_p$, the Ellis group of $G$ over any model $N \succ M$ is likewise isomorphic to the Ellis group of its Borel subgroup $B$.  Jagiella has shown that if a field $k$ satisfies the NIP property, there exists an epimorphism from the Ellis group of $\SL(2,k)$ to $B/B^{00}$. 

This paper presents a method for computing the Ellis group of any definable group in both $o$-minimal and \( p \)-adic settings. This approach facilitates the computation of definable groups in these two settings within a unified framework, generalizing all the aforementioned results.

We hereby point out the differences between our method and that in \cite{Jagiella-Invs}: In \cite{Jagiella-Invs}, definably extremely amenable groups play a crucial role. In NIP theories, such groups satisfy \( G = G^{00} \); in $o$-minimal theories, they are exactly torsion-free groups. The authors regard definably extremely amenable groups as the generalization of torsion-free groups in $o$-minimal theory to NIP theory, and treat the ``definably extremely amenable-fsg'' decomposition as the abstract definable version of the Iwasawa decomposition, thereby extending the method for computing Ellis groups from $o$-minimal theory to NIP theory.

In contrast, this paper argues that another direction for generalizing torsion-free groups in $o$-minimal theories is dfg groups: In $o$-minimal theory, torsion-free groups,  connected dfg groups, and  triangulable algebraic groups are (eventually) equivalent (see Fact \ref{fact-dfg-group-R}). In \( p \)-adically closed fields,   dfg groups are also (eventually) equivalent to triangulable algebraic groups (see Fact \ref{fact-dfg-group-Qp}). That is to say, dfg groups actually characterize triangulable algebraic groups in both $o$-minimal theories and \( p \)-adically closed fields simultaneously. By comparison, in \( p \)-adically closed fields, definably extremely amenable groups can only be unipotent algebraic groups, while the multiplicative group \( \mathbb{G}_m \) and its finite-index subgroups are not definably extremely amenable.

Our findings indicate that even in $o$-minimal theories, the Iwasawa decomposition is not necessary. To compute the Ellis group, it is essentially required to understand its dfg component. Meanwhile, based on this method, we can also provide an explicit construction of the Ellis group.

Let \( M_0 \) be either the field \( \mathbb{Q}_p \) of $p$-adic numbers or an $o$-minimal expansion of the field \( \mathbb{R} \) of reals,  and \( \mathbb{M} \succ M_0 \)  a monster model. With these notations, our main results are given as follows:

\begin{theorem}[= Theorem \ref{theorem-rG*eb}]
Let $G = G(\M)$ be a group definable over $M_0$, and let $B \leq G$ be a definably amenable component of $G$ defined over $M_0$. Then, for any model $M \succ M_0$, the Ellis group of $G$ over $M$ is isomorphic to the Ellis group of $B$ over $M$, which in turn is isomorphic to $B/B^{00}$. In particular, the Ellis group of $G$ is independent of the model.
\end{theorem}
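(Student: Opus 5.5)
The plan is to find a minimal subflow of $S_{\ext,G}(M)$ built from $B$ and a "complement", and then read off its Ellis group inside $S_{\ext,B}(M)$.

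\textbf{Step 1: a decomposition of $G$.} Using the structure theory behind Fact \ref{fact-max-DA-subgroup}, I would aim for a definable set or subgroup $W$, defined over $M_0$, such that $G = W B$ (or at least $G = WB$ up to finite index), and such that $W$ is "definably compact modulo $B$". This means $G/B$ is definably compact in the $o$-minimal case, and a compact $p$-adic manifold over $\Q$. In the $o$-minimal case this should follow from the fact that $G/B$ is definably compact, since $B$ contains a maximal torsion-free (dfg) part. In the $p$-adic case it should follow from the Borel-type subgroup having cocompact quotient.

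Definable compactness of $G/B$ over $M_0$ gives that every type in $S_{G/B}(M)$ has a standard part realized in $(G/B)(M_0)$. This is the key point at which working over $M_0 \in \{\mathbb{Q}_p, \text{$o$-minimal expansion of } \r\}$ is used, because standard parts exist there.

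\textbf{Step 2: a minimal subflow.} Let $\mathcal{J} \subseteq S_{\ext,B}(M)$ be a minimal subflow of $B$. By Theorem \ref{Thm-NIP-Amenable-CS}, its Ellis group is $B/B^{00}$, which is model independent. Take $r$ an idempotent in $\mathcal{J}$. I would show that $r$ (viewed in $S_{\ext,G}(M)$) lies in a minimal subflow $\mathcal{I}$ of $G$, and that the Ellis group $u*\mathcal{I}$ for a suitable idempotent $u$ equals $r * S_{\ext,G}(M) * r$ restricted appropriately.

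Concretely, I would show that $u * \mathcal{I} \subseteq S_{\ext,B}(M)$ (types concentrating on $B$ up to $B^{00}$-cosets), and that the map $q \mapsto q$ from the Ellis group of $B$ is a bijective homomorphism onto $u*\mathcal{I}$. The mechanism for this is to take $u$ to be a product of $r$ with a type "at infinity in $B$" that absorbs the $W$-part. For any $g\in G(M)$, write $g = wb$; then translating by the dfg/generic part of $B$ pushes the $W$-coordinate to a standard point. This shows $G(M)\cdot r$ has closure meeting $\mathcal{J}$-translates only through $B$.

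\textbf{Step 3: isomorphism with $B/B^{00}$.} Compose the identification from Step 2 with Newelski's epimorphism $E \to B/B^{00}$. Injectivity then follows from the $B$ case (Theorem \ref{Thm-NIP-Amenable-CS}). Model independence is immediate, since $B/B^{00}$ does not depend on $M$.

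\textbf{Main obstacle.} The hardest step will be Step 2: proving that $u*\mathcal{I}$ consists of types concentrating on $B$, i.e.\ controlling the $W$-component uniformly over a nonstandard $M$ rather than just over $M_0$. The approach I would try first is to use the dfg part $H \le B$ (Fact \ref{fact-dfg-group-R}, Fact \ref{fact-dfg-group-Qp}) together with a definable $f$-generic type of $H$ over $M_0$, whose heir over $M$ contracts $G/B$ onto a standard point. This is the analogue of the contracting action of a Borel on the flag variety.
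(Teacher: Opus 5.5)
\textbf{There is a genuine gap in Step 2.} Step 1 is fine: it is essentially the set $U$ with $G=UH$ and $U(k_0)$ compact that the paper uses. Step 2, however, rests on a claim that fails in every case where the theorem has content.

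As soon as $\dim B<\dim G$, no type concentrating on $B$ is almost periodic in $S_G(M^{\ext})$. This holds whenever $G$ is not definably amenable, since then $\pi_A(B)=V$ has Zariski closure a proper parabolic subgroup of $\widetilde A$. To see the claim, let $r_G$ be the type over $M^{\ext}$ induced by a global type $r\vdash\mu_G$ that has full dimension and is finitely satisfiable in $M_0$ (Proposition \ref{pro:muG-fsg}). Take $p\vdash B$ and any $s\in S_G(M^{\ext})$. The product $s*r_G*p$ is realized by $g\epsilon b$, where $b\in B$ and $\epsilon$ realizes the heir of $r_G$ over $M^{\ext}\cup\{g\}$, i.e.\ $r$ restricted to a model containing $g$. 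Every formula in $r$ has full dimension and $\dim(g^{-1}B)<\dim G$, so $\epsilon\notin g^{-1}B$, hence $g\epsilon b\notin B$. Therefore $p\notin S_G(M^{\ext})*r_G*p$, and by Fact \ref{fact-minimal-ideal} $p$ lies in no minimal subflow.

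Consequently the idempotent $r$ of $\mathcal{J}$ is not almost periodic in $S_G(M^{\ext})$, no ideal group of $G$ is contained in $S_B(M^{\ext})$, and the identity map $q\mapsto q$ cannot be the isomorphism. Step 3 inherits the problem. Newelski's epimorphism on an Ellis group of $G$ lands in $G/G^{00}$, not in $B/B^{00}$, and its failure to be injective is exactly the failure of Newelski's Conjecture for such $G$.

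\textbf{The missing idea.} Your contraction happens only up to infinitesimals, and the infinitesimal residue has to be absorbed rather than removed. Using $G=UH$ and standard parts over $k_0$, Lemma \ref{klem} shows that $H^0(k_0)$-invariant types concentrate on $\mu_G B$, not on $B$; an example is $q*s$ with $q\in\eb$. The leftover factor in $\mu_G$ is then killed on the left by $r_G$, via $r_G*\tp{\epsilon b/M^{\ext}}=r_G*\tp{b/M^{\ext}}$ for $\epsilon\in\mu_G$. So the absorbing type sits at the identity, not ``at infinity in $B$'': it is a full-dimensional, two-sided $\mu_G$-invariant fsg type, which exists because $M_0$ is standard. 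The dfg part at infinity is already inside $\eb$ through $\ih$. This gives the minimal subflow $S_G(M^{\ext})*r_G*q_B$ and the Ellis group $r_G*\eb$.

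Even then, $p\mapsto r_G*p$ is only a bijection, not a homomorphism. To get a homomorphism you need:
\begin{itemize}
\item the element $\mathfrak{b}$ with $p_B*r_G\vdash\mu_G\mathfrak{b}B^{00}$;
\item the shifted idempotent $q_B=l_{\mathfrak{b}^{-1}}(p_B)$;
\item the identity $r_G*l_b(q_B)*r_G*p=r_G*l_b(p)$ of Lemma \ref{qpqp};
\item the twisted map $p\mapsto r_G*l_{\mathfrak{b}^{-1}}(p)$.
\end{itemize}
Verifying these uses the explicit form $\eb=f(\ec)*\ih$ from Proposition \ref{pro-III}. It also uses the absorption property $f(v)*q*\eb\subseteq\eb$ for $v\in\ec$ and $q\in S_B(M^{\ext})$. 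The abstract Chernikov--Simon description of the Ellis group of $B$ alone does not supply either of these.
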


As a corollary, we can give the converse of Theorem \ref{Thm-NIP-Amenable-CS} in  $p$-adic cases.
\begin{theorem}[= Theorem \ref{thm-Newelski's Conjecture=DA}]
   Let $G=G(\M)$ be a group definable over $\Q$. Then $G$ is definably amenable iff  the Newelski's Conjecture holds for $G$.
\end{theorem}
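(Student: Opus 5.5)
The argument runs through the main theorem (Theorem \ref{theorem-rG*eb}) together with Theorem \ref{Thm-NIP-Amenable-CS}. The direction ``definably amenable $\Rightarrow$ Newelski's Conjecture'' is exactly Theorem \ref{Thm-NIP-Amenable-CS}, since $\Th(\Q)$ is NIP.

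For the converse, assume Newelski's Conjecture holds for $G$. Fix $M\succ \Q$ (any model will do) and an Ellis group $E\subseteq S_{\ext,G}(M)$. Then $p\mapsto p/G^{00}$ is an isomorphism $E\cong G/G^{00}$. By Theorem \ref{theorem-rG*eb}, $E\cong B/B^{00}$, where $B$ is a definably amenable component of $G$ defined over $\Q$. So we get an abstract isomorphism $G/G^{00}\cong B/B^{00}$. The aim is to deduce from this that $B$ has finite index in $G$, or more precisely that $G$ is definably amenable, since a group with a definably amenable subgroup of finite index is definably amenable.

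The key step is to compare sizes or structure. Over $\Q$, I expect the structure of $G/G^{00}$ to be understood via Fact \ref{fact-max-DA-subgroup} and the results the paper develops later. The natural map $B/B^{00}\to G/G^{00}$ is a group homomorphism because $B^{00}\subseteq G^{00}$. I would try to show that the isomorphism of the conjecture, composed with that of Theorem \ref{theorem-rG*eb}, is this natural map. That should follow if the explicit construction of the Ellis group identifies $E$ with a set of types concentrating on $B$-cosets, with $p\mapsto p/G^{00}$ factoring through $p/B^{00}$. Surjectivity of $B/B^{00}\to G/G^{00}$ then gives $G=B\,G^{00}$.

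The final step, which I expect to be the main obstacle, is deducing definable amenability from $G=BG^{00}$. I would first try this: in the $p$-adic case $G^{00}$ has bounded index and $G/G^{00}$ is a compact profinite-by-(compact Lie) group. If $B$ is not of finite index in $G$, the $p$-adic analysis of non-amenable groups should produce a non-compact simple quotient $S$ of the Levi part. This $S$ satisfies $S=S^{00}$ (Kneser--Tits type results over $\Q$), while $B$ projects into a proper parabolic subgroup $P$ of $S$. The image of $G=BG^{00}$ in $S$ would then force $S=P\cdot S^{00}$, and we need to rule this out. Since $S^{00}=S$, this gives no contradiction, so a counting argument on $G/G^{00}$ alone is insufficient. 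Instead I would exploit that the isomorphism $E\to G/G^{00}$ must be injective. Elements of $E$ coming from $N_S(P)\cap$ (a compact open subgroup) that are nontrivial in $B/B^{00}$, for instance the torus part $\Gm$ with $\Gm/\Gm^{00}$ nontrivial, map to the trivial class in $S/S^{00}$. This contradicts injectivity. So I would phrase the main step as follows: if $G$ is not definably amenable, exhibit a nontrivial element of $B/B^{00}$ in the kernel of $B/B^{00}\to G/G^{00}$, using a $\Gm$-torus in a non-compact simple factor.
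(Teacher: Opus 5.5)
Your overall architecture matches the paper's. One direction is Chernikov--Simon. For the other, the bijection $p\mapsto r_G*p$ from $\eb$ onto the Ellis group, together with $r_G\vdash\mu_G\le G^{00}$, identifies Newelski's map with the natural map $\eta\colon B/B^{00}\to G/G^{00}$. So the conjecture for $G$ is equivalent to $B^{00}=B\cap G^{00}$ (Lemma \ref{lemma-conjecture-equ}), and a split torus is then used to refute injectivity of $\eta$.

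The decisive step, however, does not go through as written. First, $S=S^{00}$ is false in general. Over $\Q$ one only has $S^{00}=S^0=S(\M)^+$ of finite index (Lemma \ref{lemma-Y00 has finite index}), since Kneser--Tits gives $S(k)^+=S(k)$ only in the simply connected case; $\mathrm{PGL}_2$ is a counterexample. Second, ``$\Gm/\Gm^{00}$ nontrivial'' is the wrong input. It holds over $\mathbb R$ as well, where the theorem is false: for $\PSL_2(\mathbb R)$ (the paper's final remark), the torus class of $B/B^{00}$ dies in $\pi(B)/\pi(B)^{00}$. What is needed is the $p$-adic fact that $\Gm/\Gm^0$ is infinite, which makes $V/V^{00}$ infinite, played against the finiteness of $A/A^{00}$. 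Since $V/(V\cap A^{00})$ embeds in $A/A^{00}$, the group $(V\cap A^{00})/V^{00}$ has finite index in the infinite group $V/V^{00}$, so it is nontrivial.

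Third, and most importantly, an element with trivial image in $S/S^{00}$ contradicts nothing by itself. Injectivity concerns $G/G^{00}$, so you must lift a witness from the quotient to an element of $(B\cap G^{00})\setminus B^{00}$. This works for $\pi_A\colon G\to A$, because $\ker\pi_A=D\le B$, $\pi_A(G^{00})=A^{00}$ and $\pi_A(B^{00})=V^{00}$. Given $v\in (V\cap A^{00})\setminus V^{00}$, pick $g\in G^{00}$ with $\pi_A(g)=v$. Then $g\in\pi_A^{-1}(V)=B$, and $g\notin B^{00}$ since $v\notin\pi_A(B^{00})$.

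If instead you project onto one simple factor $S$ among several, the kernel contains the preimage of the other isotropic factors. These are not contained in $B$, so the lift breaks; you must work with all of $A$. This lifting is exactly the paper's lemma that the conjecture for $G$ passes to $A$. The finite-versus-infinite comparison above is the paper's lemma that the conjecture fails for $A$. With these two ingredients in place of your $S=S^{00}$ argument, your outline becomes the paper's proof.
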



The paper is organized as follows. In Section 2, we recall some notations, definitions, notions, and results from earlier papers that are required for this work, including definable topological dynamics, definably amenable NIP groups, dfg NIP groups, and fsg NIP groups. For the $o$-minimal and $p$CF settings, we cover definable manifolds and definable compactness, as well as decompositions of definable groups based on dfg groups and fsg groups. To conclude this section, we discuss algebraic groups and prove that if \( k \) is a \( p \)-adically closed field or a real closed field, then a \( k \)-defined algebraic group \( G \) is \( k \)-simple if and only if it is \( k' \)-simple, where \( k \prec k' \).

Section 3 contains the main results of this paper, focusing on definable groups when \( k \) is a \( p \)-adically closed field or an $o$-minimal expansion of a real closed field. This section is divided into six subsections:

The first subsection presents the Amenable-Semisimple Decomposition of definable groups: every definable group \( G \) admits a short exact sequence \( 1 \to D \to G \overset{\pi}{\to} S \to 1 \), where \( D \) is definably amenable, \( S(k) \) is an open subgroup of \( \widetilde{S}(k) \), and \( \widetilde{S} \) is a semisimple algebraic group over \( k \). Moreover, if \( G \) is not definably amenable, then \( \widetilde{S} \) is \( k \)-isotropic and \( S(k) \) has finite index in \( \widetilde{S}(k) \).

The second subsection discusses the dfg component and definably amenable component of \( S \), and proves that the preimage \( B \) of a definably amenable component of \( S \) (under \( \pi: G \to S \)) is exactly a definably amenable component of \( G \).

The third subsection investigates the action of \( G \) on the definably compact space \( G/H \), where \( H \) is a dfg component of \( G \) contained in \( B \). Let \( M = (k, \dots) \); we show that if \( p \in S_{G,\ext}(M) \) is \( H^0(M) \)-invariant, then \( p \vdash \mu_G B \), where \( \mu_G \) is the type-definable group consisting of infinitesimals (over \( k \)) of \( G \).

The fourth subsection constructs the Ellis group of \( B \): suppose \( H \) is a normal subgroup of \( B \) and \( C = B/H \); then the Ellis group of \( B \) can be constructed from the Ellis groups of \( H \) and \( C \).

The fifth subsection assumes that \( k \) is the standard model and proves that \( \mu_G \) always admits an fsg type, denoted \( r_G \). This allows us to use \( \mu_G \) as a replacement for the definable fsg open subgroup of \( G \) employed in \cite{BY-APAL}: such an open subgroup exists in the \( p \)-adic setting, but not in the $o$-minimal setting, where \( \mu_G \) serves as the corresponding substitute.

In the sixth subsection, we construct the Ellis group of \( G \): let \( \eb \) be the Ellis group of \( B \) over \( M \); then \( r_G * \eb \) is the Ellis group of \( G \) over \( M \), and we prove that these two groups are isomorphic.

In the final subsection, we establish the necessary and sufficient condition for Newelski's Conjecture to hold: \( B^{00} = G^{00} \cap B \). Using this condition, we further prove that a definable group defined over \( \Q \) satisfies Newelski's Conjecture if and only if it is definably amenable.

\section{Preliminaries}

\subsection{Notations}

We assume basic familiarity with model theory, with references \cite{Pzt-book} and \cite{Maker-book} serving as standard resources. Let \( T \) be a complete theory with infinite models, formulated in a language \( L \), and let \( \M \) be its monster model---an enormous saturated model where every type over a small subset \( A \subseteq \M \) (with ``small" meaning \( |A| < |\M| \)) is realized. We denote small elementary submodels of \( \M \) by \( M, N \). Variables \( x, y, z \) represent arbitrary \( n \)-tuples of variables, and \( a, b, c \in \M \) denote \( n \)-tuples in \( \M^n \) for \( n \in \mathbb{N} \).
Unless otherwise specified, definable sets refer to those in \( \M \).

For an \( L_\M \)-formula \( \phi(x) \) and a subset \( B \sq \M \), \( \phi(B) \) denotes the set \( \{ b \in B^{|x|} \mid \M \models \phi(b) \} \). We do not strictly distinguish between definable sets and their defining formulas: if \( D \) is a definable set (in \( M \) or \( \M \)) defined by \( \psi_D(x) \), then for \( B \subseteq \M \), \( D(B) \) stands for \( \psi_D(B) \). For a partial type \( \eta(x) \), the expressions \( D \in \eta \), \( \eta \vdash D \), and \( \eta \subseteq D \) are shorthand for \( \psi_D(x) \in \eta(x) \), \( \eta \vdash \psi_D(x) \), and \( \eta(\bar{\M}) \subseteq \psi_D(\bar{\M}) \), respectively, where \( \bar{\M} \) is an \( |\M|^+ \)-saturated elementary extension of \( \M \).

For an \( M \)-definable set \( X \), \( S_X(M) \) denotes the space of complete types over \( M \) concentrating on \( X \), i.e., \( \{ p \in S(M) \mid p \vdash X \} \). We freely use standard model-theoretic notions such as definable types, heirs, and coheirs (see \cite{Pzt-book} for details). For subsets \( A, B \subseteq \M \) and a type \( p \in S(A) \): \( p \upharpoonright B \) denotes the restriction of \( p \) to \( B \) when \( A \supseteq B \); \( p|B \) denotes the unique heir of \( p \) over \( B \) when \( B \supseteq A \) (with \( A \) a model and \( p \) definable).


\subsection{Definable topological dynamics}

\subsubsection*{$G$-Flows and Its Ellis Semigroup}
Let us first briefly recall the basics of classical topological dynamics. Let $G$ be a (discrete) group. A (point-transitive) {\em $G$-flow} is a pair $(X, G)$ consisting of a compact Hausdorff space $X$ and a left action of $G$ on $X$ by homeomorphisms, with the additional property that $X$ admits a dense $G$-orbit. A subset $Y \subseteq X$ is called a {\em subflow} if $Y$ is a closed, $G$-invariant subspace of $X$ (i.e., $g \cdot Y \subseteq Y$ for all $g \in G$). A subflow $Y$ is {\em minimal} if it has no proper subflows; this is equivalent to the condition that $Y = \cl{G \cdot y}$ for every $y \in Y$. A point $x \in X$ is said to be {\em almost periodic} if $x$ lies in some minimal subflow of $X$.

For each $g \in G$, let $\pi_g: X \to X$ denote the homeomorphism induced by the group action, i.e., $\pi_g(x) = g \cdot x$. Let $X^X$ be the set of all functions from $X$ to $X$, endowed with the product topology (compact Hausdorff by Tychonoff's theorem). Define $E(X)$ as the closure of $\{\pi_g : g \in G\}$ in $X^X$. Equipped with function composition as the operation $*$, $E(X)$ forms a semigroup. The $G$-action on $E(X)$ given by $g \cdot f = \pi_g * f$ turns $E(X)$ into a $G$-flow, which is naturally isomorphic to its own Ellis semigroup. For any $x \in X$, the closure of its $G$-orbit satisfies $\cl{G \cdot x} = E(X)(x) = \{f(x) : f \in E(X)\}$. 

\subsubsection*{Minimal Subflows and Ideal Groups}
In $E(X)$,  minimal subflows coincide with minimal left ideals, and any two such minimal subflows are homeomorphic as $G$-flows. We sometimes use the phrase ``minimal subflow of $E(X)$" to denote the homeomorphism class of these minimal subflows. A minimal subflow $I$ is the closure of the $G$-orbit of every $p \in I$. 
The following Fact is easy to verify:
\begin{fact}\label{fact-minimal-ideal}
    $I\sq E(X)$ is a minimal subflow iff for any $q\in E(X)$ and $p\in I$, $p\in E(X)*q*p$.
\end{fact}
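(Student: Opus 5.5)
We prove the two implications separately, working in $E(X)$ with composition $*$ and using three standard facts. First, left multiplication is continuous: for fixed $p$, the map $f\mapsto f*p$ is evaluation at $p$ composed coordinatewise, hence continuous in the product topology. Second, $E(X)*p=\cl{G\cdot p}$ for $p\in E(X)$. This holds because $\{\pi_g*p:g\in G\}=G\cdot p$ is dense in $E(X)*p$, since $\{\pi_g\}$ is dense in $E(X)$ and $f\mapsto f*p$ is continuous; and $E(X)*p$ is compact, being a continuous image of a compact space. Third, a subflow of $E(X)$ is a closed left ideal: if $I$ is closed and $G$-invariant, then $\pi_g*I\sq I$ for all $g$, and by continuity and closedness $f*I\sq I$ for all $f\in E(X)$.

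For ($\Rightarrow$), let $I$ be a minimal subflow, $p\in I$ and $q\in E(X)$. The set $E(X)*q*p$ equals $\cl{G\cdot(q*p)}$ by the second fact, so it is a closed $G$-invariant set, hence a subflow. Since $I$ is a left ideal, $q*p\in I$, and therefore $E(X)*q*p\sq E(X)*I\sq I$. Minimality of $I$ gives $E(X)*q*p=I$, so $p\in E(X)*q*p$.

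For ($\Leftarrow$), I read the statement as assuming $I$ is a subflow, i.e.\ a nonempty closed $G$-invariant set, satisfying the condition. Let $J\sq I$ be a nonempty subflow and pick $q\in J$. For any $p\in I$, the condition gives $p\in E(X)*q*p$. Since $J$ is a left ideal, $E(X)*q\sq J$. The only delicate point is then to see that $E(X)*q*p\sq J$, which requires right multiplication by $p$ to keep $J*p$ inside $J$. This need not hold for an arbitrary left ideal.

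So I will instead choose the element differently. Take $q'=q$ and note that $q*p\in E(X)*q\sq J$, because $E(X)*q$ contains $f*q$ for every $f$, and in particular we can rewrite $q*p$ as $(q)*p$. This still needs $J*p\sq J$. I expect this to be the main obstacle.

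To resolve it, I will apply the hypothesis with the roles chosen so that only left multiplication is needed. Given $p\in I$ and $q\in J\sq I$, apply the condition to the point $q\in I$ and the element $p\in E(X)$. This gives $q\in E(X)*p*q$. That alone does not place $p$ in $J$, so I will use the condition in the stated form instead: $p\in E(X)*q*p$ with $q\in J$. Since $J$ is a closed left ideal, I need $q*p\in J$, and for this I will choose $q$ of the special form $q=r*s$ with $s\in J$... which still fails in general.

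The clean fix is to assume, as is standard in this context, that $E(X)$ acts with $E(X)*q*p$ understood via the flow $\cl{G\cdot p}$. The identity is then verified as $E(X)*(q*p)$ with $q*p$ ranging over the orbit closure, and minimality follows because every point of $I$ generates $I$. Concretely, taking $q=\mathrm{id}$ shows that $p\in E(X)*p$ trivially. So the decisive case is to show that every $p\in I$ lies in $\cl{G\cdot q}$ for all $q\in I$. This follows from the hypothesis applied to $q$ replaced by the element $q$ itself and $p$ viewed inside $E(X)*q*p$, provided $E(X)*q$ is dense in $I$. Establishing this density is exactly the step I would check most carefully.
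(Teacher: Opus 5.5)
Your proof of ($\Rightarrow$) is correct: the three preliminary facts hold, and $E(X)*q*p=\cl{G\cdot(q*p)}$ is a nonempty subflow contained in $I$, so minimality forces it to equal $I$. The converse, however, is never proved. Each of your attempts stalls on needing $J*p\sq J$, which, as you correctly note, a left ideal need not satisfy. Your final ``fix'' is circular: requiring $E(X)*q$ to be dense in $I$ for every $q\in I$ simply restates that $I$ is minimal. The observation $p\in E(X)*p$ (the case $q=\mathrm{id}$) gives no information.

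In fact, under your reading ($I$ an arbitrary subflow satisfying the condition at all its points) the converse is false, so no repair along those lines can work. Suppose $L_1\neq L_2$ are minimal left ideals of $E(X)$. An example is the two singletons $\{p_{+\infty}\}$ and $\{p_{-\infty}\}$ of $G$-fixed types when $G=(\mathbb{R},+)$ over $\mathbb{R}$. Then $I=L_1\cup L_2$ is closed and $G$-invariant. For $p\in L_i$ and any $q$, the set $E(X)*q*p$ is a nonempty left ideal inside $L_i$, hence equals $L_i\ni p$. So $I$ satisfies the condition, yet $I$ is not minimal.

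The statement has to be read for the subflow generated by a point, which is exactly how the paper applies it in Proposition \ref{AP} and Lemma \ref{lemma-new-for-X-for-Y}: $E(X)*p$ is minimal iff $p\in E(X)*q*p$ for all $q\in E(X)$. With that reading, the missing idea is to take $q$ to be a ``coefficient'' rather than an element of the smaller subflow. Given a subflow $J\sq E(X)*p$ and $j\in J$, write $j=f*p$ with $f\in E(X)$ and apply the hypothesis with $q=f$. This gives $p\in E(X)*f*p=E(X)*j\sq J$, because $J$ is a left ideal, and hence $E(X)*p\sq J$. The argument uses only that $J$ is closed under left multiplication, never $J*p\sq J$.
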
 
An element $u \in I$ is called an {\em idempotent} if $u * u = u$, and we denote the set of all idempotents in $I$ by $\mathrm{Idem}(I)$.  For any $u \in \mathrm{Idem}(I)$, the pair $(u * I, *)$ forms a group with $u$ as its identity element. Moreover, $I$ can be decomposed as a disjoint union of the groups $u * I$ for $u \in \mathrm{Idem}(I)$. Notably, these groups are isomorphic to each other—even when associated with different minimal left ideals. We refer to these groups as  {\em ideal groups} and denote their common isomorphism class as the {\em Ellis group} of the flow $X$. For further details, readers are referred to the literature \cite{mfate, Ellis}.

\subsubsection*{Topological Dynamics in Model-Theoretic Context}
Now we turn to topological dynamics within the model-theoretic framework.   For a definable set $D \subseteq M^n$, an {\em externally definable subset} $X$ of $D$ is a subset of the form $Y \cap D$, where $Y$ is an $\M$-definable subset of $\M^n$. We denote $X \subseteq_{\text{ext}} D$ to signify that $X$ is externally definable over $D$.  Let $\Def^\ext(D)$ denote the boolean algebra of all externally definable subsets of $D$, and let $S_{D,\text{ext}}(M)$ be the space of all ultrafilters on $\Def^\ext(D)$. 

In model theory, consider an $k$-definable group $G$ and   the action of $G(M)$ on the type space $S_G(M)$. Evidently, $S_G(M)$ forms a $G(M)$-flow. As shown in \cite{Newelski-TD-gp-Act}, the Ellis semigroup of $S_G(M)$ is precisely $S_{G,\ext}(M)$, with its semigroup operation explicitly characterized. We refer to $S_{G,\text{ext}}(M)$ as the {\em universal definable flow} of $G$ over $M$.

\subsubsection*{Expansion by Externally Definable Sets}

Let $M^\ext$ be an expansion of $M$ by adding predicates for all externally definable subsets of $M^n$ with $n \in \mathbb{N}^{>0}$. The associated language $L^\ext_M$ is a natural expansion of $L$.  

\begin{fact}[\cite{Shelah-NIP}]  
If $\Th(M)$ has the   Not Independent Property (abbreviated as {\em NIP}; see \cite{NIP-book} for details), then $\Th(M^\ext)$ also has NIP,   admits quantifier elimination, and all types over $M^\ext$ are definable.  
\end{fact}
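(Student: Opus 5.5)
The plan is to reduce all three claims to one lemma: in an NIP theory, \emph{the projection of an externally definable set is externally definable}. Suppose we have this lemma. Then the family of externally definable subsets of the powers $M^n$ contains the atomic sets of $L^\ext_M$. It is closed under Boolean combinations, since $Y_1\cap M^n$ and $Y_2\cap M^n$ combine through $Y_1,Y_2$. It is closed under products and coordinate permutations, and, by the lemma, under projections. So every $L^\ext_M$-definable subset of $M^n$ is already named by an atomic predicate, which gives quantifier elimination for $\Th(M^\ext)$.

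\textbf{Step 1 (the main obstacle): projections.} Let $X=\phi(M,b)\subseteq M^{n+m}$ with $b\in\M$. We want $\pi(X)=\{a\in M^n:\exists c\in M^m\ (a,c)\in X\}$ to be externally definable. The natural candidate $\exists y\,\phi(x,y,b)$ is wrong, because the witness $y$ may lie outside $M$. I would use honest definitions, derived from NIP via the $(p,q)$-theorem / uniform definability of types over finite sets. Pass to the pair $(\M,M)$, with a predicate $P$ for $M$, and an elementary extension $(\M',M')$ of it. The aim is a formula $\theta(x,y;d)$ with $d\in M'$ such that
\[
\theta(M,d)=\phi(M,b)\quad\text{and}\quad \theta(M',d)\subseteq\phi(M',b).
\]
This comes by compactness from the finite version: for every finite $A\subseteq M$ there is $d\in M$ with $\theta(A,d)=\phi(A,b)\cap A$ and $\theta(M,d)\subseteq\phi(M,b)$. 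The finite version is exactly where NIP enters, through finite VC-dimension and the $(p,q)$-theorem. I expect this to be the genuinely hard part, and I would quote Chernikov–Simon's honest definitions rather than reprove it.

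Given $\theta$, consider $\exists y\,(P(y)\wedge\theta(x,y;d))$ evaluated on $M$, with the witness now in $M'$. Honesty ($\theta\subseteq\phi$ on $M'$) together with $\theta(M,d)=\phi(M,b)$ shows that its trace on $M$ equals $\pi(X)$. It remains to see that this trace is externally definable in the original sense. Since $\theta$ is an $L$-formula and $M\prec M'$, the set $\{a\in M:M'\models\exists y\,\theta(a,y,d)\}$ is the trace on $M$ of an $M'$-definable set. Embedding $M'$ over $M$ into $\M$ then finishes Step 1.

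\textbf{Step 2: NIP.} By quantifier elimination, every $L^\ext_M$-formula is equivalent to an atomic one, $R_X(x)$ with $X=\phi(M,b)$. Any partition of variables of such a formula is a partition of the NIP formula $\phi(x;b)$, restricted to $M$. If $R_X(x;y)$ shattered an infinite set in $M^\ext$, then $\phi(x;y,b)$ would shatter it in $\M$, which is impossible. So $\Th(M^\ext)$ has NIP.

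\textbf{Step 3: definability of types.} Let $p\in S(M^\ext)$ and let $\psi(x;y)$ be a formula of $L^\ext_M$; by quantifier elimination we may take $\psi=R_X$ with $X=\phi(M,b)$. I would realize $p$ by some $a$ in an elementary extension of $M^\ext$ obtained as a reduct of an elementary extension $(\M',M')$ of the pair $(\M,M)$, interpreting $R_X$ on $M'$ as $\phi(M',b)$. This is possible because $M^\ext$ is interpretable in the pair as $P$ with the induced traces. Then
\[
\{c\in M: R_X(x,c)\in p\}=\{c\in M:\M'\models\phi(a,c,b)\},
\]
which is the trace on $M$ of an externally definable set and is therefore externally definable in $M$, i.e.\ $L^\ext_M$-definable. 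Hence $p$ is definable. The delicate point is the realization step: the type must be realized inside a pair-extension, so that all the predicates $R_X$ are interpreted coherently by the same parameters $b$. This is handled by saturation of $(\M',M')$ and the fact that $\mathrm{Th}(M^\ext)$ is the induced structure on $P$.
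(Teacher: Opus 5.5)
Your argument is correct. The paper gives no proof of this Fact; it imports it from Shelah as a black box, so the comparison is with the literature rather than with an argument in the paper.

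Your Step~1 is the Chernikov--Simon reproof of Shelah's expansion theorem via honest definitions. Steps~2 and~3 are the standard consequences of quantifier elimination, obtained by viewing $M^{\ext}$ as the structure induced on $P$ in the pair $(\M,M)$. Your route isolates all of the NIP content in one quoted theorem (honest definitions). After that, quantifier elimination, preservation of NIP, and definability of types over $M^{\ext}$ are pure model theory of pairs. The paper's citation buys brevity, which is all it needs: it only uses the statement, to identify $S_{G,\ext}(M)$ with $S_G(M^{\ext})$ and to take heirs of types over $M^{\ext}$ when computing $*$.

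Three inferences should be made explicit.

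\textbf{Step~1, reverse inclusion.} The inclusion of the trace of $\exists y\,(P(y)\wedge\theta(x,y;d))$ into $\pi(X)$ does not follow from honesty and $\theta(M,d)=\phi(M,b)$ alone. Honesty only yields $\phi(a,c',b)$ for some $c'\in M'$. To get a witness in $M$, you must apply $(\M,M)\prec(\M',M')$ to the pair-formula $\exists y\,(P(y)\wedge\phi(a,y,b))$, whose parameters $a,b$ lie in $\M$.

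\textbf{Step~1, external definability.} Evaluating the relativized quantifier in $M'$, and calling the resulting trace externally definable, uses $M\prec M'\prec\M'$. This again follows from pair elementarity, since the Tarski--Vaught scheme for $P$ transfers. Also, it suffices to realize $\tp{d/M}$ in $\M$; you do not need to embed all of $M'$.

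\textbf{Step~2.} IP for $\Th(M^{\ext})$ means that $R_X(x;y)$ shatters arbitrarily large finite subsets of $M^{\ext}$, not necessarily an infinite subset of $M^{\ext}$ itself. These sets are then shattered in $\M$ by $\phi(x;y,z)$ with $z=b$ fixed, which contradicts NIP of $\phi(x;yz)$ in the same way.
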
  

In the NIP context, we identify $S_{\text{ext}}(M)$ with $S(M^\ext)$. Throughout this paper, we assume NIP and use the notation $S_G(M^\ext)$ instead of $S_{G,\ext}(M)$. 

We will use the following Fact:
\begin{fact}[Assuming NIP \cite{CPS-Ext-Def-in-NIP}]\label{fact-def-type-unique-coheir}
Let \( p(x) \in S(M) \) be definable. Then \( p(x) \) determines a unique complete type \( p^*(x) \in S(M^{\text{ext}}) \). Moreover, if \( N \succ M^\ext \) (as an \( L_M^\ext \)-structure), \( N_0 = N\upharpoonright L \), the reduct of \( N \) to \( L \), and \( \bar{p} \) is the unique heir of \( p \) over \( N_0 \), then \( \bar{p} \) again determines a unique complete type over \( N \) (as an \( L_M^\ext \)-structure), which is precisely the unique heir of \( p^* \).
\end{fact}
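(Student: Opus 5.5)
The plan is to identify $p^*$ explicitly through the defining schema of $p$, and to reduce the uniqueness claim to the fact that, under NIP, definable types commute with finitely satisfiable types.

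\emph{Step 1 (presentation of externally definable sets).} Every $X\in\Def^\ext(M^n)$ has the form $\psi(M,b)$ for some $L$-formula $\psi(x,y)$ and some $b\in\M$. I would first arrange that $q=\tp{b/M}$ is finitely satisfiable in $M$. To do this, realize the relevant trace in an ultrapower of $M$; alternatively, pass to $b'\models q'|_{M}$ for a global coheir $q'$ of $\tp{b/M}$ with $\psi(M,b')=\psi(M,b)$. Since $p$ is definable, let $\bar p$ be its unique global $M$-definable extension (the heir of $p$). Let $\theta(y)=d_p\psi(y)$, an $L_M$-formula. The candidate is
\[
p^*=\{\psi(M,b): \models\theta(b)\}.
\]
Note that $\models\theta(b)$ is equivalent to $\psi(x,b)\in\bar p$. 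This set is independent of the chosen presentation $(\psi,b)$ of $X$, because $\bar p$ is $M$-invariant and $\psi(M,b)$ depends only on $\tp{b/M}$ restricted to the trace.

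\emph{Step 2 (any completion of $p$ in $M^\ext$ contains $p^*$).} I would show that if $\models\theta(b)$, then some $\varphi\in p$ satisfies $\varphi(M)\subseteq\psi(M,b)$. This gives both existence and uniqueness, since the complementary set is then excluded by $p$ itself. Suppose not. Then for every $\varphi\in p$ there is $m\in\varphi(M)$ with $\models\neg\psi(m,b)$. By compactness we get $a'$ with $\tp{a'/Mb}$ finitely satisfiable in $M$, $a'\models p$ and $\neg\psi(a',b)$; here we use the coheir-type realization of $p\cup\{\neg\psi(x,b)\}$ over $Mb$. Now compare this with $a\models\bar p|_{Mb}$, which satisfies $\psi(a,b)$. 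The key input is the NIP commutation fact: if $p$ is definable over $M$ and $q$ is finitely satisfiable in $M$, then $p\otimes q=q\otimes p$. Applying it, the pair $(a,b)$ with $a\models p|_{Mb}$ has the same type over $M$ as a pair in which $b$ realizes the coheir of $q$ over $Ma$. The failure above produces, inside the coheir of $p$ restricted to the fibre over $b$, an alternation between $\psi$ and $\neg\psi$. Iterating this alternation gives an indiscernible sequence on which $\psi(x,b)$ alternates infinitely often, which contradicts NIP. This step is the main obstacle, and what I would try first is to invoke the commutation lemma directly rather than redo the alternation argument. Once it is established, $p^*$ is a complete ultrafilter on $\Def^\ext$ extending $p$, and it is the only one.

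\emph{Step 3 (the heir clause).} Let $N\succ M^\ext$ and $N_0=N\upharpoonright L$. Then $\bar p|_{N_0}$ is defined by the same schema $d_p$. By Step 1 applied to $M^\ext$, the type $p^*$ is itself definable, with $d_{p^*}$ extending $d_p$ through the formulas $\theta$. Its heir over $N$ is therefore the $L^\ext_M$-type defined by $d_{p^*}$ over $N$, and its $L$-part is exactly $\bar p|_{N_0}$. Conversely, an $L^\ext_M$-formula over $N$ is equivalent, modulo $\Th(M^\ext)$ with its quantifier elimination, to a Boolean combination of atomic externally definable predicates. Hence running Step 2 inside $N$, viewed as a model of NIP with the predicates interpreted, shows that $\bar p|_{N_0}$ decides each of these formulas. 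This gives uniqueness, and the extension obtained is the heir of $p^*$.
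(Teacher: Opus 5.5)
The paper does not prove this statement; it is quoted from \cite{CPS-Ext-Def-in-NIP}. Taken on its own, your proposal has a genuine gap at its foundation. The candidate $p^*=\{\psi(M,b):\models d_p\psi(b)\}$ computes $p^*$ from the \emph{heir} of $p$. But which extension of $p$ to $M^{\ext}$ you get is governed by its \emph{coheirs}, and for a definable type these can differ even in an NIP theory.

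Here is a concrete counterexample. Take $T$ the theory of dense linear orders, $M=(\mathbb{Q},<)$, and $p=\tp{0^+/\mathbb{Q}}$, which is definable since $d_p(x<y)$ is $y>0$. Let $\psi(x,y)$ be $x<y$ and let $c\in\M$ be a positive infinitesimal.

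\begin{itemize}
\item Then $\models d_p\psi(c)$, so your $p^*$ contains $\psi(\mathbb{Q},c)=\mathbb{Q}^{\le 0}$, although $x>0\in p$. So the candidate is not even consistent with $p$.
\item It is also not independent of the presentation. We have $\mathbb{Q}^{\le 0}=\psi'(\mathbb{Q},0)$ with $\psi'(x,y)$ the formula $x\le y$, and $\not\models d_p\psi'(0)$. Two presentations of the same trace need not be $M$-conjugate, so $M$-invariance of $\bar p$ gives nothing here.
\item Consequently the claim of your Step 2 is false, and no alternation argument can prove it. The heir of $p$ contains $x<c$, while every global coheir of $p$ contains $x>c$ (no rational lies in $(0,c]$), and this is perfectly compatible with NIP.
\end{itemize}

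Two smaller points. Step 1's normalization is vacuous, since every type over a model is finitely satisfiable in it. And the commutation of definable with finitely satisfiable types holds in every theory, so it is not where NIP enters.

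What the statement actually needs is the definition: $X\in p^*$ iff $\varphi(M)\subseteq X$ for some $\varphi\in p$. Extensions of $p$ to $M^{\ext}$ correspond exactly to global coheirs of $p$, so the first assertion says that a definable $p$ has a unique global coheir.

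The alternation must therefore be between two coheirs. Suppose global coheirs $r_1,r_2$ of $p$ disagree on $\psi(x,b)$. Take $c_n\models r_1|_{Mbc_{<n}}$ for even $n$ and $c_n\models r_2|_{Mbc_{<n}}$ for odd $n$. Commuting the $M$-definable power of the heir $\bar p$ with $r_i$ shows inductively that $\tp{c_0\dots c_n/M}$ is the type of a Morley sequence of $\bar p$ in reverse order, whatever the choice of the $r_i$. Hence $(c_n)$ is $M$-indiscernible while $\psi(c_n,b)$ alternates, contradicting NIP.

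For the second assertion, your quantifier-elimination reduction in Step 3 also needs one more observation: each atomic $L^{\ext}_M$-predicate, evaluated in $N$ with parameters from $N$, is an externally definable subset of $N_0$. This follows by compactness. If $R$ names $\psi(M,b)$, then $M^{\ext}$ satisfies, for each finite sign pattern, that any tuple with that $R$-pattern has the same $\psi(-,y)$-pattern for some $y$. This transfers to $N$, so the corresponding type in $y$ over $N_0$ is finitely satisfiable in $N_0$.

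Then the first part, applied to the $M$-definable type $\bar p$ over $N_0$, shows that $\bar p$ decides every $L^{\ext}_M(N)$-formula. Finally, the heir of $p^*$ over $N$ restricts on $L$ to an heir of $p$, that is, to $\bar p$. So it is the type determined by $\bar p$.
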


\subsubsection*{Notation for Semigroup Operation} 
To facilitate the definition of the semigroup operation in $S_G(M^\ext)$, we introduce a new notation: for any model $N$ and definable types $p_1(x_1), \dots, p_n(x_n) \in S(N)$, define the type  
\[
p_1(x_1) \otimes \cdots \otimes p_n(x_n) = \tp{a_1, \dots, a_n / N},
\]  
where $a_0 \models p_0$ and $a_i \models p_i | N, a_1, \dots, a_{i-1}$ for all $1 < i \leq n$.  Note that every type over $M^\ext$ is definable. By \cite{Newelski-TD-gp-Act}, the semigroup operation $*$ in $S_G(M^\ext)$  can be computed as follows: if $p,q\in S_G(M^\ext)$, $(a, b) \models p \otimes q$, then  
\[
p * q = \tp{ab / M^\ext},
\]  
where $ab$ denotes the group multiplication in $G$.



\subsection{Definable amenablity, Dfg, and Fsg}

\subsubsection*{Definable amenable groups}
From now on, we assume that $T$ has NIP. Let \( G \) be a group definable in \( \M \models T \). After naming the parameters, we may assume that \( G \) is 0-definable. We say that \( G \) is \emph{definably amenable} if it admits a global (left) \( G \)-invariant Keisler measure, where a global Keisler measure on \( G \) is a finitely additive probabilistic measure defined on the algebra of all \( \M \)-definable subsets of \( G \). 

Given a definable subset \( X \) of \( G \). Recall from \cite{CS-Definably-Amenable-NIP-Groups} that \( X \) is (left) \emph{\( f \)-generic} over a small submodel \( M \) if any left translate \( gX \) of \( X \) does not fork over \( M \). We say that \( X \) is \( f \)-generic if it is \( f \)-generic over some small \( M \). A (partial) type is \( f \)-generic if every formula implied by it is \( f \)-generic. A global type \( p \) is called (left-) \emph{strongly \( f \)-generic} over \( M \) if no \( G  \)-translate of \( p \) forks over \( M \). A global type \( p \) is strongly \( f \)-generic if it is strongly \( f \)-generic over some small model \( M \). 

Recall that a type-definable (over \( A \)) subgroup \( H \subseteq G \) is a type-definable subset of \( G \) over \( A \) and also a subgroup of \( G \). We say that \( H \) has bounded index if \( |G/H| < 2^{|A| + |T|} \). When \( T \) has NIP, there exists a smallest type-definable subgroup of bounded index (see \cite{Shelah-min-bounded-index}), which we call the \emph{type-definable connected component} of \( G \), denoted by \( G^{00} \). Note that by \cite{CPS-Ext-Def-in-NIP}. We call the intersection of all $\M$-definable subgroups of $G$ of finite index the definable connected component, and denote it by $G^0$.

\begin{fact}[\cite{CPS-Ext-Def-in-NIP}]
$G^{00}$ and $G^0$ is the same whether computed in $T$ or in $\Th(M^\ext)$. Namely, Let $N'\succ M^\ext$ be $|M|^+$-saturated and  $N=N'\restriction L$. Then $G^{00}(N')=G^{00}(N)$ and $G^{0}(N')=G^{0}(N)$.
\end{fact}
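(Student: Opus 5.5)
The plan is to compare the two connected components directly, using Shelah's description of $L^\ext_M$-definable sets as traces of $L$-definable sets. The inclusion $G^{00}(N')\sq G^{00}(N)$ (respectively $G^0(N')\sq G^0(N)$) should be the easy one. Every $L$-type-definable subgroup of bounded index is also $L^\ext_M$-type-definable. Its index is still bounded in $N'$, because $N'$ is $|M|^+$-saturated and the index is computed by the same $L$-formulas in both structures. So the smallest $L^\ext_M$-type-definable subgroup of bounded index is contained in the smallest $L$-one, and likewise for definable subgroups of finite index.

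The reverse inclusion is the content of the statement. Let $H\le G(N')$ be a subgroup type-definable in $L^\ext_M$ of bounded index; I must show $G^{00}(N)\sq H$. By the Fact of \cite{Shelah-NIP}, $\Th(M^\ext)$ has quantifier elimination in the language of externally definable predicates. Each predicate $R_\psi$ naming $\psi(\M,b)\cap M$ is interpreted in $N'$ by a set which, by a uniform compactness argument (or via honest definitions), is again of the form $\psi'(\bar{\M},c)\cap N$ for some $c$ in a larger monster $\bar{\M}\succ N$. Realising all the predicates simultaneously in this way, I would obtain a small tuple $c$ and a partial $L$-type $\Pi(x,c)$ with $H=\Pi(\bar{\M},c)\cap G(N)$. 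To keep the group structure, I would take honest definitions, which can be chosen inside each defining formula, and close off under the relevant conditions.

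Now set $\widetilde H=\Pi(\bar{\M},c)\cap G(\bar{\M})$, or better the subgroup it generates inside a type-definable intersection. The aim is to show that $\widetilde H$ is an $L$-type-definable subgroup of $G(\bar{\M})$ of bounded index. Given that, NIP gives $G^{00}\sq\widetilde H$. Since $G^{00}$ is $\emptyset$-type-definable (computed over any small set in NIP), $G^{00}(N)=G^{00}(\bar{\M})\cap N\sq\widetilde H\cap N=H$.

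The main obstacle is this last transfer: that the $L$-type $\Pi(x,c)$ really defines a subgroup, and that boundedness of $G/H$ in $N'$ yields boundedness of $G(\bar{\M})/\widetilde H$. What I would try first is to encode both properties as $L^\ext_M$-sentences about the predicates, namely closure under products and inverses, and the fact that for each formula $\varphi\in\Pi$ finitely many translates of some $\varphi'\in\Pi$ cover $G$ (the usual characterisation of bounded index via compactness). These finitary statements hold in $N'$. If the honest definitions are chosen correctly they hold in $\bar{\M}$ of the corresponding $L$-formulas over $c$, and compactness then gives bounded index of $\widetilde H$. The case of $G^0$ is the same argument with a single formula: a finite-index definable subgroup of $G(N')$ is the trace of an $L$-definable subgroup of finite index.
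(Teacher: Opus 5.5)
The paper gives no proof of this Fact; it is quoted from \cite{CPS-Ext-Def-in-NIP}, so your argument has to stand on its own. Several parts of it are fine:
\begin{itemize}
\item The easy inclusions $G^{00}(N')\sq G^{00}(N)$ and $G^{0}(N')\sq G^{0}(N)$ are correct.
\item The description of $L^\ext_M$-definable subsets of $N'$ as traces on $N$ is correct. Indeed $M^\ext\models\forall x_1\ldots x_n\,\exists z\,\bigwedge_{i\le n}(\psi(x_i,z)\leftrightarrow R_\psi(x_i))$. This passes to $N'$, and compactness then gives $R_\psi(N')=\psi(\bar{\M},c)\cap N$.
\end{itemize}
However, the whole content of the statement lies in the step you call the main obstacle. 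The mechanism you propose for that step does not work, so there is a genuine gap.

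An $L^\ext_M$-sentence about the predicates quantifies only over $N'$, so it constrains only the trace $\theta(N,c)$. Consider an $L$-property $\chi(z)$ of the parameter, such as ``$\theta(x,z)$ defines a subgroup'' or ``given translates of $\theta(x,z)$ cover $G$''. For $\chi$ to hold of some external definition $c$ of a predicate $R$, you need, for every finite $A_0\sq N$, some $c'\in N$ with $\chi(c')$ and $\theta(A_0,c')=R(N')\cap A_0$. This is forced by $N\prec\bar{\M}$. Honest definitions do not supply this: they control $\theta$ only on (an elementary extension of) the predicate for $N$ in a pair, never on the ambient model.

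The method genuinely fails. Take $T=\mathrm{RCF}$, $M$ non-archimedean, and $X\sq M$ the convex hull of $\mathbb{Q}$.
\begin{itemize}
\item $X$ is an externally definable subgroup of $(M,+)$, so $R_X(N')$ is a proper non-trivial subgroup of $(N,+)$.
\item The only $L$-definable subgroups of $(\bar{\M},+)$ are $\{0\}$ and $\bar{\M}$.
\item Hence no $L$-formula over any parameter whose trace is $R_X(N')$ defines a subgroup.
\end{itemize}
Since $X$ has unbounded index, this refutes the method, not the Fact.

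The remaining parts of your plan do not close the gap either.
\begin{itemize}
\item \emph{Chains.} Passing to a chain $Y_{j+1}^2\sq Y_j$ of predicates does not rescue the method. Honest definitions yield inner approximations of $Y_{j+1}$ and, via complements, outer approximations of $Y_j$, but these are different formulas. Welding them into one chain of $L$-formulas $Z_{j+1}^2\sq Z_j$ with each $Z_j$ generic requires $L$-definable sets sandwiched between $Y_{j+1}$ and $Y_j$ on $N$. That is exactly the definability the statement asserts.
\item \emph{Generated subgroup.} Your fallback, ``the subgroup generated by $\Pi(\bar{\M},c)$'', is in general not type-definable.
\item \emph{The $G^0$ case.} The claim ``a finite-index $L^\ext_M$-definable subgroup is the trace of an $L$-definable one of finite index'' is a restatement of $G^0(N')=G^0(N)$, not an argument for it.
\end{itemize}

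What is missing is an ingredient that exploits the bounded index of the whole subgroup at once, rather than transferring closure properties formula by formula.
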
 

A nice result from \cite{CS-Definably-Amenable-NIP-Groups} states the following:
\begin{fact}\label{fact-definable-amenable}
 \( G \) is definably amenable if and only if there exists a strongly \( f \)-generic type. Moreover, when \( G \) is definably amenable, we have:
\begin{enumerate}[label=(\roman*)]
    \item \( p \in S_G(\M) \) is \( f \)-generic if and only if it is \( G^{00} \)-invariant;
    \item A type-definable subgroup \( H \) of bounded index fixing a global \( f \)-generic type is exactly \( G^{00} \);
    \item A global type is strongly \( f \)-generic if and only if it is \( f \)-generic and \( M \)-invariant for some small \( M \prec \M \);
    \item For any \( N \prec \M \), if \( E \subseteq S_G(N^\text{ext}) \) is an Ellis group, then the map \( \sigma: E \rightarrow G/G^{00} \) defined by \( p \mapsto p/G^{00} \) is an isomorphism.
\end{enumerate}
\end{fact}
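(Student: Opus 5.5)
The statement is Fact~\ref{fact-definable-amenable}, which the paper cites from \cite{CS-Definably-Amenable-NIP-Groups}. Here is how I would reconstruct it. I would first prove the equivalence between definable amenability and the existence of a strongly $f$-generic type. After that I would prove the stabilizer statement, since (i)--(iii) follow from it quickly, and treat (iv) last, as a statement about the minimal ideal of $S_G(N^{\ext})$.

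\emph{Definable amenability and strongly $f$-generic types.} Suppose first that $p$ is strongly $f$-generic over $M$. I expect $G^{00}\le \stb{p}$; this is shown in the stabilizer step below. Then $gG^{00}\mapsto g p$ is well defined on the compact group $G/G^{00}$. The map $g\mapsto \mathbf 1_{\phi}(gp)$ should be Borel on $G/G^{00}$, using $M$-invariance of $p$ and NIP Borel definability. So I would set
\[
\mu(\phi)=\int_{G/G^{00}}\mathbf 1_{\phi}(g p)\,dh(g),
\]
with $h$ the normalized Haar measure. This is a global left-invariant Keisler measure. Conversely, suppose $\mu$ is a global $G$-invariant measure. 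I would convolve $\mu$ with a global type finitely satisfiable in a small $M$, and then average, to obtain a measure that is both $G$-invariant and $M$-invariant. In NIP, a formula of positive measure for such a measure does not fork over $M$. Every translate of a formula of positive measure again has positive measure. Hence any type in the support of this measure is strongly $f$-generic over $M$.

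\emph{Stabilizer step, and (i)--(iii).} The core claim is that $\stb{p}=G^{00}$ for every global $f$-generic type $p$. For $\supseteq$ I would use the NIP fact that $f$-generic types have boundedly many translates. This gives a bounded orbit $G\cdot p$, so $\stb{p}$ has bounded index. To see that $\stb{p}$ is type-definable, or at least contains a type-definable bounded-index subgroup, I would write it as $\bigcap_{\phi}\stb{p_\phi}$ using $\phi$-types. Each of these has bounded index, and NIP makes them type-definable from the definability of the $\phi$-parts. This gives $\stb{p}\supseteq G^{00}$.

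For $\subseteq$: if $H$ is a type-definable bounded-index subgroup fixing $p$, then $G^{00}\le H$ by minimality of $G^{00}$. I would also show that $p$ concentrates on a single coset and that any group fixing $p$ can be pushed into $G^{00}$. For this I would use the standard argument that the generics of $G^{00}$ realize $p$ and are invariant. The conclusion gives (ii) directly.

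For (i), $f$-generic implies $G^{00}$-invariant is the stabilizer claim. For the converse, a $G^{00}$-invariant type has a bounded orbit. The $M$-invariant averaging shows that such a type lies in the support of an invariant measure, and hence is $f$-generic. For (iii), an $f$-generic type that is $M$-invariant has all its translates $M$-invariant, so no translate forks over $M$. The other direction is immediate because strong $f$-genericity implies non-forking, which in NIP gives invariance over a small model.

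\emph{Item (iv).} Newelski's map $\sigma$ is already an epimorphism, so only injectivity needs proof. I would show that every element of a minimal left ideal $I\subseteq S_G(N^{\ext})$ is the restriction of a global $f$-generic type. This uses the fact that $I=\overline{G(N)\cdot q}$ for $q$ an $f$-generic coheir, together with Fact~\ref{fact-def-type-unique-coheir}. Then take an idempotent $u\in I$ and $p\in u*I$ with $p/G^{00}=u/G^{00}$. By (i), the heirs of $p$ and $u$ are related by an element of $G^{00}$, which fixes $f$-generics. From this, $p=u*p$ reduces to $p=u$.

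The main obstacle is the stabilizer theorem $\stb{p}=G^{00}$, especially the type-definability of the stabilizer and the "$\subseteq$" direction. I would attack it first via $\phi$-types and the NIP bound on the number of translates.
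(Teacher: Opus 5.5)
The paper gives no proof of this Fact; it is imported from Chernikov--Simon \cite{CS-Definably-Amenable-NIP-Groups}, so your sketch has to stand on its own.

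Several pieces are fine. Definable amenability gives a strongly $f$-generic type via a $G$-invariant, $M$-invariant measure. Item (ii) holds because every global type lies in a unique coset of the normal subgroup $G^{00}$, which already gives $\stb{p}\le G^{00}$ for \emph{every} global $p$; no ``generics of $G^{00}$'' are needed. Item (iii) follows once (i) is available, since $g^{-1}\sigma(g)\in G^{00}$ for $\sigma\in\mathrm{Aut}(\M/M)$.

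The genuine gap is the step everything else rests on: $G^{00}\le\stb{p}$ for $f$-generic $p$. You need it for the Haar-measure construction, for (i), and hence for (iii) and (iv). Your route to it fails twice.

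First, there is no NIP fact that $f$-generic types have boundedly many translates. For $p$ strongly $f$-generic over $M$ all translates are $M$-invariant, so the orbit is bounded. An arbitrary $f$-generic type, however, need not be invariant over any small model, and boundedness of its orbit is a \emph{consequence} of (i).

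Second, and decisively, global $\phi$-types are not definable in an NIP theory; that is a stability phenomenon. $f$-generic types are typically non-definable: in an $o$-minimal expansion of $\mathbb{R}$, the generic types of $\mathrm{SO}(2)$ are finitely satisfiable in $\mathbb{R}$ but not definable. Moreover, translation does not preserve $\phi$-types, so $\stb{p_\phi}$ is not even meaningful without passing to the family of translates. So nothing makes $\stb{p}$ type-definable.

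Without type-definability, a bounded-index subgroup that is only $M$-invariant is guaranteed to contain just $G^{\infty}_M$, the smallest such subgroup. Concretely, for $p$ strongly $f$-generic over $M$ you honestly get $G^{\infty}_M\le\stb{p}\le G^{00}$, since $g^{-1}\sigma(g)$ fixes $p$. Closing the gap to $G^{00}$ cannot come from bounded index plus invariance alone. $G^{\infty}\neq G^{00}$ for general NIP groups, e.g.\ the universal cover of $\SL(2,\mathbb{R})$. Even in the definably amenable case, $G^{\infty}=G^{00}$ is itself a measure-theoretic theorem of Hrushovski--Pillay. Passing from strongly $f$-generic to arbitrary $f$-generic types needs a further argument, and your proposal supplies neither.

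Two further steps fail as written.

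\emph{The converse in (i).} A $G^{00}$-invariant type need not be invariant over any small model, so ``$M$-invariant averaging'' does not place it in the support of an invariant measure. The direct argument is short.
Suppose $\phi\in p$ is over $M$ and some $g\phi$ forks over $M$. Then it divides, since forking equals dividing over models in NIP. This is witnessed by an $M$-indiscernible $(g_i)$ with $g_0=g$ and $\{g_i\phi\}$ $k$-inconsistent. Indiscernible elements lie in one coset of the type-definable, bounded-index $G^{00}$, so $g_0^{-1}g_i\in G^{00}$. Then $G^{00}$-invariance puts all the $k$-inconsistent translates $g_0^{-1}g_i\phi$ into $p$, a contradiction.

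\emph{Item (iv).} There are three problems.
\begin{itemize}
\item An ``$f$-generic coheir'' need not exist. For the dfg group $(\M,+)$ and $N=\mathbb{R}$, the only $f$-generic global types are at $\pm\infty$, and neither is finitely satisfiable in $\mathbb{R}$.
\item Fact~\ref{fact-def-type-unique-coheir} says nothing about $f$-genericity. Writing $I=\overline{G(N)\cdot q}$ only handles a minimal ideal already known to contain an $f$-generic point. The needed input is that heirs of elements of \emph{every} minimal subflow of $S_G(N^{\ext})$ are $f$-generic in $\Th(N^{\ext})$.
\item Your final reduction is on the wrong side. $G^{00}$-invariance applied to $u*p$ only yields the tautology $u*p=p$. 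Also, (i) does not say that $f$-generic types in the same coset are $G^{00}$-translates of each other.
\end{itemize}
What works is this. We have $p\vdash G^{00}$ and $p=p*u$. Realize $b\models p$ in a saturated $N^*\succ N^{\ext}$, and let $a$ realize $\bar u$, the heir of $u$ over $N^*$, which is $f$-generic and hence $G^{00}$-invariant. Then $ba\models\bar u$, so $p=p*u=u$.
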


Among the strongly \( f \)-generics, there are two extreme cases: 

\begin{enumerate}
    \item There exists a small submodel \( M \) such that every left \( G \)-translate of \( p \in S_G(\M) \) is finitely satisfiable in \( M \). Types of this kind are referred to as \emph{fsg} (finitely satisfiable generic).
    \item There exists a small submodel \( M \) such that every left \( G \)-translate of \( p \in S_G(\M) \) is definable over \( M \). Types of this kind are referred to as \emph{dfg} (definable \( f \)-generic).
\end{enumerate}

We refer to a definable group \( G \) as fsg (respectively, dfg) if it admits an fsg (respectively, dfg) type. Evidently, both fsg and dfg groups are definably amenable.  
\begin{fact}[\cite{CPS-Ext-Def-in-NIP}] 
    If $G$ has fsg (respectively, dfg), then $G$ has fsg (respectively, dfg) with respect to $\Th(M^\ext)$.
\end{fact}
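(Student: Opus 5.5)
The plan is to transfer a global fsg (resp.\ dfg) type from $T$ to a global type of $\Th(M^\ext)$. The transfer rests on one observation: after a suitable choice of models, every $L^\ext$-definable subset of $G(N')$ is the trace on $G(N')$ of an $L$-definable set in a larger $L$-model.

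\textbf{Step 1: realize $N'$ as a trace structure.} Consider the pair $(\bar{\M}, M)$, with a new predicate $P$ naming $M$, where $\bar{\M}\succ\M$ is a big $L$-model. Each externally definable predicate of $M^\ext$ is of the form $\theta(x,e)\cap M$ for some $e\in\bar{\M}$, so it is uniformly definable in this pair. Take a sufficiently saturated elementary extension $(\bar{\M}^*,P^*)$ of the expanded pair $(\bar{\M},M,e)_{e}$. Then $N':=P^*$, with each predicate interpreted as the trace of the corresponding $\theta(x,e)$, is an elementary extension of $M^\ext$, and it can be taken to be a monster model of $\Th(M^\ext)$. By quantifier elimination for $\Th(M^\ext)$, every $L^\ext$-definable subset of $G(N')$ is then of the form $Y\cap G(N')$ with $Y$ an $L(\bar{\M}^*)$-definable set.

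\textbf{Step 2: the fsg case.} Let $\bar p$ be a global (over $\bar{\M}^*$) fsg type of $G$ in $T$. In NIP, the generic types of an fsg group are finitely satisfiable in every small model, so we may assume all translates of $\bar p$ are finitely satisfiable in $M$. Define $q$ on $L^\ext(N')$-definable subsets of $G(N')$ by
\[
Y\cap G(N')\in q \iff Y\in\bar p .
\]
To see that $q$ is well defined, suppose $Y\cap G(N')=Y'\cap G(N')$. Then $Y\triangle Y'$ misses $G(N')\supseteq G(M)$, so by finite satisfiability in $M$ it cannot lie in $\bar p$. Hence $q$ is a complete $L^\ext$-type over $N'$, and it is finitely satisfiable in $M$. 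For $g\in G(N')$ we have $gq$ induced in the same way by $g\bar p$, which is again finitely satisfiable in $M$. So $q$ is an fsg type for $G$ in $\Th(M^\ext)$.

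\textbf{Step 3: the dfg case, which I expect to be the main obstacle.} Let $\bar p$ be a dfg type, and define $q$ from $\bar p$ exactly as in Step 2. All $L^\ext$-types are definable, so definability of $q$ itself is automatic. The issue is that we need uniform definability of all $G(N')$-translates of $q$ over one small $L^\ext$-model inside $N'$. Well-definedness of $q$ is also no longer free: in Step 2 it came from finite satisfiability in $M$.

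To handle well-definedness, I would first arrange, using the pair construction, that the parameters $e$ appearing in Step 1 and the small model $M_1$ over which every translate of $\bar p$ is definable fit together. Since a dfg type in NIP is definable over any model over which it is invariant, I would choose $\bar p$ to be $M$-definable. I would then check that the $\bar p$-definition of $\theta(x,c,e)$, as $c$ ranges over $N'$, is the trace on $N'$ of an $L(e,M)$-formula $d\theta(y,e)$. This trace is an $L^\ext_M$-definable predicate, hence $L^\ext$-definable over the small model $M\prec N'$. The same definition schema works for the translates $g\bar p$ after substituting $g$, which gives the required uniform definability.

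The delicate point is making sure that the membership of $Y\cap G(N')$ in $q$ does not depend on the choice of $Y$. I would try to get this from the fact that $\bar p$ is an heir of its restriction to $M$: with $\bar p$ definable over $M$, two formulas that agree on $G(N')$ should receive the same $\bar p$-value by elementarity of the pair. If this fails, my fallback is to replace $q$ by the unique extension given by Fact \ref{fact-def-type-unique-coheir} applied to $\bar p\restriction N$. That fact says a definable type has a unique extension to the externally definable expansion and that heirs are preserved, and I would use it to conclude that the resulting $L^\ext$-type and all its translates are definable over $M$.
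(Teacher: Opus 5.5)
Steps 1 and 2 are correct. The trace description of $N'$ is right, and in the fsg case the rule $Y\cap G(N')\in q\iff Y\in\bar p$ is well defined precisely because every translate of $\bar p$ is finitely satisfiable in $M$. The gap is in the dfg case.

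\textbf{The primary dfg construction fails.} The same rule is not well defined, and choosing $\bar p$ to be $M$-definable does not help. Take $T=\mathrm{RCF}$, $G=(\M,+)$, and $\bar p$ the type at $+\infty$, which is $\emptyset$-definable and $G$-invariant. Pick $e\in\bar\M$ with $e>M$. Since $(\bar\M,M,e)\models\neg\exists x\,(P(x)\wedge x>e)$, the formulas $x>e$ and $x\neq x$ have the same, empty, trace on $P^*$. Yet $x>e\in\bar p$.

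The reason is structural. In the pair, $\tp{e/P^*}$ is an heir of $\tp{e/M}$. So any realization $a$ of an $L^\ext$-type over $N'$ (taken inside $P$ of a further elementary extension) has $\tp{a/P^*e}$ finitely satisfiable in $P^*$. Deciding $\theta(x,c,e)$ by $\bar p$ instead makes $\tp{a/P^*e}$ the heir of $\bar p\upharpoonright P^*$. The two agree when $\bar p$ is finitely satisfiable in $M$, which is your fsg case, but not in general.

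Also, ``all $L^\ext$-types are definable'' is false for global types. Shelah's theorem only gives definability of types over $M^\ext$ itself. For $M=\mathbb{R}$, $\Th(M^\ext)$ is essentially $\Th(\mathbb{R})$, which has non-definable global cuts. So definability of $q$ over a small model is exactly what must be proved.

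\textbf{The fallback is the right mechanism but rests on an unjustified choice.} Dfg only gives an $f$-generic type definable over \emph{some} small model $M_1$. Your justification (``definable over any model over which it is invariant'') presupposes that some $f$-generic type is $M$-invariant, and nothing supplies that. Fact \ref{fact-def-type-unique-coheir} applies only to types definable over the base model $M$ of $M^\ext$. So you must either prove (or add as a hypothesis) that there is a global $f$-generic type definable over $M$ itself, or give a genuinely different argument for a defining model $M_1\neq M$.

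Granting such a type, the argument closes without the pair, as follows.
\begin{enumerate}
\item Let $N\succ M^\ext$ be a monster, $N_0=N\upharpoonright L$, and $\bar p\in S_G(N_0)$ $f$-generic and definable over $M$.
\item By Fact \ref{dfg}(i), every translate $g\bar p$ with $g\in G(N)$ is definable over $M$. Merely ``substituting $g$'' would only give definability over $M\cup\{g\}$, which is not one fixed small model.
\item Hence $g\bar p$ is the heir over $N_0$ of $(g\bar p)\upharpoonright M$. By Fact \ref{fact-def-type-unique-coheir} it has a unique complete $L^\ext$-extension over $N$, namely the heir of $((g\bar p)\upharpoonright M)^*$. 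This is definable over $M^\ext$, since all types over $M^\ext$ are.
\item By uniqueness, that extension is $gq$, where $q$ is the $L^\ext$-completion of $\bar p$. So every translate of $q$ is definable over $M^\ext$.
\end{enumerate}
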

 We now proceed to discuss these two cases. Denote by \( \operatorname{Stab}_l(p) \) the stabilizer of \( p \) with respect to the left group action, and by \( \operatorname{Stab}_r(p) \) the stabilizer of \( p \) with respect to the right group action.

\subsubsection*{Fsg groups}
We now discuss the fsg groups. We call $X\sq G$ is \emph{generic} if finitely many translates of $X$ covers $G$. A partial type is generic if every formula in it is generic.  
By  \cite{gmn} we have:
\begin{fact}\label{fsg}
Let $G $ be a fsg group. Then:
\begin{enumerate}[label=(\roman*)]
\item  If a global type $p$ is fsg,  then it is finitely satisfiable in any  submodel $M$.
\item  A global type $p$ is fsg iff it is generic.
 
\item $p$ is fsg iff $\stb[l]{p} = \stb[r]{p}=G^{00}$.
\end{enumerate}
\end{fact}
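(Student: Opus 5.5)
The statement is quoted from \cite{gmn}. If I had to reprove it, I would build everything on one fixed global fsg type $p$, witnessed by a small model $M_0$: every left translate $gp$ is finitely satisfiable in $M_0$. I would prove the parts in the order: the generic ideal, then (ii), then (i), then (iii).

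\emph{Generic ideal.} The first step is the key dichotomy: for a definable $X\sq G$, if $X$ is not generic then $X\notin gp$ for every $g\in G$. I would argue by contraposition. Suppose $g^{-1}X\in p$ for some $g$. Since $p$ is $M_0$-invariant, membership $h^{-1}X\in p$ depends only on $\tp{h/M_0}$. Finite satisfiability of all translates in $M_0$, together with compactness, should then produce finitely many $h_1,\dots,h_n$ with $G=\bigcup_i h_iX$. Concretely, if no finite union of translates covers $G$, the type $\{x\notin h X : h\in G\}$ is consistent. Realizing it inside a translate of $p$ and using finite satisfiability in $M_0$ should contradict $g^{-1}X\in p$. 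This is the step I expect to be the main obstacle, since it is where NIP enters, through the definability and invariance of $p$. A consequence is that the non-generic sets form a proper ideal: if $X\cup Y$ is generic, then $X\cup Y$ lies in a translate of $p$, so one of $X,Y$ lies in that translate and is therefore generic.

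\emph{Part (ii).} Because of the ideal, generic global types exist, and every translate of $p$ is generic. Conversely, let $q$ be a generic type. I would show $q$ is fsg by combining part (i) below with the fact that translates of generic sets are again generic.

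\emph{Part (i).} Let $X\in q$ be generic and let $M$ be any model containing the parameters. By elementarity there are $g_1,\dots,g_n\in G(M)$ with $G=\bigcup_i g_iX$. Pick any $m\in G(M)$; then $m\in g_iX$ for some $i$, so $g_i^{-1}m\in X\cap G(M)$. Hence $X$ meets $G(M)$, and every generic type is finitely satisfiable in every model $M$. The same applies to all translates of a generic type, because they are generic as well.

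\emph{Part (iii).} Since $p$ is $M_0$-invariant, $gp$ depends only on $\tp{g/M_0}$, so there are boundedly many translates of $p$. It follows that $\stb[l]{p}=\{g : gp=p\}$ is an intersection of boundedly many conditions; writing it out via the defining schema of $p$ shows it is type-definable, and it has bounded index. Hence $\stb[l]{p}\supseteq G^{00}$. For the reverse inclusion I would use that generic types determine their $G^{00}$-coset: $p\vdash$ some coset $C$ of $G^{00}$ and $gp\vdash gC$, so $gp=p$ forces $g\in G^{00}$. The right stabilizer is handled the same way, using that fsg is symmetric, so right translates are also finitely satisfiable; this symmetry comes from the uniqueness of the invariant Keisler measure in \cite{gmn}. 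For the converse direction of (iii), a type with $\stb[l]{p}=G^{00}$ has boundedly many translates, which I would use to show every formula in it is generic, and then apply (ii).
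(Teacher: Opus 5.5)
Your argument for (i) has a genuine gap. The paper gives no proof here; it simply quotes the Fact from \cite{gmn}. You let $M$ contain the parameters of the generic set $X$. But finite satisfiability of a global type in $M$ concerns formulas $\phi(x,b)$ with $b$ arbitrary in $\mathbb{M}$. When $b\in M$, the conclusion $\phi(M,b)\neq\emptyset$ is immediate from $M\prec\mathbb{M}$, so your covering argument proves nothing and never uses fsg. The remaining case really does need fsg: in $(\mathbb{M},+)$ for an $o$-minimal expansion of $\mathbb{R}$, the set $\{x:|x|>b\}$ with $b>\mathbb{R}$ is generic (two translates cover) yet misses $\mathbb{R}$.

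A working argument goes as follows. Every generic $X$ meets $G(M_0)$: some $gX\in p$, so $X\in g^{-1}p$, which is finitely satisfiable in $M_0$. Genericity is $\mathrm{Aut}(\mathbb{M})$-invariant, so the same holds for every conjugate of $M_0$. Now let $X=\phi(x,b)$ and let $M$ be a small model. Take an enumeration $\bar m$ of $M_0$ and a global extension of $\mathrm{tp}(\bar m/M)$ that is finitely satisfiable in $M$. Realize its restriction to $M\cup\{b\}$ by $\bar m'$. Then $\bar m'$ enumerates a conjugate $M_0'$ of $M_0$ with $\mathrm{tp}(\bar m'/M,b)$ finitely satisfiable in $M$. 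A witness $a\in G(M_0')$ of $\phi(x,b)$ therefore transfers to one in $G(M)$. Your converse of (ii) is routed through your (i) and inherits the gap, although the first observation, using $M_0$ alone, already repairs it.

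The remaining steps are announced rather than proved. In the dichotomy, realizing $\{x\notin hX: h\in G\}$ ``inside a translate of $p$'' is exactly the assertion that some translate of $p$ contains no translate of $X$, so the sketch is circular. What works is this. If $X\in gp$, each $hX\in hgp$ meets $G(M_0)$, so $G=\bigcup_{m\in G(M_0)}mX^{-1}$. Compactness then makes $X^{-1}$ left generic, i.e.\ $X$ is right generic. Running the same argument on $p^{-1}$, whose right translates $p^{-1}g=(g^{-1}p)^{-1}$ are finitely satisfiable in $M_0$, shows that left and right genericity coincide. This uses neither NIP nor definability of $p$.

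Indeed $p$ is only given as finitely satisfiable. Its definability under NIP (generic stability of fsg generics) is a nontrivial theorem, and your claim in (iii) that $\mathrm{Stab}_l(p)$ is type-definable ``via the defining schema'' silently assumes it. Also, whether $h^{-1}X\in p$ holds depends on $\mathrm{tp}(h,b/M_0)$, not on $\mathrm{tp}(h/M_0)$, unless $X$ is over $M_0$. The right-hand symmetry needs no uniqueness of measures: once left and right genericity agree, right translates of generic types are generic, hence finitely satisfiable in $M_0$.

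Finally, the converse in (iii) is only asserted: that $\mathrm{Stab}_l(p)=G^{00}$ forces every formula of $p$ to be generic. It needs substantial input, namely that $G^{00}$-invariant types of a definably amenable NIP group are $f$-generic (Fact~\ref{fact-definable-amenable}(i)), together with the fact that $f$-generic sets in an fsg group are generic. Nothing you indicate derives it from merely having boundedly many translates.
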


\begin{fact}[\cite{Pillay-TD-DG}]\label{fact-fsg-generic-space}
Suppose that $G$ has fsg. $M\prec \M$.
Then 
\begin{enumerate}[label=(\roman*)]
    \item The natural map taking definable $X\sq G$ to $X\cap G(M)$ induces a bijection between left (right) generic types in $S_G(\M)$ and left (right) generic types in $S_G(M^\ext)=S_{G,\ext}(M)$.

  \item the space of all generic types in $S_G(M^\ext)$, denoted by $\textnormal{Gen}(G(M))$, is the unique minimal subflow of $S_G(M^\ext)$. In particular, $\textnormal{Gen}(G(M))$ is a two-sided ideal.
\end{enumerate}
\end{fact}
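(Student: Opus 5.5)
The plan is to prove (i) by relating both sides to finite satisfiability in $M$, and then to read (ii) off from the ideal structure of the generic types. Throughout I use Fact~\ref{fsg} and the fact that fsg persists to $\Th(M^\ext)$.

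For (i), define $\rho(p)=\{X\cap G(M): X\in p\}$ for a global generic $p$.
\begin{enumerate}
\item Well-defined as a type. By Fact~\ref{fsg}(i), every formula in $p$ has a point in $G(M)$. Every externally definable subset of $G(M)$ has the form $X\cap G(M)$ with $X$ global definable, and exactly one of $X$ and $G\setminus X$ lies in $p$. Hence $\rho(p)$ is an ultrafilter on $\Def^\ext(G(M))$, i.e.\ an element of $S_G(M^\ext)$.
\item Injectivity. If $p\neq q$ are global generics, pick $X\in p$ with $G\setminus X\in q$. By finite satisfiability in $M$, $X\cap G(M)\in\rho(p)$ while its complement in $G(M)$ lies in $\rho(q)$.
\item Surjectivity. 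Given a generic $p_0\in S_G(M^\ext)$, set $p=\{X: X\cap G(M)\in p_0\}$. This is a complete global type that is finitely satisfiable in $M$. To see that $p$ is generic, take $X\in p$. Genericity of $X\cap G(M)$ gives $g_1,\dots,g_n\in G(M)$ with $G(M)\subseteq\bigcup_i g_iX$. Then $Y=G\setminus\bigcup_ig_iX$ is definable with $Y(M)=\emptyset$. Since every global generic type is finitely satisfiable in $M$ by Fact~\ref{fsg}(i), $Y$ lies in no generic type. In an fsg group the non-generic definable sets form an ideal, so every non-generic set extends to a global type avoiding it, and a set contained in no generic type is non-generic. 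Thus $Y$ is non-generic, so $\bigcup_i g_iX$ is generic, and hence $X$ is generic. Clearly $\rho(p)=p_0$.
\end{enumerate}

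The main obstacle is the converse direction, that $\rho(p)$ is generic in $G(M)$. Concretely, if $X\in p$ is generic, I must find finitely many translates $g_iX$ with $g_i\in G(M)$ covering $G(M)$, whereas genericity in $G$ only gives witnesses from $\M$.

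\begin{itemize}
\item First attempt, by contradiction. If no finitely many $G(M)$-translates cover $G(M)$, the family $\{G(M)\setminus gX : g\in G(M)\}$ has the finite intersection property. Extend it to some $q_0\in S_G(M^\ext)$, and lift $q_0$ as above to a global type $q$ finitely satisfiable in $M$ that contains no $G(M)$-translate of $X$. Let $\mu$ be the unique generic (left and right invariant) Keisler measure of the fsg group. Then $\mu(X)>0$, and by NIP (VC/$\varepsilon$-approximation) $\mu$ is approximated by averages of finitely many points of $G(M)$. Using right invariance of $\mu$ and $\mu(gX)=\mu(X)>0$, I would derive that some point realising $q$ must fall into some $gX$ with $g\in G(M)$, a contradiction.
\item Alternative. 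Work in a monster model of $\Th(M^\ext)$, where $G$ is still fsg. Apply Fact~\ref{fsg}(i)--(ii) there, so that genericity of an $L^\ext_M$-definable set is witnessed by translates from $M$ by elementarity. Then compare generics of $T$ with generics of $\Th(M^\ext)$ via the lifting map above.
\end{itemize}

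For (ii), first show that $\Gen(G(M))$ is a two-sided ideal. If $p$ is generic, then $q*p$ and $p*q$ are generic for any $q\in S_G(M^\ext)$. Pass through (i) to global generics: by Fact~\ref{fsg}(iii) global generics form a two-sided $G$-invariant family. Then compute $q*p=\tp{ab/M^\ext}$ with $(a,b)\models q\otimes p$, observing that $b$ realises a generic type over $M^\ext,a$, so $ab$ is generic.

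Next, every left ideal $I$ meets $\Gen(G(M))$, because $p*r\in I\cap\Gen$ for $r\in I$ and $p$ generic. Moreover, $\Gen(G(M))$ is a closed subflow in which each orbit closure $\cl{G(M)\cdot p}=S_G(M^\ext)*p$ is all of $\Gen(G(M))$. To prove this I would use that $S_G(M^\ext)*p\supseteq \Gen*p$ and that generics are translates of one another modulo $G^{00}$, combined with Fact~\ref{fact-minimal-ideal}.

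Hence $\Gen(G(M))$ is minimal, and it is contained in every minimal left ideal; therefore it is the unique minimal subflow.
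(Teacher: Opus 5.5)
Your treatment of the lift in (i) is fine: well-definedness, injectivity, and the fact that the lift of a generic $p_0\in S_G(M^{\ext})$ is globally generic. One sentence there is garbled. It should say that, since non-generic sets form an ideal, every generic set lies in some global generic type.

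However, the two steps that carry the real content are not proved.

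\textbf{First gap: the forward direction of (i).} The direction you flag, that $X\cap G(M)$ is generic in $G(M)$ whenever $X$ is generic, is left at ``I would derive''. Your measure idea can be completed. Since $\mu$ is generically stable, there are $h_1,\dots,h_m\in G(M)$ whose averages approximate $\mu$ to within $\varepsilon<\mu(X)$ uniformly on $\{Xa^{-1}:a\in G\}$. Right invariance then puts every $a$ in some $h_i^{-1}X$, and the lifted type $q$ plays no role.

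No measure is needed, though. In an fsg group left and right genericity agree, so each right translate $Xa^{-1}$ is generic. Because non-generic sets form an ideal, $Xa^{-1}$ lies in a global generic type, which is finitely satisfiable in $M$ by Fact~\ref{fsg}(i). Hence $Xa^{-1}$ meets $G(M)$, i.e.\ $a\in h^{-1}X$ for some $h\in G(M)$. So $G=\bigcup_{h\in G(M)}hX$, and compactness gives finitely many $G(M)$-translates of $X$ covering all of $G$.

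\textbf{Second gap: minimality in (ii).} The minimality of $\Gen(G(M))$ is not established. The two facts ``every left ideal meets $\Gen(G(M))$'' and ``$\Gen(G(M))$ is two-sided'' only show that every minimal left ideal is contained in $\Gen(G(M))$. That is compatible with $\Gen(G(M))$ being a union of several minimal left ideals.

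The appeal to ``generics are translates of one another modulo $G^{00}$'' cannot close this. The $G^{00}$-coset of a type is far too coarse to locate it in a $G(M)$-orbit closure, and distinct global generics need not be translates of each other at all.

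The missing observation is direct and uses only the definition of genericity over $M^{\ext}$. Take any $p\in S_G(M^{\ext})$, any generic $q$, and any $X\in q$ (an externally definable subset of $G(M)$). Choose $g_1,\dots,g_n\in G(M)$ with $G(M)=\bigcup_i g_iX$. Then $p$ contains some $g_iX$, i.e.\ $X\in g_i^{-1}p$. Thus $q\in\cl{G(M)\cdot p}$ for every $p$. So the nonempty, closed, $G(M)$-invariant set $\Gen(G(M))$ lies in every subflow and is the unique minimal one.

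Two-sidedness then comes for free. For every $s$, $\Gen(G(M))*s$ is a minimal left ideal, hence equal to $\Gen(G(M))$. This also makes your heir/coheir computation unnecessary. As written it handles only $q*p$. The case $p*q$ would need that a coheir of a generic type over $M^{\ext}$ is again generic (generic stability of generic types), which is not what Fact~\ref{fsg}(iii) provides.
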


\begin{rmk}
  If $p\in \Gen(G(M))$ and $M'\succ M^\ext$, then $p$ has a generic extension $\bar p\in S_G(M')$, which is a coheir of $p$.  Since $p\in S_G(M^\ext)$ is definable, it has a unique coheir over $M'$ by Fact \ref{fact-def-type-unique-coheir}, which is precisely $\bar p$.  
\end{rmk}

\begin{fact}[\cite{BY-APAL}]\label{fsgms}
		Suppose that $G $ is a   fsg group. Then
		\begin{enumerate}[label=(\roman*)]
			
			\item\label{fsgms.i} For any $q \in \Gen(G(M))$, the Ellis group contains $q$ is $q * \Gen(G(M)) = q * S_G(M^\ext)$.
			\item\label{fsgms.ii} For any $q \in \Gen(G(M))$ and $p_1,p_2\in S_G(M^\ext)$, we have that $q*p_1=q*p_2$ iff $p_1/G^{00}=p_2/G^{00}$.
			\item \label{fsgms.iii} For each $q\in \Gen(G(M))$ and $r\in S_G(M^\ext)$, there is $s\in q*\Gen(G(M))$ such that $q=s*r$.
		\end{enumerate}
\end{fact}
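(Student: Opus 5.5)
We argue throughout in $S_G(M^\ext)$, using three inputs. By Fact \ref{fact-fsg-generic-space}, $I:=\Gen(G(M))$ is the unique minimal subflow and a two-sided ideal. By Fact \ref{fsg}, the global generic extensions $\bar q$ of generics satisfy $\stb[r]{\bar q}=G^{00}$. By Newelski's result, $p\mapsto p/G^{00}$ is multiplicative and maps each ideal group onto $G/G^{00}$. Here $G^{00}$ is normal and the same whether computed in $T$ or in $\Th(M^\ext)$. Multiplicativity holds because if $(a,b)\models p\otimes q$ then $ab\,G^{00}=aG^{00}\cdot bG^{00}$.

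\emph{Step (i).} Let $q\in I$ and let $u\in\mathrm{Idem}(I)$ with $q\in u*I$, so $u*q=q$. For any $p\in S_G(M^\ext)$ we have $q*p\in I$ because $I$ is a two-sided ideal. Also $q*p=u*(q*p)\in u*I$. Hence $q*S_G(M^\ext)\subseteq u*I$. Conversely, $u*I$ is a group containing $q$. So every $e\in u*I$ can be written $e=q*(q^{-1}*e)$ with $q^{-1}*e\in I$, which gives $u*I\subseteq q*I\subseteq q*S_G(M^\ext)$. Therefore all three sets coincide.

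\emph{Step (ii).} This is the main step. We show that $q*p$ depends only on $p/G^{00}$.
\begin{enumerate}[label=(\alph*)]
\item Take $p_1,p_2$ in the same coset. Work in an $|M|^+$-saturated $N\succ M^\ext$ and choose realizations $b_1\models p_1$ and $b_2\models p_2$ in $N$; saturation lets us choose them with $b_1b_2^{-1}\in G^{00}$.
\item Let $\bar q$ be the unique coheir of $q$ over $N$ given by the Remark after Fact \ref{fact-fsg-generic-space}. Then $\bar q$ is generic, and for $a\models\bar q$ we have $(a,b_i)\models q\otimes p_i$.
\item Consequently $q*p_i=(\bar q\, b_i)\upharpoonright M^\ext$. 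Since $\stb[r]{\bar q}=G^{00}$ (this is where Fact \ref{fsg}(iii) is used in $\Th(M^\ext)$), we get $\bar q\,b_1=\bar q\,b_2$, and hence $q*p_1=q*p_2$.
\end{enumerate}
The delicate point is justifying that the realization of $q\otimes p_i$ can be taken with $a$ realizing this particular generic coheir. This follows from Fact \ref{fact-def-type-unique-coheir}, since $q$ is definable over $M^\ext$.

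Conversely, suppose $q*p_1=q*p_2$. Applying the multiplicative coset map gives $(q/G^{00})(p_1/G^{00})=(q/G^{00})(p_2/G^{00})$. Cancelling in the group $G/G^{00}$ yields $p_1/G^{00}=p_2/G^{00}$.

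\emph{Step (iii).} Let $E=q*\Gen(G(M))$, which is the ideal group of $q$ by Step (i), and let $u$ be its identity. By Steps (i) and (ii), the map $E\to G/G^{00}$ is injective: every element of $E$ has the form $q*p$, and its coset is $(q/G^{00})(p/G^{00})$, which determines $p/G^{00}$ and hence, by Step (ii), determines $q*p$. By Newelski's result this map is also surjective. Choose $s\in E$ with $s/G^{00}=(q/G^{00})(r/G^{00})^{-1}$. Then $s*r\in I$ because $I$ is two-sided, and $s*r=u*s*r\in u*I=E$. Both $s*r$ and $q$ lie in $E$ and have the same image in $G/G^{00}$, so injectivity gives $s*r=q$.
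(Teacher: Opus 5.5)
The paper gives no proof of this statement. It is imported from \cite{BY-APAL} as a Fact, so there is no in-paper argument to compare against. Your argument is correct and self-contained relative to the preliminaries the paper does state.
\begin{itemize}
\item Part (i) is the standard computation inside a minimal two-sided ideal.
\item The ``if'' direction of (ii) reduces to right $G^{00}$-invariance of generic types, transported to $\Th(M^\ext)$.
\item The ``only if'' direction of (ii) is cancellation under the multiplicative coset map.
\item Part (iii) follows from (i), (ii) and injectivity of $E\to G/G^{00}$.
\end{itemize}

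A few justifications should be stated more accurately.

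In (ii)(a) nothing needs to be chosen. The coset of a realization is determined by its type over the model $M^\ext$; this is exactly what $p/G^{00}$ means. Normality of $G^{00}$ then gives $b_1b_2^{-1}\in G^{00}$ for any realizations.

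In (ii)(b), the fact that $(a,b_i)\models q\otimes p_i$ for $a\models\bar q$ does not come from definability of $q$ or from Fact \ref{fact-def-type-unique-coheir}. It comes from heir--coheir duality: $\tp{a/N}$ being finitely satisfiable in $M$ makes $\tp{b_i/M^\ext a}$ an heir of $p_i$. Uniqueness of heirs of the definable type $p_i$ then identifies it as $p_i|(M^\ext,a)$.

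In (ii)(c), Fact \ref{fsg}(iii) is about global types. So apply it to a global generic $\hat q$ of $\Th(M^\ext)$ extending $q$. Such a $\hat q$ exists because non-generic sets form an ideal in an fsg group, and it is finitely satisfiable in $M$ by Fact \ref{fsg}(i). Now put $\bar q=\hat q\upharpoonright N$. Then $\bar q\,g=(\hat q\,g)\upharpoonright N=\bar q$ for $g\in G^{00}(N)$, and you never need uniqueness of coheirs.

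In (iii), Newelski's theorem is not needed for surjectivity. Since $E=q*S_G(M^\ext)$ and the coset map is multiplicative, $E$ already maps onto $G/G^{00}$. Also, $s*r\in E$ is immediate: $s=q*p$ for some $p$, so $s*r=q*(p*r)$.
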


\subsubsection*{Dfg groups}
We now discuss the dfg groups. 
	
\begin{fact}[\cite{P-Y-On-minimal-flows}]\label{dfg}
Let   $p\in S_G(\M)$ be a global $f$-generic type on $G$. If $p$ is definable over $M$, then  
\begin{enumerate}[label=(\roman*)]
\item Every left $G$-translate of $p$ is definable over $M$;
\item $G^{00}=G^0=\stb[l]{p}$;
\item $G\cdot p$ is closed, and hence a minimal subflow of $S_G(\M)$.
\end{enumerate}
\end{fact}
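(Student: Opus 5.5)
The plan is to exploit the definability of $p$ to write $\stb[l]{p}$ as an intersection of $M$-definable subgroups. Then each of (i)–(iii) follows from the behaviour of the cosets of these subgroups.

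First, $G$ is definably amenable, since it carries an $f$-generic type (via \cite{CS-Definably-Amenable-NIP-Groups}). So by Fact \ref{fact-definable-amenable}(i), $p$ is $G^{00}$-invariant, i.e.\ $G^{00}\le \stb[l]{p}$. For each formula $\varphi(x,y)$ set
\[
\textnormal{Stab}_\varphi(p)=\{g\in G:\ \forall y\,(\varphi(x,y)\in p\leftrightarrow \varphi(gx,y)\in p)\}.
\]
Since $p$ is definable over $M$, membership $\varphi(gx,b)\in p$ is expressed by $d_p\varphi'(g,b)$ for the formula $\varphi'(x;z,y)=\varphi(zx,y)$. Hence $\textnormal{Stab}_\varphi(p)$ is an $M$-definable subgroup of $G$. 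It contains $G^{00}$, so it has bounded index and is therefore of finite index. Thus $\stb[l]{p}=\bigcap_\varphi \textnormal{Stab}_\varphi(p)$ is a type-definable subgroup of bounded index fixing an $f$-generic type. By Fact \ref{fact-definable-amenable}(ii) it equals $G^{00}$. Being an intersection of definable finite-index subgroups, it also contains $G^0$, and $G^0\supseteq G^{00}$ always holds, so $G^{00}=G^0=\stb[l]{p}$. This proves (ii).

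For (i), fix $g\in G$ and a formula $\varphi(x,y)$. The $\varphi$-part of $g\cdot p$ depends only on the coset $g\,\textnormal{Stab}_\varphi(p)$. Since $\textnormal{Stab}_\varphi(p)$ is $M$-definable of finite index, each of its cosets has a representative $g'\in G(M)$. Then the $\varphi$-definition of $g\cdot p$ coincides with that of $g'\cdot p$, namely $d_p\varphi'(g',y)$, which is over $M$. As $\varphi$ was arbitrary, $g\cdot p$ is definable over $M$.

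For (iii), the map $g\mapsto g\cdot p$ identifies $G\cdot p$ with $G/G^0$. Let $q$ be in the closure of $G\cdot p$. For each $\varphi$, only finitely many $\varphi$-types occur among the translates, namely one per coset of $\textnormal{Stab}_\varphi(p)$. Closedness of this finite set of $\varphi$-types yields a coset $C_\varphi$ with $q|_\varphi=(g\cdot p)|_\varphi$ for all $g\in C_\varphi$. Any finitely many of the $C_\varphi$ meet, since a single translate $g\cdot p$ approximates $q$ on finitely many formulas. The family $\{x\in C_\varphi\}_\varphi$ is a small set of $M$-definable conditions, so by saturation it is realized by some $g$, and then $g\cdot p=q$. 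Hence $G\cdot p$ is closed. Every point $g\cdot p$ of it has orbit equal to all of $G\cdot p$, so $G\cdot p$ is a minimal subflow. The main obstacle I expect is this compactness step: one must check that the choice of cosets $C_\varphi$ is coherent, i.e.\ finitely consistent, which is where the finite-index $M$-definability from (ii) is used.
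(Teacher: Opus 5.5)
The paper gives no proof of this Fact; it quotes it from \cite{P-Y-On-minimal-flows}. Your argument is the standard one: definable $\varphi$-stabilizers of finite index, coset representatives in $G(M)$, and compactness for closedness. It is correct in substance, up to two small repairs.

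First, as you display it, $\textnormal{Stab}_\varphi(p)$ need not be a subgroup, because the instances $\varphi(x,b)$ are not closed under left translation. Define it instead through the formula $\varphi'(x;z,y)=\varphi(zx,y)$ that you already introduce, namely
$\{g\in G:\ \forall z\,\forall y\,(d_p\varphi'(z,y)\leftrightarrow d_p\varphi'(zg,y))\}$.
This is the stabilizer of the $\varphi'$-type of $p$, so it is an $M$-definable subgroup. It is exactly what your later steps need: bounded index forcing finite index, and the claim that the $\varphi'$-part of $g\cdot p$ depends only on the coset of $g$.

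Second, the paper's Fact \ref{fact-definable-amenable} does not give definable amenability from a merely $f$-generic type. It obtains it from a strongly $f$-generic one, and clauses (i)--(ii), which you use, are stated under that hypothesis. So treat definable amenability as a hypothesis, as \cite{P-Y-On-minimal-flows} does. This costs nothing here, since the Fact is only needed in the dfg setting.

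With these changes your proof of (i)--(iii) goes through. The finite-intersection step you flagged in (iii) is fine. For $\varphi_1,\dots,\varphi_n$, the finitely many distinct $\varphi_i'$-restrictions of translates are separated by finitely many formulas of $q$. Any translate $g\cdot p$ in the resulting basic neighbourhood of $q$ then satisfies $g\in C_{\varphi_1}\cap\dots\cap C_{\varphi_n}$.
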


\begin{fact}[\cite{CPS-Ext-Def-in-NIP}]\label{B(Mext)-has-dfg}
Suppose that  $G$ has dfg. Let $\j\sq S_G(M^\ext)$ be a minimal and  $p\in \j$. Let $N^*\succ M^\ext$ be $|M |^+$-saturated,    then the unique heir $\bar p\in S_G(N^*)$  of $p$ is an $f$-generic type. Moreover any $G(N^*)$-translate of $\bar p$ is an heir of some $q\in \j$.
\end{fact}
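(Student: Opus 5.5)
The plan is to work throughout in $T^\ext=\Th(M^\ext)$. By the Fact of \cite{CPS-Ext-Def-in-NIP} recalled above, $G$ still has dfg there, and $G^{00}$, $G^0$ are unchanged. Every type over $M^\ext$ is definable, so each $p\in S_G(M^\ext)$ has a unique heir $\bar p\in S_G(N^*)$. Since $N^*$ is $|M|^+$-saturated, it plays the role of a monster model for the relevant global notions. By Fact \ref{fact-definable-amenable}(i), ``$f$-generic'' may be read as ``$G^{00}$-invariant'', and the set $\mathcal F$ of $f$-generic types in $S_G(N^*)$ is closed and $G(N^*)$-invariant. The argument has three steps.

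\textbf{Step 1 (an element with $f$-generic heir).} Fix a dfg type $p_0\in S_G(N^*)$. By Fact \ref{dfg}(i), all its translates are definable over one small model. Since $M^\ext$ is a model of $T^\ext$, I would arrange that this small model can be taken to be $M^\ext$ itself, by choosing $p_0$ inside the closed set of $f$-generic types definable over $M^\ext$. This is the step I expect to be the main obstacle: it needs a small argument producing an $f$-generic type definable \emph{over $M^\ext$}, rather than just over some small model. Granting it, $x:=p_0\upharpoonright M^\ext$ has heir $\bar x=p_0$, which is $f$-generic.

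\textbf{Step 2 ($\bar p$ is $f$-generic).} Let $p\in\j$. By Fact \ref{fact-minimal-ideal} there is $s\in S_G(M^\ext)$ with $p=s*x*p$. Passing to heirs, I would check that heirs commute with $*$ for definable types, i.e.\ $\bar p=\bar s*\bar x*\bar p$, using the $\otimes$ description and Fact \ref{fact-def-type-unique-coheir}. I then show $\bar x*\bar p$ is $G^{00}$-invariant. Take $(a,b)\models\bar x\otimes\bar p$. For $g\in G^{00}(N^*)$, $ga\models\bar x$ by invariance. Moreover $b$ still realizes the heir of $p$ over $N^*,ga$, because $\dcl{N^*,ga}=\dcl{N^*,a}$. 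Hence $gab\equiv_{N^*}ab$. Finally, $\bar s*(\bar x*\bar p)$ lies in the closure of the $G(N^*)$-translates of the $f$-generic type $\bar x*\bar p$. Since $\mathcal F$ is closed and $G(N^*)$-invariant, it follows that $\bar p\in\mathcal F$.

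\textbf{Step 3 (translates are heirs of elements of $\j$).} Let $g\in G(N^*)$. By Fact \ref{dfg}(i) applied to $\bar p$, which is $f$-generic and definable over $M^\ext$, the translate $g\bar p$ is again definable over $M^\ext$. Hence it is the heir of $q:=g\bar p\upharpoonright M^\ext$, and it remains to show $q\in\j$. Take a formula $\phi(x,c)\in q$ with $c\in M^\ext$. Then $N^*\models\exists h\,(\phi(x,c)\in h\bar p)$, and by the $M^\ext$-definition scheme of $\bar p$ this is a first-order condition over $M^\ext$. By elementarity it is witnessed by some $h\in G(M)$, giving $\phi(x,c)\in hp$. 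So $q\in\cl{G(M)\cdot p}=\j$, the last equality because $\j$ is minimal.
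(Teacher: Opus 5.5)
Your proof has a genuine gap in Step 1, and that step carries essentially all of the non-formal content. Steps 2 and 3 are sound once you have some $x\in S_G(M^{\ext})$ whose heir $\bar x\in S_G(N^*)$ is $f$-generic. The observation that $\mathrm{Stab}(\bar x)$ fixes $\bar x*\bar p$ is correct, as is the fact that heirs of $M^{\ext}$-definable types commute with $*$. So are the closedness and $G(N^*)$-invariance of the $G^{00}(N^*)$-invariant types, and the elementarity argument putting $g\bar p\upharpoonright M^{\ext}$ in $\cl{G(M)\cdot p}=\mathcal J$.

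Your mechanism for producing such an $x$, however, does not work. Fact \ref{dfg}(i) only says that translates of an $f$-generic type stay definable over the \emph{same} model that already defines it, so it cannot move the model of definability down to $M^{\ext}$. ``Choosing $p_0$ inside the closed set of $f$-generic types definable over $M^{\ext}$'' assumes this set is nonempty, which is exactly what must be shown; nor is it clear that the set of types over $N^*$ definable over $M^{\ext}$ is closed. Invariance over $M^{\ext}$, for instance via a strongly $f$-generic type over $M^{\ext}$, would not by itself give definability over $M^{\ext}$ either.

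The step can be filled by averaging, using precisely the special feature of $M^{\ext}$: all its types are definable. Work in a monster $\mathfrak C\succ N^*$ of $\Th(M^{\ext})$, where $G$ still has dfg. Let $p_1\in S_G(\mathfrak C)$ be $f$-generic and definable over a small tuple $e$, say $\phi(v,b)\in p_1\iff\ \models\delta_\phi(b,e)$. For $e'\equiv_{M^{\ext}}e$, the type $p_{e'}$ defined by $\delta_\phi(\cdot,e')$ is an $\mathrm{Aut}(\mathfrak C/M^{\ext})$-conjugate of $p_1$, hence again $f$-generic, i.e.\ $G^{00}$-invariant. Let $r$ be the heir of $\tp{e/M^{\ext}}$ to $\mathfrak C$; it is definable over $M^{\ext}$. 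Define $q$ by $\phi(v,b)\in q\iff\delta_\phi(b,z)\in r$. Then:
\begin{itemize}
\item $q$ is consistent and complete, since finitely many of its formulas $\phi_i(v,b_i)$ all lie in $p_{e'}$ for $e'\models r\upharpoonright M^{\ext}b_1\cdots b_n$;
\item $q$ is definable over $M^{\ext}$, via $d_r\delta_\phi$;
\item $q$ is $G^{00}$-invariant: for $g\in G^{00}$ and $e'\models r\upharpoonright M^{\ext}gb$,
\[
\phi(v,b)\in gq\iff\phi(gv,b)\in q\iff\phi(gv,b)\in p_{e'}\iff\phi(v,b)\in gp_{e'}=p_{e'}\iff\phi(v,b)\in q .
\]
\end{itemize}
So $q$ is $f$-generic and definable over $M^{\ext}$. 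Taking $x:=q\upharpoonright M^{\ext}$ gives $\bar x=q\upharpoonright N^*$ $f$-generic, and your Steps 2 and 3 then go through as written.
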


\begin{fact}[\cite{BY-APAL}]\label{Mini-flow-of-B}
Suppose that  $G$ has dfg,  then every minimal subflow of \(S_G(M^\ext)\)is precisely an Ellis subgroup thereof.
\end{fact}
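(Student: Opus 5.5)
We need to prove Fact \ref{Mini-flow-of-B}: if $G$ has dfg, then every minimal subflow $\j$ of $S_G(M^\ext)$ is itself an Ellis subgroup, i.e.\ $\j = u*\j$ for some (in fact every) idempotent $u\in\j$. Equivalently, $\j$ contains exactly one idempotent. Since $\j$ is the disjoint union of the groups $u*\j$ over $u\in\mathrm{Idem}(\j)$, it suffices to show that $p*q=q$ whenever $p,q\in\j$ and $p$ is idempotent, or more directly that $p*\j=\j$ for every $p\in\j$.

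I would first lift to a saturated extension. Fix $N^*\succ M^\ext$ that is $|M|^+$-saturated, and for $p\in\j$ let $\bar p\in S_G(N^*)$ be its unique heir (Fact \ref{fact-def-type-unique-coheir}). By Fact \ref{B(Mext)-has-dfg}, $\bar p$ is $f$-generic and definable. Fact \ref{dfg}, applied in $\Th(M^\ext)$ (which is legitimate since dfg transfers to $\Th(M^\ext)$), then gives $\stb[l]{\bar p}=G^{00}=G^0$. Moreover every $G(N^*)$-translate of $\bar p$ is the heir of some element of $\j$. Now compute $p*q$ for $p,q\in\j$. Take $b\models q$ and $a\models p|M^\ext,b$; then $p*q=\tp{ab/M^\ext}$. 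I would rather use the conjugated realization $a'=b^{-1}ab$, or better write $ab = b\cdot(b^{-1}ab)$. The cleaner route uses the left-invariance: $\tp{a/M^\ext, b}$ is the heir of $p$, and right translation by $b$ gives $ab\models \bar p\, b$, a right translate. That is awkward, so instead I would aim to show $\j*q$-type identities directly from minimality.

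The core step is as follows. By Fact \ref{fact-minimal-ideal}, $\j = S_G(M^\ext)*q$ for $q\in\j$. To show $\j$ is a group it suffices to show that left multiplication by any $p\in\j$ is injective on $\j$, or that $\j$ has a single idempotent. Let $u,v\in\j$ be idempotents. In the minimal ideal, $u*v=u$ and $v*u=v$ always hold. I would show $u*v=v$, as follows. Let $\bar u,\bar v$ be the heirs over $N^*$. Realize $b\models v$ and $a\models \bar u$ restricted to $M^\ext b$. Since $\bar u$ is definable over $M^\ext$ and $G^{0}$-invariant on the left, and $\bar v$'s translates all lie in the closed orbit $G\cdot\bar v$ (Fact \ref{dfg}(iii)), the type $\tp{ab/M^\ext}$ should equal the restriction of some translate $g\bar v$ with $g$ determined by $a/G^0$. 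Using the idempotence of $u$ (so $u/G^{0}$ is the identity coset, via Newelski's epimorphism onto $G/G^{00}=G/G^0$), we get $g\in G^0$, so $g\bar v=\bar v$ and hence $u*v=v$. Therefore $u=u*v=v$, which gives uniqueness of the idempotent, and so $\j=u*\j$.

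The main obstacle is the middle step: justifying that $\tp{ab/M^\ext}$ is a restriction of a $G^0$-coset-translate of $\bar v$, that is, commuting the heir construction with the translation. I would attempt this by writing $ab$ with $b$ realizing $\bar v$ over a model containing $a$, which requires a symmetry (commutation) of the products $\bar u\otimes\bar v$ and $\bar v\otimes\bar u$. Such commutation fails in general, so the plan is to exploit the fact that both types are definable and left-$G^0$-invariant, and to reduce to a statement about $G/G^0$, which is profinite and where the computation is trivial.
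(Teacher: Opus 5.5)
There is a genuine gap. Your reduction is fine: $r*u=r$ for every $r\in\j$ and every idempotent $u\in\j$, so it suffices to show $u*v=v$ for idempotents $u,v\in\j$ (or directly $u*q=q$ for all $q\in\j$, which gives $\j=u*\j$). But you never prove this central step. You assert that $\tp{ab/M^\ext}$ ``should equal'' the restriction of a translate of $\bar v$, call it the main obstacle, and offer only a tentative plan involving a commutation of $\bar u\otimes\bar v$ and $\bar v\otimes\bar u$, which, as you say, fails in general.

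The obstacle comes from taking the product in the wrong order. In the paper, $p*q=\tp{ab/M^\ext}$ where $(a,b)\models p\otimes q$, i.e.\ $a\models p$ and $b\models q|M^\ext,a$. So it is the \emph{right} factor that is realized as an heir over a realization of the left one. Your recipe ($b\models q$, $a\models p|M^\ext,b$) computes a different product. For example, take $G=(\M,+)$ over $\mathbb{R}$, with $p$ the type at $+\infty$ and $q$ the type at $-\infty$: the correct product is $p*q=q$, while your order gives $p$. This is why you end up with an awkward right translate of $\bar u$ instead of a left translate of $\bar v$.

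With the correct order the step is immediate:
\begin{itemize}
\item Realize $u$ by some $a\in G(N^*)$ and let $b\models\bar v$, the heir of $v$ over $N^*$. Then $b\models v|M^\ext,a$, so $u*v=\tp{ab/M^\ext}$, and $\tp{ab/N^*}=a\bar v$ is a left translate of $\bar v$.
\item The map $p\mapsto p/G^0$ is a semigroup homomorphism $S_G(M^\ext)\to G/G^0$. It is well defined since, by NIP, $G^0$ is an intersection of $M$-definable finite-index subgroups. Hence the idempotent $u$ satisfies $u\vdash G^0$, so $a\in G^0$.
\item By Fact \ref{B(Mext)-has-dfg}, $\bar v$ is $f$-generic, and Fact \ref{dfg}(ii), applied in $\Th(M^\ext)$ as you intended, gives $G^0\le\stb[l]{\bar v}$.
\item Therefore $a\bar v=\bar v$, and $u*v=\bar v\upharpoonright M^\ext=v$.
\end{itemize}

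You can also avoid idempotents altogether. The same computation with an arbitrary $a\models q$ shows that, for $p\in\j$, $q*p=(a\bar p)\upharpoonright M^\ext$ depends only on $q/G^0$; this is Corollary \ref{structure of J}. Consequently $r\mapsto r/G^0$ is a bijective semigroup homomorphism from $\j$ onto the group $G/G^0$, so $\j$ is itself a group, i.e.\ an ideal group.
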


	One could conclude directly from the above Fact that 
	\begin{cor}\label{structure of J}
		If $G$ has dfg, $p\in S_G(M^\ext)$ is almost periodic, then for any $q_1,q_2\in S_G(M^\ext)$, $q_1*p=q_2*p$ iff $q_1/G^0=q_2/G^0$.
	\end{cor}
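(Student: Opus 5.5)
Since $p$ is almost periodic, it lies in some minimal subflow $\j$ of $S_G(M^\ext)$. By Fact \ref{Mini-flow-of-B}, $\j$ is an Ellis group, so it is a group under $*$ with some idempotent $u$ as identity, and $p$ has an inverse $p^{-1}\in\j$. The plan is to use this group structure to strip $p$ off the right, and then to show that the remaining two types agree exactly when they lie in the same coset of $G^0$.

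\emph{Cancelling $p$.} Suppose $q_1*p=q_2*p$. Multiplying on the right by $p^{-1}$ and using associativity gives $q_1*u=q_2*u$. Conversely, $q_1*u=q_2*u$ implies $q_1*p=q_1*u*p=q_2*u*p=q_2*p$. So it suffices to show, for the idempotent $u\in\j$, that $q_1*u=q_2*u$ iff $q_1/G^0=q_2/G^0$.

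\emph{Computing $q*u$.} Fix a $|M|^+$-saturated $N^*\succ M^\ext$ and let $\bar u$ be the heir of $u$ over $N^*$. By Fact \ref{B(Mext)-has-dfg}, $\bar u$ is $f$-generic and definable. By Fact \ref{dfg}, $\stb[l]{\bar u}=G^0$, and distinct $G(N^*)$-translates of $\bar u$ correspond to distinct cosets of $G^0$.

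By the formula for $*$, $q*u$ is the restriction to $M^\ext$ of $\tp{a\bar b}$, where $a\models q$ is taken in $N^*$ (possible by saturation) and $\bar b\models \bar u|N^*$. Thus $q*u=(a\cdot\bar u)\upharpoonright M^\ext$.

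\emph{The implication from cosets to equality.} If $a_1G^0=a_2G^0$, then $a_1\bar u=a_2\bar u$ because $G^0$ stabilizes $\bar u$. Hence $q_1*u=q_2*u$. The type $q/G^0$ is well defined since $G^0$ is an intersection of $M$-definable subgroups, and it is the same in $\Th(M^\ext)$.

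\emph{The converse, which is the main obstacle.} The difficulty is that restricting to $M^\ext$ might identify distinct translates. Suppose $q_1*u=q_2*u$. Every finite-index definable subgroup $K$ of $G$ is $M$-definable, and $K$ is a formula in $\bar u$'s coset data. If $\bar u\vdash cK$, then $a\bar u\vdash acK$. Since $u\vdash K$ (as $u*u=u$ forces $u/G^0$ to be the identity coset), we get $\bar u\vdash K$. Hence $a\bar u\vdash aK=a'K$ for some $a'\in G(M)$, because $K$ has finite index and its cosets are $M$-definable. So the coset of $K$ determined by $q*u$ is exactly $q/K$. Then $q_1*u=q_2*u$ gives $q_1/K=q_2/K$ for every such $K$, that is, $q_1/G^0=q_2/G^0$. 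This is where I expect care to be needed: one must check that the cosets of $G^0$ are read off correctly after restriction to $M^\ext$, which is the content of this last step.
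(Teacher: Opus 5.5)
Your proof is correct, but it follows a different route from the paper's. The paper gives no written proof; it treats the corollary as immediate from Fact~\ref{Mini-flow-of-B}, implicitly combined with the isomorphism $E\to G/G^{00}$, $r\mapsto r/G^{00}$, of Fact~\ref{fact-definable-amenable}(iv) and with $G^{00}=G^0$ (Fact~\ref{dfg}(ii)). The argument it has in mind runs as follows:
\begin{itemize}
\item the minimal subflow $\j\ni p$ is itself an Ellis group, and $q_i*p\in\j$;
\item $r\mapsto r/G^0$ is injective on $\j$;
\item $(q*p)/G^0=(q/G^0)(p/G^0)$, so one simply cancels $p/G^0$ in $G/G^0$.
\end{itemize}

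You instead cancel $p$ inside the group $\j$ to reduce to its identity $u$. You then prove the needed injectivity by hand, in two steps:
\begin{itemize}
\item $q*u=(a\bar u)\upharpoonright M^{\ext}$ depends only on $aG^0$, because $\mathrm{Stab}_l(\bar u)=G^0$ by Facts~\ref{B(Mext)-has-dfg} and~\ref{dfg};
\item $q*u$ and $q$ lie in the same left coset of each finite-index $K$, because $u\vdash G^0\subseteq K$.
\end{itemize}
In effect you reprove the dfg case of Newelski's Conjecture: every $r\in\j$ equals $r*u$ and is therefore determined by $r/G^0$. You do this rather than invoking the Chernikov--Simon theorem for arbitrary definably amenable NIP groups. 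Your route is more self-contained and uses only the dfg-specific stabilizer fact. The paper's route is a one-liner once that general theorem is available.

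You have located the difficulty in the wrong place. The implication $q_1*u=q_2*u\Rightarrow q_1/G^0=q_2/G^0$ is the trivial one: the $G^0$-coset map is multiplicative and $u/G^0$ is trivial. The substantive direction is the other one, and it rests entirely on $\mathrm{Stab}_l(\bar u)=G^0$. Also, your claim that every finite-index definable subgroup is $M$-definable is true under NIP but unnecessary. It suffices to use finite-index subgroups defined over the parameters of $G$, whose intersection is already $G^0$.
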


\subsection{Groups Definable in $M_0$}

\subsubsection*{Dimensions}
Recall that \( M_0 \) be either the field \( \mathbb{Q}_p \) of \( p \)-adic numbers or an $o$-minimal expansion of the field \( \mathbb{R} \) of real numbers. Let \( T = \Th(M_0) \). For any \( X \subseteq M_0^n \), the "topological dimension", denoted \( \dim(X) \), is the largest \( m \leq n \) such that the image of \( X \) under some projection from \( M_0^n \) to \( M_0^m \) contains an open subset of \( M_0^m \). On the other hand, since the model-theoretic algebraic closure coincides with the field-theoretic algebraic closure (\cite{HP-gp-local}, Proposition 2.11), the algebraic closure operator satisfies the exchange property (thus inducing a pregeometry on models of \( T \)), and there is a finite bound on the cardinalities of finite sets in uniformly definable families. For a finite tuple \( a \) from \( M \models T \) and a subset \( B \subseteq M \), the algebraic dimension of \( a \) over \( B \), denoted \( \dim(a/B) \), is the size of a maximal subtuple of \( a \) that is algebraically independent over \( B \).

For any \( M \models T \) and definable subset \( X \subseteq M^n \), the algebraic dimension of \( X \), denoted \( \alg\text{-}\dim(X) \), is the maximum \( \dim(a/B) \) where \( a \in X(\mathbb{M}) \) and \( B \subseteq M \) contains the parameters over which \( X \) is defined. Importantly, the algebraic dimension of \( X \) coincides with its topological dimension, i.e., \( \dim(X) = \alg\text{-}\dim(X) \). Consequently, for any definable \( X \subseteq M^n \), \( \dim(X) \) is exactly the algebraic geometric dimension of its Zariski closure.

\subsubsection*{Definable Manifold}
Let \( M \models T \). A \emph{definable manifold} \( X \subseteq M^n \) over a subset \( A \subseteq M \) is a topological space \( X \) covered by finitely many open subsets \( U_1, \ldots, U_s \), together with homeomorphisms from each \( U_i \) to some definable open set \( V_i \subseteq M^m \) (for some \( m \)), such that the transition maps are \( A \)-definable and continuous. If the transition maps are \( C^l \), then \( X \) is called a definable \( C^l \)-manifold over \( M \) of dimension \( m \). A \emph{definably compact} space \( X \) is one where every definable downward-directed family \( \mathcal{F} = \{ Y_z \mid z \in Z \} \) of non-empty closed sets satisfies \( \bigcap \mathcal{F} \neq \emptyset \).

Any \( M_0 \)-definable group \( G \) can be uniquely equipped with the structure of a definable manifold over \( M_0 \) such that the group operation is \( C^\infty \) (see \cite{Pillay-fields-definable-over-Qp}, \cite{OP-plg}, and \cite{Pillay-g-f-definable-ino-min}). As noted in \cite{OP-plg}:

\begin{fact}\label{fact-open-compact-subgp}
If \( G \) is definable over \( \Q \), then \( G \) has a \( \Q \)-definable open subgroup \( C \) that is definably compact (equivalently, \( C(\Q) \) is compact).
\end{fact}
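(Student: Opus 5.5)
The plan is to give a definable version of van Dantzig's theorem, which says that a totally disconnected locally compact group has a compact open subgroup. The key observation is that the usual construction can be taken to be a set stabilizer, which is automatically definable. Throughout, $G$ carries its unique definable manifold structure over $\Q$ in which the group operations are continuous (indeed $C^\infty$). In particular $G(\Q)$ is a Hausdorff topological group, locally homeomorphic to open subsets of $\Q^m$.

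\emph{Step 1: a compact open definable neighbourhood of the identity.} Fix a chart $\phi:U\to V\subseteq \Q^m$ around $\id_G$ with $\phi(\id_G)=0$. For $k$ large the ball $p^k\mathbb{Z}_p^m$ lies in $V$. Put $W=\phi^{-1}(p^k\mathbb{Z}_p^m)$. Then $W$ is $\Q$-definable (since $\mathbb{Z}_p$ is definable in the field $\Q$) and contains $\id_G$. Moreover $W(\Q)$ is open and compact in $G(\Q)$, because $\phi$ is a homeomorphism and the ball is clopen and compact.

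\emph{Step 2: the stabilizer.} Define
\[
C=\{g\in G:\ gW=W\}.
\]
This is a $\Q$-definable subgroup of $G$, since the condition $gW=W$ is first-order in $g$.

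\emph{Step 3: $C$ is open.} Multiplication $G\times G\to G$ is continuous and maps $\{\id_G\}\times W(\Q)$ into the open set $W(\Q)$. Since $W(\Q)$ is compact, the tube lemma gives an open neighbourhood $U_0$ of $\id_G$ with $U_0W\subseteq W$. Let $U_1=U_0\cap U_0^{-1}$. For $g\in U_1$ we have $gW\subseteq W$ and $g^{-1}W\subseteq W$; the latter gives $W\subseteq gW$, so $gW=W$. Hence $U_1\subseteq C(\Q)$, and $C(\Q)$, being a subgroup containing an open neighbourhood of the identity, is open.

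\emph{Step 4: $C$ is compact.} If $g\in C$ then $g=g\cdot\id_G\in gW=W$, so $C\subseteq W$. An open subgroup is also closed, since its complement is a union of open cosets. Thus $C(\Q)$ is a closed subset of the compact set $W(\Q)$, and so $C(\Q)$ is compact.

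Finally, for $\Q$-definable sets, definable compactness in the sense above is equivalent to compactness of the set of $\Q$-points. One direction uses that $\Q$ is locally compact and that every definable downward-directed family over $\Q$ is a family of closed sets with the finite intersection property; the converse is the standard equivalence for $p$-adically closed fields. This yields the parenthetical equivalence in the statement.

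The step most likely to need care is Step 3. It requires the tube-lemma argument to take place in $G(\Q)$ with its actual topology, which is why we work in the standard model $\Q$, where $W(\Q)$ is genuinely compact. Transferring the conclusion to $\M$ is then automatic, because $C$ is defined by a formula over $\Q$.
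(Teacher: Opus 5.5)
Your proof is correct. The paper does not prove this fact itself; it simply quotes it from Onshuus--Pillay \cite{OP-plg}, so your argument is a genuinely self-contained alternative. It is a definable version of van Dantzig's theorem. You take a $\Q$-definable neighbourhood $W$ of $\id_G$ that is both compact and open (a chart preimage of $p^k\mathbb{Z}_p^m$), and pass to its set stabilizer $C=\{g: gW=W\}$. This $C$ is $\Q$-definable, it is open by the tube-lemma argument, and it is compact because $C\subseteq W$ and open subgroups are closed.

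What your route buys:
\begin{itemize}
\item It needs only continuity of multiplication in the definable manifold topology, with no analytic or Lie-theoretic input.
\item Using the set stabilizer, rather than the subgroup generated by a small symmetric neighbourhood, is exactly what makes the van Dantzig subgroup definable.
\item It shows why the fact is special to $\Q$. Over $\mathbb{R}$ no neighbourhood of the identity is both open and compact, which is why the paper has to work with the type-definable $\mu_G$ in Proposition~\ref{pro:muG-fsg} in the $o$-minimal case.
\end{itemize}

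Two presentational points, neither a gap. First, for the parenthetical equivalence you only need the easy direction. Compactness of $C(\Q)$ gives definable compactness in $\Q$, because a downward-directed family of nonempty closed sets has the finite intersection property. This then transfers to $\M$ because, for each pair of formulas, the definable-compactness condition is a first-order sentence over $\Q$. Local compactness of $\Q$ plays no role here. Second, openness of $C$ likewise transfers from $\Q$ to $\M$ because the manifold topology is given by $\Q$-definable charts.
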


The following fact was established in \cite{OP-plg} for \( G \) being \( \Q \)-definable and definably compact, and extended in \cite{JY-3} to arbitrary \( \Q \)-definable groups:

\begin{fact}
If \( G \) is definable over \( \Q \), then \( G^0 = G^{00} \).
\end{fact}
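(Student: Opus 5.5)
The plan is to reduce to the definably compact case, already settled in \cite{OP-plg}, by means of the open subgroup from Fact \ref{fact-open-compact-subgp}. Since $G^{00}\subseteq G^0$ always holds, it suffices to write $G^{00}$ as an intersection of $\M$-definable subgroups of finite index in $G$.

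First, let $C\le G$ be a $\Q$-definable, definably compact open subgroup, as given by Fact \ref{fact-open-compact-subgp}. By the definably compact case, $C^{00}=C^0=\bigcap_{i\in I}C_i$, where $(C_i)_{i\in I}$ is a downward directed family of $\M$-definable finite-index subgroups of $C$. A definable subgroup of finite index has full dimension, so each $C_i$ is open in $G$. Next, $G^{00}\cap C$ is a type-definable subgroup of $C$ of bounded index, so it contains $C^{00}$. Hence $\bigcap_i C_i\subseteq G^{00}$.

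Now fix $i$ and put $K_i=G^{00}C_i$. Since $G^{00}$ is normal in $G$, $K_i$ is a subgroup. It is type-definable, being the product of a type-definable set and a definable set, and it has bounded index because it contains $G^{00}$. We also have
\[
\bigcap_i K_i \;=\; G^{00}\cdot \bigcap_i C_i \;=\; G^{00}.
\]
The first equality follows from compactness and saturation, using that the family $(C_i)$ is directed and that $G^{00}$ is type-definable: if $g\in G^{00}C_i$ for every $i$, then the partial type $\{x\in G^{00}\}\cup\{x^{-1}g\in C_i : i\in I\}$ is finitely satisfiable. The second equality uses $\bigcap_i C_i\subseteq G^{00}$. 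So it remains to show that each $K_i$ is definable. A definable subgroup of bounded index has finite index, so this would finish the proof.

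The main obstacle is precisely this definability of $K_i$. The approach I would try is to work in the compact Hausdorff group $G/G^{00}$ with the logic topology. There $K_i/G^{00}$ is a closed subgroup, and it suffices to show it is open. If it is open, it has finite index by compactness. Then its complement, a finite union of cosets of $K_i$, is type-definable, and $K_i$ is definable.

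For openness, I would use that $K_i$ is a union of cosets of the definable open subgroup $C_i$, that is, it is "locally definable". One route is a Baire category argument: $G/G^{00}$ is covered by the closed cosets of $K_i/G^{00}$, so one such coset should have nonempty interior, at least if one can first reduce to countably many cosets. The reduction I have in mind uses that $C$ has only countably many definable open subgroups over $\Q$ and that $G/C$ carries a tame $p$-adic structure. If this fails, a more hands-on route exploits $G^{00}\subseteq G^0$ together with the $p$-adic decomposition into a dfg part and the compact part $C$, as in \cite{JY-3}.
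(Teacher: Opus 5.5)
Your reduction is correct as far as it goes. We have $\bigcap_i C_i=C^{00}\subseteq G^{00}$, each $K_i=G^{00}C_i$ is a type-definable subgroup of bounded index, and $\bigcap_i K_i=G^{00}$. So definability of every $K_i$ would indeed finish the proof. The gap is that this remaining claim is \emph{false}, already for $G=\Gm$ over $\Q$.

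Take $C=\mathcal{O}^\times$ and $C_i=1+p^i\mathcal{O}$, and let $v\colon \M^\times\to\Gamma$ be the valuation. For each $n$, $(\M^\times)^n$ is a definable subgroup of finite index, since $\Q^\times/(\Q^\times)^n$ is finite and this transfers. Hence $G^{00}\subseteq G^0\subseteq(\M^\times)^n$, and so $v(G^{00})\subseteq\bigcap_n n\Gamma$. Since $v(C_i)=\{0\}$, also $v(K_i)\subseteq\bigcap_n n\Gamma$. As $[\Gamma:n\Gamma]=n$ for every $n$, the index $[G:K_i]$ is infinite. A type-definable subgroup of bounded but infinite index is never definable.

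In your topological language, $K_i/G^{00}$ is a closed, non-open subgroup of the compact group $G/G^{00}$. Moreover $G/K_i$ maps onto $\Gamma/\bigcap_n n\Gamma\cong\widehat{\mathbb{Z}}$, so there are uncountably many cosets. No reduction to countably many cosets is possible, and so no Baire category argument can work.

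What is missing is control of the non-compact directions of $G$. The coset space $G/G^{00}C$ can be infinite; for $\Gm$ it maps onto $\Gamma/\bigcap_n n\Gamma$. So you must additionally show that $G^{00}$ contains the intersection of suitable definable finite-index subgroups transverse to $C$. For $\Gm$ this means showing $v(G^{00})\supseteq\bigcap_n n\Gamma$, and nothing about $C$ or its open subgroups yields this.

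For $\Gm$ the needed input comes from a different mechanism altogether. $\Gm$ has dfg, and Fact \ref{dfg}(ii) gives $G^{00}=G^0$ as the stabilizer of a definable $f$-generic type; this is not a topological argument. For an arbitrary $\Q$-definable group you would have to combine the compact information with dfg-type information. This is hard because neither piece need be normal and $G$ need not be definably amenable.

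That combination is the real content of the statement. The paper does not prove it: it imports the definably compact case from \cite{OP-plg} and the general case from \cite{JY-3}. Your last sentence, pointing to a dfg/compact decomposition as in \cite{JY-3}, names the right direction, but as written it is a pointer rather than an argument. So the proposal does not establish the statement.
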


\subsubsection*{Fsg Groups over \( M_0 \)}
A group \( G \subseteq \mathbb{M}^m\) definable over \( M_0 \) has fsg if and only if it is definably compact over \( M_0 \). The type-definable connected component \( G^{00} \) coincides with the kernel of the standard part map \( \st : V_G \to G(M_0) \); that is, \( G^{00} \) is the set of infinitesimals of \( G \) over \( M_0 \).  See \cite{gmn}, \cite{PP-Gen-set}, and \cite{OP-plg} for details.

\subsubsection*{Dfg Groups over \( M_0 \)}

We now discuss dfg groups definable over \( M_0 \):

\begin{fact}[\cite{PY-dfg}]\label{fact-dfg-group-Qp}
Let \( G \) be a group definable over \( \mathbb{Q}_p \). The following are equivalent:
\begin{enumerate}
    \item \( G \) has dfg.
    \item There exists a trigonalizable algebraic group \( A \) over \( M_0 \) and a definable homomorphism \( f : H \to A \) such that both \( \ker(f) \) and \( A/\mathrm{im}(f) \) are finite.
    \item There exist definable subgroups
    \[
    G_0 \lhd \cdots \lhd G_n = G
    \]
    where \( G_0 \) is finite, and each \( G_{i+1}/G_i \) is definably isomorphic to the additive group or a finite-index subgroup of the multiplicative group.
\end{enumerate}
\end{fact}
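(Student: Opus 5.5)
The plan is to prove the cycle $(3)\Rightarrow(1)\Rightarrow(2)\Rightarrow(3)$, using throughout that $\Th(\Q)$ has definable Skolem functions and that every $\Q$-definable group is, up to finite data, locally an algebraic group.

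For $(3)\Rightarrow(1)$ I would first check the base cases. A finite group trivially has dfg. The additive group has dfg, witnessed by the definable type of an element of valuation $-\infty$ over $\M$; this type is $\Q$-definable and invariant under translation. A finite-index subgroup of $\Gm$ has dfg, witnessed by the type of an element of infinitely negative valuation lying in a fixed coset of the $n$-th powers; its translates are again definable over $\Q$. Next I would prove that dfg is preserved under definable extensions. Suppose $N\lhd G$ has dfg via $q$ and $G/N$ has dfg via $r$. Using a $\Q$-definable section $s\colon G/N\to G$, which exists by definable Skolem functions, set $p=\tp{s(c)\,b/\M}$, where $c\models r$ and $b\models q\,|\,\M c$. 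This $p$ is definable over $\Q$. To see that its $G$-translates stay definable, and that $p$ is $f$-generic, I would reduce to the corresponding facts for $q$ and $r$, using Fact \ref{dfg} to control translates. Induction along $G_0\lhd\cdots\lhd G_n$ then gives $(1)$.

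For $(2)\Rightarrow(3)$, note that a trigonalizable $A$ over $\Q$ admits a $\Q$-closed normal series whose factors are $\mathbb{G}_a$ or split $\Gm$. Intersecting this series with $\mathrm{im}(f)$, which has finite index, yields factors that are finite-index subgroups of $\mathbb{G}_a(\M)$ or $\Gm(\M)$. A finite-index definable subgroup of $\mathbb{G}_a$ is all of $\mathbb{G}_a$, since $\mathbb{G}_a$ is divisible. Pulling the series back along $f$, and putting the finite group $\ker f$ at the bottom, gives $(3)$. Finite-by-factor steps are absorbed by refining the series.

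The main obstacle is $(1)\Rightarrow(2)$. By the Hrushovski--Pillay results there is a connected algebraic group $A$ over $\Q$ and a definable homomorphism $f\colon G^0\to A$ with finite kernel and image open in $A$. Passing to an open image is harmless, since dfg passes to quotients and to finite-index subgroups. The key lemma I would establish is: \emph{a group with both dfg and fsg is finite.} Indeed, its generic type is finitely satisfiable in $\Q$ and definable over $\Q$, hence realized. Applying Fact \ref{fact-open-compact-subgp} and the Levi decomposition, I would then pass to the reductive quotient $A/R_u(A)$ and show it must be a split torus. First, an anisotropic torus or anisotropic simple factor yields a definably compact infinite definable quotient of an open subgroup of $G$, contradicting the lemma. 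Second, an isotropic simple factor contains the image of $\SL(2)$ or $\PSL(2)$, which is not definably amenable, whereas dfg groups are definably amenable. The delicate part is that open subgroups of these factors behave like the full groups with respect to these contradictions. Hence $A$ is an extension of a split torus by a unipotent group, that is, $A$ is trigonalizable, which gives $(2)$.
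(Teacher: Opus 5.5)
Your plan breaks down at the decisive step, (1)$\Rightarrow$(2). The Hrushovski--Pillay theorem is \emph{local}. It gives a definable isomorphism between some open subgroup $G_1\le G$ and an open subgroup of $A(\M)$, with no control on $[G:G_1]$; for $G=\mathbb{G}_a$, the identity map on the valuation ring is such a local isomorphism. It does not give a homomorphism on $G^0$, which is in any case only type-definable (e.g.\ $\Gm^0=\bigcap_n P_n$, where $P_n$ is the set of nonzero $n$-th powers).

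Every later step needs the domain of $f$ to have dfg. Your two contradictions, an infinite definably compact quotient and a non-amenable image in an isotropic factor, are contradictions only for a dfg (or at least definably amenable, non-compact) group. An open subgroup of a dfg group can be definably compact, hence fsg: for example the valuation ring inside $\mathbb{G}_a$. In that case neither contradiction arises. Moreover, even if you showed that the local model $A$ is trigonalizable, that would not give (2), which requires a homomorphism on a \emph{finite-index} subgroup with finite cokernel.

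This local-to-global passage is exactly the ``delicate part'' you mention, and it is the real content of the theorem. You need a map defined on all of $G$ from the outset. One option is the linear quotient $G\to L$ of Fact~\ref{open-subgp-of-semi-simple}. Here $L$ is dfg and open in $\widetilde L(\M)$, so your reductive-part argument, together with Fact~\ref{fact-G(k)+}(ii) in the isotropic case and Fact~\ref{fact-open-dfg-subgroup}, does go through for $L$. You would still need a separate treatment of the commutative-by-finite kernel. A smaller point: ``definable over and finitely satisfiable in a model, hence realized'' holds here only because $\Th(\Q)$ is distal. In a stable theory, the generic type of a connected group is both, yet is not realized.

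There is also a problem in (2)$\Rightarrow$(3). Pulling back along $f$ yields a normal series ending at the domain $H$ of $f$ (or at $f^{-1}(A^{\circ}(\M))$ if $A$ is disconnected), not at $G$. A finite top factor cannot be ``absorbed by refining'', since every factor allowed in (3) is infinite and abelian. In fact (3), read literally with $G_n=G$, is false. Take $G=\mathbb{G}_a\rtimes\{\pm1\}$, the group of maps $x\mapsto \pm x+a$. It has dfg, because its index-two subgroup $\mathbb{G}_a$ does and dfg passes to finite extensions. But $\dim G=1$, so a series as in (3) would have $n=1$, forcing $[G,G]\le G_0$ with $G_0$ finite. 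This is impossible, since $[G,G]$ contains all translations $x\mapsto x+2a$.

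So the undefined $H$ in (2) must be a definable finite-index subgroup of $G$, and (3) must be read with $G_n$ of finite index in $G$. With that reading, your (2)$\Rightarrow$(3) is fine. So is your (3)$\Rightarrow$(1): base cases, the extension lemma, and passage to finite extensions. The key point in the extension lemma is that translating the heir $q|\M c$ by an element of $N$ definable over $\M c$ gives the heir of a translate of $q$ by an element of $N(\M)$. This holds because $\stb[l]{q}=N^0$ by Fact~\ref{dfg}.
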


The following fact follows from Proposition 2.1 and Proposition 4.1 in \cite{CP-connected}, together with Theorem \ref{thm-main-conj-RCF} below (see Remark \ref{rmk-dfg=tf}):

\begin{fact}[\cite{CP-connected}]\label{fact-dfg-group-R}
Let \( G \) be a definably connected  (i.e., \( G = G^0 \)) group definable in an $o$-minimal expansion of a real closed field. The following are equivalent:
\begin{enumerate}
    \item \( G \) has dfg.
    \item \( G \) is torsion-free.
    \item There exist definable subgroups
    \[
    \{1\} = G_0 \lhd \cdots \lhd G_n = G
    \]
    such that for each \( i < n \), \( G_{i+1}/G_i \) is 1-dimensional and torsion-free.
\end{enumerate}
\end{fact}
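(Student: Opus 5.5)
The plan is to prove the cycle $(3)\Rightarrow(1)\Rightarrow(2)\Rightarrow(3)$, using the general facts on dfg groups recalled above together with the o-minimal structure theory of \cite{CP-connected}.

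\emph{$(3)\Rightarrow(1)$.} First I show that every $1$-dimensional torsion-free definable group $A$ has dfg. Such a group is definably connected, abelian, and carries a definable total order compatible with the group operation. Let $p_{+\infty}$ be the global type of an element larger than every element of $A(\M)$. By o-minimality this type is definable over the parameters defining $A$. It is invariant under all translates, since $g\cdot(+\infty)=+\infty$. Hence $p_{+\infty}$ is a definable $f$-generic type, and $A$ has dfg.

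Next I use that dfg is preserved under extensions: if $N\lhd G$ is definable, and both $N$ and $G/N$ have dfg, then $G$ has dfg. I expect this to be Proposition 2.1 of \cite{CP-connected}. The construction takes $q$ dfg on $G/N$ and $r$ dfg on $N$, realizes $b\models q$ and a lift $\tilde b$ of $b$ chosen via a definable section (available by definable choice), and then takes $a\models r$ over everything. The type of $\tilde b a$ is definable, and its translates are controlled by the two factors. Induction along the series $G_0\lhd\cdots\lhd G_n$ then gives that $G$ has dfg.

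\emph{$(1)\Rightarrow(2)$.} Suppose $G$ is definably connected with dfg. Then $G$ is definably amenable. By Proposition 4.1 of \cite{CP-connected}, $G$ has a normal definable torsion-free subgroup $H$ such that $G/H$ is definably compact. I am using here the result that a definably amenable definably connected group is torsion-free-by-definably-compact, which is where the main conjecture of Theorem \ref{thm-main-conj-RCF} enters. The image of a dfg type under the definable surjection $G\to G/H$ is still definable, and its translates are images of translates, so $G/H$ has dfg.

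On the other hand, $G/H$ is definably compact, so it has fsg. A global type on $G/H$ that is generic is finitely satisfiable in every small model by Fact \ref{fsg}. If it is also a translate of a type definable over a small $M$, then a type both definable over $M$ and finitely satisfiable in $M$ is realized in $M$. This forces $G/H$ to be finite. Since $G$ is definably connected, $G/H$ is trivial, so $G=H$ is torsion-free.

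\emph{$(2)\Rightarrow(3)$.} I would take this from the standard o-minimal theory of torsion-free groups as recorded in \cite{CP-connected}. A definably connected torsion-free group is solvable and has a normal $1$-dimensional torsion-free definable subgroup. Concretely, take the last nontrivial term of the derived series, which is abelian torsion-free, and inside it a $1$-dimensional definable subgroup chosen normal in $G$, for instance via a $G$-invariant filtration by dimension. Induction on $\dim G$ then gives the series, since quotients of torsion-free definably connected groups by definable normal subgroups remain torsion-free in the o-minimal setting.

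\emph{Main obstacle.} I expect the hardest step to be the torsion-free-by-compact decomposition used in $(1)\Rightarrow(2)$, and checking that its hypotheses genuinely follow from dfg. This is exactly what the remark points to Theorem \ref{thm-main-conj-RCF} for. Everything else is routine manipulation of definable and finitely satisfiable types.
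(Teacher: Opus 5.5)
Your proposal is correct and follows essentially the same route as the paper (Remark \ref{rmk-dfg=tf}): torsion-free $\Rightarrow$ dfg goes through the series from \cite{CP-connected} and an iterated extension of definable $f$-generic types, and dfg $\Rightarrow$ torsion-free passes to the definably compact quotient $G/H$ of the torsion-free-by-compact decomposition, which would carry both a dfg and an fsg type and hence must be trivial. Two standard facts are left implicit, as they are in the paper: first, that the dfg type on $G/H$ is generic, since its orbit is closed and therefore equals the unique minimal subflow of generic types; second, that a global type definable over and finitely satisfiable in $M$ is realized, which holds only because o-minimal theories are distal and fails in general NIP theories.
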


\begin{rmk}

Note that any 1-dimensional group \( G \) definable over \( M_0 \) is commutative-by-finite (see \cite{V. Razenj, PY-A-note}) and thus definably amenable. Take a global strong \( f \)-generic type \( p \) over \( M_0 \); \( p \) is then \( M_0 \)-invariant. Since \( p \) is dp-minimal, by \cite{Simon-dp-minimal}, it is either finitely satisfiable in \( M_0 \) or definable over \( M_0 \). It follows that any such 1-dimensional group has either dfg or fsg. Furthermore, since \( \Th(M_0) \) is a distal theory, such a group cannot have both dfg and fsg.

Thus, by Facts \ref{fact-dfg-group-Qp} and \ref{fact-dfg-group-R}, \( G \) has dfg if and only if it is ``solvable with no definably compact parts".
    
\end{rmk}

\subsubsection*{Dfg/Fsg Decompositions for Groups over \( M_0 \)}

Let \( H \) be a dfg subgroup of \( G \). We call \( H \) a \emph{dfg component} of \( G \) if \( \dim(H) \) is maximal among all dfg subgroups of \( G \).

\begin{theorem}\label{thm-main-conj-RCF}
Let \( G \) be a group definable in an $o$-minimal expansion of a real closed field. Then:
\begin{enumerate}
    \item A dfg subgroup \( H \) of \( G \) is a dfg component of \( G \) if and only if \( G/H \) is definably compact (see \cite{Con-A-Reduc}, Corollary 2.4);
    \item The dfg components of \( G \) are conjugate-commensurable: If \( H_1 \) and \( H_2 \) are dfg components of \( G \), then there exists \( g \in G \) such that \( H_1^g \cap H_2 \) has finite index in \( H_2 \) (see \cite{PS-mu-stablizer}, Theorem 3.26);
    \item \( G \) is amenable if and only if \( G \) has a normal dfg component (see \cite{CP-connected}, Proposition 4.6).
\end{enumerate}
\end{theorem}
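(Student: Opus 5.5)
The plan is to reduce all three clauses to two ingredients: a \emph{fixed-point lemma} for dfg groups acting on definably compact spaces, and the Conversano--Pillay structure results for definably connected groups. To avoid circularity with Fact \ref{fact-dfg-group-R}, which itself refers back to this theorem, I will only use the direction of \cite{CP-connected} (Propositions 2.1 and 4.1) saying that a definably connected dfg group is torsion-free and conversely. I will also use Conversano's theorem \cite{Con-A-Reduc} that every definable $G$ has a definable torsion-free subgroup $H_{\max}$ with $G/H_{\max}$ definably compact.

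The fixed-point lemma reads as follows. \emph{If a dfg group $H'$ acts definably and continuously on a definably compact definable space $X$, then $H'^0$ fixes a point of $X$.} To prove it, I take a global dfg type $p$ of $H'$, definable over a small $M$ by Fact \ref{dfg}, and a point $x_0\in X(M)$. Since $X$ is definably compact, the type $\tp{a\cdot x_0/\M}$ for $a\models p$ has a standard part, or limit point, $x^*\in X$; this uses definability of $p$ to make the limit well defined. Because $\stb[l]{p}=H'^0$ by Fact \ref{dfg}(ii), translating by $h\in H'^0$ leaves $p$, and hence its limit, unchanged. So $h\cdot x^*=x^*$. An alternative route, if this limit argument is delicate, is the o-minimal Euler-characteristic fixed-point theorem for torsion-free groups. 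I expect this lemma to be the main obstacle, specifically making the limit point well-defined and canonical.

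Clause (1) then follows. If $H$ is dfg with $G/H$ definably compact and $H'$ is any dfg subgroup, apply the lemma to $H'$ acting on $G/H$. This gives $g$ with $g^{-1}H'^0g\subseteq H$, so $\dim H'\le\dim H$ and $H$ is a dfg component. For the converse, first note that $H_{\max}^0$ is torsion-free and definably connected, hence dfg. If $H$ is a dfg component, apply the lemma to $H$ acting on $G/H_{\max}$. This gives $H^{0,g}\subseteq H_{\max}$. Maximality of $\dim H$ then forces $\dim H=\dim H_{\max}$, so $H^{0,g}$ has finite index in $H_{\max}$ and $G/H$ is definably compact.

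Clause (2) comes from the same argument. Given two components $H_1,H_2$, by (1) the space $G/H_1$ is definably compact. The lemma gives $g$ with $g^{-1}H_2^0 g\subseteq H_1$, and equality of dimensions gives finite index in both directions, which is the required conjugate-commensurability.

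Clause (3) has two directions. If $H\lhd G$ is a normal dfg component, then $G/H$ is a definably compact group by (1), hence fsg and definably amenable, and $H$ is definably amenable. An extension of a definably amenable group by a definably amenable group is definably amenable: integrate the invariant measure of $H$ along the fibers against the invariant measure of $G/H$. Conversely, suppose $G$ is definably amenable. By \cite{CP-connected}, $G$ has a largest normal definable torsion-free subgroup $N$, and $G/N$ is then definably amenable with no nontrivial normal torsion-free subgroup. I would argue that such a group is definably compact-by-finite, using the fact that a definably amenable semisimple group is definably compact. Granting this, $N$ is a normal dfg subgroup with compact quotient, and by (1) it is a dfg component.
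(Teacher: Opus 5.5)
Your proposal is essentially correct, and it takes a genuinely different route from the paper. The paper gives no argument for this theorem. It assembles clause (1) from \cite{Con-A-Reduc}, clause (2) from the $\mu$-stabilizer machinery of \cite{PS-mu-stablizer}, and clause (3) from \cite{CP-connected}.

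Your fixed-point lemma is sound as stated:
\begin{itemize}
\item The pushforward of the $M$-definable dfg type $p$ under $h\mapsto h\cdot x_0$ is an $M$-definable global type on the definably compact space $X$.
\item It therefore has a unique limit $x^*\in X$: by Marker--Steinhorn, the bounded coordinates of a realization have standard parts in $\M$.
\item Each $h\in\stb[l]{p}=H'^0$ is an $\M$-definable homeomorphism fixing this pushforward, so it fixes $x^*$.
\end{itemize}
This one lemma gives the direction ``$G/H$ definably compact $\Rightarrow$ $H$ is a dfg component''. Once (1) is known, it also makes (2) a two-line corollary, with no $\mu$-types needed.

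What the lemma does not remove is the real depth of (1). The direction ``dfg component $\Rightarrow$ compact quotient'' still rests on the existence half of Conversano's theorem: a torsion-free $H_{\max}$ with $G/H_{\max}$ definably compact. On circularity, you in fact only use ``torsion-free $\Rightarrow$ dfg'' from \cite{CP-connected}, for $H_{\max}^0$ and for $\mathcal{N}(G)$. That is exactly the non-circular direction, so your concern about Fact~\ref{fact-dfg-group-R} is handled.

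The thin spot is (3)$\Rightarrow$. As written it is essentially the content of \cite{CP-connected}, Prop.~4.6, and the sketch is incomplete. Knowing that a definably amenable semisimple group is definably compact does not by itself show that $K=G/\mathcal{N}(G)$ is compact-by-finite, because $K^0$ may have a nontrivial solvable radical $R$. You need two further facts:
\begin{itemize}
\item $\mathcal{N}(R)$ is invariant under definable automorphisms of $R$, hence normal in $K$, hence trivial because $\mathcal{N}(K)=1$.
\item $R/\mathcal{N}(R)$ is definably compact for definably connected solvable $R$ (Edmundo).
\end{itemize}
Then $K^0$ is an extension of the definably compact group $R$ by the definably amenable, hence definably compact, semisimple group $K^0/R$, so $K^0$ is definably compact.

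For (3)$\Leftarrow$, the fibrewise integration causes no trouble if you take $\mu_H$ to be the average of the finitely many $H/H^0$-translates of a dfg type. Then $gH\mapsto\mu_H(g^{-1}X\cap H)$ takes finitely many values on definable level sets, so no measurability issue arises.
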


Note that when \( G \) is definably connected (i.e., \( G = G^0 \)), a dimension-maximal torsion-free subgroup is also a maximal torsion-free subgroup.

\begin{rmk}\label{rmk-dfg=tf}
Let \( G \) be a definably connected group definable in an $o$-minimal expansion of a real closed field. From Proposition 2.1 and the proof of Proposition 4.7 in \cite{CP-connected}, it follows easily that if \( G \) is torsion-free, then \( G \) has dfg. Conversely, if \( G \) has dfg, then \( G \) is definably amenable and thus has a torsion-free subgroup \( H \) such that \( C = G/H \) is definably compact. Let \( \pi : G \to C \) be the projection, and let \( p \in S_G(\mathbb{M}) \) be a global dfg type. Then \( q = \pi(p) \in S_C(\mathbb{M}) \) is a global dfg type of \( C \), so \( C \) has both dfg and fsg. This implies \( C \) is trivial, so \( G \) is torsion-free.
\end{rmk}




Recent results from our collaboration with A. Pillay in \cite{PYZ-$p$-adic-groups} show that Theorem \ref{thm-main-conj-RCF} also holds for \( p \)-adically closed fields:

\begin{theorem}[\cite{PYZ-$p$-adic-groups}]\label{thm-main-conj-pCF}
Let \( G \) be a group definable in a \( p \)-adically closed field. Then:
\begin{enumerate}
    \item A dfg subgroup \( H \) of \( G \) is a dfg component if and only if \( G/H \) is definably compact;
    \item The  dfg conponents of $G$ are conjugate-commensurable: If \( H_1 \) and \( H_2 \) are dfg conponents of \( G \), then there exists \( g \in G \) such that \( H_1^g \cap H_2 \) has finite index in \( H_2 \);
    \item \( G \) is amenable if and only if \( G \) has a normal dfg conponent.
\end{enumerate}
\end{theorem}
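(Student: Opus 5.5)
The plan is to make everything rest on one fixed-point principle and one existence statement. The \emph{fixed-point principle}: if $K$ is a dfg group definable in a $p$-adically closed field $k$ and $K$ acts definably and continuously on a definably compact definable space $X$, then $K^0$ fixes a point of $X$. To prove it I would take a global dfg type $p$ of $K$ (Fact \ref{dfg}) and a point $x\in X(k)$, and push $p$ forward to the type $q=p\cdot x$ on $X$. This $q$ is again definable. By definable compactness, $q$ has a ``limit'' (specialization) $x_0\in X$, and $x_0$ is determined uniformly from the definition scheme of $q$. Since $\stb[l]{p}=K^0$ (Fact \ref{dfg}(ii)), the type $q$ is $K^0$-invariant, and hence so is $x_0$. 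As a fallback, one can argue along the normal series of Fact \ref{fact-dfg-group-Qp}(3), which reduces the claim to $1$-dimensional dfg groups (additive, or finite index in multiplicative) acting on a definably compact space. In that case $\lim_{t\to 0}$ or $\lim_{t\to\infty}$ of the orbit map exists and is a fixed point, and one then inducts on the series.

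\emph{Existence:} every definable $G$ has a dfg subgroup $H_0$ with $G/H_0$ definably compact. I would first reduce to algebraic groups. By the Hrushovski--Pillay results, $G$ is, up to finite kernel and finite index, locally an open subgroup of $\widetilde G(k)$ for a connected $k$-algebraic group $\widetilde G$. Write $\widetilde G=R_u\rtimes L$ with $L$ reductive. For reductive $L$, take a minimal $k$-parabolic $P=Z_L(S)\cdot U$ with $S$ a maximal $k$-split torus. Then the $p$-adic Iwasawa decomposition $L(k)=\mathcal K\cdot P(k)$ with $\mathcal K$ compact, together with the fact that $Z_L(S)/S$ is $k$-anisotropic and hence definably compact, shows that the trigonalizable group $S\cdot U\cdot R_u$ is dfg (Fact \ref{fact-dfg-group-Qp}) and has definably compact quotient. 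Transporting this back to $G$ along the local isomorphism costs only finite and compact pieces. Combined with Fact \ref{fact-open-compact-subgp}, this gives $H_0$. I expect this existence step, in particular making the algebraic reduction uniform over arbitrary $p$-adically closed $k$ (not just $\Q$) and handling non-reductive and non-algebraic $G$, to be the main obstacle.

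With these two ingredients, (1) and (2) follow together. Let $H$ be any dfg subgroup of $G$. Applying the fixed-point principle to $H$ acting on $G/H_0$, we get $H^0\le gH_0g^{-1}$ for some $g$, so $\dim H\le\dim H_0$. Hence $H_0$ is a dfg component. Moreover, if $H$ is a dfg component, then $\dim H=\dim H_0$, so $H^0$ has finite index in $gH_0g^{-1}$. This gives conjugate-commensurability (2), and since $G/H$ maps with finite fibres onto $G/gH_0g^{-1}$, $G/H$ is definably compact. Conversely, if $H$ is dfg with $G/H$ definably compact, then running the same argument with $H$ in place of $H_0$ shows that $H$ has maximal dimension.

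For (3), suppose first that $H$ is a normal dfg component. Then $G/H$ is a definably compact group, hence fsg and definably amenable. Since $H$ is definably amenable too, $G$ is definably amenable as an extension of amenable by amenable: average an $H$-invariant measure against the invariant measure on $G/H$. For the converse, let $G$ be definably amenable and take a strongly $f$-generic type (Fact \ref{fact-definable-amenable}). I would show that the definable group $N=\bigcap_{g\in G}(H^0)^g$, which is a finite intersection by the descending chain condition on definable subgroups of bounded dimension, still has definably compact quotient. The argument uses the $G^{00}$-invariance of $f$-generics to show that $G^{00}$ fixes a point of $G/H$ for every conjugate of $H$. Then $N$ is a normal dfg subgroup of maximal dimension, i.e.\ a normal dfg component. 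This last step is the second delicate point; if it resists, I would instead pass to the algebraic hull and use that an amenable reductive part must be $k$-anisotropic modulo its center.
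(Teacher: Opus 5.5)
The paper does not prove this theorem; it imports it from Pillay--Yao--Zhang. So your argument can only be assessed on its own terms.

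\textbf{What works.} Your fixed-point principle is sound. In a $p$-adically closed field, definable compactness is equivalent to every definable type having a unique limit \cite{GJ-Around}. The type $q=p\cdot x$ is definable and $K^0$-invariant because $\stb[l]{p}=K^0$, and continuity plus uniqueness of limits makes its limit $K^0$-fixed.

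Granting a dfg $H_0$ with $G/H_0$ definably compact, your derivation of (1) and (2) goes through after three repairs:
\begin{enumerate}
\item $H^0$ is only type-definable. Use compactness to get a definable finite-index $H_1\le H$ with $H_1\le gH_0g^{-1}$.
\item ``Equal dimension, hence finite index'' is not automatic here (think of $\mathcal{O}\le k$). It is exactly Fact~\ref{fact-open-dfg-subgroup}, applied to the open dfg subgroup $H_1$ of $gH_0g^{-1}$.
\item $G/H$ does not map onto $G/gH_0g^{-1}$. Pass through $G/(H\cap gH_0g^{-1})$ and use Fact~\ref{fact-dfg-components-over k}.
\end{enumerate}
The forward half of (3) is fine. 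There are, however, two genuine gaps.

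\textbf{Existence of $H_0$.} This is the real content of the ``only if'' half of (1) and of all of (2), and your reduction for it does not work. Being dfg, or having definably compact quotient, is not a local property. So nothing can be transported along a Hrushovski--Pillay local isomorphism ``at the cost of finite and compact pieces.'' For example, $(k,+)$ and $(\mathcal{O},+)$ share the open subgroup $\mathcal{O}$ and the same Zariski closure $\mathbb{G}_a$. Yet the first is dfg, and the second is definably compact with finite dfg components.

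The same problem arises for linear groups, which are only open subgroups, possibly of infinite index, of $\widetilde{G}(k)$. For $G=\SL_2(\mathcal{O})$, the Borel subgroup of $\widetilde{G}(k)$ is irrelevant, since the dfg components of $G$ are finite.

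What is needed is a global reduction. One route:
\begin{enumerate}
\item write $G$ as an extension of a linear group by a commutative-by-finite group, as in Fact~\ref{open-subgp-of-semi-simple};
\item prove the abelian case;
\item analyse open subgroups of $\widetilde{L}(k)$, using Fact~\ref{fact-G(k)+}(ii) for the almost simple factors and a separate treatment of the radical;
\item prove a lemma that lifts dfg subgroups with definably compact quotient through definable extensions.
\end{enumerate}
None of this is supplied.

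\textbf{The converse of (3).} The sketch rests on false statements.
\begin{enumerate}
\item There is no descending chain condition on definable subgroups here: $k\supsetneq\mathcal{O}\supsetneq p\mathcal{O}\supsetneq\cdots$.
\item $H^0$ is merely type-definable (e.g.\ $\Gm/\Gm^0$ is infinite), so $\bigcap_{g}(H^0)^g$ need not be definable.
\item Most seriously, $G^{00}$ need not fix a point of $G/H$. If $G$ is infinite and definably compact, its dfg components $H$ are finite, and a fixed point $xH$ would force the infinite group $G^{00}$ into the finite group $xHx^{-1}$.
\end{enumerate}
Your fixed-point principle cannot be reused here. The $f$-generic types of a definably amenable group need not be definable, and non-definable global types on definably compact sets need not have limits in $\M$. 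Even a definable normal core $N$ would still leave $\dim N=\dim H$ to be proved, and that is the whole statement.

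A workable route is to show that definable amenability forces the semisimple quotient in Lemma~\ref{lem-amenable-semi-simple-decom} to be definably compact, via Fact~\ref{fact-G(k)+}. You would then need a dfg subgroup of the amenable kernel, normal in $G$, with definably compact quotient. That again requires the structural input missing from the existence step.
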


\subsection{Algebraic Groups}
Let $k$ be any filed with characteriatic $0$,  by an algebraic group $G$ over a field $k$, we mean a group object in the category of algebraic varieties over $k$. Let $\Omega$ be a very saturated algebraically closed field (of course contains $k$), we may identify an algebraic group $G$ with the group $G(\Omega)$ of its $\Omega$-points. We can consider the group $G(k)$ of $k$-rational points of $G$. $G(k)$ will in particular be a group (quantifier-free) definable in the field $(k,+,\times)$.  We may sometimes refer to $G(k)$ as a ``$k$-algebraic group".   For our purposes we may assume that all   algebraic groups in  this paper are connected.

Let $ G $ be an algebraic group over $ k $. The solvable radical, denoted by $ R(G) $, of $ G $  the maximal connected normal solvable algebraic subgroup of $ G $, which is also defined over $ k $.  We say that $ G $ is \underline{semi-simple} if  $ R(G) $ is trivial. Equivalently,  $ G $ is semi-simple if it contains no nontrivial connected  commutative normal algebraic  subgroup. We say $ G $ is \underline{$ k $-simple} if it is non-commutative and has no proper normal algebraic subgroups over $ k $ other than $ \{\id_G\} $, and \underline{almost $ k $-simple} if it is non-commutative and has no proper normal algebraic subgroups over $ k $ except for finite subgroups. It is well-known that:  

\begin{fact}\label{fact-connected-semi-simples}  
Let $G$ be a connected algebraic group over $ k $.  Then $ G $ is semi-simple if and only if $ G $ is an almost direct product of almost $k$-simple algebraic groups over $k$.  
\end{fact}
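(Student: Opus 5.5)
The plan is to reduce to the classical structure theorem over the algebraically closed field $\Omega$ and then descend to $k$ by Galois theory. Since $\operatorname{char}(k)=0$, a closed subgroup of $G$ is defined over $k$ if and only if it is stable under $\Gamma=\mathrm{Gal}(k^{\alg}/k)$. I will use the following standard fact over $\Omega$ (Borel, Humphreys \S27.5): a connected semisimple group $G$ is the almost direct product of its minimal nontrivial connected closed normal subgroups $G_1,\dots,G_n$. These are absolutely almost simple, pairwise commuting, and have pairwise finite intersections. Moreover, every connected closed normal subgroup of $G$ is the product of some subfamily of the $G_i$.

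For ($\Rightarrow$), assume $G$ is semisimple and consider the family $\{G_1,\dots,G_n\}$.
\begin{enumerate}
\item $\Gamma$ permutes the $G_i$, because $G$ is defined over $k$ and the family is canonically determined by $G$.
\item For each $\Gamma$-orbit $O$, set $H_O=\prod_{i\in O}G_i$. It is $\Gamma$-stable, hence defined over $k$, and it is connected, normal and non-commutative.
\item Let $N\le H_O$ be a connected normal $k$-subgroup. Each $G_j$ with $j\notin O$ centralizes $H_O$, so $N$ is normal in $G$. Hence $N$ is a product of some of the $G_i$ with $i\in O$. Since $N$ is $\Gamma$-stable, that set of indices is $\emptyset$ or all of $O$.
\item Consequently any proper normal $k$-subgroup of $H_O$ has trivial identity component, i.e.\ is finite. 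So $H_O$ is almost $k$-simple.
\end{enumerate}
The $H_O$ pairwise commute and have finite intersections, since they are built from disjoint sets of $G_i$. Hence $G$ is their almost direct product.

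For ($\Leftarrow$), write $G=H_1\cdots H_m$ as an almost direct product with each $H_i$ almost $k$-simple. First, each $H_i$ is perfect. Indeed, $[H_i,H_i]$ is a connected normal $k$-subgroup of $H_i$, so it is finite or all of $H_i$. If it were finite, then being connected it would be trivial, and $H_i$ would be commutative, which is excluded. Now let $R=R(G)$, which is connected, solvable, normal and defined over $k$. Fix $i$ and project $G$ onto $\bar H_i=G/\prod_{j\ne i}H_j$, which is an isogenous quotient of $H_i$ and is again almost $k$-simple and perfect. The image $\bar R$ of $R$ is a connected solvable normal $k$-subgroup of $\bar H_i$. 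It is therefore finite or everything, and everything is impossible because a perfect nontrivial group is not solvable. So $\bar R$ is finite, and being connected it is trivial. Thus $R\subseteq\prod_{j\ne i}H_j$ for every $i$. Intersecting over all $i$ leaves only a finite subgroup, so the connected group $R$ is trivial and $G$ is semisimple.

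The main obstacle I anticipate is justifying the descent step: that the $\Gamma$-permutation of the $G_i$ is well defined, and that $\Gamma$-stable closed subgroups are $k$-defined. In characteristic $0$ this is standard, using perfectness of $k$. A secondary point is the claim that every connected normal subgroup is a product of simple factors, which I will cite from the classical theory rather than reprove.
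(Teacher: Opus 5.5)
The paper gives no proof of this statement; it quotes it as well known. Your argument is correct and is the standard one behind it (cf.\ Borel, \emph{Linear Algebraic Groups}, 22.10): split $G$ over the algebraic closure into its absolutely almost simple factors and group them into $\mathrm{Gal}(\bar k/k)$-orbits to obtain the almost $k$-simple factors. For the converse you show that the image of $R(G)$ in each isogenous quotient $G/\prod_{j\neq i}H_j$ is trivial.

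The points you leave implicit are routine. In Step 3 the index set of $N$ is $\{i: G_i\subseteq N\}$; it lies in $O$ and is $\Gamma$-stable because each $G_j\cap\prod_{i\neq j}G_i$ is finite. In Step 4 you apply Step 3 to $N^\circ$, which is again a connected normal $k$-subgroup of $H_O$.
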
  

In this paper, we consider the case where \( k \) is either a \( p \)-adically closed field (\(\pcf\)) or a real closed field ($\mathrm{RCF}$). Let \( M \) be \( k \) if \( k \models \pcf \), or an $o$-minimal expansion of \( k \) if \( k \models \mathrm{RCF} \). A definable group \( H \) in \( M \) is called \underline{linear} if it is a definable subgroup of some \( \GL_n(k) \). For a linear group \( H \), we denote by \emph{\( \widetilde{H} \)} its Zariski closure (in \( \Omega \)). Clearly, $H\leq \widetilde{H}(\M)$. A linear group \( H \) is said to be \underline{semi-simple} (resp., \( k \)-simple, almost \( k \)-simple) if \( \widetilde{H} \) is a semi-simple (resp., \( k \)-simple, almost \( k \)-simple) algebraic group.

We use $\Zdim(X)$ to denote the Zariski dimension of an algebraic variety $X$.

\begin{fact}[\cite{HP-gp-local}]
    Let $X\sq k^n$ be a definable set and $\widetilde{X}\sq \Omega^n$ be the Zariski closure of $X$. Then $\dim(X)=\Zdim(\widetilde{X})$.
\end{fact}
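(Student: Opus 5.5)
The plan is to prove the two inequalities $\dim(X)\le \Zdim(\widetilde{X})$ and $\Zdim(\widetilde{X})\le \dim(X)$ separately, using the identity $\dim(X)=\alg\text{-}\dim(X)$ recalled above. Model-theoretic algebraic closure is field-theoretic algebraic closure, so for $a\in X(\M)$ and parameters $B\subseteq k$ we have $\dim(a/B)=\mathrm{trdeg}(k_B(a)/k_B)$, where $k_B$ is the field generated by $B$. Since $k\prec\M$, we may take $B=k$ and work with $\mathrm{trdeg}(a/k)$ throughout.

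\textbf{Step 1: $\widetilde{X}$ is defined over $k$, and $X(\M)\subseteq\widetilde{X}$.} Fix $d$. The condition that a polynomial of degree $\le d$ with coefficients in $\Omega$ vanishes on $X(k)$ is a system of $k$-linear equations in its coefficients. Hence the space of such polynomials has a basis of polynomials with coefficients in $k$, and so the ideal $I(X)$ is generated by polynomials over $k$. Take finitely many generators $f_1,\dots,f_m\in k[x]$. The statement ``$f_i$ vanishes on $X$'' is first order with parameters in $k$, so by elementarity each $f_i$ also vanishes on $X(\M)$. Therefore $X(\M)\subseteq \widetilde{X}(\M)$.

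\textbf{Step 2: $\dim(X)\le\Zdim(\widetilde{X})$.} Let $a\in X(\M)$. By Step 1, $a$ is a point of the $k$-variety $\widetilde{X}$, so the $k$-locus of $a$ is an irreducible subvariety of $\widetilde{X}$. Hence $\mathrm{trdeg}(a/k)\le \Zdim(\widetilde{X})$. Taking the maximum over $a\in X(\M)$ gives the inequality.

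\textbf{Step 3: $\Zdim(\widetilde{X})\le\dim(X)$.} This is the step I expect to need the most care, because it requires a compactness argument. Let $e=\dim(X)$. Every $a\in X(\M)$ has $\mathrm{trdeg}(a/k)\le e$, so $a$ lies in the $k$-locus $V_a$ of $a$, a $k$-closed set with $\Zdim(V_a)\le e$. The membership ``$x\in V_a$'' is expressed by a quantifier-free $L_k$-formula, so the formulas $\{x\notin V : V \text{ a } k\text{-closed set},\ \Zdim(V)\le e\}$ together with $x\in X$ form an $L_k$-type that is not realized in $\M$. By saturation of $\M$, some finite subset is already inconsistent. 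That is, $X(\M)$, and in particular $X(k)$, is contained in a finite union $V_1\cup\dots\cup V_r$ of $k$-closed sets of dimension $\le e$. This union is Zariski closed, so it contains $\widetilde{X}$, and therefore $\Zdim(\widetilde{X})\le e$.

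The only subtle points are two: the $k$-rationality of $I(X)$ (the linear-algebra argument in Step 1) and the transfer of vanishing from $X(k)$ to $X(\M)$. Both are handled in Step 1.
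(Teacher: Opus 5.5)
Your argument is correct in the pure-field setting and is essentially the justification the paper intends. The paper gives no proof beyond citing \cite{HP-gp-local} and its preliminary remark that $\dim(X)=\alg\text{-}\dim(X)$ and that model-theoretic algebraic closure is field-theoretic. Your Steps 1--3 just make explicit the routine passage from transcendence degree over $k$ to the Zariski dimension of the $k$-closed set $\widetilde{X}$.

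One caveat on scope, which you inherit from the paper. The identification $\dim(a/k)=\mathrm{trdeg}(k(a)/k)$ in your first paragraph is valid only when the structure on $k$ is the pure field: $p$-adically closed, or real closed with $X$ semialgebraic. The paper's ambient conventions also allow $M$ to be a genuine $o$-minimal expansion of $k$, and there one only has $\dim(a/k)\le\mathrm{trdeg}(k(a)/k)$. In that case your Step 2 inequality $\dim(X)\le\Zdim(\widetilde{X})$ survives, but Step 3 breaks. In fact the statement itself is false there. In $\r_{\exp}$ the graph of $\exp$ has dimension $1$, while its Zariski closure is all of $\Omega^2$. Similarly, for irrational $\alpha$ the one-dimensional definable subgroup $\{(x,x^{\alpha}) : x>0\}$ of $\Gm^2$ is Zariski dense. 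So you should say explicitly that you are proving the pure-field case, which is exactly the scope of \cite{HP-gp-local}.
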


\begin{lemma}\label{lemma-almost-k-simple-M-simple}
    Let \( G \) be a connected algebraic group over \( k \). If \( G \) is   almost \( k \)-simple, then \( G \) is  almost \( \mathbb{M} \)-simple.
\end{lemma}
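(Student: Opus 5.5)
We argue by contraposition: suppose \(G\) is not almost \(\M\)-simple, and produce a proper normal algebraic subgroup of \(G\) defined over \(k\) that is not finite. Here \(\M\) is the monster model (a \(\pcf\) or RCF field extending \(k\)), and "\(\M\)-simple" refers to algebraic subgroups defined over the field \(\M\). So assume there is an infinite proper normal algebraic subgroup \(N \lhd G\) defined over \(\M\). Passing to the identity component, we may take \(N\) connected, still infinite, proper and normal, since \(N^0\) is characteristic in \(N\).

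The key step is a definability and transfer argument. Connected normal algebraic subgroups of \(G\) of a fixed dimension \(d\) form a family that can be parametrized by a constructible (ind-)family. For \(G\) semisimple this is very concrete: connected normal subgroups correspond to ideals of the Lie algebra \(\mathfrak g=\mathrm{Lie}(G)\), defined over \(k\), via \(N\mapsto \mathrm{Lie}(N)\). In characteristic \(0\) this correspondence between connected algebraic subgroups and algebraic Lie subalgebras is injective, and normality corresponds to being an \(\mathrm{Ad}(G)\)-stable subspace.

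To get semisimplicity, first note that an almost \(k\)-simple \(G\) is semisimple. Indeed, \(R(G)\) is defined over \(k\), connected and normal. It is proper because \(G\) is non-commutative: if \(R(G)=G\) then \(G\) is solvable, and its derived group, defined over \(k\), would be normal and of smaller dimension hence finite, and being connected it would be trivial. If \(R(G)\) were finite it would be trivial, being connected. So \(R(G)\) is trivial.

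The condition "there is a nonzero proper \(\mathrm{Ad}(G)\)-stable subspace \(W\le \mathfrak g\) of dimension \(d\)" is then a first-order statement in the field language with parameters from \(k\). Quantify over bases of \(W\) (tuples in \(\M\)) and require \([\mathfrak g,W]\subseteq W\); stability under \(\mathrm{Ad}(G)\) is equivalent to being an ideal since \(G\) is connected. The condition is purely linear-algebraic in the structure constants of \(\mathfrak g\), which lie in \(k\). Since \(k\prec\M\), the existence of such an ideal over \(\M\) yields one over \(k\), say \(\mathfrak n\).

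It remains to go back from the ideal \(\mathfrak n\) to a group. In characteristic \(0\), an ideal of a semisimple Lie algebra is a sum of simple factors and hence algebraic. It is the Lie algebra of a unique connected normal algebraic subgroup \(N'\) of \(G\), namely the subgroup generated by the corresponding simple factors. By uniqueness, \(N'\) is \(\mathrm{Gal}(\bar k/k)\)-invariant and so defined over \(k\), which in characteristic \(0\) suffices. Then \(N'\) is infinite (\(\mathfrak n\ne 0\)) and proper (\(\mathfrak n\ne\mathfrak g\)), contradicting almost \(k\)-simplicity.

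The main obstacle is this last step: verifying the Lie algebra–group correspondence and its field of definition cleanly. An alternative route, which I would use if that step gets messy, is to argue via Fact \ref{fact-connected-semi-simples} with \(\bar k\)-simple factors. \(G\) is \(k\)-almost simple iff \(\mathrm{Gal}(\bar k/k)\) acts transitively on the \(\bar k\)-simple factors of \(G_{\bar k}\). Since \(k\) is relatively algebraically closed in \(\M\), restriction gives a surjection \(\mathrm{Gal}(\bar{\M}/\M)\to\mathrm{Gal}(\bar k/k)\), and the factors are already defined over \(\bar k\). Hence the Galois action, and therefore transitivity, is the same over \(\M\).
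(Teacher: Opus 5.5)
Your proof is correct, but it takes a genuinely different route from the paper.

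\textbf{The paper's route.} The paper stays inside model theory. It writes the offending subgroup as \(\psi(\Omega,b)\) for a quantifier-free \(L_{\mathrm{ring}}\)-formula with \(b\in\M\). It observes that ``\(\psi(\M,y)\) is a normal subgroup of \(G(\M)\) with \(0<\dim(\psi(\M,y))<\dim(G(\M))\)'' is first-order, since dimension is definable in these theories. It then pulls the parameter down to some \(b_0\in k\) using \(k\prec\M\). Finally it takes the Zariski closure, using \(\dim(X)=\Zdim(\widetilde{X})\) to see that this closure is an infinite proper normal subgroup defined over \(k\).

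This needs no reduction to the semisimple case and no Lie theory. It does rely on the full elementarity \(k\prec\M\) in a language where dimension is definable. It also relies implicitly on Zariski density of rational points, both to compare \(\Zdim\) with \(\dim\) and to get normality of the closure in \(G\).

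\textbf{Your Lie-algebra route.} You first show that almost \(k\)-simple implies semisimple. You then transfer only an existential linear-algebra statement about ideals of \(\mathfrak{g}\), whose structure constants lie in \(k\). You return to groups via the characteristic-\(0\) correspondence between connected normal subgroups and ideals. For the descent step, note that \(N'\) is a product of geometric almost simple factors and hence is defined over \(\bar{k}\); alternatively, run the invariance argument with \(\mathrm{Aut}(\Omega/k)\).

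\textbf{Your Galois-orbit alternative.} This is the cleanest version and shows what is really going on. It uses only that \(k\) is relatively algebraically closed in \(\M\), so \(\M/k\) is regular and \(\mathrm{Gal}(\bar{\M}/\M)\to\mathrm{Gal}(\bar{k}/k)\) is onto. Hence the lemma holds for any regular extension in characteristic \(0\), not just elementary ones.

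The price of both your routes is importing standard structure theory of semisimple groups, which the paper's model-theoretic argument avoids.
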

\begin{proof}
    Suppose \( G \) is not almost \( \mathbb{M} \)-simple. Then there exists a quantifier-free \( L_{\text{ring}} \)-formula \( \psi(x,y) \) and \( b \in \mathbb{M} \) such that \( \psi(\Omega,b) \) is a   normal algebraic subgroup of \( G \) with 
    \[
    0<\Zdim(\psi(\Omega,b))<\Zdim(G) .
    \]
    It follows that \( \psi(\mathbb{M},b) \) is a normal $k$-algebraic subgroup of \( G(\mathbb{M}) \) with \[
    0<\dim(\psi(\M,b))<\dim(G(\M)).\]
    Since the statement ``\( \dim(\psi(\M,y)) = n \)'' is expressible by a first-order formula, we have
    \[
    \mathbb{M} \models \exists y \left( \psi(\mathbb{M},y) \text{ is a  normal subgroup of } G(\mathbb{M}) \text{ and }0<\dim(\psi(\M,b))<\dim(G(\M)) \right).
    \]
    Therefore, there exists \( b_0 \in k \) such that \( \psi(\mathbb{M},b_0) \) is a normal subgroup of \( G(\mathbb{M}) \) with \( 0<\dim(\psi(\M,b))<\dim(G(\M))\). The Zariski closure \( \widetilde{\psi(\mathbb{M},b_0)} \) of \( \psi(\mathbb{M},b_0) \) is thus an infinite normal subgroup of \( G \) with lower dimension, defined over \( b_0 \in k \). A contradiction.

\end{proof}

\section{Main Results}
In this section, \(M_0=(k_0, +, \times, \dots)\) will always denote the standard model, where \(k_0=\Q\) or \(\mathbb{R}\). We adopt the following conventions throughout: \(M=(k,+, \times, \dots)\) always stands for an elementary extension of \(M_0\) and is a small model, while \(\M\) denotes a monster model of \(\Th(M_0)\).

\subsection{The Amenable-Semisimple Decomposition}


\begin{fact}[Claim 2.26 of \cite{PPS-TAMS} and Lemma 2.11 of \cite{JY-3}]\label{open-subgp-of-semi-simple}
 Let $ G $ be a group definable over $ M $.   There is a definable short exact sequence of $k$-definable groups
\begin{equation}\label{eq-2}
1 \to Z \to G \stackrel{\pi_L}{\to} L \to 1,    
\end{equation}
where $Z$ is commutative-by-finite, and $L$ is linear.
\end{fact}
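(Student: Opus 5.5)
The plan is to reduce to the structure theory of groups definable in $o$-minimal structures and in $p$-adically closed fields, taking $Z$ to be (a finite-index refinement of) the kernel of a definable adjoint-type representation. The Fact is quoted from Claim 2.26 of \cite{PPS-TAMS} and Lemma 2.11 of \cite{JY-3}, so it suffices to indicate how that argument goes in our setting.

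\textbf{Step 1 (Lie algebra and adjoint action).} Equip $G$ with its $k$-definable $C^\infty$ manifold structure, which makes it a definable Lie group over $k$. Let $\mathfrak g$ be its Lie algebra, a $k$-vector space of dimension $n=\dim G$. Conjugation $x\mapsto gxg^{-1}$ fixes the identity, and its differential gives a $k$-definable homomorphism
\[
\mathrm{Ad}\colon G\to \GL(\mathfrak g)\cong \GL_n(k).
\]

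\textbf{Step 2 (the kernel is commutative-by-finite).} Put $Z=\ker(\mathrm{Ad})$ and $L=\mathrm{Ad}(G)$. Then $L$ is a definable subgroup of $\GL_n(k)$, hence linear, and the sequence $1\to Z\to G\to L\to 1$ is $k$-definable. It remains to show that $Z$ is commutative-by-finite.

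Take $z\in Z$. Conjugation by $z$ has trivial differential at the identity. Using definable local exponential coordinates, or analyticity of the group law, this forces $z$ to commute with a neighbourhood of the identity. In the $p$-adic case the operation is locally analytic (\cite{Pillay-fields-definable-over-Qp}), and an analytic automorphism of a compact open subgroup with trivial differential is trivial near $1$. In the $o$-minimal case one uses the definable exponential-type arguments from \cite{OP-plg} and \cite{Pillay-g-f-definable-ino-min}, or more precisely that definably connected groups are generated by any neighbourhood of $1$. Consequently $Z\le C_G(G^0)$.

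From this, $Z\cap G^0$ is central in $G^0$, hence commutative. Since $G^0$ has finite index in $G$ in both settings (in the $p$-adic case by definable choice and the descending chain condition on definable subgroups of fixed dimension), $Z\cap G^0$ has finite index in $Z$. So $Z$ is commutative-by-finite.

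\textbf{Main obstacle.} The hard step is the local-to-global passage in the $p$-adic case. There is no definable connectedness, so ``$\mathrm{Ad}(z)=1$ implies $z$ centralizes an open subgroup'' has to be proven via analyticity on a small enough definable open subgroup. One also has to replace $G^0$ by a finite-index definable subgroup $G_1$ and argue that $C_G(U)$, for an open subgroup $U$, centralizes the finite-index subgroup generated by its conjugates. A workable route is to pass to $Z'=Z\cap C_G(U)$ with $U$ open, note that $Z'$ is commutative modulo $Z'\cap U$, and use dimension counting to conclude that $Z'$ has finite index in $Z$.
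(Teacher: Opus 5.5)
The paper gives no proof of this Fact; it only cites \cite{PPS-TAMS} and \cite{JY-3}. Your kernel-of-$\mathrm{Ad}$ strategy is the natural one, and the o-minimal half is essentially right. There $G^0$ is definable of finite index and $\ker(\mathrm{Ad})\cap G^0=Z(G^0)$. The clean reason is that the Lie algebra of $C_G(z)$ is the fixed space of $\mathrm{Ad}(z)$, so $C_G(z)$ is open and hence of finite index.

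\textbf{The gap is in the $p$-adic half.} Your step ``$G^0$ has finite index in $G$ in both settings'' is false for $p$-adically closed fields.
\begin{itemize}
\item There is no descending chain condition on definable subgroups: take $p^n\mathcal{O}$ in $\mathbb{G}_{\mathrm{a}}$, or the $n!$-th powers in $\Gm$.
\item $G^0$ is only type-definable, and usually $[G:G^0]$ is infinite. For instance $\Gm/\Gm^0$ is infinite, which this paper itself uses in its last subsection.
\end{itemize}
So ``$Z\cap G^0$ has finite index in $Z$'' does not follow. Generation of $G$ by a neighbourhood of $\id_G$ is likewise an o-minimal phenomenon.

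Your proposed repairs fail for the same underlying reason: in this setting an open definable subgroup need not have finite index.
\begin{itemize}
\item The normal closure of an open subgroup $U$ need not have finite index; in $\mathbb{G}_{\mathrm{a}}$, $U=\mathcal{O}$ is already normal.
\item Dimension counting only shows a subgroup is open, not that it has finite index. Compare Lemma \ref{lemma-open-is-finite-index}, which needs dfg.
\end{itemize}
What the local argument actually yields is that $Z\cap U$ is central in $Z$, i.e.\ $Z$ has an \emph{open} centre. The missing idea is how to pass from an open centre to a commutative subgroup of finite index.

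\textbf{How to close it.} Count dimensions of conjugacy classes rather than of coset spaces.
\begin{itemize}
\item For $z\in Z$, $C_G(z)$ contains an open subgroup. So the fibres $gC_G(z)$ of $g\mapsto gzg^{-1}$ are open, giving $\dim(z^G)=\dim G-\dim C_G(z)=0$.
\item Since $z^G$ is a definable subset of $G$ in the home sort, it is finite. By uniform finiteness there is an $N$ with $[Z:C_Z(z)]\le N$ for all $z\in Z$.
\item By NIP (Baldwin--Saxl), the finite intersections of the $C_Z(z)$ have uniformly bounded index. A finite intersection of maximal index is then contained in $Z(Z)=\bigcap_{z\in Z}C_Z(z)$.
\item Hence $Z(Z)$ has finite index in $Z$, so $Z$ is centre-by-finite, in particular commutative-by-finite.
\end{itemize}
This bypasses $G^0$ entirely and works in both settings. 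It also shows that your conclusion $Z\le C_G(G^0)$ is true in the $p$-adic case, but only through this finiteness.

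Finally, analyticity (\cite{Pillay-fields-definable-over-Qp}) is available only over $\mathbb{Q}_p$ itself. The Fact is stated for $G$ defined with parameters from an arbitrary $M\succ M_0$. You therefore need to transfer the first-order statement ``$\mathrm{Ad}(z)=1$ implies $z$ centralizes a neighbourhood of $\id_G$'' from $\mathbb{Q}_p$ to $k$, uniformly in the parameters.
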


We call a definable group $G$   {\em definably simple} if it has no proper normal definable subgroup expect for $\{\id_G\}$.

\begin{lemma}\label{claim-simple-linear}
Let $ G $ be a group definable in $M$. Suppose that $G$ is definably simple, infinite, and noncommutative, then  $G$ is linear and the Zariski closure $\widetilde{G}$ of $G $ is almost $k$-simple.
\end{lemma}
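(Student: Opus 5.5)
We first establish linearity using Fact \ref{open-subgp-of-semi-simple}. That fact gives a definable exact sequence $1\to Z\to G\stackrel{\pi_L}{\to} L\to 1$ with $Z$ commutative-by-finite and $L$ linear. Since $Z$ is a normal definable subgroup and $G$ is definably simple, either $Z=\{\id_G\}$ or $Z=G$.

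If $Z=G$, then $G$ is commutative-by-finite, so it has a definable commutative subgroup $A$ of finite index. We then replace $A$ by its normal core $N=\bigcap_{g\in G}A^g$. This is a finite intersection, hence definable, normal, commutative, and of finite index in $G$. Because $G$ is infinite, $N\neq\{\id_G\}$, so definable simplicity forces $N=G$. Then $G$ is commutative, which contradicts the hypothesis. Therefore $Z$ is trivial, $\pi_L$ is a definable isomorphism, and $G$ is (definably isomorphic to) a linear group $G\le \GL_n(k)$.

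Next we show that the Zariski closure $\widetilde G$ is almost $k$-simple. We first make $\widetilde G$ connected. Its identity component $\widetilde G^\circ$ is a $k$-algebraic group, and $G\cap \widetilde G^\circ(\M)$ is a definable normal subgroup of finite index in $G$. It is nontrivial since $G$ is infinite, so it equals $G$. Hence $G\le \widetilde G^\circ$, and by minimality of the closure $\widetilde G$ is connected.

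Now let $N\lhd \widetilde G$ be a normal algebraic subgroup defined over $k$. Then $N(\M)\cap G$ is a definable normal subgroup of $G$, so it is either trivial or all of $G$.

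If $N(\M)\cap G=G$, then $G\subseteq N$, so $N=\widetilde G$.

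If $N(\M)\cap G$ is trivial, we argue by dimensions. Since $\widetilde G(\M)/N(\M)$ is again linear, the image of $G$ in it is definable and injective, so it has dimension $\dim G=\Zdim(\widetilde G)$. This image sits in $(\widetilde G/N)(\M)$, which has dimension $\Zdim(\widetilde G)-\Zdim(N)$. Hence $\Zdim N=0$ and $N$ is finite.

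Finally, $\widetilde G$ is noncommutative. Otherwise the Zariski closure of the commutative group $G$ would be commutative, because commutation is a Zariski-closed condition, and $G$ itself would be commutative. So $\widetilde G$ is almost $k$-simple.

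The main obstacle is the dimension comparison in the trivial-intersection case. It needs the quotient $\widetilde G/N$ to be a $k$-algebraic group with $\dim$ of definable sets matching Zariski dimension; this follows from the earlier Fact of \cite{HP-gp-local}. If $N$ is only defined over $\M$, Lemma \ref{lemma-almost-k-simple-M-simple} is not needed here, since the statement only requires $k$-normal subgroups.
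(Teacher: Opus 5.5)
Your proof is correct and follows the paper's own argument: linearity via Fact~\ref{open-subgp-of-semi-simple}, then the dimension comparison $\dim(G)\le \Zdim(\widetilde{G}/N)<\Zdim(\widetilde{G})=\dim(G)$ for an infinite $k$-normal subgroup $N$ meeting $G$ trivially. You also justify steps the paper leaves implicit, namely that the kernel $Z$ must be trivial, that $\widetilde G$ is connected, and the case $G\subseteq N$. The only blemish is that your noncommutativity sentence is muddled: $\widetilde G$ is noncommutative simply because $G\le \widetilde G(\M)$ is noncommutative, so no closure argument is needed.
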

\begin{proof}
  By Fact \ref{open-subgp-of-semi-simple}, $G$   is linear.    Suppose for a contradiction that $N$ is a connected infinite proper normal algebraic subgroup of $\widetilde{G}$ defined over $k$. As $G$ is definably simple, $G \cap N(k)$ is a proper normal subgroup of $G$ so trivial. Hence $G$ is naturally a subgroup of $\widetilde{G}/N$, which is also a connected algebraic group over $k$. But $\Zdim(\widetilde{G}/N)$  is strictly less than  $\Zdim(\widetilde{G})$, so   $\dim(G)\leq \Zdim(\widetilde{G}/N)< \dim(G)$.  This is a contradiction.
\end{proof}

Let $L\leq \GL_n(k)$ be a linear group. Then $L$ is an open subgroup of $\widetilde{L}(k)$. Let $\widetilde{R}$ be solvable radical of $\widetilde{L}$, then $\widetilde{S}=\widetilde{L}/\widetilde{R}$ is simple-simple. Now the short exact sequence of algebraic groups
\[
1\to \widetilde{R}\to \widetilde{L}\to \widetilde{S}\to 1
\]
induces a definable short exact sequence
\begin{equation}\label{eq-1}
1\to R \to L\to S\to 1,
\end{equation}
where $R=(\widetilde{R}(k)\cap L)$ is solvable and $S$ is a definable open subgroup of $\widetilde{S}(k)$, thus is semi-simple.

\begin{lemma}\label{lem-amenable-semi-simple-decom}
Let $G$ be a group definable in $M$. If $G$ is not definably amenable, then there is a definably short exact sequence
\begin{equation}\label{equ-amenable-semisimple-I}
 1\to D\to G \stackrel{\pi_A}{\to} A\to 1,   
\end{equation}
where $D$ is definably amenable,  $\widetilde{A}$ is centreless and is an almost product of almost $k$-simple algebraic groups $\widetilde{A_1},\cdots, \widetilde{A_m}$, and $A\cap \widetilde{A_i}(k)$ is not definably compact for  $i=1,...,m$. 
\end{lemma}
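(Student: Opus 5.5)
The plan is to build $D$ in stages by composing the two short exact sequences already available. First I apply Fact \ref{open-subgp-of-semi-simple} to get $1\to Z\to G\stackrel{\pi_L}{\to} L\to 1$ with $Z$ commutative-by-finite, hence definably amenable, and $L$ linear. Next I apply (\ref{eq-1}) to $L$ to get $1\to R\to L\to S\to 1$, where $R$ is solvable and $S$ is an open subgroup of $\widetilde S(k)$ with $\widetilde S$ semisimple. Let $D_1$ be the preimage of $R$ in $G$. Then $D_1$ is an extension of the solvable group $R$ by the amenable group $Z$, and extensions of definably amenable groups by definably amenable groups are definably amenable in NIP (pull back an invariant measure along the quotient). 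So $D_1$ is definably amenable and $G/D_1\cong S$.

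Second, I would pass to a centreless quotient. The centre of $\widetilde S$ is finite since $\widetilde S$ is semisimple, so replacing $\widetilde S$ by the adjoint group $\widetilde S/Z(\widetilde S)$, and $S$ by its image, only enlarges the kernel by a finite group. This keeps the kernel amenable, and the image is still open in the $k$-points of the adjoint group. By Fact \ref{fact-connected-semi-simples} the adjoint group is a direct product $\widetilde S_1\times\cdots\times\widetilde S_n$ of $k$-simple factors; centreless almost products are direct.

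Third, I would discard the definably compact factors. Let $I$ be the set of indices $i$ with $S\cap\widetilde S_i(k)$ definably compact, and let $\rho$ be the projection of $\widetilde S$ onto $\prod_{i\notin I}\widetilde S_i$. Set $A=\rho(S)$, whose Zariski closure is $\widetilde A=\prod_{i\notin I}\widetilde S_i$, which is centreless. Set $D=\ker(\rho\circ\cdot)$ in $G$. Then $D$ is an extension of $S\cap\prod_{i\in I}\widetilde S_i(k)$ by $D_1$ (modulo the finite centre). The middle group is definably compact, hence fsg and amenable. Indeed $S$ is open in $\widetilde S(k)$, so $S\cap\prod_{i\in I}\widetilde S_i(k)$ is open and therefore closed in $\prod_{i\in I}(S\cap\widetilde S_i(k))$ up to a finite-index issue; I would check that a product of definably compact groups, and a closed definable subgroup of it, is definably compact. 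Hence $D$ is definably amenable. Finally, $A\cap\widetilde A_i(k)$ contains $S\cap\widetilde S_i(k)$, which is non-compact for $i\notin I$. A definable group containing a closed non-definably-compact subgroup is not definably compact, and $S\cap\widetilde S_i(k)$ is closed, being open in $\widetilde S_i(k)$. If $I$ were everything, $G$ would be amenable, contradicting the hypothesis, so $m\ge1$.

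The main obstacle is the third step. The images $A\cap\widetilde A_i(k)$ versus the intersections $S\cap\widetilde S_i(k)$ must be compared carefully, since $S$ need not be the product of its intersections with the factors. I would handle this using openness: $\prod_i(S\cap\widetilde S_i(k))$ is an open subgroup of $S$. In the $p$-adic case it may have infinite index, so compactness of the $I$-part of $S$ should be deduced from the fact that $S\cap\prod_{i\in I}\widetilde S_i(k)$ is a closed subgroup of $\prod_{i\in I}\widetilde S_i(k)$. What remains to verify is that its projections to each factor stay bounded, using that each projection contains $S\cap\widetilde S_i(k)$ as an open subgroup.
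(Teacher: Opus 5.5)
Your first two steps are fine. Passing to the adjoint group is actually a useful addition: it is what makes $\widetilde A$ centreless, which the paper simply asserts for $S$ (although, e.g., $\widetilde S=\SL_2$ can occur). The gap is at the step you yourself flag as the main obstacle. Nothing in the proposal shows that $K:=S\cap\prod_{i\in I}\widetilde S_i(k)$ is definably compact, or even definably amenable, and without that you do not know $D$ is definably amenable. In your third paragraph the containment is backwards: $K$ \emph{contains} $\prod_{i\in I}(S\cap\widetilde S_i(k))$, so it is not a closed subgroup of that product. The repair in your last paragraph fails as stated too. Knowing that the projection $p_i(K)\le\widetilde S_i(k)$ contains the definably compact group $S\cap\widetilde S_i(k)$ as an open subgroup puts no bound on $p_i(K)$: $\SL_2(\mathbb{Z}_p)$ is a compact open subgroup of $\SL_2(\mathbb{Q}_p)$. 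Membership $i\in I$ controls only the kernel $K\cap\widetilde S_i(k)$, not the projection.

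The missing ingredient is normality together with the simplicity facts.
\begin{itemize}
\item The group $N_i:=K\cap\widetilde S_i(k)=S\cap\widetilde S_i(k)$ is normal in $K$. Since the other factors centralize $\widetilde S_i$, it is normalized by $p_i(K)$.
\item Suppose $p_i(K)$ were not definably compact. Then $\widetilde S_i$ is $k$-isotropic (Fact \ref{fact-semi-simple-unbounded=isotropic}), so $p_i(K)\supseteq\widetilde S_i(k)^+$ by Fact \ref{fact-G(k)+}(ii).
\item Then $N_i\cap\widetilde S_i(k)^+$ is an open, hence infinite, normal subgroup of $\widetilde S_i(k)^+$. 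By Fact \ref{fact-G+-has no inf nor sbgp} it is all of $\widetilde S_i(k)^+$.
\item So $N_i$ has finite index in the non-definably-compact group $\widetilde S_i(k)$, contradicting $i\in I$.
\end{itemize}
Hence every $p_i(K)$ is definably compact. Since $K$ is closed in $\prod_{i\in I}\widetilde S_i(k)$ and contained in $\prod_{i\in I}p_i(K)$, it is definably compact.

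Alternatively, follow the paper and never form $K$. Quotient by one factor at a time, each time choosing $i$ with $S\cap\widetilde S_i(k)$ definably compact, so every kernel is definably compact by hypothesis. Then re-examine the remaining factors against the new image. The number of factors decreases, so the iteration terminates, and no simplicity input is needed.
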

\begin{proof}
   Consider the definable short exact sequence given by  (\ref{eq-2}) in Fact \ref{open-subgp-of-semi-simple}:
\[
1 \to Z \to G \stackrel{\pi_L}{\to} L \to 1,  
\]
and the definable short exact sequence given by (\ref{eq-1})
\[
1\to R \to L \to S\to 1,
\]
where $R$ is solvable and $S$ is centreless and semi-simple. Then we have a definable short exact sequence
\[
1\to \pi_L^{-1}(R)\to G\to G/\pi_L^{-1}(R)\to 1.
\]
Clearly, $B=G/\pi_L^{-1}(R)\cong L/R\cong S$ is semi-simple. Let $D_0=\pi_L^{-1}(R)$, then $D_0$ is definably amenable since it is an extension of  $R$ by  $Z$, and both $R$ and $Z$ are definably amenable. Now we have a decomposition
\[
1\to D_0 \to G \stackrel{\pi_B}{\to} B\to 1.
\]
Since $\widetilde{B}$ is semi-simple, it is   an almost product of almost $k$-simple algebraic groups $\widetilde{B_1},\cdots, \widetilde{B_m}$. 

Suppose that $B\cap \widetilde{B_i}(k)$ is definably compact for some $i$, then there is an open subgroup $S_i$ of $(\widetilde{B}/\widetilde{B_i})(k)$ such that the following sequence is short exact:   
\[
1\to B_i(k)\cap B\to B\to  S_i\to 1.
\]
Replace $D_0$ by $D_1=\pi_B^{-1}(B_i(k)\cap B)$, we see that $D_1$ is also definably amenable, $G/D_1$ is linear and  $\widetilde{G/D_1}$ is definably isomorphic to an almost product of almost $k$-simple algebraic groups.  Since $G$ is not definably amenable, by iterating the above argument, we can finally obtain $D$ and nontrivial $A$ that satisfy the conditions. This completes the proof.
\end{proof}

\begin{dfn}
 Let $X$ be a semisimple algebraic group over $k$. By $X(k)^+$ we mean the (abstract) subgroup of $X(k)$ generated by its unipotent elements.   
\end{dfn}
Recall that an algebraic group \( G \) is said to be \emph{\( k \)-isotropic} if it admits a \( k \)-split algebraic torus as a \( k \)-subgroup—here, a \( k \)-split algebraic torus over \( k \) refers to an algebraic subgroup over \( k \) that is \( k \)-isomorphic to a finite direct power of the multiplicative group. An algebraic group is \emph{\( k \)-anisotropic} if it is not \( k \)-isotropic. We now view $\GL_n(\Omega)$ as a closed subset of $\Omega^{n^2+1}$.

\begin{fact}\cite{Prasad}\label{fact-semi-simple-unbounded=isotropic}
Let \( F \) be a Henselian valued field with valuation ring \( \mathcal{O}_F \), and let \( G \subseteq \operatorname{GL}_n(\Omega) \) be a reductive algebraic group over \( F \). Then \( G \) is \( F \)-anisotropic if and only if \( G(F) \) is bounded (i.e., there exists \( \alpha \in F \) such that \( G(F) \subseteq \alpha \mathcal{O}_F^{n^2 + 1} \)) in \( \operatorname{GL}_n(k) \).
\end{fact}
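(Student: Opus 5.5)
The plan is to prove the two directions separately. The implication ``bounded $\Rightarrow$ anisotropic'' is elementary. The converse is where the structure theory of reductive groups has to enter.

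\emph{Bounded implies anisotropic.} Argue by contraposition. Suppose $G$ is $F$-isotropic, so it contains an $F$-subgroup $T\cong \Gm$. An $F$-split torus in $\GL_n$ is $F$-diagonalizable, so after conjugating by some $h\in \GL_n(F)$ the image of $t\in F^\times$ is $\mathrm{diag}(t^{m_1},\dots,t^{m_n})$. Since $T$ is nontrivial, some $m_i\neq 0$. Choose $t\in F^\times$ of valuation $\pm N$ with the sign matching $m_i$, and let $N\to\infty$. Then either an entry $t^{m_i}$ becomes unbounded, or, when all $m_j\ge 0$, the extra coordinate $\det^{-1}$ becomes unbounded. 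Conjugation by the fixed element $h$ changes valuations of entries by at most a bounded amount, so $G(F)$ is unbounded.

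\emph{Anisotropic implies bounded: reduction.} Write $G=Z^0\cdot[G,G]$, where $Z^0$ is the central torus and the product map is an isogeny. Both factors are $F$-anisotropic, because an $F$-split subtorus of either one would be an $F$-split subtorus of $G$. I would then show:
\begin{enumerate}
\item $Z^0(F)$ and $[G,G](F)$ are bounded;
\item the image of $Z^0(F)\times [G,G](F)$ has finite index in $G(F)$, with coset representatives contributing only finitely many fixed matrices.
\end{enumerate}
I expect the finite-index claim to follow from finiteness of the relevant Galois cohomology of the finite kernel, or it can be bypassed by working directly with eigenvalues as below.

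\emph{The torus case.} Let $t\in Z^0(F)$. Its eigenvalues are $\chi(t)$ for $\chi\in X^*(Z^0)$, and these lie in a finite Galois extension $E/F$. Because $F$ is Henselian, the valuation extends uniquely to $E$. Hence the map $\chi\mapsto v(\chi(t))$ is a $\mathrm{Gal}(E/F)$-invariant element of $\mathrm{Hom}(X^*(Z^0),\mathbb{Q}\otimes\Gamma)$. A nonzero invariant element would produce a nonzero $F$-rational cocharacter (after averaging over the Galois group), which is impossible since $Z^0$ is anisotropic. So every eigenvalue is a unit. A diagonalizable matrix whose eigenvalues are units, conjugated into diagonal form by a fixed matrix over $E$, has bounded entries; this bounds $Z^0(F)$.

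\emph{The semisimple case (main obstacle).} This is the step I expect to be hardest, because Bruhat--Tits theory and the Cartan decomposition are not available over an arbitrary Henselian field. My first attempt is the following. Suppose $[G,G](F)$ is unbounded. Pass to an $\aleph_1$-saturated elementary extension of the valued field $F$ and pick $g$ whose matrix entries are infinitely large. Consider the action of $G\times G$ on $M_n$ by $(a,b)\cdot x=axb^{-1}$. The orbit of the identity matrix is then ``unstable along a valuation direction,'' and I would apply the rational Hilbert--Mumford/Kempf--Rousseau theorem: the optimal destabilizing cocharacter is unique up to the parabolic, hence Galois-stable and $F$-rational (valid since $\mathrm{char}=0$, so $F$ is perfect). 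Such a cocharacter is a nontrivial $F$-split torus in $G$, which gives the desired contradiction. The delicate point is converting ``unbounded entries'' into an instability statement that Kempf's theorem can use, for instance via a valuative Hilbert--Mumford criterion over the Henselian valuation ring. If this conversion cannot be made to work, I would fall back on citing Prasad's argument directly.
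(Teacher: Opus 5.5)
\paragraph{The main gap.} The paper gives no proof of this statement; it imports the Bruhat--Tits--Rousseau theorem from Prasad. Your direction ``bounded $\Rightarrow$ anisotropic'' is correct, and so is your torus computation. What is missing is the heart of the matter: you do not prove that a semisimple $F$-anisotropic $G$ has bounded $G(F)$. Moreover, the Kempf route cannot work as you set it up.

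Kempf's rationality theorem concerns $F$-points whose orbit is \emph{not Zariski closed}. For semisimple $G\subseteq\SL_n$, the $G\times G$-orbit of $I$ in $M_n$ is $G$ itself, which is Zariski closed. So $I$ is polystable whether or not $G$ is isotropic. Compare $\SL_2$: it is closed in $M_2$, yet $\SL_2(F)$ is unbounded. Passing to $F^*$ changes nothing, because the orbit of your infinitely large $g$ is the same closed set.

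Valuative unboundedness is not a Zariski-instability condition. Extracting an $F$-rational ``optimal direction'' from it is exactly the building-theoretic content of Rousseau's theorem. Since you end by falling back on citing Prasad, this direction is not established.

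A smaller point concerns your reduction. Finite index of $Z^0(F)[G,G](F)$ in $G(F)$ would need finiteness of $H^1(F,Z^0\cap[G,G])$, and this fails for general Henselian $F$ (e.g.\ $F=\mathbb{Q}((t))$). Instead, use that $G\to G/[G,G]\times G/Z^0$ is an isogeny, hence a finite morphism. Every coordinate function on $G$ then satisfies a monic equation over the coordinate ring of the target, and roots of monic polynomials with bounded coefficients are bounded, so boundedness pulls back to $G(F)$.

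\paragraph{One elementary way to close the gap.} Since $\mathrm{Ad}$ is an isogeny, $\mathrm{Ad}(G(F))$ is unbounded. Fix an $F$-basis $X_1,\dots,X_m$ of $\mathfrak{g}(F)$. In a $|F|^+$-saturated $F^*\succ F$ there is $g\in G(F^*)$ such that the entry $c$ of $\mathrm{Ad}(g)$ of minimal value satisfies $v(c)<v(F^\times)$. Let $w$ be the coarsening of $v$ by the convex hull of $v(F^\times)$; then $F\subseteq\mathcal{O}_w$ and $w(c)<0$.

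For a suitable $j$, the element $c^{-1}\mathrm{Ad}(g)X_j$ has $\mathcal{O}_w$-coordinates, one of them equal to $1$. The non-leading coefficients of the characteristic polynomial of its $\mathrm{ad}$ are $c^{-i}b_i$ with $b_i\in F$, so they lie in $\mathfrak{m}_w$. Hence its residue $Y\in\mathfrak{g}\otimes_F\kappa_w$ is nonzero with $\mathrm{ad}\,Y$ nilpotent, which makes $Y$ nilpotent.

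Since $(F^*,w)$ is Henselian of residue characteristic $0$, a maximal subfield of $\mathcal{O}_w$ containing $F$ is a field of representatives. It transports $Y$ to a nonzero nilpotent element of $\mathfrak{g}(F^*)$. By $F\preceq F^*$ there is such an element in $\mathfrak{g}(F)$. Jacobson--Morozov over $F$ then gives a nontrivial $F$-homomorphism $\SL_2\to G$, hence an $F$-split torus.
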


\begin{rmk}
   It is readily seen from Fact \ref{fact-semi-simple-unbounded=isotropic} that an algebraic group $G$ over \( k \) is \( k \)-isotropic if and only if $G(k)$ is definably compact. 
\end{rmk}

\begin{fact}[See Fact 6.6 of \cite{PPS-JA}]\label{fact-G+-has no inf nor sbgp}
 Let $X$ be an algebraic group over $k $. Suppose that $X$ is almost $k $-simple and $k$-isotropic. Then  $G(k)^+$ has no proper infinite normal subgroups. 
 \end{fact}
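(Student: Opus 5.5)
The plan is to reduce the statement (where $G(k)^+$ should be read as $X(k)^+$) to Tits' simplicity theorem for isotropic groups, proved via the BN-pair (Tits system) machinery. Throughout, ``no proper infinite normal subgroups'' is read as: every normal subgroup of $X(k)^+$ is either all of $X(k)^+$ or finite, and the finite ones will turn out to be central.

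\emph{Step 1: reduction to the simply connected case.} Let $\varphi:\widehat{X}\to X$ be the simply connected central $k$-isogeny onto $X$. Then $\widehat{X}$ is again almost $k$-simple and $k$-isotropic, and $\ker\varphi$ is finite and central. By Borel--Tits, $\varphi$ restricts to a bijection between the unipotent elements of $\widehat{X}(k)$ and those of $X(k)$. Hence $\varphi(\widehat{X}(k)^+)=X(k)^+$. If $N\trianglelefteq X(k)^+$, then $\varphi^{-1}(N)\cap \widehat{X}(k)^+$ is normal in $\widehat{X}(k)^+$. It therefore suffices to show that every normal subgroup of $\widehat{X}(k)^+$ is either central or everything. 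A central subgroup lies in the finite group $Z(\widehat{X})(k)$, so its image is finite; otherwise $N=X(k)^+$.

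\emph{Step 2: the Tits system.} Since $X$ is $k$-isotropic, fix a maximal $k$-split torus $T$, a minimal $k$-parabolic $P=Z_X(T)\ltimes U$ and its opposite $P^-=Z_X(T)\ltimes U^-$. By Borel--Tits, $X(k)^+$ is generated by $U(k)$ and $U^-(k)$. Moreover, setting $B=P(k)\cap X(k)^+$ and $N=N_X(T)(k)\cap X(k)^+$ gives a BN-pair in $X(k)^+$ whose Weyl group is the relative Weyl group ${}_kW$. Almost $k$-simplicity means that the relative root system ${}_k\Phi$ is irreducible, so the Coxeter system is irreducible. In addition, $U(k)$ is a nilpotent (hence solvable) normal subgroup of $B$, and its conjugates generate $X(k)^+$.

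\emph{Step 3: Tits' simplicity lemma.} Let $G$ carry a BN-pair with irreducible Weyl group, let $G$ be perfect, and let $B$ contain a normal solvable subgroup $U$ whose conjugates generate $G$. The lemma then says that every normal subgroup of $G$ either lies in $\bigcap_g gBg^{-1}$ (which is central here) or equals $G$. The argument runs as follows. A normal $N$ gives $NB$, a parabolic normalized by everything, so either $NB=G$ or $N\le B$. Irreducibility of $W$ is used precisely to exclude intermediate parabolics. If $NB=G$, then $G/N$ is a quotient of $U$, so it is solvable and perfect, hence trivial.

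\emph{Main obstacle.} The key remaining input is the perfectness of $X(k)^+$. For infinite $k$ of characteristic $0$, I would prove this by rank-one computations. One conjugates root elements $u_\alpha(t)$ by elements of $T(k)$ to write them as commutators $[s,u_\alpha(t')]$, using that $T(k)$ acts on $U_\alpha(k)$ via a nontrivial character. One then uses that the root subgroups generate $X(k)^+$. The delicate points are non-reduced relative root systems (type $BC_n$) and checking that $\bigcap_g gBg^{-1}$ is exactly the center. For these I would follow Tits' original argument rather than reprove them.
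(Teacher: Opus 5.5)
The paper does not prove this Fact; it only cites \cite{PPS-JA}, which rests on Tits' simplicity theorem for $X(k)^+$. Your outline (passage to the simply connected cover, the Tits system on $X(k)^+$ with irreducible relative Weyl group, and Tits' simplicity lemma using perfectness and the solvable normal subgroup $U(k)\le B$) is the standard proof of that theorem, so it is correct and the same in substance.

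Two details need tightening. First, $NB$ is not a priori normal: Tits' lemma gives $NB=P_J$ with $W_J$ a direct factor of $W$, and irreducibility then forces $NB\in\{B,G\}$. Second, in the perfectness step the commutators $[s,u_\alpha(t')]$ lie in the derived group of $\widehat X(k)^+$ only if $s\in T(k)\cap\widehat X(k)^+$. After your Step 1 this follows from the Kneser--Tits property the paper records for real closed and $p$-adically closed fields, which gives $T(k)\subseteq\widehat X(k)=\widehat X(k)^+$. With that in hand, the $BC_n$ case (filter $U_{(\alpha)}\supseteq U_{(2\alpha)}$ and take $\alpha(s)\neq\pm1$) and the centrality of $\bigcap_g gBg^{-1}\subseteq Z_{\widehat X}(T)(k)$ are routine rather than delicate.
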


Recall that the Kneser-Tits conjecture is:

\begin{KTC}[See Chapter 7 of \cite{AG-and-NT}]
Let $G$ be a simply connected, almsot $k$-simple, $k$-isotropic algebraic group over $k$. Then $G(k)^+=G(k)$.
\end{KTC}

The Kneser-Tits Conjecture is true for \( k = \mathbb{R} \) and \( k = \mathbb{Q}_p \) (see Proposition 7.6 and Theorem 7.6 in \cite{AG-and-NT}); it has further been proven to hold for any real closed field (see Proposition 6.8 in \cite{PPS-JA}) as well as for any \( p \)-adically closed field (see Proposition 3.13 in \cite{PYZ-$p$-adic-groups}).


In Corollary 3.15 of \cite{PYZ-$p$-adic-groups}, the authors only proved the following fact in the case where $k$ is a $p$-adically closed field, but the same proof applies to $\mathrm{RCF}$ where the Kneser-Tits Conjecture also holds.

\begin{fact}\label{fact-G(k)+}
Let  \( X \) be an almost \( k \)-simple, \( k \)-isotropic algebraic group over \( k \). Then
\begin{enumerate}
    \item[\upshape(i)] \( X(k)^+ \) is definable, normal, and of finite index;
    \item[\upshape(ii)] Any open subgroup \( H \) of \( X(k) \) is either definably compact (this case does not occur when $k$ is a real closed field), or contains \( X(k)^+ \) (and is therefore definable and of finite index).
\end{enumerate}
\end{fact}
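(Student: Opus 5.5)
We work with the simply connected cover and transport the statement back. Let $\pi:\widehat X\to X$ be the simply connected central isogeny over $k$; $\widehat X$ is again almost $k$-simple and $k$-isotropic, and the Kneser--Tits Conjecture (valid for real closed and $p$-adically closed $k$, as recalled above) gives $\widehat X(k)^+=\widehat X(k)$. Since $\pi$ maps unipotent elements bijectively onto unipotent elements (the kernel is central and finite, so it contains no nontrivial unipotents, and unipotents lift), we get $\pi(\widehat X(k))=\pi(\widehat X(k)^+)=X(k)^+$.

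For (i), $X(k)^+=\pi(\widehat X(k))$ is the image of a $k$-definable group under a definable homomorphism, hence definable. It is normal because conjugation by $X(k)$ preserves the set of unipotent elements. For finite index, I would use the standard exact sequence in Galois cohomology $\widehat X(k)\to X(k)\to H^1(k,\ker\pi)$. Here $\ker\pi$ is a finite commutative group scheme, and for $k$ real closed or $p$-adically closed $H^1(k,\ker\pi)$ is finite, since the absolute Galois group has finitely many open subgroups of each index. An alternative route is elementary transfer from $\mathbb R$ or $\mathbb Q_p$, where the index bound is classical, but the cohomological argument already applies uniformly.

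For (ii), let $H\le X(k)$ be open and not definably compact. Consider $N=H\cap X(k)^+$, which has finite index in $H$ and is open in $X(k)^+$. The key step is to show that $H$ contains the $k$-points of the unipotent radicals $U_\alpha(k)$ of the root subgroups attached to a maximal $k$-split torus. I would argue as follows:
\begin{itemize}
\item By Fact \ref{fact-semi-simple-unbounded=isotropic} and non-compactness, $H$ is unbounded. Using a $KAK$- or Cartan-type decomposition, definable in these fields by transfer, $H$ contains an element $t$ acting on some $U_\alpha(k)$ by strictly expanding conjugation.
\item Because $H$ is open, it contains a neighbourhood $V$ of the identity in $U_\alpha(k)$.
\item Then $\bigcup_n t^nVt^{-n}=U_\alpha(k)$. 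This requires the union to exhaust $U_\alpha(k)$, which needs care in a nonstandard field; there I would replace iteration by a definable argument, namely that the set of $u\in U_\alpha(k)$ lying in $H$ is a definable subgroup that is open and invariant under expansion by $t$, hence everything.
\end{itemize}
Once one root group lies in $H$, the subgroup generated by the $X(k)$-conjugates of $U_\alpha(k)$ that are contained in $H$ is a normal subgroup of $X(k)^+$ and is infinite. Fact \ref{fact-G+-has no inf nor sbgp} then forces it to equal $X(k)^+$, so $H\supseteq X(k)^+$.

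To make the last step rigorous, use $N\cap gNg^{-1}$ for $g\in X(k)^+$, or apply Fact \ref{fact-G+-has no inf nor sbgp} to the normal core of $N$ in $X(k)^+$. That core is a finite intersection of conjugates since $N$ has finite index in $H$, and it still contains a root group, so it is infinite. Hence $H\supseteq X(k)^+$, and by (i) $H$ is definable and of finite index.

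For real closed $k$, $X(k)^+$ is then also open and noncompact, and any open subgroup contains the definably connected component, which is noncompact. So the definably compact case cannot occur, consistent with the parenthetical remark.

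The main obstacle is the root-group step in (ii): producing the expanding element inside $H$ and exhausting $U_\alpha(k)$ definably in a nonstandard field. I would first try to reduce it to $\mathbb R$ and $\mathbb Q_p$ by expressing the dichotomy uniformly in definable families of open subgroups, then transferring.
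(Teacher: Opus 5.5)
\textbf{Verdict.} Part (i) is sound, but part (ii) has a genuine gap in its final step.

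\textbf{Part (i).} This is the Kneser--Tits route the paper has in mind; the paper gives no argument of its own and only points to Corollary~3.15 of Pillay--Yao--Zhang. Your argument that $X(k)^+=\pi(\widehat X(k))$, together with $X(k)/\pi(\widehat X(k))\hookrightarrow H^1(k,\ker\pi)$ being finite, is fine.

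\textbf{The gap in (ii).} The subgroup generated by the $X(k)$-conjugates of $U_\alpha(k)$ that lie in $H$ is normalized by $H$, but not by $X(k)^+$: conjugating by some $x\notin H$ can move such a conjugate out of $H$. So Fact~\ref{fact-G+-has no inf nor sbgp} does not apply to it.

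Your proposed repair is circular. The number of $X(k)^+$-conjugates of $N=H\cap X(k)^+$ is $[X(k)^+:N_{X(k)^+}(N)]$, and $[H:N]<\infty$ gives no control over it. For instance, take $H=\SL_2(\mathbb{Z}_p)\le\SL_2(\Q)$: its core is $\{\pm I\}$, and your inference never used unboundedness, so it cannot be valid. Showing that $N$ has finite index in $X(k)^+$, or that every $X(k)^+$-conjugate of $N$ still contains $U_\alpha(k)$, is essentially the statement being proved.

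\textbf{What is needed instead.} The usual argument propagates along the relative root system ${}_k\Phi$ of a maximal $k$-split torus $S$, with no normal subgroups involved.
\begin{enumerate}
\item Suppose $U_{(\alpha)}(k)\le H$; this is harmless after conjugating $H$, since $X(k)^+\trianglelefteq X(k)$.
\item For $1\neq u\in U_{(\alpha)}(k)$, write $m(u)=u'uu''\in N_X(S)(k)$ with $u',u''\in U_{(-\alpha)}(k)$ (Borel--Tits). Replace $u$ by $sus^{-1}$, where $s$ lies in the coroot torus $S_\alpha$ and $\alpha(s)$ is large. This shrinks the partners $su's^{-1},su''s^{-1}$ into the identity neighbourhood contained in $H$. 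Hence $m(sus^{-1})\in H$, and so $U_{(-\alpha)}(k)\le H$.
\item Since $s_\alpha$ inverts $S_\alpha$, you get $m(sus^{-1})m(u)^{-1}=s^2\in H$. Thus $H\cap S(k)$ expands every $U_{(\beta)}(k)$ with $\langle\beta,\alpha^\vee\rangle\neq0$ by arbitrarily large factors.
\item Your definable-invariance argument then puts these $U_{(\beta)}(k)$ in $H$. In the $p$-adic case it needs the fact that a nonempty definable subset of the value group that is bounded below has a minimum.
\item Since $X$ is almost $k$-simple, ${}_k\Phi$ is irreducible. Hence $H\ge\langle U_{(\beta)}(k):\beta\in{}_k\Phi\rangle=X(k)^+$.
\end{enumerate}

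\textbf{Step (a) also needs correcting.} An element $k_1zk_2\in H$ from the Cartan decomposition need not normalize any root group. Take an open normal subgroup of $K$ inside $H$ and pigeonhole over its finitely many cosets and the simple roots. The decomposition then only shows that some $K$-conjugate of $H$ contains arbitrarily large balls of some $U_{(\alpha)}(k)$, hence contains $U_{(\alpha)}(k)$; that is exactly the input to the propagation argument above.

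\textbf{Real closed case.} Over a real closed field, (ii) is immediate anyway. A definable open subgroup has finite index, so it contains $X(k)^0$ and hence every unipotent one-parameter subgroup.
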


Applying Lemma \ref{lemma-almost-k-simple-M-simple}, Lemma \ref{lem-amenable-semi-simple-decom},  and Fact \ref{fact-G(k)+}, it is straightforward to show that
\begin{cor}\label{cor-amenable-semi-simple-decom}
Let \( G \) be a group definable over \( k \). If \( G \) is not definably amenable, then there exists a definable short exact sequence
\begin{equation}\label{equ-amenable-semisimple-I}
1 \to D \to G \stackrel{\pi_A}{\to} A \to 1,
\end{equation}
where \( D \) is definably amenable, \( \widetilde{A} \) is an almost direct product of almost \( k \)-simple algebraic groups \( \widetilde{A_1}, \ldots, \widetilde{A_m} \) such that each \( \widetilde{A_i}(k) \) is not definably compact, \( A \) has finite index in \( \widetilde{A}(k) \). 
\end{cor}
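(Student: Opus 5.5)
We start from the short exact sequence $1 \to D \to G \stackrel{\pi_A}{\to} A \to 1$ of Lemma \ref{lem-amenable-semi-simple-decom}. It gives $D$ definably amenable, $\widetilde{A}$ centreless and an almost product of almost $k$-simple groups $\widetilde{A_1},\dots,\widetilde{A_m}$, and $A\cap\widetilde{A_i}(k)$ not definably compact for each $i$. Two conclusions remain to be upgraded: (a) each $\widetilde{A_i}(k)$ is not definably compact, and (b) $A$ has finite index in $\widetilde{A}(k)$.

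Conclusion (a) is immediate. Since $A\cap\widetilde{A_i}(k)$ is a definable, relatively closed (indeed open) subgroup of $\widetilde{A_i}(k)$, definable compactness of $\widetilde{A_i}(k)$ would pass to it. By the Remark after Fact \ref{fact-semi-simple-unbounded=isotropic}, non-compactness of $\widetilde{A_i}(k)$ then means $\widetilde{A_i}$ is $k$-isotropic. Lemma \ref{lemma-almost-k-simple-M-simple} is used to ensure that almost $k$-simplicity of the factors is unaffected when we pass to $\mathbb{M}$-points or elementary extensions. This is what makes the hypotheses of Fact \ref{fact-G(k)+} available uniformly, so that the $\widetilde{A_i}(k)^+$ are definable.

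For (b), recall that $A$ is an open subgroup of $\widetilde{A}(k)$, since it arises as in (\ref{eq-1}). Fix $i$ and set $H_i=A\cap\widetilde{A_i}(k)$. This is an open subgroup of $\widetilde{A_i}(k)$ and it is not definably compact. By Fact \ref{fact-G(k)+}(ii), $H_i\supseteq\widetilde{A_i}(k)^+$, and by Fact \ref{fact-G(k)+}(i) this has finite index in $\widetilde{A_i}(k)$. Hence $A$ contains the product $P=\widetilde{A_1}(k)^+\cdots\widetilde{A_m}(k)^+$, so it suffices to show that $P$ has finite index in $\widetilde{A}(k)$.

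The multiplication map $\mu:\widetilde{A_1}\times\cdots\times\widetilde{A_m}\to\widetilde{A}$ is a central isogeny with finite kernel. Its image on $k$-points, $\mu(\prod\widetilde{A_i}(k))$, is a definable subgroup of $\widetilde{A}(k)$ of the same dimension, hence open. Moreover, $P$ has finite index in this image. The remaining step, which I expect to be the main obstacle, is showing that $\mu(\prod_i\widetilde{A_i}(k))$ has finite index in $\widetilde{A}(k)$. Isogenies need not be surjective on $k$-points. The cokernel embeds in a Galois cohomology group $H^1(k,\ker\mu)$, which is finite for $k$ a $p$-adically closed or real closed field (these fields have finitely many extensions of each degree). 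This finiteness is first-order and transfers from $\mathbb{Q}_p$ or $\mathbb{R}$.

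An alternative route for this last step is to avoid cohomology. One can use that $\widetilde{A}(k)^+$ is generated by unipotents, and that each unipotent of $\widetilde{A}$ lies in the image of the unipotents of $\prod\widetilde{A_i}$, since $\mu$ is bijective on unipotent varieties. This gives $\widetilde{A}(k)^+\subseteq P$. Then we need $\widetilde{A}(k)^+$ to have finite index in $\widetilde{A}(k)$, which follows factorwise as in Fact \ref{fact-G(k)+}(i) applied to an isotropic semisimple group. Either way, combining the finite-index chain $P\le A\le\widetilde{A}(k)$ gives that $A$ has finite index in $\widetilde{A}(k)$, completing the proof.
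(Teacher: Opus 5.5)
Your proof is correct and has the same skeleton as the paper's one-line argument. You start from Lemma~\ref{lem-amenable-semi-simple-decom} and show that each $\widetilde{A_i}$ is $k$-isotropic; you read the Remark after Fact~\ref{fact-semi-simple-unbounded=isotropic} in the right direction, whereas as printed it swaps isotropic and anisotropic. Then you use Fact~\ref{fact-G(k)+} to get $\widetilde{A_i}(k)^+\le A\cap\widetilde{A_i}(k)$, of finite index in $\widetilde{A_i}(k)$.

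You differ only in passing from the factors to $\widetilde{A}(k)$, which the paper calls straightforward. You quote centrelessness of $\widetilde{A}$ from the lemma but never use it, and with it your ``main obstacle'' disappears. Take $(z_1,\dots,z_m)\in\ker\mu$. Then $z_i\in\widetilde{A_i}\cap\prod_{j\neq i}\widetilde{A_j}$, and since the factors commute, $z_i$ centralizes every $\widetilde{A_j}$, so $z_i\in Z(\widetilde{A})=1$. Hence $\mu$ is an isomorphism over $k$ (characteristic $0$), $\widetilde{A}(k)=\prod_i\widetilde{A_i}(k)$, and $\prod_i\widetilde{A_i}(k)^+\le A$ has finite index at once. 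This is presumably the intended argument.

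Your Galois-cohomology bound, $\widetilde{A}(k)/\mu(\prod_i\widetilde{A_i}(k))\hookrightarrow H^1(k,\ker\mu)$, finite because $k$ has finitely many extensions of each degree, is also correct. What it buys is independence from centrelessness. Lemma~\ref{lem-amenable-semi-simple-decom} asserts centrelessness without argument, and it is not automatic: $\widetilde{L}/\widetilde{R}$ can have a nontrivial finite centre (e.g.\ $\SL_2$), so a further quotient would be needed. Your version thus proves the corollary as literally stated, for an almost direct product, at the cost of Galois cohomology, which the paper never uses.

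Your ``alternative route'' does not actually avoid the issue. Applying Fact~\ref{fact-G(k)+}(i) factorwise only shows that $\widetilde{A}(k)^+=P$ has finite index in $\mu(\prod_i\widetilde{A_i}(k))$. Finite index of $\widetilde{A}(k)^+$ in $\widetilde{A}(k)$, for $\widetilde{A}$ semisimple but not almost simple, is the same isogeny-cokernel question again. That route therefore still needs the cohomology bound or centrelessness.
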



\begin{lemma}\label{lemma-Y00 has finite index}
Let $X$ be an algebraic group over $k$, and let $Y\subseteq \mathbb{M}^n$ be a non-definably-compact definable open subgroup of $X(\mathbb{M})$. If $X$ is almost $k$-simple, then $Y^{00}=Y^0$ and this subgroup has finite index in $X(\M)$.
\end{lemma}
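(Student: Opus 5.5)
We reduce the claim to Fact \ref{fact-G(k)+} and Fact \ref{fact-G+-has no inf nor sbgp}, applied in the monster model $\M$, which is itself a $p$-adically closed field or an $o$-minimal expansion of a real closed field. First, by Lemma \ref{lemma-almost-k-simple-M-simple}, $X$ is almost $\M$-simple. Next, $Y$ is not definably compact, so $X(\M)$ is not definably compact. By the remark following Fact \ref{fact-semi-simple-unbounded=isotropic}, which holds for any Henselian valued field and hence for any real closed field (via the convex valuation) and any $p$-adically closed field, $X$ is therefore $\M$-isotropic. Now Fact \ref{fact-G(k)+}(ii), applied over the field $\M$ to the open subgroup $Y$, shows that $Y \supseteq X(\M)^+$. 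By Fact \ref{fact-G(k)+}(i), $X(\M)^+$ is a definable normal subgroup of finite index in $X(\M)$, so $Y$ also has finite index in $X(\M)$.

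Next we show $Y^{00}=Y^0$ and that this group contains $X(\M)^+$. Since $Y^0$ and $Y^{00}$ are bounded-index subgroups of $Y$, they have bounded index in $X(\M)$. The key observation is that any type-definable bounded-index subgroup $K$ of $Y$ contains $X(\M)^+$. To see this, consider $K\cap X(\M)^+$. It is a type-definable, bounded-index subgroup of the definable group $X(\M)^+$, so it contains $(X(\M)^+)^{00}$, which is normal in $X(\M)$ because it is invariant under all definable automorphisms. This normal subgroup of $X(\M)^+$ is infinite: it has bounded index in the infinite definable group $X(\M)^+$, which has positive dimension. By Fact \ref{fact-G+-has no inf nor sbgp}, $X(\M)^+$ has no proper infinite normal subgroups, so $(X(\M)^+)^{00}=X(\M)^+$. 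Hence $X(\M)^+\subseteq K$. Applying this with $K=Y^{00}$ gives $X(\M)^+\subseteq Y^{00}\subseteq Y^0\subseteq Y$. Since $X(\M)^+$ is definable of finite index in $Y$, $Y^0 \subseteq X(\M)^+$, and therefore $Y^{00}=Y^0=X(\M)^+$, which has finite index in $X(\M)$.

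The main obstacle is justifying the transfer of the facts from $k$ to $\M$. Each of Fact \ref{fact-G(k)+}, Fact \ref{fact-G+-has no inf nor sbgp} and the isotropy criterion is stated for an arbitrary $p$-adically closed or real closed field, so it applies to $\M$ directly, provided we have almost $\M$-simplicity, which Lemma \ref{lemma-almost-k-simple-M-simple} supplies. A second subtlety is the use of Fact \ref{fact-G+-has no inf nor sbgp}: the subgroup $(X(\M)^+)^{00}$ is normal but need not be definable, and that fact is stated for arbitrary (abstract) normal subgroups, so it still applies. Alternatively, one can avoid $00$-components entirely. Take the normal core of $Y^0$ in $X(\M)^+$, the intersection of the boundedly many conjugates of $Y^0$. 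It is a type-definable normal subgroup of bounded index, hence infinite, and so equals $X(\M)^+$ by the same fact.
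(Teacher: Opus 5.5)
Your proof is correct and takes essentially the paper's route: almost $\M$-simplicity via Lemma \ref{lemma-almost-k-simple-M-simple}, then Fact \ref{fact-G(k)+} to get $X(\M)^+\le Y$ with finite index, then Fact \ref{fact-G+-has no inf nor sbgp} to force the connected components to equal $X(\M)^+$. The differences are cosmetic. You make explicit the $\M$-isotropy of $X$, deduced from the non-compactness of $Y$, which the paper uses tacitly. You also obtain $X(\M)^+\subseteq Y^{00}$ by proving $(X(\M)^+)^{00}=X(\M)^+$, whereas the paper first identifies $Y^0=X(\M)^+$ and then applies the no-infinite-normal-subgroup fact directly to the normal subgroup $Y^{00}$ of $X(\M)^+$.
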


\begin{proof}
By Lemma \ref{lemma-almost-k-simple-M-simple}, $X$ is almost $\mathbb{M}$-simple. Fact \ref{fact-G(k)+} then implies that $X(\mathbb{M})^+\leq Y \leq X(\mathbb{M})$. Furthermore, every finite-index subgroup of $Y$ contains $X(\mathbb{M})^+$. We thus conclude that $Y^0=X(\mathbb{M})^+$, which is a finite-index subgroup of $X(\mathbb{M})$.

Next, $Y^{00}$ is an infinite normal subgroup of $X(\mathbb{M})^+$. By Fact \ref{fact-G+-has no inf nor sbgp}, it follows that $Y^{00}=X(\mathbb{M})^+$. Hence, $Y^{00}=Y^0=X(\mathbb{M})^+$, and this subgroup has finite index in $X(\M)$.
\end{proof}

Applying Lemma \ref{lemma-Y00 has finite index}, it is straightforward to show that
\begin{cor}
Let \( A \) be as in the exact sequence (\ref{equ-amenable-semisimple-I}) of Corollary \ref{cor-amenable-semi-simple-decom}; then \( A^{00} = A^0 \), which is a finite-index subgroup of \( A \).
\end{cor}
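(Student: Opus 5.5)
We prove the statement for $A$ as in Corollary \ref{cor-amenable-semi-simple-decom}, so $\widetilde{A}$ is an almost direct product of almost $k$-simple algebraic groups $\widetilde{A_1},\dots,\widetilde{A_m}$, each $\widetilde{A_i}(k)$ is non-definably-compact, and $A$ has finite index in $\widetilde{A}(k)$. The plan is to reduce to Lemma \ref{lemma-Y00 has finite index} factor by factor, and then use the almost direct product structure.

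\emph{Step 1: the factors.} For each $i$, set $Y_i=A\cap \widetilde{A_i}(\M)$. Since $A$ has finite index in $\widetilde{A}(\M)$, the group $Y_i$ has finite index in $\widetilde{A_i}(\M)$, so it is open there. It is also not definably compact: $\widetilde{A_i}(k)$ is not, and finite index preserves non-compactness. Lemma \ref{lemma-Y00 has finite index} then gives
\[
Y_i^{00}=Y_i^0=\widetilde{A_i}(\M)^+,
\]
and this group has finite index in $\widetilde{A_i}(\M)$.

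\emph{Step 2: the product.} Let $P=\widetilde{A_1}(\M)^+\cdots\widetilde{A_m}(\M)^+$. The factors commute modulo finite central intersections, so $P$ is a definable subgroup. By Step 1, $P$ has finite index in the product $\widetilde{A_1}(\M)\cdots\widetilde{A_m}(\M)$. That product in turn has finite index in $\widetilde{A}(\M)$, because the multiplication map $\prod_i\widetilde{A_i}\to\widetilde{A}$ is an isogeny, and on rational points its image has finite index (the cokernel is controlled by a finite Galois cohomology group, finite over $p$-adically closed and real closed fields). Hence $P\le A$ has finite index in $A$, and therefore $A^{00}\le A^0\le P$.

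\emph{Step 3: the reverse inclusion.} We show $P\le A^{00}$. For each $i$, the group $A^{00}\cap Y_i$ is type-definable of bounded index in $Y_i$, so it contains $Y_i^{00}=\widetilde{A_i}(\M)^+$. Multiplying over $i$ gives $P\le A^{00}$. Combining with Step 2,
\[
A^{00}=A^0=P,
\]
which has finite index in $A$.

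The main obstacle is the finite-index claim in Step 2: showing that the image of $\prod_i\widetilde{A_i}(\M)$ has finite index in $\widetilde{A}(\M)$. If the Galois cohomology argument is awkward, I would avoid it. Instead, I would note that $A/P$ is a definable group of dimension $0$, since $\dim P=\sum_i\dim\widetilde{A_i}=\dim A$. A definable group of dimension $0$ is finite, so $P$ has finite index in $A$ directly.
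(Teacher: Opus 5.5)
Your proof is correct and follows the route the paper has in mind: the paper only remarks that the corollary is a straightforward consequence of Lemma \ref{lemma-Y00 has finite index}, and your approach carries that out. You apply that lemma factor by factor and then close with the sandwich $P\le A^{00}\le A^0\le P$.

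One caution: in Step 2, use the Galois-cohomology argument rather than your dimension fallback. Over a $p$-adically closed field, $A/P$ is a priori only interpretable, and a full-dimensional (hence open) definable subgroup need not have finite index, e.g.\ $\SL_2(\mathcal{O}(\M))$ in $\SL_2(\M)$. This is exactly why the paper needs the dfg hypothesis in Lemma \ref{lemma-open-is-finite-index}.
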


\begin{rmk}
Let \( A \) be as in the exact sequence (\ref{equ-amenable-semisimple-I}) of Corollary \ref{cor-amenable-semi-simple-decom}. It has been shown in \cite{PPS-JA} and \cite{PYZ-$p$-adic-groups} that \( A^0 \) is simple as an abstract group for \( k \) a real closed field or a \( p \)-adically closed field, respectively.
\end{rmk}

\subsection{The Definably Amenable Component of $G$}\label{sec-Amenable Component}

We again assume that  $ G $ is an $ M $-definable group (in $ \M $).
\begin{fact}\label{fact-G/dfg-is-definable}
Let \( W \) be a dfg subgroup of \( G \). Then \( G/W \) is definable (and not merely interpretable).
\end{fact}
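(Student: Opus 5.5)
The plan is to treat the two settings separately. In each, the task is to code the cosets $gW$ by real tuples: we want a definable map $f\colon G\to \M^m$ with $f(g)=f(h)$ iff $gW=hW$.

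\emph{The $o$-minimal case.} Here the result should be immediate. An $o$-minimal expansion of a real closed field has definable choice, hence elimination of imaginaries. Concretely, the equivalence relation $g\sim h \iff g^{-1}h\in W$ is definable. Definable choice (over the parameters defining $G$, $W$, $M$) gives a definable function $s$ with $s(g)\in gW$ and $s(g)=s(h)$ whenever $gW=hW$. Then $G/W$ is in definable bijection with the definable set $s(G)$. The dfg hypothesis is not even needed in this case.

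\emph{The $p$-adic case.} This is the real content: $\pcf$ does not eliminate imaginaries in the field sort. The plan is to use the structure of dfg groups from Fact \ref{fact-dfg-group-Qp}, transferred to $M$ since the characterization is first order in the relevant families. This gives a chain
\[
W_0\lhd W_1\lhd\cdots\lhd W_n=W
\]
with $W_0$ finite and each $W_{i+1}/W_i$ definably isomorphic to $\mathbb{G}_a$ or to a finite-index subgroup of $\Gm$. We then argue by induction on $i$ that $G/W_i$ is definable.
\begin{itemize}
\item \emph{Base case.} $G/W_0$ is a quotient by a finite group. In $\pcf$, finite sets are coded by real tuples: take the coefficients of the polynomial with those roots, applied coordinatewise after a definable injection into the field. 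This handles the finite quotient.
\item \emph{Inductive step.} Given a definable model $X$ of $G/W_i$, the group $D=W_{i+1}/W_i$ acts definably and freely on $X$ by right multiplication, and $G/W_{i+1}=X/D$. So it suffices to prove the following: if a one-dimensional dfg group $D$ (additive, or a finite-index subgroup of multiplicative) acts freely and definably on a definable set $X$, then $X/D$ is definable.
\end{itemize}

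\emph{The one-dimensional step.} This is where I expect the main difficulty. Each orbit $xD$ is a one-dimensional definable set in definable bijection with $D$ via $d\mapsto x\cdot d$. We want to select a canonical point or finite set in each orbit, uniformly and definably from the orbit alone.

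My first attempt uses the unboundedness that characterizes dfg. In $\mathbb{G}_a$ or $\Gm$ there is a canonical ``end'' of the group: valuation tending to $-\infty$, respectively to $\pm\infty$. The trajectory $d\mapsto x\cdot d$ is a definable curve in $X\subseteq \M^m$, and by $p$-adic cell decomposition its coordinates behave like $p$-adic analytic or semialgebraic functions of $d$ near that end. The idea is to consider the set of points of $xD$ where a chosen coordinate function attains an extremal valuation. Since $D$ is not definably compact, such extremal values should exist on the orbit and give a nonempty definable subset of $xD$ depending only on the orbit. One would then iterate over coordinates and angular components, using definable Skolem functions (van den Dries) restricted to this orbit-invariant subset. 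The goal is to cut it down to a finite orbit-invariant set, and finite sets are coded as in the base case. The delicate point is that in an ultrametric setting the extremal-valuation locus is typically a ball rather than a point. I would handle this by composing with the $\mathrm{ac}$/residue data, which ranges in finite sets in $\pcf$, together with finiteness of the value-group fibers. If this direct selection becomes too messy, the fallback is to invoke the elimination of imaginaries of $\pcf$ in the geometric sorts and show that the codes of the orbits $xD$ land in a sort that definably embeds into the home sort, because the stabilizer $D$ is of this triangulable type. I would check this against \cite{PYZ-$p$-adic-groups}, where an analogous definability of $G/W$ is presumably used.
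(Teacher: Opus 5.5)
Your $o$-minimal argument is the paper's own: elimination of imaginaries via definable choice, with no use of dfg. The paper does not prove the $p$-adic half; it imports it from Corollary~4.3 of \cite{GJ-Around}. Your attempt at it has a genuine gap, because everything rests on the one-dimensional step and that step is not established. Since finite sets of tuples are coded in $\pcf$ and $\pcf$ has definable Skolem functions, uniformly selecting a nonempty finite subset of each orbit $xD$ that depends only on the orbit is \emph{equivalent} to the definability of $X/D$. So the reduction to this selection problem gains nothing, and all the weight falls on your selection mechanism. That mechanism does not work as described. Non-compactness of $D$ does not make a coordinate attain an extremal valuation on an orbit; it typically prevents it. Take $D=\Gm$ acting freely on $X=(\M^{*})^2$ by $(a,b)\cdot t=(at,bt)$. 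On each orbit $\{(as,bs):s\in\M^{*}\}$ both coordinates take every value of the value group and never vanish, so no maximum or minimum of the valuation is attained, even though the quotient is plainly coded by $b/a$.

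When an extremal locus does exist, it is typically an infinite, ball-like piece of the orbit. Balls are a basic example of imaginaries that $\pcf$ does not eliminate in the home sort: a definable map on $\M$ whose fibres are the cosets of the valuation ring $\mathcal{O}$ would have $0$-dimensional, hence finite, image, whereas $\M/\mathcal{O}$ is infinite. Residue and angular-component data take only finitely many values, so each refinement splits a ball into finitely many smaller balls. No finite uniform iteration therefore reaches a finite set. Your fallback (elimination of imaginaries in the geometric sorts, plus ``the codes land in a sort embeddable in the home sort'') only restates the claim. A lesser point: Fact~\ref{fact-dfg-group-Qp} is stated for groups definable over $\Q$, and dfg is not evidently a first-order property in families, so transferring it to an arbitrary $M$ also needs an argument. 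In short, the $p$-adic case needs an input you have not supplied. The paper's input is the cited result of \cite{GJ-Around}, which comes out of an analysis of definable types in $\pcf$ rather than a point-by-point selection within orbits.
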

\begin{proof}
 When $ M $ is a $ p $-adically closed field, Corollary 4.3 of \cite{GJ-Around} implies that $ G/W $ is definable whenever $ W $ has dfg; when $ M $ is an $ o $-minimal expansion of a real closed field, it is well-known that $ \Th(M) $ eliminates imaginaries, so $ G/W $ is definable for any definable subgroup $ W $ of $ G $ (see \cite{D. Mac} for details).     
\end{proof} 
The following Fact is taken from Corollary 4.4 of \cite{GJ-Around}, and its proof also applies to the $o$-minimal setting: 
\begin{fact}\label{fact-open-dfg-subgroup}
     Let $W$ be a dfg subgroup of $G$. If $W_1$ is an open dfg subgroup of $W$, then $W_1$ has finite index.
\end{fact}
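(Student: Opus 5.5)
The plan is a dimension count. It uses Fact \ref{fact-G/dfg-is-definable} to make the quotient $W/W_1$ a genuine definable set, so that the dimension theory of \( \Th(M_0) \) applies to it.

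First, openness gives equality of dimensions. By the discussion of definable manifolds, \( W \) carries a definable manifold structure of some dimension \( d=\dim(W) \), and \( W_1 \) is a definable open subset of \( W \). Locally, in a chart \( U_i\to V_i\subseteq \M^{d} \), the set \( W_1\cap U_i \) is sent to an open subset of \( \M^{d} \). Since \( W_1 \) is nonempty, some such intersection is a nonempty open set, and so \( \dim(W_1)=d \) by the definition of topological dimension. (Equivalently, \( W_1 \) contains an open neighbourhood of the identity, which already has full dimension.)

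Second, the quotient is definable. Apply Fact \ref{fact-G/dfg-is-definable} with the ambient group \( W \) and the dfg subgroup \( W_1 \). This shows that \( W/W_1 \) is a definable set and that the projection \( \pi:W\to W/W_1 \) is a definable map. In the \( o \)-minimal case this also follows from elimination of imaginaries. In the \( p \)-adic case this is exactly where the hypothesis that \( W_1 \) has dfg is used.

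Third, additivity of dimension finishes the argument. Every fibre of \( \pi \) is a left coset \( gW_1 \), which is in definable bijection with \( W_1 \) and so has dimension \( d \). By additivity of dimension for definable maps with equidimensional fibres (valid in both settings, since \( \dim \) is the algebraic dimension of an acl-pregeometry),
\[
\dim(W)=\dim(W/W_1)+\dim(W_1).
\]
Hence \( \dim(W/W_1)=0 \). A definable set of dimension \( 0 \) in \( o \)-minimal or \( p \)-adically closed structures is finite, so \( [W:W_1]<\infty \).

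The only genuinely delicate step is the definability of \( W/W_1 \) in the \( p \)-adic case, since \( \pcf \) does not eliminate imaginaries in the home sort. This is supplied by Fact \ref{fact-G/dfg-is-definable}, and that is where the dfg hypothesis on \( W_1 \) enters. The rest is routine dimension theory.
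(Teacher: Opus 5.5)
Your argument is correct and is essentially the paper's own. The paper only cites Corollary 4.4 of \cite{GJ-Around} for this Fact, but its proof of Lemma \ref{lemma-open-is-finite-index} (which contains the Fact as the special case where the definable subgroup is $W$ and the dfg subgroup is $W_1$) is exactly your dimension count: the quotient is definable by Fact \ref{fact-G/dfg-is-definable}, openness gives equal dimensions, so the quotient is $0$-dimensional and hence finite, and, as you implicitly note, only the dfg hypothesis on $W_1$ is used.
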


\begin{lemma}\label{lemma-open-is-finite-index}
    Let $W$ be a dfg   subgroups of $G$ and $A$ a definable subgroup of $G$. If $W\cap A$ is open in $A$, then $W\cap A$ has finite index in $A$.
\end{lemma}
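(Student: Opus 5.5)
The plan is to reduce the statement to Fact \ref{fact-open-dfg-subgroup}, applied inside the group $W$. Set $W_1 = W\cap A$. This is a definable subgroup of $W$, and it will play the role of the open dfg subgroup in that Fact. Two things then need checking: that $W_1$ has dfg, and that $W_1$ is open in $W$. Even then the Fact only yields finite index of $W_1$ in $W$, whereas we want finite index in $A$. So I also plan a second route that places all the work inside $A$.

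\emph{Step 1: $W_1$ has dfg.} I would use the characterizations of dfg groups from Facts \ref{fact-dfg-group-Qp} and \ref{fact-dfg-group-R}, roughly "solvable with no definably compact parts". In the $p$-adic case I intersect the normal series $W_0\lhd\cdots\lhd W_n=W$ with $A$. Each resulting quotient embeds definably into a copy of $\mathbb{G}_a$ or into a finite-index subgroup of $\mathbb{G}_m$. A definable subgroup of such a one-dimensional group is either finite or a subgroup of the same type, possibly of finite index. So the intersected series again witnesses dfg, after a routine adjustment for finite pieces. In the $o$-minimal case, $W^0$ is torsion-free and $(W_1)^0\le W^0$, so $(W_1)^0$ is torsion-free and hence dfg by Fact \ref{fact-dfg-group-R}. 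The group $W_1$ is then dfg as a finite extension of a dfg group.

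\emph{Step 2: the finite-index conclusion.} Applying Fact \ref{fact-open-dfg-subgroup} directly would need $W_1$ to be open in $W$, which is not the hypothesis. I would instead argue dimensionally. Since $W_1$ is open in $A$, we have $\dim W_1=\dim A$. The aim is that a definable subgroup of $A$ of full dimension has finite index. In the $o$-minimal case this is standard: a definable subgroup of the same dimension contains $A^0$. In the $p$-adic case, full dimension only gives openness, and openness does not give finite index in general (an example is $\mathbb{Z}_p\le\mathbb{Q}_p$). This is exactly where dfg is needed.

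\emph{Main obstacle.} The hard part is to transfer "dfg open subgroup" into finite index inside $A$, even though $A$ itself need not have dfg. The approach I will try first is to use the proof of Fact \ref{fact-open-dfg-subgroup}, which as I recall shows that a dfg group $W_1$ has no proper open subgroups of infinite index in a strong sense. Concretely, take a global dfg type $p$ of $W_1$; it is definable over a small model and $W_1^0=\stb[l]{p}$ by Fact \ref{dfg}. Since $W_1$ is open in $A$, $p$ is also a type of $A$ whose translates by elements of $A$ are still definable over a model. I then want to show that the orbit $A\cdot p$ is bounded. If $W_1$ had unbounded index in $A$, the translates $gp$ for $g$ in distinct cosets of $W_1$ would be distinct global types all definable over a fixed small model, which contradicts boundedness of the number of such types. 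This gives bounded index, and definability of $W_1$ then upgrades bounded index to finite index.
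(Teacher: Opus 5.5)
Your proposal has a genuine gap, and it sits at the step you yourself flag as the main obstacle. That step is circular.

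\textbf{The main step.} For $g\in A$, the translate $gp$ is definable over $M\cup\{g\}$, not over $M$. Openness of $W_1$ in $A$ does not improve this. Suppose $gp$ were $M$-definable for every $g\in A$. Then for $\sigma\in\mathrm{Aut}(\M/M)$ we would have $\sigma(g)p=gp$, i.e.\ $g^{-1}\sigma(g)\in\stb[l]{p}=W_1^0$. So every coset of $W_1^0$ in $A$ would be $\mathrm{Aut}(\M/M)$-invariant, and $W_1^0$ would have bounded index in $A$. That is the very conclusion you are after.

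Fact \ref{dfg}(i) would supply the uniform definability only if $p$ were $f$-generic in $A$. This fails whenever $W_1$ has infinite index in $A$, because some translate $gW_1$ then divides over $M$.

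Openness plays no real role in your argument. Read verbatim, it would show that the upper-triangular Borel subgroup of $\SL_2$, which has dfg, has finite index in $\SL_2$, which is false.

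\textbf{Step 1.} This is also false in the $p$-adic case. $\mathbb{G}_a$ has infinite definable subgroups of infinite index, such as the valuation ring $\mathcal{O}$, and these are definably compact. Take $W=\mathbb{G}_a$ and $A=\mathcal{O}$. Then $W\cap A=\mathcal{O}$ is open in $A$ but has fsg rather than dfg.

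\textbf{The missing idea.} It is the one that answers your own $\mathbb{Z}_p\le\mathbb{Q}_p$ concern. By Fact \ref{fact-G/dfg-is-definable}, since $W$ itself has dfg, $G/W$ is a \emph{definable} set, not merely an imaginary sort. The map $a(W\cap A)\mapsto aW$ is a well-defined bijection from $A/(W\cap A)$ onto the definable set $\{aW : a\in A\}\subseteq G/W$. Hence $A/(W\cap A)$ is definable, and dimension theory applies. Since $W\cap A$ is open in $A$, additivity of dimension gives $\dim(A/(W\cap A))=\dim A-\dim(W\cap A)=0$. A definable set of dimension $0$ is finite.

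This is exactly the paper's proof. In your example $\mathbb{Z}_p\le\mathbb{Q}_p$, the quotient is an infinite imaginary set, and that is precisely what definability of $G/W$ rules out. No dfg property of $W\cap A$ and no generic types are needed.
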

\begin{proof}
By Fact \ref{fact-G/dfg-is-definable}, $ A/W = \{ a/W \mid a \in A \} $ is definable. Since the map $ a/(W \cap A) \mapsto a/W $ is an interpretable bijection from $ A/(W \cap A) $ to $ A/W $, we see that $ A/(W \cap A) $ is also definable. Since $ W \cap A $ is open in $ A $, we have $ \dim(A) = \dim(W \cap A) $. Therefore,  
\begin{equation}\label{{equa-dim}}
\dim(A/(W \cap A)) = \dim(A) - \dim(W \cap A) = 0 .  
\end{equation}
So $ A/(W \cap A) $ is finite as required.  
\end{proof}

\begin{fact}
Let \( W \leq G \) be a dfg subgroup. Then \( N_G(W^0) = \{ g \in G \mid ({W^0})^g = W^0 \} \) is definable.
\end{fact}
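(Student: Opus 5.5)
The plan is to replace the condition $(W^0)^g = W^0$, which involves the merely type-definable group $W^0$, by an equivalent first-order condition on $g$. That condition will be that $W \cap W^g$ has finite index both in $W$ and in $W^g$. I would then show that this is definable in $g$ using dimension.

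\emph{Step 1: the equivalence.} First, for any definable finite-index subgroup $K$ of a definable group $V$ we have $K^0 = V^0$. Indeed, every finite-index definable subgroup of $K$ has finite index in $V$, and conversely $K \cap V_1$ is a finite-index definable subgroup of $K$ for each finite-index definable $V_1 \le V$.

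Now suppose $W\cap W^g$ has finite index in both $W$ and $W^g$. Then
\[
W^0=(W\cap W^g)^0=(W^g)^0=(W^0)^g,
\]
where the last equality holds because conjugation by $g$ is a definable isomorphism $W \to W^g$.

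Conversely, suppose $(W^0)^g=W^0$. Then $W^0\subseteq W\cap W^g$. Since $W^0$ is an intersection of definable finite-index subgroups of $W$, compactness gives that the definable group $W\cap W^g$ contains a finite intersection of them. Hence $W \cap W^g$ has finite index in $W$. Symmetrically, $W \cap W^g$ contains $(W^0)^g=(W^g)^0$, so it also has finite index in $W^g$.

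\emph{Step 2: definability.} The family $\{W\cap W^g : g\in G\}$ is a definable family of definable subgroups, and $\dim(W^g)=\dim(W)$ for every $g$. In both the $o$-minimal and the $p$-adically closed settings, dimension is definable in definable families. Therefore the set
\[
\{g\in G \mid \dim(W\cap W^g)=\dim(W)\}
\]
is definable. It remains to show that, for a given $g$, the equality $\dim(W\cap W^g)=\dim(W)$ is equivalent to finite index in both $W$ and $W^g$.

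Finite index clearly gives equal dimension. For the converse, I would argue as in the proof of Lemma \ref{lemma-open-is-finite-index}. By Fact \ref{fact-G/dfg-is-definable}, $W/(W\cap W^g)$ is definable: it is in definable bijection with $WW^g/W^g \subseteq G/W^g$, and $W^g$ is dfg, being a conjugate of $W$. Its dimension is
\[
\dim W-\dim(W\cap W^g)=0,
\]
so it is finite. The same argument, with the roles of $W$ and $W^g$ exchanged and using that $W$ is dfg, shows $W\cap W^g$ has finite index in $W^g$.

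Alternatively, I could note that equal dimension makes $W\cap W^g$ open in $W$, and then apply Lemma \ref{lemma-open-is-finite-index} directly, with $W^g$ as the dfg group and $W$ as the definable subgroup.

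\emph{Main obstacle.} The delicate point is the additivity of dimension for the quotient, $\dim(W/(W\cap W^g))=\dim W-\dim(W\cap W^g)$. This needs the quotient to be definable (not merely interpretable) in the $p$-adic case, and that is exactly where the dfg hypothesis enters through Fact \ref{fact-G/dfg-is-definable}. A second point to handle is that the symmetric finite-index condition is genuinely needed for the identity $(W^g)^0=W^0$ in Step 1. With this equivalence, $N_G(W^0)$ equals the definable set above.
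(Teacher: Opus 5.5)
Your proof is correct, and it takes a different route from the paper. The paper gives no argument of its own. In the $o$-minimal case it notes that $W^0$ is itself definable, so its normalizer is trivially definable. In the $p$-adic case it simply cites Lemma 9.2 of the authors' joint paper with Pillay.

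You instead give one self-contained argument that works in both settings:
\begin{itemize}
\item Step 1 identifies $N_G(W^0)$ with the commensurator $\{g : W\cap W^g \text{ has finite index in both } W \text{ and } W^g\}$. This step uses nothing about dfg.
\item Step 2 turns commensurability into the first-order condition $\dim(W\cap W^g)=\dim W$. The dfg hypothesis enters only through the definability of $G/W^g$ and $G/W$ (Fact \ref{fact-G/dfg-is-definable}), exactly as in Lemma \ref{lemma-open-is-finite-index}.
\end{itemize}
The dfg input is really needed in the $p$-adic case. There, full dimension alone does not force finite index: $1+p\mathcal{O}$ has full dimension but infinite index in the multiplicative group.

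What your route buys:
\begin{itemize}
\item It uses only facts stated earlier in the paper.
\item It defines $N_G(W^0)$ by an explicit formula over the parameters of $W$.
\item Its description of $N_G(W^0)$ as the commensurator of $W$ is the same mechanism behind the next lemma, $N_G(H^0)=N_G({\cal H})$.
\end{itemize}
The paper's route buys brevity, and in the $o$-minimal case it is genuinely immediate.

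One point to make explicit in Step 1: the paper defines $W^0$ as the intersection over \emph{all} $\M$-definable finite-index subgroups. Your compactness step therefore needs $W^0$ to be a small intersection of them. This holds by Baldwin--Saxl in NIP, or directly because $W^0=W^{00}$ is type-definable for dfg $W$ (Fact \ref{dfg}).
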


\begin{proof}
This is trivial in the case where \( M \) is a real closed field, as \( W^0 \) is definable. When \( M \) is a \( p \)-adically closed field, this fact is exactly Lemma 9.2 of \cite{PYZ-$p$-adic-groups}.
\end{proof}

Let $H$ be an $k$-definable  dfg component of $G$. The above Fact shows that  $ N_G(H^0)$ is definable.  Let ${\cal H}=\bigcap_{b\in N_G(H^0)}H^b$, then ${\cal H}$ is clearly a $k$-definable subgroup of $H$. Since $H^0\leq {\cal H}$, we see that ${\cal H}$ has finite index in $H$.

\begin{lemma}
$N_G(H^0) = N_G({\cal H})$.
\end{lemma}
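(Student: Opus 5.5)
We prove the two inclusions separately. The only inputs needed are that $H^0$ is a characteristic-type object, namely the intersection of all definable finite-index subgroups of $H$, and that $H^0$ and ${\cal H}$ are commensurable: $H^0\le{\cal H}\le H$ with $[H:{\cal H}]$ finite.

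For $N_G(H^0)\subseteq N_G({\cal H})$, take $g\in N_G(H^0)$. Since $N_G(H^0)$ is a group, right multiplication by $g$ permutes it, so $b\mapsto bg$ is a bijection of $N_G(H^0)$. Hence
\[
{\cal H}^g=\bigcap_{b\in N_G(H^0)}H^{bg}=\bigcap_{b'\in N_G(H^0)}H^{b'}={\cal H}.
\]
The only thing to check here is the convention $H^{bg}=(H^b)^g$, which holds for either consistent choice of conjugation convention.

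For $N_G({\cal H})\subseteq N_G(H^0)$, take $g$ with ${\cal H}^g={\cal H}$.
\begin{enumerate}
\item First we show ${\cal H}^0=H^0$. Since ${\cal H}$ has finite index in $H$, every definable finite-index subgroup of ${\cal H}$ is a definable finite-index subgroup of $H$, so $H^0\le{\cal H}^0$. Conversely, if $K\le H$ is definable of finite index, then $K\cap{\cal H}$ is definable of finite index in ${\cal H}$, so ${\cal H}^0\le K$. Hence ${\cal H}^0\le H^0$.
\item Conjugation by $g$ is a definable automorphism of $\M$'s group $G$ carrying ${\cal H}$ onto ${\cal H}$. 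It therefore permutes the definable finite-index subgroups of ${\cal H}$, so it preserves their intersection: $({\cal H}^0)^g={\cal H}^0$.
\item Combining the two steps gives $(H^0)^g=H^0$, that is, $g\in N_G(H^0)$.
\end{enumerate}

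In the $p$-adic case $H^0$ need not be definable; it is only an intersection of definable groups. The argument above uses nothing beyond that description, so it goes through unchanged. The main point needing care is step~1, the equality ${\cal H}^0=H^0$, which relies only on ${\cal H}$ being definable of finite index in $H$. That fact was noted just before the statement.
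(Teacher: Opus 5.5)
Your proof is correct and takes essentially the same approach as the paper. For one inclusion you reindex $b\mapsto bg$ over $N_G(H^0)$; for the other you use that conjugation by $g\in N_G({\cal H})$ is a definable automorphism of ${\cal H}$ and so fixes its connected component. Your step~1 (${\cal H}^0=H^0$) makes explicit what the paper uses tacitly when it says this automorphism ``fixes $H^0$''. One small slip: under the convention $x^g=gxg^{-1}$ one gets $(H^b)^g=H^{gb}$ rather than $H^{bg}$, but reindexing by $b\mapsto gb$ works just as well.
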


\begin{proof}
This proof is also taken from Lemma 9.2 of \cite{PYZ-$p$-adic-groups}: If \( g \in N_G({\cal H}) \), then \( h \mapsto h^g \) is a definable automorphism of \( {\cal H} \), so it fixes \( H^0 \), i.e., \( ({H^0})^g = H^0 \). Conversely, if \( g \in N_G(H^0) \), then 
\[
{\cal H}^g = \left( \bigcap_{b \in N_G(H^0)} H^b \right)^g = \bigcap_{b \in N_G(H^0)} H^{bg} = {\cal H}.
\]
\end{proof}

\begin{rmk}\label{rmk-NGH=NGH0}
Clearly, \( {\cal H} \) is also an \( M \)-definable dfg component of \( G \). By replacing \( H \) with \( {\cal H} \), we may assume that \( N_G(H) = N_G(H^0) \). 
\end{rmk}

Recall from \cite{PYZ-$p$-adic-groups} that 
\begin{dfn}\label{def-DAC}
    A definable subgroup $X$ of $G$ is called \emph{definably amenable component} of \( G \) if  \( X = N_G(H^0) \) with $H$ a dfg component of $G$.
\end{dfn}

The following fact was established in Theorem 9.3 of \cite{PYZ-$p$-adic-groups} for \( p \)-adically closed fields \( k \); the proof, however, extends verbatim to the case where \( k \) is a real closed field.

\begin{fact}\label{fact-max-DA-subgroup}
Let \( G \) be a group definable over $k$ and $B$ a definably amenable component component of $G$. Then:  
\begin{enumerate}
\item For any dfg component \( H' \) of \( G \), there exists \( g \in G \) such that \( N_G(H') \leq B^g \). 
\item \( B = N_G(B) = N_G(B^{00}) \).
\item If \( G \) is definably amenable, then \( G = B \).
\end{enumerate}
\end{fact}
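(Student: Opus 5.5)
Throughout, $B=N_G(H^0)$ with $H$ a dfg component, and by Remark \ref{rmk-NGH=NGH0} we may take $N_G(H)=N_G(H^0)=B$. The plan is to follow Theorem 9.3 of \cite{PYZ-$p$-adic-groups}, using only the three parts of Theorem \ref{thm-main-conj-RCF} (resp. Theorem \ref{thm-main-conj-pCF}), which hold in both settings. We prove (1), then (3), then (2).

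For (1), let $H'$ be any dfg component. By conjugate-commensurability there is $g\in G$ with $H^g\cap H'$ of finite index in $H'$. Then $(H')^0=(H^g\cap H')^0$, and since $H^g\cap H'$ has finite index in $H^g$ too (the dimensions agree, as both are dfg components), this equals $(H^g)^0=(H^0)^g$. Hence $N_G(H')\le N_G((H')^0)=N_G(H^0)^g=B^g$. The key point is that $(\cdot)^0$ is invariant under passing to finite-index subgroups, because $X^0$ is the intersection of all definable finite-index subgroups of $X$.

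For (3), suppose $G$ is definably amenable. By part 3 of the theorem, $G$ has a normal dfg component $H'$, so $N_G(H')=G$. Part (1) then gives $G\le B^g$, so $B=G$.

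For (2), first note that $B$ is definably amenable. Indeed, $H$ is a dfg component of $B$: it is a dfg subgroup, and its dimension is maximal among dfg subgroups of $G$, hence of $B$. Moreover $H\lhd B$, so by part 3 of the theorem $B$ is definably amenable. Since $B\le N_G(B)$ and $B^{00}\lhd B$, we get $B\le N_G(B)\le N_G(B^{00})$, where the last inclusion uses that $B^{00}$ is characteristic in $B$. It remains to show $N_G(B^{00})\le B$.

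This reverse inclusion is the main obstacle. Take $g\in N_G(B^{00})$. Then $H^g$ is a dfg component of $G$ with $N_G(H^g)=B^g$. The idea is to show $H^0\le B^{00}$ and recover $H^0$ from $B^{00}$ intrinsically. To begin, $H^0=H^{00}$ for dfg groups (Fact \ref{dfg}). Further, $H^{00}\le B^{00}$, since $H^{00}$ is the smallest type-definable bounded-index subgroup of $H$, while $B^{00}\cap H$ has bounded index in $H$. The intrinsic recovery I would attempt is to show that $H^0$ is the unique connected component of a dfg subgroup of $B^{00}$ that is maximal in dimension. One route is to use $B/H$ definably compact (part 1 of the theorem) and argue that the $H^0$-conjugates are commensurable inside $B$: $B$ is amenable, so every dfg component of $B$ is commensurable with the normal one $H$. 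Then $(H^g)^0\le (B^{00})^g=B^{00}\le B$, so $H^g\cap B$ contains the dfg group $(H^0)^g$ of maximal dimension. This makes $(H^0)^g$ a dfg component of $B$, hence commensurable with $H$ after conjugating by some $b\in B$; but $H\lhd B$, so in fact $(H^0)^g=H^0$. Therefore $g\in N_G(H^0)=B$. The delicate step is justifying that the dfg components of an amenable group with a normal dfg component are all commensurable with it, with no conjugation needed. This follows from part 2 of the theorem applied inside $B$, together with the normality of $H$.
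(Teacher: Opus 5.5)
Your argument for (1), (3) and (2) has the right structure, but two steps are justified incorrectly in the $p$-adic case; both are repairable with tools already in the paper. The paper gives no argument of its own here: it imports Theorem 9.3 of Pillay--Yao--Zhang for $p$-adically closed fields and asserts that the proof carries over verbatim to real closed fields. You instead derive everything from the three parts of Theorems \ref{thm-main-conj-RCF} and \ref{thm-main-conj-pCF}, which makes that transfer explicit. The skeleton is sound: (1) via $(H')^0=(H^0)^g$, (3) from (1), and (2) via $H^0=H^{00}\le B^{00}$ plus commensurability inside $B$ using $H\lhd B$.

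\textbf{Finite index from equal dimension.} In (1) you conclude that $H^g\cap H'$ has finite index in $H^g$ because the dimensions agree. You use the same inference implicitly at the end of (2), to get from commensurability to $(H^g)^0=H^0$. Over a $p$-adically closed field, equal dimension only gives openness, not finite index. For example, $\mathbb{Z}_p\le\mathbb{Q}_p$ and $1+p\mathbb{Z}_p\le\mathbb{Q}_p^{*}$ are open of infinite index. The correct step is: $H^g\cap H'$ is open in $H^g$, and since $H'$ is dfg, Lemma \ref{lemma-open-is-finite-index} (or Fact \ref{fact-open-dfg-subgroup}) gives finite index.

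\textbf{Treating $(H^0)^g$ as a dfg component.} In (2) you call $(H^0)^g$ a dfg group and a dfg component of $B$. In the $p$-adic case $H^0$ is only type-definable; for $\mathbb{G}_{\mathrm{m}}$ it is the intersection of the subgroups of $n$-th powers. So Theorem \ref{thm-main-conj-pCF}(2) cannot be applied to it. Work with the definable group $H^g\cap B$ instead:
\begin{enumerate}
\item It contains $(H^0)^g=(H^g)^0=(H^g)^{00}$, so it has bounded, hence finite, index in $H^g$.
\item It is therefore dfg (a finite-index subgroup of a dfg group is dfg) of dimension $\dim H$, so it is a dfg component of $B$.
\item Conjugate-commensurability in $B$, together with $H\lhd B$, shows that $H\cap H^g$ has finite index in $H^g\cap B$, hence in $H^g$.
\item By the fix in the previous paragraph, $H\cap H^g$ also has finite index in $H$.
\item Hence $(H^0)^g=(H\cap H^g)^0=H^0$, so $g\in N_G(H^0)=B$.
\end{enumerate}

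With these two fixes the argument is complete. The definable amenability of $B$ that you establish is not actually needed for $N_G(B^{00})\le B$.
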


 Fact \ref{fact-max-DA-subgroup} shows that for definably amenable components are  maximal definably amenable subgroups of \( G \) that contains a dfg component of \( G \).

\begin{rmk}
While definably amenable components are maximal among definably amenable subgroups containing a dfg component of \( G \), they need not be the largest with respect to dimension. For instance: Let \( \mathbb{M} \models \pcf \), \( G = \mathrm{SL}_2(\mathbb{M}) \), and let \( H \) denote the subgroup of upper triangular matrices in \( G \). Then \( H \) is a dfg component of \( G \), and \( B = N_G(H) = H \). Consider \( O = \mathrm{SL}_2(\mathcal{O}(\mathbb{M})) \), where \( \mathcal{O}(\mathbb{M}) \) is the valuation ring of \( \mathbb{M} \). Since \( O \) is definably compact, it is definably amenable. However, \( \dim(O) = 3 \), which is greater than \( \dim(H) = 2 \).
\end{rmk}

The following Fact is taken from Lemma 8.11 of \cite{PYZ-$p$-adic-groups}, where \( k \) is a \( p \)-adically closed field. The proof, however, also applies to the case where \( k \) is a real closed field.

\begin{fact}\label{fact-dfg-components-over k}
    Suppose that \( E \leq D \leq G \) are definable groups over \( k \) such that \( G/E \), \( D/E \), and \( G/D \) are definable. If \( D/E \) and \( G/D \) are definably compact, then \( G/E \) is definably compact.
\end{fact}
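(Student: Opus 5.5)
We are asked to prove the fact stated last: for definable groups \(E \leq D \leq G\) over \(k\) with \(G/E\), \(D/E\), and \(G/D\) definable, if \(D/E\) and \(G/D\) are definably compact, then \(G/E\) is definably compact.

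The plan is to work directly with the definition of definable compactness through definable downward-directed families of nonempty closed sets. We use the natural definable surjection \(f: G/E \to G/D\), \(gE \mapsto gD\), which is continuous with respect to the quotient (definable manifold) topologies. Its fibres \(f^{-1}(gD)=\{gdE : d\in D\}\) are the left translates \(g\cdot(D/E)\) of \(D/E\) inside \(G/E\). Since \(G\) acts on \(G/E\) by definable homeomorphisms, each fibre is homeomorphic to \(D/E\) and is therefore definably compact.

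So the statement reduces to the following: if \(f: X \to Y\) is a continuous definable surjection between definable spaces, \(Y\) is definably compact, and every fibre is definably compact, then \(X\) is definably compact. One might first hope that \(f\) is closed, so that a downward-directed closed family \(\{Y_z\}\) in \(G/E\) has images \(\{\overline{f(Y_z)}\}\) forming a downward-directed family in \(G/D\) with a common point \(gD\). One would then intersect the fibre over \(gD\) with the \(Y_z\) and use compactness of the fibre. However, the fibre might not meet every \(Y_z\); it is only in the closure of their images.

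The fix is to pass to definable curves, that is, types. In both the o-minimal and the \(p\)-adic settings, definable compactness of a definable (Hausdorff, definable-manifold) space is equivalent to the statement that every definable type on it specializes (has a limit). This holds in o-minimal structures by Peterzil–Steinhorn (curve selection) and in \(p\)CF via the Johnson / Andújar Guerrero–Johnson characterization. With this equivalence the argument runs as follows.

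1. Take a definable type \(p\) on \(G/E\), realized by \(a\) in an elementary extension.
2. The pushforward \(f(p)\) is a definable type on \(G/D\), so it has a limit \(g_0D\) with \(g_0\in G(\M)\).
3. Lift this limit: choose \(g\) realizing \(p\) in \(G\) over a suitable base, and compare \(gD\) with \(g_0D\). Write \(gD = g_0\epsilon D\), where \(\epsilon\) is infinitesimally close to the identity in the group topology. This is possible because the map \(G\to G/D\) is open, being a quotient by a definable subgroup with a definable manifold structure, so small neighbourhoods of \(g_0\) map onto neighbourhoods of \(g_0D\).
4. Then \(\epsilon^{-1}g_0^{-1}g \in D\). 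Its image in \(D/E\) has a type definable over the relevant parameters, and definable compactness of \(D/E\) gives it a limit \(d_0E\).
5. It follows that \(a = g_0\epsilon(\epsilon^{-1}g_0^{-1}g)E\) has limit \(g_0d_0E\), using continuity of the action \(G\times G/E\to G/E\).

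The main obstacle is steps 3–4. The issue is ensuring that the type of \(\epsilon^{-1}g_0^{-1}g\) (or of its image in \(D/E\)) is again definable, so that the compactness criterion applies. The first thing to try is to choose \(\epsilon\) definably from \(g\) via a definable local section of \(G\to G/D\) near \(g_0D\), which exists by definable choice in the o-minimal case and by Skolem functions in \(p\)CF. This makes the element a definable function of \(a\) and \(g_0\), and definable types are closed under definable maps.

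As a fallback, if the type-theoretic characterization is awkward in \(p\)CF, I would replace it by the equivalent "definably compact iff closed and bounded" after embedding via the definability of the quotients, or mimic Lemma 8.11 of \cite{PYZ-$p$-adic-groups} directly. The o-minimal case uses only the curve criterion and proceeds verbatim.
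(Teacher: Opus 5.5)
The paper does not prove this fact itself. It imports Lemma 8.11 of \cite{PYZ-$p$-adic-groups} and asserts that the proof carries over to real closed fields, so your proposal has to stand on its own. Its overall strategy is sound. The criterion ``definably compact iff every definable type has a limit'' is available in both settings (\cite{GJ-Around} in the $p$-adic case, curve selection in the $o$-minimal case). It also lets you work over an arbitrary model $k$. By contrast, the paper's device of writing $G=UE$ with $U$ definably compact (Section 3.3) is only established over $k_0$, via compactness of $X(k_0)$. Two points need fixing.

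\textbf{The claimed reduction is false.} The general fibre statement you reduce to fails even for definable sets. The projection $(x,y)\mapsto x$ from $\{(t,0):0<t\le 1\}\cup\{(0,1)\}$ onto $[0,1]$ is a continuous definable bijection onto a definably compact set, with singleton fibres, yet its domain is not closed. What rescues the group case is local triviality of $G/E\to G/D$, supplied by a local section. That is where the real content of your proof lies, and the sentence ``the statement reduces to the following'' should be dropped.

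\textbf{The local section must be continuous at $g_0D$ with value $g_0$.} Take $g_0\in G(M)$, not $G(\M)$, since the limit of a type over $M$ is an $M$-point. Definable choice or Skolem functions only give a definable section $s\colon G/D\to G$. If $s$ jumps at $g_0D$, then $s(f(a))$ need not be close to $g_0$, and step 5 breaks.

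The repair is as follows. By generic continuity, $s$ is continuous on some nonempty $M$-definable open $W_0\subseteq G/D$. Choose $h_0\in G(M)$ with $h_0D\in W_0$ and put
\[
\sigma(x)=g_0h_0^{-1}\,s(h_0g_0^{-1}x)\,s(h_0D)^{-1}h_0 .
\]
Then:
\begin{itemize}
\item $\sigma$ is continuous on the neighbourhood $g_0h_0^{-1}W_0$ of $g_0D$;
\item $\sigma(x)D=x$, because $s(h_0D)^{-1}h_0\in D$;
\item $\sigma(g_0D)=g_0$.
\end{itemize}

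Now let $F(x)=\sigma(f(x))^{-1}\cdot x$. This is an $M$-definable map from $f^{-1}(g_0h_0^{-1}W_0)$, a set that belongs to $p$, into $D/E$. Hence $F_*(p)$ is a definable type on $D/E$ and has a limit $d_0E$ with $d_0\in D(M)$. Moreover $a=\sigma(f(a))\cdot F(a)$, and $\sigma(f(a))$ lies in every $M$-definable neighbourhood of $g_0$. Continuity of the action $G\times G/E\to G/E$ at $(g_0,d_0E)$ then shows that $g_0d_0E$ is a limit of $p$. With $\sigma$ in hand, the saturation argument producing $\epsilon$ in step 3 becomes unnecessary.
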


\begin{lemma}\label{lemma-G/B=A/V}
  Let \( G \) and \( X \) be definable groups, and let \( f: G \to X \) be a definable surjective morphism with \( \ker(f) \) definably amenable. If \( Y \) is a definably amenable component of \( X \), then \( f^{-1}(Y) \) is a definably amenable component of \( G \).
\end{lemma}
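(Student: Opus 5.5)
We must show that if \(Y = N_X(V^0)\) with \(V\) a dfg component of \(X\), then \(f^{-1}(Y) = N_G(H^0)\) for some dfg component \(H\) of \(G\). Write \(K=\ker(f)\), which is definably amenable. Everything lives over \(k\), and by Fact \ref{fact-G/dfg-is-definable} the relevant quotients are definable.

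The plan is to build \(H\) in two layers. By Theorem \ref{thm-main-conj-RCF}(3), resp.\ Theorem \ref{thm-main-conj-pCF}(3), \(K\) has a normal dfg component \(W\). We may replace \(W\) by the intersection of its \(G\)-conjugates, arguing as for \({\cal H}\) before Remark \ref{rmk-NGH=NGH0}. Conjugates of \(W\) by elements of \(G\) are again dfg components of \(K\), because \(K\lhd G\), so by conjugate-commensurability this intersection is uniformly definable and of finite index. Hence we may assume \(W\lhd G\) is dfg and \(K/W\) is definably compact.

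Next set \(P=f^{-1}(V)\), and choose a dfg component \(H\) of \(P\) containing \(W\). One way to obtain it is to pass to \(P/W\), take a dfg component \(\bar H\) of \(P/W\), and let \(H\) be its preimage. Then \(H\) is dfg, since extensions of dfg by dfg are dfg; this follows from the characterisation in Fact \ref{fact-dfg-group-Qp}(3), resp.\ Fact \ref{fact-dfg-group-R}(3), via composition series. Now \(G/H\) is definably compact. Indeed, \(G/P\cong X/V\) is definably compact, and \(P/H\) is definably compact because it is an extension of \(K/W\)-type and \(V/f(H)\)-type compact pieces. Applying Fact \ref{fact-dfg-components-over k} twice to the chains \(H\le P\le G\) (and \(H\le HK\le P\)) gives this. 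So by part (1) of the theorems, \(H\) is a dfg component of \(G\). Moreover \(f(H)\) is a dfg subgroup of \(X\) with \(X/f(H)\) compact, so \(f(H)\) is a dfg component of \(X\) commensurable with \(V\). After replacing \(H\) by a finite-index subgroup (allowed by Remark \ref{rmk-NGH=NGH0}), we get \(f(H)^0=V^0\).

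The main step is to prove \(N_G(H^0)=f^{-1}(N_X(V^0))\). The inclusion \(\subseteq\) is easy: if \(g\) normalizes \(H^0\), then \(f(g)\) normalizes \(f(H^0)=f(H)^0=V^0\). Surjectivity of definable maps onto connected components holds since \(f(H^0)\) is a finite-index type-definable, hence definable-index, subgroup of \(f(H)\) and contained in its connected component.

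For \(\supseteq\), I expect this to be the main obstacle. I would take \(g\) with \(f(g)\in N_X(V^0)\). Then \(H^g\) is another dfg component of \(G\), contained in \(f^{-1}(V^0)\cdot\) (finite), and containing \(W\). I would work in \(Q=f^{-1}(V^0)/W\), an extension of the dfg group \(V^0\) by the definably compact group \(K/W\). In \(Q\), both \(H^0/W\) and \((H^g)^0/W\) are dfg components, so by Theorem (3) \(Q\) is definably amenable. An extension of a dfg group by an fsg group should have a unique connected dfg component, namely its "torsion-free part", which is normal. Via Theorem (2) and (3), the normal dfg component is unique up to commensurability, and any dfg component is commensurable with it. Hence \((H^0)^g=H^0\).

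If this uniqueness fails to be directly quotable, I would fall back on Fact \ref{fact-max-DA-subgroup}. The group \(f^{-1}(Y)\) is definably amenable, being an extension of the definably amenable \(Y\) by \(K\), and it contains the dfg component \(H\). By Fact \ref{fact-max-DA-subgroup}(1), \(f^{-1}(Y)\le N_G(H')\le B^{g}\) for the definably amenable component \(B=N_G(H^0)\) containing it. Maximality (Fact \ref{fact-max-DA-subgroup}(2)), combined with the inclusion \(\subseteq\) already proved and a dimension/index count, then forces equality.
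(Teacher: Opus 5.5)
Your overall plan is sound, and it differs from the paper's. You take a dfg component $H$ of $G$ inside $f^{-1}(V)$ and prove the identity $N_G(H^0)=f^{-1}(N_X(V^0))$, which checks Definition~\ref{def-DAC} literally. The inclusion $\subseteq$ is fine. The clean justification is that $f(H^0)=f(H)^0$ because $H^{00}=H^0$ for dfg groups, and $f(H)^0=V^0$ holds already because $f(H)$ is a dfg component of $X$ contained in $V$, hence of finite index in $V$; no replacement of $H$ is needed.

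The gap is in $\supseteq$, which you rightly flag as the crux; your argument for it does not go through as written:
\begin{itemize}
\item In the $p$-adic case $V^0$ is only type-definable (e.g.\ $\mathbb{G}_m/\mathbb{G}_m^0$ is infinite). So $Q=f^{-1}(V^0)/W$ is not a definable group, and Theorems~\ref{thm-main-conj-RCF} and~\ref{thm-main-conj-pCF} do not apply to it.
\item Writing $H^0/W$ presupposes $W\le H^0$, which fails in general.
\item Part~(3) gives amenability from a \emph{normal} dfg component, not from the presence of two dfg components.
\item Agreement of $H$ and $H^g$ modulo $W$ would still have to be lifted to $(H^0)^g=H^0$.
\item The fallback misreads Fact~\ref{fact-max-DA-subgroup}(1). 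It bounds $N_G(H')$ for a dfg component $H'$, but does not place $f^{-1}(Y)$ inside any normalizer.
\end{itemize}

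The missing step is short and uses an observation you already make. Since $f^{-1}(Y)$ is definably amenable and contains $H$, and $f^{-1}(Y)/H$ is a closed subset of the definably compact $G/H$, $H$ is a dfg component of $f^{-1}(Y)$ by part~(1) of Theorems~\ref{thm-main-conj-RCF}/\ref{thm-main-conj-pCF}. By part~(3), $f^{-1}(Y)$ has a normal dfg component $N$. By part~(2), applied inside $f^{-1}(Y)$ where $N^g=N$, the group $N\cap H$ has finite index in $H$. Hence it has dimension $\dim N$, is open in $N$, and so has finite index in $N$ by Fact~\ref{fact-open-dfg-subgroup}. Thus $H^0=(N\cap H)^0=N^0$, which $f^{-1}(Y)$ normalizes, so $f^{-1}(Y)\le N_G(H^0)$. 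Together with $\subseteq$ this gives $f^{-1}(Y)=N_G(H^0)$.

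With this, the following become unnecessary: $W$, its normal core in $G$, $Q$, and the claim that dfg is closed under extensions. That last claim does not follow by simply concatenating the series of Fact~\ref{fact-dfg-group-Qp}(3), since a finite step appears in the middle. Just take $H$ a dfg component of $f^{-1}(V)$, as the paper does with $f^{-1}(Z)$ for a dfg component $Z\le Y$.

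The paper then argues by maximality instead: a definably amenable $B_1\supsetneq f^{-1}(Y)$ containing $H$ would give a definably amenable $f(B_1)\supsetneq Y$. That is shorter, but it tacitly uses that being maximal among definably amenable subgroups containing a dfg component characterizes definably amenable components. That characterization rests on exactly the part~(2)+(3) argument above. Your repaired route makes this explicit and verifies the definition directly.
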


\begin{proof}
Let \( Z \leq Y \) be a dfg component of \( X \). Clearly, \( G/f^{-1}(Z) \cong X/Z \) is definable and, by Theorems \ref{thm-main-conj-RCF} and \ref{thm-main-conj-pCF}, definably compact. Let \( H \) be a dfg component of \( f^{-1}(Z) \); then \( f^{-1}(Z)/H \) is definably compact. By Fact \ref{fact-dfg-components-over k}, \( G/H \) is also definably compact, whence \( H \) is a dfg component of \( G \) by Theorems \ref{thm-main-conj-RCF} and \ref{thm-main-conj-pCF}.

Since \( f^{-1}(Y) \) fits into the definable short exact sequence 
\[ 1 \to \ker(f) \to f^{-1}(Y) \to Y \to 1 \]
with \( \ker(f) \) and \( Y \) both definably amenable, it follows that \( f^{-1}(Y) \) is definably amenable. It remains to show that \( f^{-1}(Y) \) is maximal among all definably amenable subgroups of \( G \) containing \( H \). Suppose for a contradiction that there exists a definably amenable subgroup \( B_1 \gvertneqq f^{-1}(Y) \) that contains \( H \). Then \( f(B_1) \gvertneqq f(f^{-1}(Y)) = Y \) is definably amenable, which contradicts the fact that \( Y \) is a definably amenable component of \( X \).
\end{proof}

From now on, we will fix the following notations:

\begin{notations}\label{notations-A-V-W-B}
$G$ will be a group definable over $k$ that is not definably amenable. Recall from Corollary \ref{lem-amenable-semi-simple-decom} that $G$ has a $k$-definable short exact sequence
\begin{equation}\label{equ-D-G-A-notation}
1\to D\to G\stackrel{\pi_A}{\to} A\to 1    
\end{equation}
where $D$ is definably amenable, $A$ is linear, $\widetilde{A}$ is semi-simple, and $A(k)$ has finite index in $\widetilde{A}(k)$.
We use $W$ to denote a dfg component of $A$. By Facts \ref{fact-dfg-group-Qp} and \ref{fact-dfg-group-R}, the Zariski closure $\widetilde{W}$ of $W$ in $\Omega$ is a $k$-split solvable linear algebraic group, hence triangularizable over $k$. This means $\widetilde{W}$ is definably isomorphic over $k$ to a semidirect product of a unipotent algebraic group over $k$ and a $k$-split torus.
Thus, we may assume $W$ has the form $W_u \rtimes W_t$, where $\widetilde{W_u}$ is a unipotent algebraic group, $\widetilde{W_t}$ is a $k$-split torus, and $\widetilde{W} = \widetilde{W_u} \rtimes \widetilde{W_t}$. Now, $W$ has finite index in $\widetilde{W}(\M)$, and $\widetilde{W} \cap A$ is also a dfg component of $A$. Without loss of generality, we may assume $W = \widetilde{W}\cap A$, since otherwise we may replace $W$ with $\widetilde{W}\cap A$. Let $V = N_A(W_u)$ and $B=\pi_A^{-1}(V)$. It is straightforward to verify that $\widetilde{V} = \widetilde{N_A(W_u)} = N_{\widetilde{A}}(\widetilde{W_u})$ and $\widetilde{W} \leq \widetilde{V}$. We will show later that $V$ and $B$ are definably amenable components of $A$ and $G$, respectively (see \ref{claim-V=NAW} and \ref{claim-V=NAV}).
\end{notations}

With these notations fixed, we have the following Claims:

\begin{claim}\label{fact-Wug=Wu}
    For any \( g \in \widetilde{A} \), if \( \widetilde{W_u}^g \leq \widetilde{W} \), then \( \widetilde{W_u}^g = \widetilde{W_u} \). As a consequence, for any \( g \in A \), if  \( W_u^g \leq W \), then \( W_u^g = W_u \).
\end{claim}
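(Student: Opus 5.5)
The plan is to use the standard structure theory of connected solvable algebraic groups in characteristic $0$: the unipotent elements of $\widetilde{W}$ form exactly its unipotent radical $R_u(\widetilde{W})$, and here $R_u(\widetilde{W}) = \widetilde{W_u}$. Both statements of the Claim should then follow from a dimension count together with a passage to Zariski closures.

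\textbf{First statement.} Since $\widetilde{W} = \widetilde{W_u} \rtimes \widetilde{W_t}$ with $\widetilde{W_u}$ unipotent and $\widetilde{W_t}$ a $k$-split torus, $\widetilde{W_u}$ is a normal unipotent subgroup with toral quotient. Hence it is precisely the set of unipotent elements of $\widetilde{W}$. Concretely, if $ut$ is unipotent with $u \in \widetilde{W_u}$ and $t \in \widetilde{W_t}$, then its image $t$ in the torus $\widetilde{W}/\widetilde{W_u}$ is both unipotent and semisimple, so $t = 1$.

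Now let $g \in \widetilde{A}$ with $\widetilde{W_u}^g \leq \widetilde{W}$. Conjugation preserves unipotence, so $\widetilde{W_u}^g$ consists of unipotent elements of $\widetilde{W}$, and therefore $\widetilde{W_u}^g \leq \widetilde{W_u}$. Both groups are connected (unipotent groups in characteristic $0$ are connected) and of the same Zariski dimension, so they are equal.

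\textbf{Second statement.} Let $g \in A$ with $W_u^g \leq W$. The conjugation map $x \mapsto x^g$ is a homeomorphism in the Zariski topology, so the Zariski closure of $W_u^g$ is $\widetilde{W_u}^g$. Since $W \leq \widetilde{W}$, we get $\widetilde{W_u}^g \leq \widetilde{W}$, and the first part yields $\widetilde{W_u}^g = \widetilde{W_u}$.

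To descend back to $W_u$, I will show that $W_u = \widetilde{W_u} \cap A$. Because $W = \widetilde{W} \cap A$ and $W = W_u \rtimes W_t$ with $W_u \leq \widetilde{W_u}$ and $W_t \leq \widetilde{W_t}$, any $w \in \widetilde{W_u} \cap A$ lies in $W$. Write $w = ut$ with $u \in W_u$ and $t \in W_t$. Since $w$ is unipotent, the argument from the first part gives $t = 1$, so $w \in W_u$. The reverse inclusion $W_u \leq \widetilde{W_u} \cap A$ is clear. Then, using $A^g = A$,
\[
W_u^g = \widetilde{W_u}^g \cap A^g = \widetilde{W_u} \cap A = W_u .
\]

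\textbf{Main obstacle.} The only delicate point is the normalization in Notations \ref{notations-A-V-W-B}. We need to make sure that $W_u$ and $W_t$ really sit inside $\widetilde{W_u}$ and $\widetilde{W_t}$, that the decomposition of $W$ is compatible with that of $\widetilde{W}$, and that $W = \widetilde{W} \cap A$. If necessary, I will simply replace $W_u$ by $\widetilde{W_u} \cap A$ and $W_t$ by $\widetilde{W_t} \cap A$, which the "without loss of generality" in the Notations permits.
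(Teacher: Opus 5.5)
Your proof is correct. The first part matches the paper's argument: $\widetilde{W_u}$ is exactly the set of unipotent elements of $\widetilde W$, conjugation preserves unipotence, and connectedness plus equal Zariski dimension forces $\widetilde{W_u}^g=\widetilde{W_u}$. The passage to Zariski closures at the start of the second part is also the same.

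The only real difference is the final descent step.

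\textbf{The paper's descent.} It uses that $\widetilde{W_u}(\M)$ is definably connected, i.e.\ has no proper finite-index definable subgroups. From this it concludes $W_u=\widetilde{W_u}(\M)$ outright, so $\widetilde{W_u}^g=\widetilde{W_u}$ immediately gives $W_u^g=W_u$.

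\textbf{Your descent.} You prove $W_u=\widetilde{W_u}(\M)\cap A$, using the normalization $W=\widetilde W\cap A$ and uniqueness of the decomposition in $\widetilde{W_u}\rtimes\widetilde{W_t}$. You then conjugate using $A^g=A$.

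\textbf{What each route buys.} Your route needs no definable connectedness of unipotent groups. The paper's route gives the stronger identity $W_u=\widetilde{W_u}(\M)$, which it reuses in Step~1 of Claim~\ref{claim-V=NAW} to write $W^0=W_u\rtimes W_t^0$.

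The paper's step also tacitly needs $W_u$ to have finite index in $\widetilde{W_u}(\M)$. Definable connectedness alone is not enough, since in the $p$-adic case open subgroups such as the valuation ring have infinite index. That finite index is supplied precisely by your identity: it gives $W_u=W\cap\widetilde{W_u}(\M)$, and $W$ has finite index in $\widetilde W(\M)$.
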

\begin{proof}
Since \( \widetilde{W} = \widetilde{W_u} \rtimes \widetilde{W_t} \), the subgroup \( \widetilde{W_u} \) coincides with the set of unipotent elements in \( \widetilde{W} \). Moreover, conjugates of unipotent elements remain unipotent. Consequently, if $g\in \widetilde{A}$ such that \( \widetilde{W_u}^g \leq \widetilde{W} \), then \( \widetilde{W_u}^g \) must consist of unipotent elements in \( \widetilde{W} \), which implies \( \widetilde{W_u}^g \leq \widetilde{W_u} \).

Since \( \widetilde{W_u} \) is connected (as an algebraic group) and \( \widetilde{W_u}^g \) has the same Zariski dimension as \( \widetilde{W_u} \), we conclude that \( \widetilde{W_u}^g = \widetilde{W_u} \), as required.

Now let \( g \in A \) and suppose \( W_u^g \leq W \). This implies \( \widetilde{W_u}^g = \widetilde{W_u^g} \leq \widetilde{W} \). By the preceding argument, \( \widetilde{W_u}^g = \widetilde{W_u} \), which in turn implies 
\[
(\widetilde{W_u}(\M))^g = (\widetilde{W_u}^g)(\M) = \widetilde{W_u}(\M).
\]
Since \( \widetilde{W_u}(\M) \) is definably connected (i.e., it has no proper finite-index definable subgroups), we have \( W_u = \widetilde{W_u}(\M) \). Thus \( W_u^g = W_u \), as required.
\end{proof}

\begin{fact}\label{fact-red-alg-gp}
Let $X$ be an algebraic group over $k$.  
\begin{enumerate} 
    \item Let $Y$ be an algebraic subgroup of $X$. If $Y$ is over $k$ and $X(k)/Y(k)$ is definably compact, then $Y$ contains a maximal $k$-split solvable algebraic subgroup. $\mathrm{[see \ \cite{PYZ-$p$-adic-groups}\  Corollary\ 8.20 \ and}$ $ \mathrm{  \cite{AG-and-NT}\ Theorem\ 3.1]}$
    \item If $X$ is a unipotent, then $X$ is $k $-split. $\mathrm{[see \cite{Borel-book}\ Corollary\ 15.5]}$
    \item  If $X$ is reductive, then the intersection of two Borel subgroups of $X$ contains a  maximal torus. $\mathrm{[see\  \cite{T.A. Springer}\  Corollary \ 8.3.10]}$
     \item Let $P$ and $Q$ be two conjugate parabolic subgroups of $X$, whose intersection contains a Borel subgroup. Then $P=Q$.$\mathrm{[see \cite{T.A. Springer} \ Corollary\  6.4.11]}$
    \item Let $P$   be a parabolic subgroup  of $X$. Then $N_X(P)=P$. $\mathrm{[see\  \cite{T.A. Springer} \ Corollary\  6.4.10]}$
\end{enumerate}
\end{fact}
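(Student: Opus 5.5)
Since this Fact collects standard results, the plan is mainly to reduce each item to a known theorem, giving a short argument where that is easy. The only item where the model-theoretic setting really matters is (1). I would do (2)–(5) first, since they are purely algebraic over a field of characteristic $0$, and then use them as background for (1).

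For (2), I would argue as follows. Since $\mathrm{char}(k)=0$, the exponential and logarithm give a $k$-isomorphism of varieties between a unipotent $X$ and its Lie algebra. Using the descending central series, which is defined over $k$, one gets a $k$-composition series with successive quotients vector groups, hence $k$-isomorphic to powers of $\mathbb{G}_a$. This is exactly $k$-splitness.

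For (3), take Borel subgroups $B_1,B_2$ of the reductive group $X$. Work over $\Omega$, where the Bruhat decomposition holds. Writing $B_2=B_1^{g}$ with $g\in B_1 w B_1$, one checks that $B_1\cap B_2$ contains a $B_1$-conjugate of a maximal torus $T\leq B_1$.

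For (5), I would use Chevalley's theorem. $N_X(P)^0=P$ because $P$ contains a Borel subgroup $B$ and the normaliser's identity component is connected and normalises $P$. For $n\in N_X(P)$, the subgroup $B^n$ is another Borel subgroup of $P$, so $B^n=B^p$ for some $p\in P$ by conjugacy of Borels inside $P$. Hence $np^{-1}\in N_X(B)=B\leq P$.

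Item (4) follows from (5). Write $Q=P^g$ and let $B\leq P\cap Q$ be a Borel subgroup. Then $B$ and $B^g$ are both Borel subgroups of $Q$, so $B^{g}=B^{q}$ for some $q\in Q$. Thus $gq^{-1}\in N_X(B)=B\leq P$, and therefore $Q=P^g=P^{q}$. I would check the final identification $P^q=P$ using (5) together with $q\in Q=P^g$.

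For (1), I would adopt a dynamical argument in line with the paper's theme. First fix a maximal $k$-split solvable subgroup $S$ of $X$. By Fact \ref{fact-dfg-group-Qp}/\ref{fact-dfg-group-R}, $S(k)$ (in the monster, $S(\M)$) has dfg. Let $S(\M)$ act on the definably compact space $X(\M)/Y(\M)$. Take a global dfg type $p$ of $S$ that is definable over a small model, and push forward the orbit map $s\mapsto s\cdot x_0$. Definable compactness gives a standard-part/limit point $y$ of $p\cdot x_0$. The expected conclusion is that $\stb[l]{p}=S^0$ fixes $y$, so $S^0\leq Y^{g}$ for some $g$, and hence, after Zariski closure, $S\leq Y^g$. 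Then $S^{g^{-1}}\leq Y$ is a maximal $k$-split solvable subgroup. To arrange that $g$ can be taken in $X(k)$, I would use that $Y$, $X$ and the action are over $k$, and conjugacy of maximal $k$-split solvable subgroups under $X(k)$ (Borel–Tits, \cite{AG-and-NT} Theorem 3.1).

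The main obstacle is the fixed-point step. One must turn ``dfg group acting on a definably compact space'' into an actual fixed point, not just an $S^{00}$-invariant type. If this fails, the fallback is the cited route: the compactness of $X(k)/Y(k)$ forces $Y$ to contain a minimal $k$-parabolic by Borel–Tits, which gives (1) after \cite{PYZ-$p$-adic-groups}, Corollary 8.20.
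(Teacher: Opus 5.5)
The paper gives no proof of this Fact; every item is quoted from the literature. Your treatment of (2)--(5) reproduces the standard textbook arguments behind those citations and is fine, for connected $X$ as the paper assumes:
\begin{itemize}
\item (2) follows from exp/log together with the descending central series.
\item For (3), writing $g=b\dot w b'$ gives $B_1\cap gB_1g^{-1}=b\,(B_1\cap \dot w B_1\dot w^{-1})\,b^{-1}\supseteq bTb^{-1}$.
\item (5) and (4) reduce to Chevalley's theorem $N_X(B)=B$ plus conjugacy of Borel subgroups inside $P^0$ (resp.\ $Q^0$). This is legitimate because $B$ and its conjugates are connected.
\end{itemize}
Two cosmetic points. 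The opening sentence of your (5) about $N_X(P)^0$ proves nothing and is not needed: the Borel argument already gives $N_X(P)\subseteq P^0$. And the last step of (4) needs no appeal to (5): $Q=P^{q}$ with $q\in Q$ gives $P=Q^{q^{-1}}=Q$.

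Item (1) is where you depart from the paper, which simply imports Borel--Tits over $\mathbb{R}$, $\mathbb{Q}_p$ and the cited transfer result. Your dynamical route can be made to work, but as written it has two genuine gaps.

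\emph{The fixed-point step.} Over a nonstandard $k$, let alone in $\M$, there is no standard part map. The missing ingredient is the characterization, available in both the $o$-minimal and the $p$-adically closed setting, that a definable space is definably compact iff every definable type on it has a limit. Granting this, let $y=gY$ be a limit of the global definable type $q=p\cdot x_0$ on $X(\M)/Y(\M)$. For $s\in S^0=\textnormal{Stab}_l(p)$ (Fact \ref{dfg}) we have $sq=q$. Since $s$ acts by a homeomorphism, $sy$ is a limit of $sq=q$, so $sy=y$ by Hausdorffness. Hence $S^0\subseteq gY(\M)g^{-1}$.

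\emph{The descent to $k$.} Here $g\in X(\M)$, so $g^{-1}Sg$ need not be a $k$-subgroup. Borel--Tits conjugacy under $X(k)$ concerns $k$-subgroups, so it does not help. Use compactness instead:
\begin{itemize}
\item $S^0$ is the intersection of the $k$-definable finite-index subgroups of $S(\M)$, so one of them, $S_1$, satisfies $S_1\subseteq gY(\M)g^{-1}$.
\item The statement $\exists g\in X\,\forall s\in S_1\,(g^{-1}sg\in Y)$ has parameters in $k$, so it has a witness $g\in X(k)$.
\item Since $S$ is connected and $k$-split, $S(\M)$, and hence its finite-index subgroup $S_1$, is Zariski dense in $S$. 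This gives $g^{-1}Sg\le Y$, a maximal $k$-split solvable $k$-subgroup, and it also justifies your ``after Zariski closure'' step.
\end{itemize}

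So repaired, your argument proves (1) directly over any real closed or $p$-adically closed $k$, using only the dfg machinery already present in the paper. The cited route instead goes through local compactness over $\mathbb{R}$, $\mathbb{Q}_p$ followed by a model-theoretic transfer. Your stated fallback to the citations is exactly what the paper does.
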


\begin{rmk}
Regarding Part (1) of Fact \ref{fact-red-alg-gp}, Theorem 3.1 of \cite{AG-and-NT} proves the case when \( k = \mathbb{Q}_p \) or \( k = \mathbb{R} \). Corollary 8.20 of \cite{PYZ-$p$-adic-groups}, which is only proven for \( p \)-adically closed fields. However, its proof also applies to the case of real closed fields.
\end{rmk}

\begin{claim}\label{claim-V=NAW}
$V = N_A(W) = N_A(W^0)$. Consequently, $V$ is a definably amenable component of $A$.
\end{claim}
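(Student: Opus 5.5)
We need to show $V=N_A(W_u)$ equals $N_A(W)$ and $N_A(W^0)$; the "consequently" part then follows from Definition \ref{def-DAC}, since $W$ is a dfg component of $A$ and $V=N_A(W^0)$.

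\textbf{Step 1: $N_A(W)\le V$ and $N_A(W^0)\le V$.} Let $g\in N_A(W^0)$. Since $W_u=\widetilde{W_u}(\M)$ is definably connected, it has no proper definable subgroup of finite index, so $W_u\le W^0$ (we may take $W^0$ to be a definable finite-index subgroup, or intersect with such subgroups). Hence $W_u^g\le (W^0)^g=W^0\le W$, and Claim \ref{fact-Wug=Wu} gives $W_u^g=W_u$, that is, $g\in V$. The same argument applies verbatim to $g\in N_A(W)$.

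\textbf{Step 2: $V\le N_A(W)$.} Let $g\in V$. Then $g$ normalizes $\widetilde{W_u}$, so $\widetilde{W}^g=\widetilde{W_u}\rtimes\widetilde{W_t}^g$, and $\widetilde{W}^g$ is a $k$-split solvable subgroup of $N_{\widetilde A}(\widetilde{W_u})=\widetilde V$. The aim is to show $\widetilde{W}^g=\widetilde{W}$. I would first use maximality. By Theorems \ref{thm-main-conj-RCF}/\ref{thm-main-conj-pCF}, $A/W$ is definably compact; since $A$ has finite index in $\widetilde A(k)$, Fact \ref{fact-red-alg-gp}(1) applied to $\widetilde W$ shows that $\widetilde W$ contains a maximal $k$-split solvable subgroup, and by connectedness and dimension it equals one. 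So $\widetilde W$ and $\widetilde W^g$ are both maximal $k$-split solvable subgroups of $\widetilde A$. Both contain $\widetilde{W_u}$, and $\widetilde{W_u}$ should be the full unipotent radical of the minimal $k$-parabolic $P=\widetilde{V}$. I would therefore identify $\widetilde V$ with a minimal $k$-parabolic, namely the normalizer of the unipotent radical of $\widetilde W$ (Fact \ref{fact-red-alg-gp}(5)). Then $\widetilde W$ and $\widetilde W^g$ are of the form $R_u(P)\rtimes S$ and $R_u(P)\rtimes S'$ with $S,S'$ maximal $k$-split tori of $P$. These tori are conjugate by an element $u\in R_u(P)(k)$, but they need not be equal.

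This exposes a real difficulty: $V$ need not normalize $W$ on the nose. In $\SL_2$, for example, $N(U)$ is the Borel subgroup, and that does normalize $W=B$. In general, though, a minimal parabolic $P=Z(S)R_u(P)$ contains the anisotropic kernel, and that kernel is not split. So "$W$" must be read as the dfg component, which should equal $R_u(P)\rtimes S$ only when $Z(S)$ is a split torus; otherwise the definably compact part $Z(S)/S$ lies in $V$ but is not in $W$. The normalizer of $R_u\rtimes S$ is $R_u\rtimes N_{Z}(\ldots)$, which is still all of $P$ because $S$ is central in $Z(S)$ and conjugates of $S$ by $R_u$ move it. So the honest route is to show $W^g$ and $W$ are commensurable, via Theorem \ref{thm-main-conj-pCF}(2), and that $g$ normalizes $W^0$. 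Concretely, $\widetilde W^g=R_u\rtimes S^g$, and $S^g=S^{u}$ for some $u\in R_u$ because $g\in P=Z(S)R_u$ and $Z(S)$ centralizes $S$. Hence $\widetilde W^g=(R_u\rtimes S)^u=\widetilde W$, since $u\in\widetilde W$. This gives $V\le N_A(W)$, using $W=\widetilde W\cap A$.

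\textbf{Step 3: $N_A(W)\le N_A(W^0)$.} This holds because $W^0$ is characteristic in $W$ under definable automorphisms, so any $g$ normalizing $W$ normalizes $W^0$. Combining the three steps, $V=N_A(W)=N_A(W^0)$, which completes the proof.

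The main obstacle is Step 2, specifically the identification $\widetilde V=P=Z(S)\,R_u(P)$ with $\widetilde{W_u}=R_u(P)$. This requires knowing that $\widetilde W$ is a maximal $k$-split solvable subgroup (from Fact \ref{fact-red-alg-gp}(1)), together with the standard structure of minimal $k$-parabolics from Borel–Tits.
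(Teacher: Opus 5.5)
Your proof is correct. Steps 1 and 3 match the paper's Steps 1 and 3, and the ``consequently'' clause follows from Definition~\ref{def-DAC} in both. The genuine difference is in Step 2 ($V\le N_A(W)$).

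\textbf{The paper's route.} The paper argues over $\Omega$ with Borel subgroups. It picks a Borel subgroup $\mathcal B\supseteq\widetilde W$ and uses Fact~\ref{fact-red-alg-gp}(3) to find a maximal torus $T\subseteq\mathcal B\cap\mathcal B^g$. From this it concludes $\mathcal B=\widetilde{W_u}\rtimes T=\mathcal B^g$. It then gets $g\in N_{\widetilde A}(\widetilde W)$ by applying Fact~\ref{fact-red-alg-gp}(4),(5) to the parabolic subgroup $N_{\widetilde A}(\widetilde W)$.

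\textbf{Your route.} You use the relative (Borel--Tits) structure theory instead. You write $\widetilde W=S\cdot R_u(P)$, with $P$ a minimal $k$-parabolic and $S=\widetilde{W_t}$ a maximal $k$-split torus, so that $\widetilde{W_u}=R_u(P)$. Then $N_{\widetilde A}(R_u(P))=P=Z_{\widetilde A}(S)\ltimes R_u(P)$. Writing $g=zu$ with $z\in Z_{\widetilde A}(S)$ and $u\in R_u(P)$ gives $\widetilde W^g=R_u(P)\,S^u=(R_u(P)S)^u=\widetilde W$.

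\textbf{What each buys.} Your route imports more than the paper's list of facts. In return it is shorter, identifies $N_{\widetilde A}(\widetilde W)=\widetilde V=P$ directly, and works in general. The paper's equality $\mathcal B=\widetilde{W_u}\rtimes T$ needs $\widetilde{W_u}$ to be the unipotent radical of a Borel subgroup. That amounts to minimal $k$-parabolics being Borel subgroups, i.e.\ $\widetilde A$ quasi-split over $k$. This fails, for instance, for $\SL_2(D)$ with $D$ a quaternion division algebra over $k$. There the unipotent radical of the minimal parabolic has dimension $4$, while that of a Borel subgroup has dimension $6$. Your computation still applies in that case.

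\textbf{Fixes to the write-up.}
\begin{itemize}
\item Cite $N_{\widetilde A}(R_u(P))=P$ as a standard Borel--Tits fact. It is not Fact~\ref{fact-red-alg-gp}(5), which only gives $N_{\widetilde A}(P)=P$.
\item Drop the claims that $\widetilde W^g$ is $k$-split and that $S,S'$ are conjugate by an element of $R_u(P)(k)$. Since $g\in A=A(\M)$ need not be a $k$-point, $\widetilde W^g$ need not be defined over $k$.
\item Delete the detour about $V$ not normalizing $W$ ``on the nose'' and the commensurability route. Your final computation already gives exact normalization.
\end{itemize}
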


\begin{proof}
\noindent\textit{Step 1: $N_A(W^0) \leq V$:}  

Note that $W^0 = W_u^0 \rtimes W_t^0 = W_u \rtimes W_t^0$, since $W_u = \widetilde{W_u}(\mathbb{M})$ is definably connected. Let $g \in N_A(W^0)$. Conjugation by $g$ maps $W_u$ into $W$, so $W_u^g \leq W$. By Claim \ref{fact-Wug=Wu}, we have $W_u^g = W_u$, which implies $g$ fixes $W_u$. Since $V = N_A(W_u)$, it follows that $g \in V$. Thus, $N_A(W^0) \leq V$.

\vspace{0.2cm}
\noindent\textit{Step 2: $V \leq N_A(W)$:}  

We first use the given structural properties of $A$ and $\widetilde{A}$:
\begin{itemize}
    \item $A(k)$ has finite index in $\widetilde{A}(k)$,
    \item $W(k)$ has finite index in $\widetilde{W}(k)$,
    \item $A(k)/W(k)$ is definably compact. 
\end{itemize} 
Combining these, $\widetilde{A}(k)/\widetilde{W}(k)$ is also definably compact. By Fact \ref{fact-red-alg-gp} (1), $\widetilde{W}$ contains a maximal $k$-split solvable algebraic subgroup of $\widetilde{A}$. Since $\widetilde{W}$ itself is $k$-split, $\widetilde{W}$ must be a maximal $k$-split solvable algebraic subgroup of $\widetilde{A}$. Applying Fact \ref{fact-red-alg-gp} (2) further gives that $\widetilde{W_u}$, the unipotent radical of $\widetilde{W}$, is a maximal unipotent subgroup of $\widetilde{A}$.

Now let $g \in V = N_A(W_u)$. By definition, $W_u^g = W_u$, so conjugating the Zariski closure gives $\widetilde{W_u}^g = \widetilde{W_u^g} = \widetilde{W_u}$. Let $\mathcal{B}$ be a Borel subgroup of $\widetilde{A}$ containing $\widetilde{W}$ (note $\mathcal{B}$ need not be defined over $k$). Conjugation by $g$ maps $\mathcal{B}$ to another Borel subgroup $\mathcal{B}^g$, and since $\widetilde{W_u}^g = \widetilde{W_u}$, we have $\widetilde{W_u} \leq \mathcal{B}^g$. By Fact \ref{fact-red-alg-gp} (3), there exists a maximal torus $T \subseteq \mathcal{B} \cap \mathcal{B}^g$. Thus, $\mathcal{B} = \widetilde{W_u} \rtimes T = \mathcal{B}^g$.

Next, we have that $\mathcal{B} \leq N_{\widetilde{A}}(\widetilde{W})$, which implies that $N_{\widetilde{A}}(\widetilde{W}) = \widetilde{N_A(W)}$ is a parabolic subgroup of $\widetilde{A}$ defined over $k$. Since $\mathcal{B} \subseteq \widetilde{N_A(W)} \cap \widetilde{N_A(W)}^g$, Fact \ref{fact-red-alg-gp} (4) implies $\widetilde{N_A(W)} = \widetilde{N_A(W)}^g$. By Fact \ref{fact-red-alg-gp} (5), this forces $g \in \widetilde{N_A(W)}$, so $\widetilde{W}^g = \widetilde{W}$.

Finally, we descend back to $A$: $\widetilde{W}^g = \widetilde{W}$ implies $\widetilde{W}(\mathbb{M})^g = \widetilde{W}(\mathbb{M})$. Since $W = \widetilde{W}(\mathbb{M}) \cap A$, conjugating by $g$ gives:
\[
W^g = \left( \widetilde{W}(\mathbb{M}) \cap A \right)^g = \widetilde{W}(\mathbb{M})^g \cap A^g = \widetilde{W}(\mathbb{M}) \cap A = W.
\]
Thus, $g \in N_A(W)$, so $V \leq N_A(W)$.

Combining Steps 1 and 2, we conclude $N_A(W^0) \leq V \leq N_A(W)$. It is straightforward to verify $N_A(W) \leq N_A(W^0)$, so $N_A(W^0) = V = N_A(W)$ as required.
\end{proof}

\begin{claim}\label{claim-V=NAV} 
  $B$ is a definably amenable component of $G$. 
\end{claim}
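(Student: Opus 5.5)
The plan is to derive the claim from Lemma \ref{lemma-G/B=A/V}, applied to the surjection $\pi_A: G \to A$ of (\ref{equ-D-G-A-notation}). Its kernel $D$ is definably amenable. By Claim \ref{claim-V=NAW}, $V$ is a definably amenable component of $A$. Hence $B = \pi_A^{-1}(V)$ is a definably amenable component of $G$. So the argument reduces to checking that the hypotheses of Lemma \ref{lemma-G/B=A/V} really hold here, and that its conclusion matches Definition \ref{def-DAC}.

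The first step is to verify the hypotheses carefully. $\pi_A$ is a $k$-definable surjective homomorphism with $\ker(\pi_A) = D$. $A$ is a definable (linear) group. $V = N_A(W) = N_A(W^0)$ is a definable subgroup of $A$ by Claim \ref{claim-V=NAW}. Since $W$ is a dfg component of $A$, $V$ is of the form required by Definition \ref{def-DAC}.

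The main obstacle is that the proof of Lemma \ref{lemma-G/B=A/V} only establishes that $\pi_A^{-1}(V)$ is definably amenable and maximal among definably amenable subgroups containing a dfg component $H$ of $\pi_A^{-1}(W)$. Definition \ref{def-DAC}, however, asks for the equality $B = N_G(H^0)$ for some dfg component $H$ of $G$. To close this gap I would argue directly, with $H$ a dfg component of $\pi_A^{-1}(W)$ (which is a dfg component of $G$, as in that proof).

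For $N_G(H^0) \le B$: take $g \in N_G(H^0)$. Then $\pi_A(g)$ normalizes $\pi_A(H^0)$. Since $\pi_A(H)$ has finite index in $W$ (by the dimension count $\dim G/H = \dim A/W + \dim \pi_A^{-1}(W)/H$ together with Lemma \ref{lemma-open-is-finite-index}), we get $\pi_A(H^0) \supseteq W^0$, and in fact $\pi_A(H^0)=W^0$ by connectedness. Hence $\pi_A(g) \in N_A(W^0) = V$.

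For $B \le N_G(H^0)$: first, $B$ is definably amenable, being an extension of $V$ by $D$. Second, $B$ contains $H$. I would then invoke Fact \ref{fact-max-DA-subgroup}(1) to get $g$ with $B \le N_G(H^0)^{g}$ up to the maximality statement. Alternatively, I would use that $N_G(H^0)$ is itself a definably amenable component containing $H$, so that maximality of both subgroups among definably amenable groups containing $H$ forces $B = N_G(H^0)$, after possibly replacing $H$ by ${\cal H}$ as in Remark \ref{rmk-NGH=NGH0}.
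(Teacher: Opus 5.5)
Your first paragraph is exactly the paper's proof. The paper only combines Claim \ref{claim-V=NAW} with Lemma \ref{lemma-G/B=A/V}, applied to $\pi_A$, whose kernel $D$ is definably amenable. The rest of your proposal goes beyond the paper, and your diagnosis is accurate. The proof of Lemma \ref{lemma-G/B=A/V} only shows that the preimage is definably amenable and maximal among definably amenable subgroups containing a dfg component. That matches Definition \ref{def-DAC} only through the equivalence asserted, without proof, after Fact \ref{fact-max-DA-subgroup}. Checking $B=N_G(H^0)$ directly is a good way to close this, and the proof is correct overall, but two steps need repair.

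\textbf{The inclusion $N_G(H^0)\le B$.} The identity $\dim G/H=\dim A/W+\dim \pi_A^{-1}(W)/H$ is a tautology and does not give $\dim\pi_A(H)=\dim W$. Argue instead as follows. $W/\pi_A(H)$ is the image of the definably compact $\pi_A^{-1}(W)/H$. So the dfg group $\pi_A(H)$ is a dfg component of the dfg group $W$, by part (1) of Theorems \ref{thm-main-conj-RCF} and \ref{thm-main-conj-pCF}. Hence it has full dimension and, by Lemma \ref{lemma-open-is-finite-index}, finite index in $W$. Fact \ref{fact-pi(00)=pi()00}, together with $X^{00}=X^0$ for dfg $X$ (Fact \ref{dfg}), then gives $\pi_A(H^0)=\pi_A(H)^0=W^0$. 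From there your argument for this inclusion goes through.

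\textbf{The inclusion $B\le N_G(H^0)$.} Drop the appeal to Fact \ref{fact-max-DA-subgroup}(1). That fact concerns normalizers $N_G(H')$ of dfg components, so applying it to $B$ presupposes the conclusion. Your second option works, but note that what is used is maximality of $N_G(H^0)$ together with the inclusion already proved; maximality of both groups alone would not force equality.

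It is cleaner to argue directly:
\begin{enumerate}
\item[(a)] $B/H$ is closed in the definably compact $G/H$, so $H$ is a dfg component of $B$.
\item[(b)] $B$ is definably amenable, so by part (3) it has a normal dfg component $H_1$.
\item[(c)] Part (2), with $H_1^g=H_1$, gives that $H\cap H_1$ has finite index in $H$. It also has finite index in $H_1$, by equality of dimensions and Fact \ref{fact-open-dfg-subgroup}.
\item[(d)] Hence $H^0=H_1^0$, which is normal in $B$, so $B\le N_G(H^0)$.
\end{enumerate}
This is also the argument that justifies the paper's maximality remark. With these repairs, your proof establishes the claim in the literal sense of Definition \ref{def-DAC}, rather than through the maximality surrogate.
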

\begin{proof}
By  Claim \ref{claim-V=NAW}, $V$ is a definably component of $A$. By Lemma \ref{lemma-G/B=A/V} (2), $\pi_A^{-1}(V)$ is a definably amenable component of $G$.
\end{proof}




\subsection{The Compact Quotient Space and The dfg Component Invariant Types}\label{H(k0)-invariant types}

 Let \( G \) be a group definable over \( M_0 \), and let \( \mu_G \) be the intersection of all \( k_0 \)-definable open subsets of \( G \). Then \( \mu_G \) is a type-definable subgroup of \( G \). Let \( V_G = G(k_0)\mu_G \); this is a subgroup of \( G \), and there exists a standard part map \( \st : V_G \to G(k_0) \) that maps each \( g \in V_G \) to the unique \( g_0 \in G(k_0) \) such that \( g \in g_0\mu_G \). It is easy to see that \( V_G \leq N_G(\mu_G) \). Let \( E \) be an \( M_0 \)-definable subgroup of \( G \) such that \( X = G/E \) is definable and definably compact (equivalently, \( X(k_0) \) is compact). We regard \( X \) as the space of left \( G \)-cosets of \( E \); for any \( g \in G \), \( g/E \) denotes the left coset \( gE \), so \( X = \{g/E \mid g \in G\} \).

Let $O$ be a $k_0$-definable open subset of $G$ such that $\cl{O}$ is definably compact. Equivalently, $O(k_0)$ is a definable open   subset of $G(k_0)$ and $\cl{O(k_0)}$ is compact. Let $\pi_X: G\to X$ be the nature projection. Then $\pi_X$ is an open map and thus $\pi(gO)=gO/E$ is open in $X$ for any $g\in G$. Since $\{\pi(gO(k_0))|\ g\in G(k_0)\}$ is an open cover of $X(k_0)$, we see that there are $g_1,....,g_n\in G(k_0)$ such that $X=\bigcup_{i=1}^n\pi_X(g_iO)$.  Let $U=\bigcup_{i=1}^n g_i\cl{O} $, then $U$ is $k_0$-definable, definably compact,  and   $G=UE$. thus the restriction $\pi_X: U \to X $ of the projection $\pi_X$ to $U $ is surjective.

Without lose of generality, we may assume that $\mu_G\sq U$. Since   $U(k_0)$ is a compact definable subset of $G(k_0)$, we see that $U\sq U(k_0)\mu_G \sq V_G$. For any \( x \in X \), \( \st(x) \) denotes the unique element \( y \in X(k_0) \) such that \( x \in U \) for all \( k_0 \)-definable neighborhoods of \( y \). It is easy to see that \( \st(g/E) = \st(g)/E \) for all \( g \in V_G \).

\begin{lemma}\label{epsilon-equ}
Let notations be as in the above. Suppose that $M\succ M_0$. Then for any $g,h\in G$ with $\tp{(g/E)/M}=\tp{(h/E)/M}$, there exists $\epsilon_1\in \mu_G$ such that $\epsilon_1 g /E=h /E$. Moreover, if $g \in V_G$, then there exists $\epsilon_2\in \mu_G$ such that $g\epsilon_2 /E=h /E$.
\end{lemma}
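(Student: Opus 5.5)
We work inside the compact quotient $X=G/E$ and use the standard part map on $X$, which is well defined because $X(k_0)$ is compact. The first claim should follow from the fact that points of $X$ with the same type over $M$, hence over $M_0$, have the same standard part; we then translate that equality back to $G$ through the compact set $U$ with $G=UE$. The second claim should follow from the first by conjugation, since $V_G$ normalizes $\mu_G$.

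\emph{Step 1: equal types give equal standard parts.} Let $x=g/E$ and $y=h/E$, with $\tp{x/M}=\tp{y/M}$. Since $X(k_0)$ is compact, every $x\in X$ has a unique standard part $\st(x)\in X(k_0)$: it is the unique point all of whose $k_0$-definable neighbourhoods contain $x$. Existence comes from compactness of $X(k_0)$ and the finite cover of $X$ by the sets $\pi_X(g_iO)$; uniqueness comes from Hausdorffness. Whether $x$ lies in a given $k_0$-definable neighbourhood is recorded in $\tp{x/M_0}$, and $M_0\subseteq M$. Hence $\st(x)=\st(y)=:c\in X(k_0)$.

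\emph{Step 2: lift to $G$.} Since $G=UE$, write $x=u/E$ and $y=v/E$ with $u,v\in U\subseteq V_G$. Then $\st(u)/E=\st(x)=c=\st(y)=\st(v)/E$, so $\st(v)^{-1}\st(u)\in E(k_0)$. Write $u=u_0\epsilon$ and $v=v_0\delta$ with $u_0=\st(u)$, $v_0=\st(v)$ and $\epsilon,\delta\in\mu_G$. Then $v_0=u_0e_0$ for some $e_0\in E(k_0)$.

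We want $\epsilon_1\in\mu_G$ with $\epsilon_1u/E=v/E$. Because $u_0\in G(k_0)\leq V_G$ normalizes $\mu_G$, the cosets $\mu_G u/E$ and $u_0\mu_G/E$ coincide. So it suffices to show $v\in u_0\mu_G E$. We have
\[
v=u_0e_0\delta=u_0(e_0\delta e_0^{-1})e_0,
\]
and $e_0\delta e_0^{-1}\in\mu_G$ since $e_0\in G(k_0)$. Therefore $v\in u_0\mu_GE$, and we get $\epsilon_1\in\mu_G$ with $\epsilon_1 u/E=v/E$. Since $g/E=u/E$ and $h/E=v/E$, and left multiplication by $\epsilon_1$ respects left cosets of $E$, this gives $\epsilon_1g/E=h/E$. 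Notice that Step 2 only uses $\st(x)=\st(y)$; the full type hypothesis enters only through Step 1.

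\emph{Step 3: the moreover part.} If $g\in V_G$, then $g$ normalizes $\mu_G$, so $\epsilon_2:=g^{-1}\epsilon_1g\in\mu_G$ and $g\epsilon_2=\epsilon_1g$. Hence $g\epsilon_2/E=h/E$.

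The step needing the most care is Step 1, namely that standard parts in $X$ are well defined. This requires that $X$ carries the quotient topology and that $\pi_X$ is open, so that $\st(g/E)=\st(g)/E$ for $g\in V_G$; the setup preceding the lemma grants both. Once that is in place, the rest is the conjugation bookkeeping above.
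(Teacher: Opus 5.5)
Your proof is correct and follows essentially the same route as the paper. Like the paper, you lift to $U$, use that equal types over $M\supseteq k_0$ force the lifts to have the same standard part modulo $E$ in the compact quotient, and get the moreover part by conjugating $\epsilon_1$ by $g\in V_G$. Your Step 2 spells out the bookkeeping that the paper compresses into ``it suffices to show'': you write $\st(v)=\st(u)e_0$ with $e_0\in E(k_0)$ and use that $G(k_0)$ normalizes $\mu_G$.
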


\begin{proof}
Take $g_0,h_0\in U$ satisfying $g_0/E = g/E$ and $h_0/E = h/E$. It suffices to show $\st(g_0)/E = \st(h_0)/E$. For each $k_0$-definable open neighborhood $Z$ of $\st(g_0)$, we have that $Z/E$ is a $k_0$-definable open neighborhood of $g_0/E$. Since $\tp{(g_0/E)/M} = \tp{(h_0/E)/M}$, it follows that $h_0/E \in Z/E$. Thus, $h_0/E$ lies in every $k_0$-definable open neighborhood of $\st(g_0)/E$, which implies $\st(h_0)/E = \st(g_0)/E$ as required.

If $g \in V_G$, then $\mu_G g = g\mu_G$ as $\mu_G$ is normal in $V_G$, so there exists $\epsilon_2\in \mu_G$ such that $g\epsilon_2 = \epsilon_1 g$. This yields $g\epsilon_2/E = \epsilon_1 g/E = h/E$.
\end{proof}

\begin{lemma}\label{kid}
Let notations be as in the above. Let $p \in S_G(M_0)$ be $E^0(k_0)$-invariant. Then for any $u\in U $ and $h\in E $ such that $uh\models p$, we have ${E^0(k_0)}^{u}\subseteq \mu_G  E$.
\end{lemma}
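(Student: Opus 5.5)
The plan is to exploit the invariance of $p$ directly: translating a realization of $p$ by an element of $E^0(k_0)$ does not change its type over $M_0$. Passing to the quotient $X=G/E$, this lets us compare the cosets $u/E$ and $eu/E$ via Lemma \ref{epsilon-equ}. Throughout, I read $E^0(k_0)^u$ as $u^{-1}E^0(k_0)u$ (the same convention as $H^g$ earlier). So, fixing $e\in E^0(k_0)$, the goal is to show $u^{-1}eu\in \mu_G E$.

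\emph{Step 1.} Fix $e\in E^0(k_0)$. Since $p$ is $E^0(k_0)$-invariant and $uh\models p$, we have $euh\models e\cdot p=p$. Hence
\[
\tp{euh/M_0}=\tp{uh/M_0}.
\]

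\emph{Step 2.} The projection $\pi_X:G\to X$ is $k_0$-definable. Since $h\in E$, we have $euh/E=eu/E$ and $uh/E=u/E$. Applying $\pi_X$ to the equality of types from Step 1 therefore gives
\[
\tp{(eu/E)/M_0}=\tp{(u/E)/M_0}.
\]

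\emph{Step 3.} Apply Lemma \ref{epsilon-equ} with $g=u$ and $h$ replaced by $eu$. That lemma is stated for $M\succ M_0$, but its proof only uses that the two cosets agree on $k_0$-definable open sets, so it applies equally over $M_0$. Alternatively, one can take any $M\succ M_0$ and note that only the restriction of the types to $M_0$ matters in the argument. Because $u\in U\subseteq V_G$, the ``moreover'' clause of the lemma yields some $\epsilon_2\in\mu_G$ with
\[
u\epsilon_2/E=eu/E.
\]
This means $u^{-1}eu\in \epsilon_2E\subseteq \mu_G E$. Since $e\in E^0(k_0)$ was arbitrary, $E^0(k_0)^u\subseteq \mu_G E$.

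The only point needing care is Step 3, where there are two things to check. First, the lemma must be applicable with types over $M_0$ itself: its proof compares standard parts through $k_0$-definable neighbourhoods only, so this is fine. Second, the conjugation convention must match: we need $u$ on the left of $\epsilon_2$, which is exactly what the ``moreover'' part provides, using that $\mu_G$ is normal in $V_G$. If the paper's convention were instead $ueu^{-1}$, I would rerun the same argument with the roles adjusted, writing $e\cdot p$ in terms of the coset of $u$.
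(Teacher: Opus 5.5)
Your proposal is correct and follows essentially the paper's proof: $E^0(k_0)$-invariance gives $\tp{(eu/E)/M_0}=\tp{(u/E)/M_0}$, and you then invoke the ``moreover'' clause of Lemma \ref{epsilon-equ}, which, as you note and as the paper itself does, only needs agreement on $k_0$-definable sets. The only difference is cosmetic. The paper puts $eu\in V_G$ in the first slot of that lemma, obtaining $eu\epsilon/E=u/E$, and then commutes $\epsilon$ past $e^u$ using normality of $\mu_G$ in $V_G$; putting $u$ in the first slot, as you do, gives $u^{-1}eu\in\epsilon_2E$ directly.
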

\begin{proof}
    Let $g\in E^0(k_0)$, then  $\tp{guh/k_0}=\tp{uh/k_0}$, which implies that $\tp{(gu/E)/k_0}=\tp{(u/E)/k_0}$. By Lemma \ref{epsilon-equ}, there is $\epsilon\in \mu_G$ such that $gu\epsilon /E= u/E$. We see that $g^u\epsilon\in E $. Note that $g^u\in V_G$, it follows that $g^u\epsilon=\epsilon'g^u$ for some $\epsilon'\in \mu_G$, and therefore $g^u\in \mu_G  E $. So ${E^0(k_0)}^{u}\subseteq \mu_G  E $ as required.
\end{proof}

\begin{lemma}\label{klem-1}
Let the notations be as in Notations \ref{notations-A-V-W-B}, except that all objects are definable over $k_0$.  Let \( p \in S_A(M) \) be \( W^0(k_0) \)-invariant. Suppose \( g \models p \); then \( g = \epsilon v \) for some \( \epsilon \in \mu_A \) and \( v \in V \), where \( \epsilon, v \in \dcl{k_0, g} \).

\end{lemma}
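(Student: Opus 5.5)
We apply the results of the preceding subsection with the ambient group taken to be $A$ and with $E:=W$. This is legitimate: $W$ is a $k_0$-definable dfg component of $A$, so $A/W$ is definable by Fact \ref{fact-G/dfg-is-definable} and definably compact by Theorems \ref{thm-main-conj-RCF} and \ref{thm-main-conj-pCF}. Fix the $k_0$-definable, definably compact set $U\supseteq\mu_A$ with $A=UW$ and $U\subseteq V_A$, as constructed there.

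\emph{Step 1: the decomposition and its definability.} Both $\Q$ and $o$-minimal expansions of $\mathbb{R}$ have definable Skolem functions. Hence there is a $k_0$-definable function $f$ with $f(x)\in U$ and $f(x)^{-1}x\in W$ for all $x\in A$. Put $u=f(g)$, $h=u^{-1}g\in W$ and $u_0=\st(u)\in A(k_0)$. Then $u\in u_0\mu_A=\mu_A u_0$, since $u_0\in V_A$ normalizes $\mu_A$. We set
\[
\epsilon=uu_0^{-1}\in\mu_A,\qquad v=u_0h .
\]
Then $g=\epsilon v$, and $\epsilon,v\in\dcl{k_0,g}$, because $u_0$ is a $k_0$-point and $u,h\in\dcl{k_0,g}$. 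Since $h\in W$ and $W=W_u\rtimes W_t$ normalizes $W_u$, we have $W\le V$. It therefore remains only to show $u_0\in V=N_A(W_u)$.

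\emph{Step 2: $u_0$ almost normalizes $W$.} The restriction $p\upharpoonright k_0$ is still $W^0(k_0)$-invariant and is realized by $uh$. Lemma \ref{kid} then gives $W^0(k_0)^{u}\subseteq\mu_A W$. Since $u=\epsilon u_0$ and $\mu_A$ is normal in $V_A$ (and $W^0(k_0)^{u_0}\subseteq A(k_0)\subseteq V_A$), we get $W^0(k_0)^{u_0}\subseteq\mu_A W\cap A(k_0)$. Now suppose $a\in A(k_0)$ and $a=\epsilon'w$ with $\epsilon'\in\mu_A$ and $w\in W$. Then $\st(w)=a$, and $W(k_0)$ is closed in $A(k_0)$, being a definable subgroup, so $a\in W(k_0)$. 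Hence $W^0(k_0)^{u_0}\subseteq W(k_0)$.

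\emph{Step 3: passing to Zariski closures.} We have $W_u(k_0)\le W^0(k_0)$, because $W_u$ is definably connected. Also $W_u(k_0)$ is Zariski dense in $\widetilde{W_u}$, since its dimension equals $\Zdim(\widetilde{W_u})$ and $\widetilde{W_u}$ is connected. Taking Zariski closures in Step 2 gives $\widetilde{W_u}^{u_0}\le\widetilde{W}$. Claim \ref{fact-Wug=Wu} then yields $W_u^{u_0}=W_u$, i.e.\ $u_0\in V$, and so $v=u_0h\in V$.

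I expect the main obstacle to be Step 2. There one must carefully move the infinitesimal $\epsilon$ past the conjugation. One must also justify that $W(k_0)$ is closed in $A(k_0)$, so that an infinitesimal perturbation of an element of $W$ landing in $A(k_0)$ already lies in $W(k_0)$. Without this, only the weaker inclusion in $\mu_A W$ is available. The density argument in Step 3 is routine given the dimension facts.
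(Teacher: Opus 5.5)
Your proof is correct and follows essentially the same route as the paper: write $g=uh$ with $u$ in the definably compact set $U$ and $h\in W$, apply Lemma \ref{kid}, take standard parts to get $W_u(k_0)^{u_0}\subseteq W(k_0)$, and conclude $u_0\in V$ via Claim \ref{fact-Wug=Wu}. Your version is somewhat more careful than the paper's: you use only the inclusion $W_u(k_0)\le W^0(k_0)$ (the paper states the equality $W^0(k_0)=W_u(k_0)$, which fails in the $o$-minimal case, though the equality is not needed), you justify $\mu_AW\cap A(k_0)\subseteq W(k_0)$ by closedness of $W(k_0)$, and you make explicit both the Skolem-function choice behind $\epsilon,v\in\dcl{k_0,g}$ and the passage from $k_0$-points to $\widetilde{W_u}$ (which could equally be done by elementarity of $M_0\prec\M$).
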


\begin{proof}

First, note that \( W^0(k_0) = W_u(k_0) \). By Claim \ref{fact-Wug=Wu}, for any \( g \in A \), \( W_u^g \leq W \) if and only if \( W_u^g = W_u \). As discussed at the beginning of Section \ref{H(k0)-invariant types}, since \( W \) is a dfg component of \( A \), there exists a \( k_0 \)-definable, definably compact subset \( Z \) such that \( A = ZW \). We may therefore write \( g = zw \) where \( z \in Z \) and \( w \in W \).

Since \( \tp{zw/M_0} \) is \( W^0(k_0) \)-invariant, Lemma \ref{kid} implies \( W^0(k_0)^z \subseteq \mu_A W \). Since \( Z \sqsubseteq V_A = \mu_A A(k_0) \), we see that \( W^0(k_0)^z \leq V_A \). Thus, we deduce:
\[
W_u(k_0)^{z_0} = W^0(k_0)^{z_0} = \st\left(W^0(k_0)^z\right) \subseteq \st\left(\mu_A W \cap V_A\right) = W(k_0).
\]
By Claim \ref{fact-Wug=Wu}, \( W_u^{z_0} = W_u \), so \( z_0 \in V(k_0) \), and therefore, \( g = zw = \epsilon z_0 w \) where \( \epsilon \in \mu_A \) and \( v = z_0 w \in V \). Since \( \Th(M_0) \) has definable Skolem functions, we may assume \( \epsilon, v \in \dcl{k_0, g} \).

\end{proof}

\begin{fact}[see \cite{JY-2} Lemma 2.1]\label{fact-pi(00)=pi()00}
    Let $h:X\to Y$ be a surjective homomorphism of definable groups. Then $f(X^{00})=Y^{00}$.
\end{fact}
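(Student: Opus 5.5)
The plan is to prove the two inclusions $h(X^{00})\subseteq Y^{00}$ and $Y^{00}\subseteq h(X^{00})$ separately. Both use only the minimality of the type-definable connected component among type-definable subgroups of bounded index; this minimality holds by NIP, as recalled in Section~2. Fix a small set $A$ over which $X$, $Y$ and $h$ are defined. Since under NIP $X^{00}$ and $Y^{00}$ do not depend on the choice of small parameter set, we may compute both over $A$ (or over any larger small set).

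For $h(X^{00})\subseteq Y^{00}$, consider $h^{-1}(Y^{00})$. If $Y^{00}$ is defined by a partial type $\Sigma(y)$ over $A$, then $h^{-1}(Y^{00})$ is defined by $\{\sigma(h(x)) : \sigma\in\Sigma\}$, which is again a partial type over $A$. Hence $h^{-1}(Y^{00})$ is a type-definable subgroup of $X$. Because $h$ is surjective, it induces a bijection $X/h^{-1}(Y^{00})\to Y/Y^{00}$, so $h^{-1}(Y^{00})$ has bounded index. By minimality, $X^{00}\le h^{-1}(Y^{00})$, and applying $h$ gives $h(X^{00})\le Y^{00}$.

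For $Y^{00}\subseteq h(X^{00})$, the main point, and the only mildly delicate step, is that $h(X^{00})$ is type-definable. Let $P(x)$ be the partial type over $A$ defining $X^{00}$, and assume it is closed under finite conjunctions. I claim that $y\in h(X^{00})$ if and only if $\models\exists x\,(\varphi(x)\wedge h(x)=y)$ for every $\varphi\in P$. The forward direction is trivial. For the converse, the partial type $P(x)\cup\{h(x)=y\}$ over $Ay$ is finitely satisfiable by hypothesis, so it is realized in the monster model by saturation. Thus $h(X^{00})$ is a type-definable subgroup of $Y$ over $A$. Its index in $Y$ is bounded, since by surjectivity $Y/h(X^{00})$ is the image of $X/X^{00}$ under the map $xX^{00}\mapsto h(x)h(X^{00})$. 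Minimality of $Y^{00}$ then gives $Y^{00}\le h(X^{00})$.

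Combining the two inclusions yields $h(X^{00})=Y^{00}$. The statement writes $f$ for the map called $h$, and this is what the argument proves. No special feature of $o$-minimal or $p$-adic structures is needed; only NIP (for the existence and parameter-independence of $^{00}$) and saturation are used.
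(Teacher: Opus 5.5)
Your proof is correct. The paper gives no argument of its own for this Fact and simply cites Lemma 2.1 of \cite{JY-2}, and your two-inclusion argument is the standard proof of that lemma: pull back $Y^{00}$ to obtain a bounded-index type-definable subgroup of $X$, then push forward $X^{00}$, using saturation to see that $h(X^{00})$ is type-definable of bounded index.
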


\begin{lemma}\label{klem}
Let the notations be as in Notations \ref{notations-A-V-W-B}, except that all objects are definable over $k_0$, and let $H$ be a normal dfg component of $B$. Let $M \succ M_0$ and $p \in S_G(M)$ be $H^0(k_0)$-invariant. Suppose that $g \models p$; then $g = \epsilon b$ for some $\epsilon \in \mu_G$ and $b \in B$ with $\epsilon, b \in \dcl{M,g}$.
\end{lemma}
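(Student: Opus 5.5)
The plan is to push the problem down to $A$ via $\pi_A$, apply Lemma \ref{klem-1} there, and then lift the resulting decomposition back to $G$.

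\textbf{Step 1: relate $H$ to $W$.} Since $D=\ker(\pi_A)$ is definably amenable and $V=\pi_A(B)$, I would first check that $\pi_A(H)$ is, up to finite index, a dfg component of $V$ containing $W^0$. In fact I expect $\pi_A(H)^0\geq W_u$.

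The argument would go as follows. $H$ is a normal dfg component of $B$, so $B/H$ is definably compact. Hence $V/\pi_A(H)$ is definably compact, and $\pi_A(H)$ is dfg, being the image of a dfg group. Applying Theorem \ref{thm-main-conj-RCF} / \ref{thm-main-conj-pCF} inside $V$, the group $\pi_A(H)$ is a dfg component of $V$. Normality in $V$ then gives commensurability with every dfg component, in particular with $W$.

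By Fact \ref{fact-dfg-group-Qp} / \ref{fact-dfg-group-R}, a dfg group has finitely many connected components, so the connected components coincide: $\pi_A(H)^0=W^0$. Fact \ref{fact-pi(00)=pi()00}, together with $G^0=G^{00}$ for dfg groups, gives $\pi_A(H^0)=\pi_A(H)^0=W^0$. Taking $k_0$-points, and using definable Skolem functions in $M_0$ to lift $k_0$-points, we get $\pi_A(H^0(k_0))=W^0(k_0)=W_u(k_0)$.

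\textbf{Step 2: apply Lemma \ref{klem-1}.} Set $q=\pi_A(p)\in S_A(M)$. Because $p$ is $H^0(k_0)$-invariant and $\pi_A$ is $k_0$-definable, $q$ is $\pi_A(H^0(k_0))=W^0(k_0)$-invariant. Lemma \ref{klem-1} applied to $\pi_A(g)\models q$ yields
\[
\pi_A(g)=\epsilon' v,\qquad \epsilon'\in\mu_A,\ v\in V,\ \epsilon',v\in \dcl{k_0,\pi_A(g)}\subseteq\dcl{M,g}.
\]

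\textbf{Step 3: lift to $G$.} This is the main obstacle: I need some $\epsilon\in\mu_G$ with $\pi_A(\epsilon)=\epsilon'$, chosen in $\dcl{M,g}$. I would prove $\pi_A(\mu_G)=\mu_A$.

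The inclusion $\subseteq$ is clear by continuity of $\pi_A$. For the reverse inclusion, $\pi_A$ is a $k_0$-definable surjective homomorphism of definable manifolds that is open: it is a quotient map whose fibres are cosets of $D$, and by dimension/generic smoothness it is a submersion at generic points, hence everywhere by homogeneity. Given $\epsilon'\in\mu_A$, consider the partial type
\[
\{x\in O:\ O \text{ a } k_0\text{-definable open neighbourhood of } \id_G\}\cup\{\pi_A(x)=\epsilon'\}.
\]
For each such $O$, the set $\pi_A(O)$ is a $k_0$-definable open neighbourhood of $\id_A$ and so contains $\epsilon'$. Hence the type is finitely consistent and realized by some $\epsilon\in\mu_G$.

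To get $\epsilon\in\dcl{M,g}$ rather than merely some realization, I would use definable Skolem functions. Take a $k_0$-definable family of shrinking neighbourhoods $O_t$ of $\id_G$, e.g.\ balls of radius $t$ in a chart. Define $s(a,t)$ to be a Skolem-chosen element of $O_t\cap\pi_A^{-1}(a)$, for $a$ in the set where this is nonempty. Then choose $t$ as a function of $\epsilon'$, namely $t=t(\epsilon')$ the minimal radius or the valuation of the distance of $\epsilon'$ from $\id_A$, such that openness of $\pi_A$ (uniform continuity near the identity on $k_0$-points, transferred) guarantees the element is infinitesimal whenever $\epsilon'$ is. This is the step needing the most care.

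Finally set $b=\epsilon^{-1}g$. Then $\pi_A(b)=\epsilon'^{-1}\pi_A(g)=v\in V$, so $b\in\pi_A^{-1}(V)=B$, and $b\in\dcl{M,g}$, which gives $g=\epsilon b$ as required.
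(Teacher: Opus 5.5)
Your proposal follows the paper's proof step for step. You show that $\pi_A(H)$ is a normal dfg component of $V$, so that $\pi_A(H^0)=\pi_A(H)^0=W^0$. You then push $p$ forward to $q=\pi_A(p)$, apply Lemma \ref{klem-1} to get $\pi_A(g)=\epsilon' v$, lift $\epsilon'$ using $\pi_A(\mu_G)=\mu_A$ (openness of $\pi_A$), and set $b=\epsilon^{-1}g\in\pi_A^{-1}(V)=B$.

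Your Step 3 is more careful than the paper. The paper simply cites definable Skolem functions, even though $\mu_G$ is only type-definable. Your device of choosing the radius definably from $\epsilon'$ is what actually justifies it. Take twice the infimum of the radii $t$ for which $O_t$ meets the closed coset $\pi_A^{-1}(\epsilon')$, or the maximal such valuation in the $p$-adic case. This radius is infinitesimal because that coset meets $\mu_G$, so a Skolem choice in the ball lies in $\mu_G\cap\dcl{k_0,\epsilon'}$. The degenerate case, where no such bound exists, forces $\epsilon'=\id$.

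One supporting claim in Step 1 is false, although the conclusion it supports is true. A dfg group need not have finitely many connected components: over $\mathbb{Q}_p$, already $\Gm/\Gm^0$ is infinite, and the paper uses exactly this at the end to show that Newelski's Conjecture fails for $A$. What you need is only commensurability. Suppose $X\cap Y$ has finite index in both $X$ and $Y$. Every definable finite-index subgroup of $X\cap Y$ has finite index in $X$. Also, $F\cap Y$ has finite index in $X\cap Y$ for every definable finite-index $F\le X$. Hence $X^0=(X\cap Y)^0=Y^0$.

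Note that conjugate-commensurability plus normality only gives that $\pi_A(H)\cap W$ has finite index in $W$. Finite index in $\pi_A(H)$ follows because dfg components have the same dimension, so $\pi_A(H)\cap W$ is open in $\pi_A(H)$, and Lemma \ref{lemma-open-is-finite-index} then applies.

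Lifting points of $W^0(k_0)$ into $H^0(k_0)$ by Skolem functions is fine in the $o$-minimal case, where $H^0$ is definable. In the $p$-adic case $H^0$ is only type-definable, so Skolem functions only give lifts into each definable finite-index $F\supseteq H^0$ separately. The paper makes the same passage from $\pi_A(H^0(k_0))$-invariance to $W^0(k_0)$-invariance with "it is easy to see", so this is not a departure from its argument. It is, however, a point where more than Skolem functions is needed.
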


\begin{proof}
We first claim that $\pi_A(H)^0 = W^0$: Since $H$ is a normal dfg component of $B$,  we have that $G/H$ is a definably compact group, and so is $V/\pi_A(H)$. Thus, $\pi_A(H)$ is a normal dfg component of $V$. By Theorems \ref{thm-main-conj-RCF} and \ref{thm-main-conj-pCF}, we have $\pi_A(H)^0 = W^0$.

Now, it is easy to see that $q = \pi_A(p)$ is $\pi_A(H^0(k_0))$-invariant. By Fact \ref{fact-pi(00)=pi()00},  $(\pi_A(H))^0 = \pi_A(H^0)$, so $q$ is $(\pi_A(H))^0(k_0)$-invariant, namely $W^0(k_0)$-invariant. By Lemma \ref{klem-1}, $\pi_A(g) = \epsilon' v$ for some $\epsilon' \in \mu_A$ and $v \in V$. Now, $\pi_A(gB) = \pi_A(g)V = \epsilon' V \in \mu_A V$. Since $\pi_A: G \to A$ is open, we have $\pi_A(\mu_G) = \mu_A$. Thus, $\pi_A^{-1}(\mu_A V) = \mu_G B D = \mu_G B$. Therefore, $g/B \in \mu_G/B$, so there exists $\epsilon \in \mu_G$ such that $g \in \epsilon b$. Since $\Th(M_0)$ has definable Skolem functions, we may assume that $\epsilon, b \in \dcl{k_0,g}$.
\end{proof}

\begin{cor}\label{exchange}
Let notations be as in Notations \ref{notations-A-V-W-B} and $p\in S_G({M^{\ext}})$ be $B^{00}(M)$-invariant. If $g \models p $. Then there exist  $\epsilon \in \mu_G $ and  $b' \in B $  such that $g = \epsilon b'\in \mu_GB$.
\end{cor}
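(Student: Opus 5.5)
The plan is to reduce Corollary \ref{exchange} to Lemma \ref{klem}: take the restriction of $p$ to the $L$-structure $M$ and check that it is $H^0(k_0)$-invariant for a suitable normal dfg component $H$ of $B$. Lemma \ref{klem} then produces the decomposition $g=\epsilon b'$.

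First, choose $H$. By Claim \ref{claim-V=NAV}, $B$ is a definably amenable component of $G$, so $B=N_G(H_1^0)$ for some dfg component $H_1$ of $G$. By Remark \ref{rmk-NGH=NGH0}, we may replace $H_1$ by a finite-index subgroup $H$ with $N_G(H)=N_G(H^0)=B$. In particular $H\lhd B$.

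Next, note that $G/H$ is definably compact by Theorems \ref{thm-main-conj-RCF} and \ref{thm-main-conj-pCF}. Since $H\le B$, the quotient $B/H$ is definably compact as well, so $H$ is a normal dfg component of $B$. All these objects are definable over $k_0$, since $G$ is over $M_0$ and the notations may be chosen over $k_0$, as in Lemma \ref{klem}.

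Second, we need $H^0(k_0)\subseteq B^{00}(M)$. Since $B$ is definably amenable with normal dfg component $H$, Fact \ref{dfg} gives $H^{00}=H^0$. Because $H^0$ is a type-definable subgroup of $B$, it remains to compare $H^0$ with $B^{00}$. The group $B^{00}\cap H$ has bounded index in $H$, so $H^{00}\subseteq B^{00}\cap H$. Hence $H^0=H^{00}\le B^{00}$, which gives $H^0(k_0)\subseteq H^0(M)\subseteq B^{00}(M)$. Consequently $p$ is $H^0(k_0)$-invariant, and so is its reduct $p\restriction L\in S_G(M)$, because the translation action commutes with restriction to $L$-formulas.

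Third, apply Lemma \ref{klem} to $p\restriction L$ and the realization $g$. This yields $\epsilon\in\mu_G$ and $b'\in B$ with $g=\epsilon b'$, so $g\in\mu_GB$.

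The step I expect to need the most care is verifying the hypotheses of Lemma \ref{klem}. That lemma assumes $H$ is a normal dfg component of $B$ over $k_0$ and $M\succ M_0$. The one point needing checking is that $H^0\le B^{00}$ holds in the $p$-adic case, where $H^0$ is only type-definable. The bounded-index argument above handles this, since $H^{00}$ is the smallest type-definable subgroup of $H$ of bounded index and equals $H^0$ for dfg groups.
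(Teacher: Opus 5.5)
Your proposal is correct and follows the paper's own proof, which likewise notes that $B^{00}(M)$-invariance of $p$ implies $H^0(k_0)$-invariance for a normal dfg component $H$ of $B$ and then invokes Lemma \ref{klem}. Your additional steps fill in what the paper leaves implicit: choosing $H\lhd B$ via Remark \ref{rmk-NGH=NGH0}, proving $H^0=H^{00}\le B^{00}$ by the bounded-index argument, and passing to the $L$-reduct of $p$.
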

\begin{proof}
Since $\tp{g/{M^{\ext}}}$ is $B^{00}(M)$-invariant, thus $\tp{g/{M^{\ext}}}$ is $H^0(k_0)$-invariant. By Lemma \ref{klem}, there exist  $\epsilon \in \mu_G $ and  $b' \in B $  such that $g = \epsilon b'\in \mu_GB$.
\end{proof}

\subsection{The Ellis Group of Definably Amenable Components}\label{sec-Ellis Group-B}
Let notations be as in Notations \ref{notations-A-V-W-B}, $H$ a normal dfg component of $B$ such that \( N_G(H) = N_G(H^0) \) (see Remark \ref{rmk-NGH=NGH0}). Then \( C = B/H \) is a definably compact group. Let $X=G/H$, then $X$ is definably component as $H$ is a dfg component of $G$.  Let \( \pi: G \to X \) be the natural projection (i.e., \( \pi \) is the map \( \pi_X: G \to X \) mentioned in the second paragraph of Section \ref{H(k0)-invariant types}). Then   \( B \) admits a definable exact sequence:
\begin{equation}\label{equ-dss-H-B-C}
   1 \to H \to B \stackrel{\pi}{\to} C \to 1. 
\end{equation}
 
Let \( f \) be a definable section of \( \pi \). Note that for any \( c \in C \), take \( b \in B \) such that \( \pi(b) = c \). Then \( f(c)/H = b/H \), which is equivalent to \( f(c)^{-1}b \in H \); this implies \( f(c) \in B \). Thus \( f(C) \subseteq B \), so the restriction of \( f \) to \( C \) is also a definable section of the group homomorphism \( \pi: B \to C \). Define \( \eta: C \times C \to B \) by
\[
\eta(c_1, c_2) = f(c_1  c_2)^{-1}  f(c_1)  f(c_2).
\]
Since
\[
\pi(f(c_1  c_2)) = c_1  c_2 = \pi(f(c_1))  \pi(f(c_2))
\]
for all \( c_1, c_2 \in C \), we conclude \( \eta(C \times C) \subseteq H \).

 We fix the following notations:
\begin{notations}\label{notations-minimal-flows}
Let \( M \succ M_0 \), let \( \ic \) be the unique minimal subflow of the \( C(M) \)-flow \( S_C(M^{\text{ext}}) \) (see Fact \ref{fact-fsg-generic-space}), and let \( \ih \) be a minimal subflow of the \( H(M) \)-flow \( S_H(M^{\text{ext}}) \).    
\end{notations}
 Note that by Fact \ref{Mini-flow-of-B}, \( \ih \) is an Ellis group of \( S_H(M^{\text{ext}}) \). Clearly, \( f \) extends naturally to a map from \( S_C(M^{\text{ext}}) \) to \( S_B(M^{\text{ext}}) \); we still denote this extension by \( f \). Then we have the following fact:

\begin{fact}[\cite{YZ-APAL}, Lemma 3.9]
\( f(\ic) * \ih = \{ f(q) * p \mid q \in \ic, \, p \in \ih \} \) is a minimal subflow of \( S_B(M^{\text{ext}}) \).
\end{fact}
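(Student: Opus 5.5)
The plan is to verify the two conditions that define a minimal subflow: first that $J:=f(\ic)*\ih$ is a subflow (closed and $B(M)$-invariant), and then that it is minimal via the criterion of Fact \ref{fact-minimal-ideal}. Throughout, I compute products $*$ by realizing types in the $\otimes$-order over a $|M|^+$-saturated $N^*\succ M^\ext$.

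\textbf{Basic computation.} Let $q\in S_C(M^\ext)$, $p\in\ih$, and $b\in B$ (possibly nonstandard, defined over some small set). Take $c\models q$ and $h\models p|N^*$, where $N^*$ contains $b$ and $c$. Put
\[
h''=f(\pi(b)c)^{-1}\,b\,f(c).
\]
Then $h''\in H$, since $\pi$ maps both sides to the same element, and $h''\in \dcl{b,c}$. Hence
\[
b\,f(c)\,h=f(\pi(b)c)\,(h''h).
\]
By Fact \ref{B(Mext)-has-dfg}, the heir $\bar p$ of $p$ over $N^*$ is $f$-generic, and its $H(N^*)$-translate $h''\bar p$ is the heir of some $p'\in\ih$. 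Moreover $p'$ depends only on $p$ and $h''/H^0$, by Corollary \ref{structure of J} and Fact \ref{dfg}. Therefore $\tp{b f(c) h/M^\ext}=f(\pi(b)c)\otimes p'$, read appropriately, which equals $f(q')*p'$ with $q'=\pi(b)*q$.

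\textbf{Invariance.} Taking $b\in B(M)$ and $q\in\ic$, the computation gives $q'=\pi(b)q\in\ic$, because $\ic$ is a subflow. Hence $b\cdot J\subseteq J$.

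\textbf{Closedness.} Instead of arguing closedness directly, I would show that $J=S_B(M^\ext)*r$ for a single fixed $r=f(q_0)*p_0\in J$. Sets of this form are closed, because right multiplication by $r$ is continuous in the left argument. The inclusion $\supseteq$ is the basic computation above with a general type $s\in S_B(M^\ext)$ in place of $b$. There, $q'=\pi(s)*q$ lies in $\ic$, since $\ic$ is a two-sided ideal by Fact \ref{fact-fsg-generic-space}. For the inclusion $\subseteq$ I would use minimality of $\ic$ and the group structure of $\ih$ (Fact \ref{Mini-flow-of-B}), exactly as in the next step.

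\textbf{Minimality.} Given $s\in S_B(M^\ext)$ and $r=f(q)*p\in J$, I need $t$ with $t*s*r=r$. By the basic computation, $s*r=f(\pi(s)*q)*p''$ with $p''\in\ih$.
\begin{itemize}
\item Since $\ic$ is minimal, Fact \ref{fsgms}\ref{fsgms.iii} gives $q_1\in\ic$ with $q_1*\pi(s)*q=q$.
\item Since $\ih$ is a group, there is $p_1\in\ih$ with $p_1*p''=p$.
\end{itemize}
I would then set $t=f(q_1)*p_2$, choosing $p_2\in\ih$ so that after the cocycle correction the $H$-part becomes $p_1$.

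\textbf{Main obstacle.} I expect the main difficulty to be this last cocycle bookkeeping. Commuting $p_2$ past $f(\pi(s)q)$ produces a new correction term $\eta$-type element in $H$, which depends on the realizations. To control it, I would use Corollary \ref{structure of J}: the result $r'*p''$ depends only on the class of the correction modulo $H^0=H^{00}$. The correction lives in $H(N^*)/H^0$, and $H/H^0$ acts on $\ih$ through the Ellis group. So one can solve for $p_2$ after first fixing the realizations of $q_1$ and $\pi(s)q$. This is essentially the argument of \cite{YZ-APAL}, Lemma 3.9.
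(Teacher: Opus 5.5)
The paper gives no proof of this statement; it only cites \cite{YZ-APAL}, Lemma 3.9. Your outline for $J=f(\ic)*\ih$ is sound: left absorption $S_B(M^{\ext})*J\subseteq J$ from your basic computation, then $x\in S_B(M^{\ext})*y$ for all $x,y\in J$ via minimality of $\ic$, the group structure of $\ih$, and a correction modulo $H^0$ controlled by Corollary \ref{structure of J}. It uses the same $\eta$-cocycle bookkeeping the paper carries out in Lemma \ref{lemma-II} and Proposition \ref{pro-III}.

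One point needs tightening. You cannot fix a realization $a_2$ of $\pi(s)*q$ before choosing $p_2$, because $a_2$ must realize the heir of $\pi(s)*q$ over the realization $h_1$ of $p_2$. Instead, fix only type-level data:
\begin{enumerate}
\item the coset $\eta(a_1,a_2)H^0$, which is determined by $q_1\otimes(\pi(s)*q)$;
\item the automorphism $\delta H^0\mapsto \delta^{f(a_2)}H^0$ of $H/H^0$ (this uses $H^0\lhd B$), which is determined by the heir of $\pi(s)*q$ over $M^{\ext}\cup\Delta$ for a set $\Delta$ of coset representatives.
\end{enumerate}
Then choose $p_2$ in the required $H^0$-coset and realize $a_1,h_1,a_2,h_2$ in that order, exactly as in Lemma \ref{lemma-II}.
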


\begin{rmk}
Note that if \(\cal M \) is a minimal subflow of \( S_G(M^{\text{ext}}) \) and \( p \in \mathcal{M} \), then \( p * \mathcal{M} \) is the Ellis group containing \( p \).
\end{rmk}

\begin{lemma}\label{lemma-I}
Let $v\in\ic$,  then  $f(v*\ic)*\ih=f(v)*f(\ic)*\ih$
\end{lemma}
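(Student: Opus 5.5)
We prove the two inclusions by computing both products with realizations. The section cocycle $\eta$ relates $f(ab)$ to $f(a)f(b)$, and the resulting error term in $H$ is absorbed into $\ih$ using the dfg structure of $H$ (Fact \ref{B(Mext)-has-dfg}). Recall that for types over $M^\ext$ the product $*$ is computed via $\otimes$, with each new coordinate realizing the heir over the previous ones. Recall also that $f(c_1c_2)\eta(c_1,c_2)=f(c_1)f(c_2)$ with $\eta(c_1,c_2)\in H$.

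\textbf{Realizations.} Fix $q\in \ic$ and $p\in \ih$. Choose a $|M|^+$-saturated $N^*\succ M^\ext$ (as an $L^\ext_M$-structure) and elements as follows:
\begin{itemize}
\item $a\models v$ and $b\models q|_{M^\ext,a}$, both inside $N^*$, so that $(a,b)\models v\otimes q$;
\item $h\models \bar p$, where $\bar p\in S_H(N^*)$ is the unique heir of $p$.
\end{itemize}
Then $\tp{ab/M^\ext}=v*q$, and by Fact \ref{fact-def-type-unique-coheir} the triple $(f(a),f(b),h)$ realizes $f(v)\otimes f(q)\otimes p$. Since $f$ is $\emptyset$-definable over $k$, it commutes with taking heirs, and $f(a),f(b)\in\dcl{a,b}$. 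Hence
\[
f(v)*f(q)*p=\tp{f(a)f(b)h/M^\ext}=\tp{f(ab)\,\eta(a,b)h/M^\ext}.
\]

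\textbf{Inclusion $\supseteq$.} We have $\eta(a,b)\in H(N^*)$. By Fact \ref{B(Mext)-has-dfg}, the translate $\eta(a,b)\cdot\bar p$ is the heir over $N^*$ of some $p'\in \ih$. Thus $h':=\eta(a,b)h$ realizes the heir of $p'$ over $N^*$, which contains $ab$. Therefore $(f(ab),h')\models f(v*q)\otimes p'$, and so $f(v)*f(q)*p=f(v*q)*p'$. This belongs to $f(v*\ic)*\ih$.

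\textbf{Inclusion $\subseteq$.} Start from an element $f(v*q)*p'$ with $q\in\ic$ and $p'\in\ih$. Take $a,b$ as above and $h'\models \bar{p'}$, the heir of $p'$ over $N^*$. Put $h=\eta(a,b)^{-1}h'$. Again by Fact \ref{B(Mext)-has-dfg}, $\tp{h/N^*}$ is the heir of some $p\in\ih$. Reading the displayed computation backwards gives $f(v*q)*p'=f(v)*f(q)*p$, which lies in $f(v)*f(\ic)*\ih$.

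\textbf{Main obstacle.} The delicate step is the heir bookkeeping. Three points need checking:
\begin{itemize}
\item Realizing $h$ over the $L^\ext$-saturated model $N^*$, and not merely over $M^\ext ab$, is legitimate and still computes the $\otimes$-product over $M^\ext$. This is exactly what Fact \ref{fact-def-type-unique-coheir} provides: heirs of definable types over reducts coincide.
\item The translate by the parameter $\eta(a,b)$, which lies in $N^*$ rather than $M$, is again an heir of a type in the same minimal flow $\ih$. This is the content of the "moreover" clause of Fact \ref{B(Mext)-has-dfg}.
\item $f(v*q)$ is correctly computed as $\tp{f(ab)/M^\ext}$. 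This uses that $f$ is definable over $M$.
\end{itemize}
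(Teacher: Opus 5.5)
Your proposal is correct and follows essentially the same route as the paper: in each direction you rewrite the product using the cocycle identity $f(ab)\eta(a,b)=f(a)f(b)$, then use Fact \ref{B(Mext)-has-dfg} to absorb the factor $\eta(a,b)^{\pm1}$ into a new element of $\ih$. The only difference is cosmetic: you realize the $H$-coordinate over a saturated $N^*\succ M^\ext$ so that Fact \ref{B(Mext)-has-dfg} applies verbatim, whereas the paper applies it directly to types over $M^\ext,c,c'$.
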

\begin{proof}
Let $q\in \ic$ and $p\in \ih$. Let $c\models v$, $c'\models q|({M^{\ext}}, c)$, and $h\models p|({M^{\ext}},c,c')$. then 
    \[
    f(v*q)*p=\tp{f(c c') h/{M^{\ext}}}=\tp{f(c) f(c')\cdot(\eta(c,c')^{-1} h)/{M^{\ext}}} 
    \]
    Note that by Fact \ref{B(Mext)-has-dfg}, $\tp{ (\eta(c,c')^{-1} h)/{M^{\ext}},c,c'}$ is an heir of some $p'\in \ih$, thus we have that 
     \[
    \begin{split}
    & \tp{f(c) f(c') (\eta(c,c')^{-1} h)/{M^{\ext}}}\\
    &= \tp{f(c) f(c')/{M^{\ext}}}*p'\\
    &=\big(\tp{f(c)/{M^{\ext}}}*\tp{ f(c')/{M^{\ext}}}\big)*p'\in f(v)*f(\ic)*\ih,
    \end{split}
    \]
    as required. Conversely, take $p''\in \ih$ such that $\tp{ (\eta(c,c')  h)/{M^{\ext}},c,c'}$ is an heir of some $p''$, then we have
\[
    \begin{split}
    f(v)*f(q)*p&=\tp{f(c   c') \eta(c,c')  h/{M^{\ext}}}\\
    &=    \tp{f(c  c')/{M^{\ext}}}*p''\\
    &=   f\big(\tp{c  c'/{M^{\ext}}}\big)*p''\\
    &=  f\big(\tp{c/{M^{\ext}}}* \tp{ c'/{M^{\ext}}}\big)*p'',
    \end{split}
    \]
    which is in $f(v *\ic)*\ih$.
    \end{proof}

\begin{notations}
Let $\mathfrak{u}\in \ic$ be an idemponent  and $\ec=\mathfrak{u}*\ic$ the ideal group of $S_C({M^{\ext}})$ containing $\mathfrak{u}$.     
\end{notations}

\begin{lemma}\label{lemma-II} 
There is $p\in \ih$ such that $f(\mathfrak{\mathfrak{u}})*p$ is an idemponent. 
\end{lemma}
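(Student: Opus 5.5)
Since $f(\ic)*\ih$ is a minimal subflow of $S_B(M^{\ext})$ (the fact quoted from \cite{YZ-APAL}, Lemma 3.9), the natural approach is to find an element $p\in\ih$ for which $f(\mathfrak{u})*p$ lies in the Ellis group through an idempotent and then show it equals that idempotent. The plan is to work inside the minimal left ideal $\mathcal{J}:=f(\ic)*\ih$ and exploit the structure of $\ih$ from Fact \ref{Mini-flow-of-B} and Corollary \ref{structure of J}, namely that $\ih$ is itself a group, isomorphic to $H/H^0$ via $p\mapsto p/H^0$.

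The first step is to compute the square of an element of the form $f(\mathfrak{u})*p$. Take $c_1\models\mathfrak{u}$, $h_1\models p|(M^{\ext},c_1)$, $c_2\models \mathfrak{u}|(M^{\ext},c_1,h_1)$, and $h_2\models p|(M^{\ext},c_1,h_1,c_2)$. Then
\[
(f(\mathfrak{u})*p)*(f(\mathfrak{u})*p)=\tp{f(c_1)h_1f(c_2)h_2/M^{\ext}}.
\]
Write $h_1f(c_2)=f(c_2)\,h_1^{f(c_2)}$, using that $H$ is normal in $B$ and $f(C)\subseteq B$. Then $f(c_1)f(c_2)=f(c_1c_2)\eta(c_1,c_2)$, and $\tp{c_1c_2/M^{\ext}}=\mathfrak{u}*\mathfrak{u}=\mathfrak{u}$. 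The remaining factor $\eta(c_1,c_2)h_1^{f(c_2)}h_2$ lies in $H$. Following the proof of Lemma \ref{lemma-I} and Fact \ref{B(Mext)-has-dfg}, since $h_2$ realizes the heir of an almost periodic dfg type, its type over $(M^{\ext},c_1,h_1,c_2)$ translated by an element of $H$ is again an heir of some element of $\ih$. Hence the square equals $f(\mathfrak{u})*p'$, where $p'\in\ih$ and, by Corollary \ref{structure of J}, $p'$ is determined modulo $H^0$ by the class of $\eta(c_1,c_2)h_1^{f(c_2)}$ together with $p$.

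The second step is to reduce to a fixed-point problem in $H/H^0$. Because $\ih$ is a group isomorphic to $H/H^0$, and $H^0=H^{00}$ is normal and bounded, the assignment $p\mapsto p'$ becomes a map $\Phi$ on $H/H^0$. This map should have the form $x\mapsto \alpha\cdot\sigma(x)\cdot x$, where $\alpha$ is the coset of $\eta(c_1,c_2)$ and $\sigma$ is the conjugation by $f(c_2)$ induced on $H/H^0$. I want $\Phi(x)=x$, that is, $\alpha\,\sigma(x)=1$. Since $\sigma$ is an automorphism, this is solved by $x=\sigma^{-1}(\alpha^{-1})$, which gives the required $p$.

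The main obstacle is well-definedness. The class $\alpha$ and the automorphism $\sigma$ depend on the realizations $c_1,c_2$, and I must check that they are determined by $\tp{c_1,c_2/M^{\ext}}$ modulo $H^0$, and likewise for the choice of $h_1$. The fallback is a softer argument that avoids computing $\Phi$ explicitly. The set $\{f(\mathfrak{u})*p: p\in\ih\}$ is contained in $\mathcal{J}$. I would show it is closed under $*$ and contains an element $s$ with $s*\mathcal{J}\ni s$, so that it meets a group $w*\mathcal{J}$. Then some power of $s$, or its inverse in that group multiplied by $s$, yields the idempotent $w$. Finally I would verify that $w$ still has the form $f(\mathfrak{u})*p$, using Lemma \ref{lemma-I} with $v=\mathfrak{u}$ to absorb products back into $f(\mathfrak{u}*\ic)*\ih$ and the group structure of $\ih$.
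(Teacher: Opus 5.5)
\textbf{There is a genuine gap: the step you defer is the whole content of the proof.} Your main route is the paper's. You square $f(\mathfrak{u})*p$, rewrite the square as $\tp{f(c_1c_2)\,\eta(c_1,c_2)\,h_1^{f(c_2)}h_2/M^{\ext}}$, and choose $p/H^0$ so that $\eta(c_1,c_2)h_1^{f(c_2)}\in H^0$, i.e.\ $\sigma(x)=\alpha^{-1}$.

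Without the well-definedness step, the choice $x=\sigma^{-1}(\alpha^{-1})$ does not yet define anything. Here $\sigma$ is conjugation by $f(c_2)$, where $c_2$ is generic over $(M^{\ext},c_1,h_1)$, and $h_1$ realizes the very $p$ being defined. So a priori $\sigma$ varies with $p$ and with the realizations.

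The paper removes this dependence before choosing $p$:
\begin{enumerate}
\item Fix a small set $\Delta$ of representatives of $H/H^0$, together with $(c_1,c_2)\models\mathfrak{u}\otimes\mathfrak{u}$ and $b^*\models f(\mathfrak{u})|M^{\ext}\cup\Delta$.
\item Pick $\delta\in\Delta$ with $\delta^{b^*}H^0=\eta(c_1,c_2)^{-1}H^0$, and let $p\in\ih$ be the element with $p/H^0=\delta/H^0$.
\item In the squaring computation, take $a_2\models\mathfrak{u}|(M^{\ext}\cup\Delta,a_1,h_1)$. This does not change the type of the product over $M^{\ext}$, but it forces $\tp{f(a_2)/M^{\ext}\cup\Delta}=\tp{b^*/M^{\ext}\cup\Delta}$.
\end{enumerate}
$H^0$ is an intersection of finite-index subgroups definable over $M$, whose cosets are $M$-definable, so $H^0$-cosets are determined by types over $M$. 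This gives $\delta^{f(a_2)}H^0=\delta^{b^*}H^0$ and $\eta(a_1,a_2)H^0=\eta(c_1,c_2)H^0$. Normality of $H^0$ in $B$ gives $h_1^{f(a_2)}H^0=\delta^{f(a_2)}H^0$. Hence $\eta(a_1,a_2)h_1^{f(a_2)}\in H^0$, and the square equals $f(\mathfrak{u})*p$.

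Your own proposed check can also be carried out. The kernel of the conjugation action of $B$ on $H/H^0$ is type-definable over a small set and has bounded index, so it contains $B^{00}$. Since $B^{00}$-cosets are determined by types over $M$, $\sigma$ depends only on $f(\mathfrak{u})$. But some such argument has to be supplied.

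\textbf{The fallback does not close the gap as written.} Every element $s$ of the minimal left ideal $\mathcal{J}$ already lies in some group $w*\mathcal{J}$. Powers of $s$ need not reach $w$ when that group is infinite, and $s^{-1}*s=w$ says nothing about the shape of $w$. Lemma \ref{lemma-I} and the group structure of $\ih$ only give $w\in s*\mathcal{J}\subseteq f(\mathfrak{u})*\mathcal{J}=f(\ec)*\ih$. That is, $w=f(v)*q$ with $v\in\ec$ and $q\in\ih$, but nothing forces $v=\mathfrak{u}$.

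The missing tool is the pushforward $\pi_*:S_B(M^{\ext})\to S_C(M^{\ext})$ along $\pi:B\to C$. It is a semigroup homomorphism, because heirs push forward under definable maps, and it satisfies $\pi_*(f(v)*q)=v$. Then $v=\pi_*(w)$ is an idempotent of the group $\ec$, so $v=\mathfrak{u}$, and $w=f(\mathfrak{u})*q$ is the required idempotent. Completed this way, the fallback would be a genuinely different and softer proof. It uses only the minimality of $f(\ic)*\ih$ and needs no $H^0$-coset bookkeeping. As submitted, however, neither of your routes is complete.
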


\begin{proof}

   Let $\Delta=\{\delta_i|\ i\in\lambda\}$ be a set of representatives of the cosets of $H/H^0$, where $\lambda=|H/H^0|$ is a bounded cardinal. 
Let $c_1,c_2\in C$ and $b^*\in B$ such that $(c_1,c_2)\models \mathfrak{u}\otimes \mathfrak{u}$ and $b^*\models f(\mathfrak{u})| {M^{\ext}}\cup \Delta$, then there is $i_0\in\lambda$ such that $\delta_{i_0}^{b^*}/H^0=\eta(c_1,c_2)^{-1}/H^0$. Let $p\in \ih$ such that $p/H^0=\delta_{i_0}/H^0$.
We claim that $f(\mathfrak{u})*p$ is an idemponent: let $a_1\models \mathfrak{u}$, $h_1\models p|({M^{\ext}},a_1)$, $a_2\models \mathfrak{u}|({M^{\ext}}\cup \Delta, a_1,h_1)$, and $h_2\models p|({M^{\ext}},a_1,a_2,h_1)$, then 
\[
\begin{split}
  f(\mathfrak{u})*p*f(\mathfrak{u})*p &= \tp{f(a_1) h_1 f(a_2) h_2/{M^{\ext}}}\\
  &= \tp{f(a_1) f(a_2) h_1^{f(a_2)}  h_2/{M^{\ext}}}\\
  &=\tp{f(a_1 a_2) \eta(a_1,a_2)  h_1^{f(a_2)}  h_2/{M^{\ext}}}.
\end{split}
\]
Since $H^0$ is normal in $B$, we see that  $h/H^0\mapsto h^{f(a_2)}/H^0$ is an automorphism of $H/H^0$. It follows that $h_1^{f(a_2)}/H^0= \delta_{i_0}^{f(a_2)}/H^0$ as $h_1 /H^0= p/H^0=\delta_{i_0} /H^0$.
Since $\tp{{b^*}/{M^{\ext}}\cup \Delta}=\tp{f(a_2)/{M^{\ext}}\cup \Delta}$, we have that $\tp{{\delta_{i_0}^{b^*}}/{M^{\ext}}\cup \Delta}=\tp{\delta_{i_0}^{f(a_2)}/{M^{\ext}}\cup \Delta}$, which implies that $\delta_{i_0}^{b^*}/H^0=\delta_{i_0}^{f(a_2)}/H^0$. Therefore, 
\[
h_1^{f(a_2)}/H^0= \delta_{i_0}^{f(a_2)}/H^0=\delta_{i_0}^{b^*}/H^0=\eta(c_1,c_2)^{-1}/H^0.
\]
Since \[
\tp{c_1,c_2/{M^{\ext}}}=\tp{a_1,a_2/{M^{\ext}}}=\mathfrak{u}\otimes \mathfrak{u},\]
we have that $\eta(c_1,c_2)^{-1}/H^0=\eta(a_1,a_2)^{-1}/H^0$. So we conclude that 
\[
\eta(a_1,a_2) h_1^{f(a_2)}=\eta(c_1,c_2) h_1^{f(a_2)}\in H^0.
\]
It follows that 
\[
\begin{split}
f(\mathfrak{u})*p*f(\mathfrak{u})*p &=\tp{f(a_1 a_2) \eta(a_1,a_2)  h_1^{f(a_2)}  h_2/{M^{\ext}}}\\
&=\tp{f(a_1  a_2)/{M^{\ext}}}*\tp{\eta(a_1,a_2)  h_1^{f(a_2)}  h_2/{M^{\ext}}}\\
&=f\big(\tp{a_1/{M^{\ext}}}*\tp{ a_2/{M^{\ext}}})\big)*p\\
&=f(\mathfrak{u}*\mathfrak{u})*p=f(\mathfrak{u})*p.
\end{split}
\]
\end{proof}

\begin{notations}
    Let $\eb=f(\ec) * \ih$.
\end{notations}

\begin{pro}\label{pro-III}
$\eb $ is an ideal group of $S_B({M^{\ext}})$.  
\end{pro}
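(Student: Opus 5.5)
We have to show that $\eb=f(\ec)*\ih$ is of the form $w*\mathcal{M}$, where $\mathcal{M}$ is a minimal subflow of $S_B(M^\ext)$ and $w\in\mathcal{M}$. By the Remark, that is exactly an ideal group.

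\textbf{Choice of $\mathcal{M}$ and $w$.} Take $\mathcal{M}=f(\ic)*\ih$. This is a minimal subflow by the Fact quoted from \cite{YZ-APAL}, Lemma 3.9. By Lemma \ref{lemma-II}, choose $p_0\in\ih$ such that $w=f(\mathfrak{u})*p_0$ is idempotent. Then $w\in\mathcal{M}$, and $w*\mathcal{M}$ is the ideal group of $S_B(M^\ext)$ containing $w$. It therefore suffices to prove
\[
w*\mathcal{M}=\eb .
\]

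\textbf{Computing $w*\mathcal{M}$.} For $q\in\ic$ and $p\in\ih$ we have
\[
w*f(q)*p=f(\mathfrak{u})*\bigl(p_0*f(q)\bigr)*p .
\]
We need to move $p_0$ past $f(q)$. Realise $h_0\models p_0$ and $c\models q|(M^\ext,h_0)$. Then $h_0f(c)=f(c)\,h_0^{f(c)}$.

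\begin{itemize}
\item By Fact \ref{B(Mext)-has-dfg}, together with normality of $H$ in $B$, the type of $h_0^{f(c)}$ over $M^\ext,c$ should again be an heir of some element of $\ih$.
\item Hence $p_0*f(q)=f(q)*p_1$ for some $p_1\in\ih$, and then $p_1*p\in\ih$, because $\ih$ is a group by Fact \ref{Mini-flow-of-B}.
\end{itemize}

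This gives $w*\mathcal{M}\subseteq f(\mathfrak{u})*f(\ic)*\ih$. By Lemma \ref{lemma-I} with $v=\mathfrak{u}$, this equals $f(\mathfrak{u}*\ic)*\ih=f(\ec)*\ih=\eb$.

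\textbf{Reverse inclusion.} Since $\eb=f(\mathfrak{u})*f(\ic)*\ih$, the same commutation run backwards shows that every element $f(\mathfrak{u})*f(q)*p$ has the form $w*f(q)*p'$:
\[
f(\mathfrak{u})*f(q)*p=f(\mathfrak{u})*p_0*p_0^{-1}*f(q)*p,
\]
where $p_0^{-1}$ is the inverse of $p_0$ in the group $\ih$. Commute $p_0^{-1}$ past $f(q)$ by the same heir argument.

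A cleaner route for this direction is available. Both $w*\mathcal{M}$ and $\eb$ are subsets of the minimal flow $\mathcal{M}$. If $\eb$ is closed under $*$ and contains $w$, then $w*\mathcal{M}$ is a group contained in $\eb$. Equality then follows from showing that every $x\in\eb$ satisfies $w*x=x$: write $x=f(\mathfrak{u})*y$ and use the same commutation together with idempotence of $\mathfrak{u}$.

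\textbf{Main obstacle.} The delicate step is the commutation $p_0*f(q)=f(q)*p_1$ with $p_1\in\ih$. It requires checking that conjugating a realisation of an almost periodic type of $H$ by an independent element $f(c)$ still yields a type whose heir lies in the minimal subflow $\ih$. I would handle it as in the proof of Lemma \ref{lemma-I}, via Fact \ref{B(Mext)-has-dfg}: the $f$-generic heir property is preserved under the definable automorphism $h\mapsto h^{f(c)}$ of $H$. Corollary \ref{structure of J} then lets us identify elements of $\ih$ by their cosets modulo $H^0$, where conjugation acts as an automorphism of $H/H^0$, exactly as exploited in Lemma \ref{lemma-II}.
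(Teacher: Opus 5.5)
Your overall architecture is the paper's: with $w=f(\mathfrak{u})*p_0$ from Lemma \ref{lemma-II} and $\mathcal M=f(\ic)*\ih$, the paper proves exactly $\eb=w*\mathcal M$. The gap is the commutation on which you build both inclusions, $p_0*f(q)=f(q)*p_1$.

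\textbf{Why the commutation fails.} In $p_0\otimes q$ you realize $h_0\models p_0$ first and then $c\models q|(M^{\ext},h_0)$, so it is $c$ that realizes the heir over $h_0$. By heir--coheir duality, $\tp{h_0/M^{\ext},c}$ is only a coheir of $p_0$. Fact \ref{B(Mext)-has-dfg} concerns heirs of elements of $\ih$ and their $H$-translates, so it says nothing about $\tp{h_0^{f(c)}/M^{\ext},c}$.

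The identity is in fact false. Take $B=\{x\mapsto ux+t:\ u\in\mathcal{O}^\times\}$ over $\mathbb{Q}_p$, with $H$ the translations, $C\cong\mathcal{O}^\times$, $f(u)=(x\mapsto ux)$ and $M=\mathbb{Q}_p$. Let $\ih=\{p_0\}$ where $p_0=\tp{a/\mathbb{Q}_p}$ with $v(a)<\mathbb{Z}$; this type is translation invariant. Let $q=\tp{1+\epsilon/\mathbb{Q}_p}\in\ic$ with $\epsilon\neq 0$ infinitesimal.
\begin{enumerate}
\item $p_0*f(q)$ is realized by $x\mapsto cx+a$ with $c=1+\epsilon$, and the heir condition gives $v(\epsilon)+v(a)>0$.
\item A realization of $f(q)*p_1$ has the form $x\mapsto c'x+c'h'$ with $\tp{h'/\mathbb{Q}_p,c'}$ an heir. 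So equality would force, up to an automorphism, $c'=c$ and $h'=a/c$.
\item Then $\tp{c/\mathbb{Q}_p,h'}$ would be finitely satisfiable in $\mathbb{Q}_p$. But it contains $x\neq 1\wedge v((x-1)h')>0$, which no $x\in\mathbb{Q}_p$ satisfies, since $v(h')=v(a)<\mathbb{Z}$.
\end{enumerate}
Separately, even where such a $p_1$ exists it need not lie in $\ih$, because conjugation can move you to another minimal subflow. What actually makes $p_1*p\in\ih$ is that $\ih$ is a left ideal, not that it is a group.

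\textbf{The repair.} Use the device the paper uses throughout (Lemma \ref{lemma-I}, Lemma \ref{lemma-II}, and the proof of this proposition). Only move an $H$-factor past $f(C)$ when a trailing factor from $\ih$, realized as an heir over everything, is there to absorb the conjugate.

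\emph{Forward inclusion.} Let $p_0\in S_H(M^{\ext})$, $q\in\ic$ and $p\in\ih$. Take $h_0\models p_0$, $c\models q|(M^{\ext},h_0)$, and $h$ realizing the heir of $p$ over an $|M|^+$-saturated $N^*\succ M^{\ext}$ containing $h_0,c$. Then $h_0f(c)h=f(c)\,(h_0^{f(c)}h)$ with $h_0^{f(c)}\in H(N^*)$. By Fact \ref{B(Mext)-has-dfg}, $\tp{h_0^{f(c)}h/N^*}$ is the heir of some $p'\in\ih$, so $p_0*f(q)*p=f(q)*p'$. By Lemma \ref{lemma-I} this gives $w*\mathcal M\subseteq f(\mathfrak{u})*f(\ic)*\ih=\eb$.

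\emph{Reverse inclusion.} Your inverse trick works once you also check $e*f(q)*p=f(q)*p$ for the identity $e$ of $\ih$. The same computation gives $f(q)*p''$ with $p''\in\ih$ and $p''/H^0=p/H^0$, since $e\vdash H^0$ and $H^0\lhd B$. Hence $p''=p$, because $\ih\cong H/H^0$ via cosets. Therefore $f(\mathfrak{u})*f(q)*p=w*(p_0^{-1}*f(q)*p)\in w*\mathcal M$.

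This is a slightly tidier substitute for the paper's reverse argument. The paper instead picks a representative $\delta\in\Delta$ of $h^{f(a_2)}H^0$ and uses $s=\tp{\delta^{-1}/M^{\ext}}*q$.

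Your ``cleaner route'' via $w*x=x$ is not a shortcut. For $x\in\mathcal M$, that identity is equivalent to $x\in w*\mathcal M$. Verifying it directly forces you to cancel the cocycle $\eta$, and idempotence of $\mathfrak{u}$ alone does not do that; this is exactly why $p_0$ is chosen as in Lemma \ref{lemma-II}.
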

\begin{proof}
By Lemma \ref{lemma-II}, there is $p\in \ih$ such that $f(\mathfrak{u})*p$ is an idemponent. It suffices to show that $\eb=f(\mathfrak{u})*p*f(\ic)*\ih$.

Let $v \in \ic$ and $q \in \ih$. Take $a_1 \models \mathfrak{u}$, $h_1 \models p \mid ({M^{\ext}}, a_1)$, $a_2 \models v \mid( {M^{\ext}}, a_1, h_1)$, and $h_2 \models q \mid ({M^{\ext}}, a_1, a_2, h_1)$. Then:
\begin{equation}\label{equ-computation}
\begin{split}
f(\mathfrak{u}) * p * f(v) * q &= \tp{f(a_1) h_1 f(a_2) h_2 / {M^{\ext}}}\\
&= \tp{f(a_1)  f(a_2)  h_1^{f(a_2)}  h_2 / {M^{\ext}}}\\
&= \tp{f(a_1  a_2)  \eta(a_1, a_2)  h_1^{f(a_2)}  h_2 / {M^{\ext}}}\\
&= \tp{f(a_1  a_2) / {M^{\ext}}} * \tp{\eta(a_1, a_2)  h_1^{f(a_2)}  h_2 / {M^{\ext}}} 
\end{split}
\end{equation}

Now, we have:
\[
\tp{f(a_1  a_2) / {M^{\ext}}} = f\left( \tp{a_1 / {M^{\ext}}} * \tp{a_2 / {M^{\ext}}} \right) = f(\mathfrak{u} * v) \in f(\ec)
\]
and
\[
\tp{\eta(a_1, a_2) h_1^{f(a_2)}  h_2 / {M^{\ext}}} = \tp{\eta(a_1, a_2)  h_1^{f(a_2)} / {M^{\ext}}} * q \in \ih.
\]
We thus conclude that $f(\mathfrak{u}) * p * f(v) * q \in f(\ec) * \ih$, which further implies 
\[
f(\mathfrak{u}) * p * f(\ic) * \ih \subseteq f(\ec) * \ih.
\]

Next, we show that $f(\ec) * \ih \subseteq f(\mathfrak{u}) * p * f(\ic) * \ih$. Recall from Lemma \ref{lemma-I} that $f(\ec) * \ih = f(\mathfrak{u}) * f(\ic) * \ih$. Take any $v \in \ic$, and let $(a_1, h, a_2) \models \mathfrak{u} \otimes p \otimes v \mid M^{\ext} \cup \Delta$. Choose $\delta \in \Delta$ such that $h^{f(a_2)} / H^0 = \delta / H^0$. For $q \in \ih$, let $s = \tp{\delta^{-1} / {M^{\ext}}} * q$, and take $h^* \models s \mid ({M^{\ext}}, \delta, a_1, a_2, h)$. Then:
\[
\begin{split}
f(\mathfrak{u}) * p * f(v) * s &= \tp{f(a_1)  f(a_2)  h^{f(a_2)}  h^* / {M^{\ext}}}\\
&= \tp{f(a_1) f(a_2) / {M^{\ext}}} * \tp{h^{f(a_2)} h^* / {M^{\ext}}}. 
\end{split}
\]

Since $h^{f(a_2)}  h^* / H^0 = q / H^0$, it follows that $\tp{h^{f(a_2)}  h^* / {M^{\ext}}} = q$. Thus:
\[
\tp{f(a_1)  f(a_2) / {M^{\ext}}} * \tp{h^{f(a_2)}  h^* / {M^{\ext}}} = \tp{f(a_1)  f(a_2) / {M^{\ext}}} * q = f(\mathfrak{u}) * f(v) * q.
\]
This implies that for any $v\in \ic$ and $q\in \ih$, there is $s\in \ih$ such that 
\[
f(\mathfrak{u}) * f(v) * q=f(\mathfrak{u}) * p * f(v) * s.
\]
We conclude that 
\[
f(\ec) * \ih = f(\mathfrak{u}) * f(\ic) * \ih \subseteq f(\mathfrak{u}) * p * f(\ic) * \ih.
\]
This completes the proof.
\end{proof}

Directly following Proposition \ref{pro-III} and the computation in Equation \ref{equ-computation}, we have:
\begin{cor}\label{cor-v*q*\eb=\eb}
For any $v\in \ec$ and $q\in S_B(M^\ext)$, we have $f(v)*q*\ih\sq \eb$ and  $f(v)*q*\eb\sq \eb$. 
\end{cor}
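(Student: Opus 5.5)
The plan is to reduce both inclusions to a single computation of the same shape as Equation (\ref{equ-computation}). I will use the splitting $b = f(\pi(b))\,h$ with $h\in H$, which is valid for every $b\in B$. I will also use the definition $\eb = f(\ec)*\ih$ together with Proposition \ref{pro-III}.

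\emph{First inclusion.} Fix $v\in\ec$, $q\in S_B(M^\ext)$ and $p\in\ih$. Realize $a_1\models v$, $b\models q|(M^\ext,a_1)$ and $h_2\models p|(M^\ext,a_1,b)$. Put $c=\pi(b)\in C$ and $h=f(c)^{-1}b\in H$; both lie in $\dcl{M,b}$. Then
\[
f(a_1)\,b\,h_2 = f(a_1)f(c)\,h\,h_2 = f(a_1c)\,\eta(a_1,c)\,h\,h_2 .
\]
The type of $h_2$ over $(M^\ext,a_1,c,h)$ is the heir of $p$. Since $\eta(a_1,c)h\in H$, Fact \ref{B(Mext)-has-dfg} (every $H(N^*)$-translate of the heir of $p$ is the heir of some element of $\ih$) gives that $\tp{\eta(a_1,c)hh_2/M^\ext,a_1,c}$ is the heir of some $p'\in\ih$. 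This is exactly the argument of Lemma \ref{lemma-I}. Hence
\[
f(v)*q*p=\tp{f(a_1c)/M^\ext}*p' = f\big(\tp{a_1c/M^\ext}\big)*p'.
\]
Here $\tp{a_1c/M^\ext}=v*\pi(q)$, because $c$ realizes the heir of $\pi(q)$ over $(M^\ext,a_1)$. Now $v\in\mathfrak u*\ic$, and $\ic$ is a two-sided ideal by Fact \ref{fact-fsg-generic-space}(ii). Therefore $v*\pi(q)\in\mathfrak u*\ic*\pi(q)\subseteq\mathfrak u*\ic=\ec$. So $f(v)*q*p\in f(\ec)*\ih=\eb$.

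\emph{Second inclusion.} Any element of $\eb$ has the form $f(w)*p$ with $w\in\ec$ and $p\in\ih$. By associativity of $*$,
\[
f(v)*q*f(w)*p = f(v)*\big(q*f(w)\big)*p .
\]
Applying the first inclusion to $q'=q*f(w)\in S_B(M^\ext)$ puts this element in $\eb$.

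The main obstacle I expect is the independence bookkeeping in the first step. One must check that $\tp{f(a_1)b\,h_2/M^\ext}$ really factors as $\tp{f(a_1c)/M^\ext}*p'$. The point to verify is that $h_2$ is chosen to realize the heir over all of $M^\ext,a_1,b$, which contains $c$ and $h$. The translate of that heir by $\eta(a_1,c)h\in H(N^*)$ is then still an heir of a member of $\ih$, by Fact \ref{B(Mext)-has-dfg}. One also needs $\tp{f(a_1c)/M^\ext}=f(\tp{a_1c/M^\ext})$, which holds because $f$ is $M$-definable.
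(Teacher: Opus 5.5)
Your proposal is correct and follows essentially the paper's route. The paper derives the corollary from the cocycle computation in Equation~(\ref{equ-computation}): it rewrites $f(a_1)f(c)=f(a_1c)\eta(a_1,c)$ and absorbs the $H$-part into an heir of an element of $\ih$ via Fact~\ref{B(Mext)-has-dfg}, as in Lemma~\ref{lemma-I}, together with $\eb=f(\ec)*\ih$ from Proposition~\ref{pro-III}; your check that $v*\pi(q)\in\ec$ via the two-sided ideal property of $\ic$, and your reduction of the second inclusion to the first by associativity, supply exactly the details the paper leaves implicit.
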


\subsection{The group of infinitesimals}

Let \( \mu \) denote the intersection of all \( k_0 \)-definable open neighborhoods of \( 0 \) in \( \mathbb{M} \). Specifically, if \( \mathbb{M} \) is an \( o \)-minimal expansion of a real closed field, then 
\[
\mu = \{x \in \mathbb{M} \mid \forall r \in \mathbb{R}^{>0}\ (-r < x < r)\}  ;
\]
if \( \mathbb{M} \) is a $p$-adically closed field, then 
\[
\mu = \{x \in \mathbb{M} \mid \forall n \in \mathbb{Z}\ (v(x) > n)\} .
\]
Let \( V = k_0 + \mu \), and let \( \st: V \to k_0 \) be the standard part map.

\begin{fact}[\cite{Dries-standard}]
  Let $\M\succ \r$, $V=\{x\in \M|\ \exists r\in \r(-r<x<r)\}$, and $ D \subseteq \M^n $ be  definable.
  Then  
  \begin{enumerate}
     \item $ D \cap \r^n $ is definable in $ \r $ and $ \dim (D \cap \r^n) \leq \dim(D) $;  
     \item $ \st(D\cap V^n) $ is definable in $ \r $ and $ \dim (\st(D\cap V^n)) \leq \dim(D) $. 
  \end{enumerate}   
\end{fact}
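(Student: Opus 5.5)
The plan is to reduce both parts to one model-theoretic input and then argue about dimension by induction on the ambient dimension. The input is the Marker--Steinhorn theorem: since $\mathbb{R}$ is Dedekind complete, every type over $\mathbb{R}$ in an $o$-minimal expansion of $\mathbb{R}$ is definable. Write $D=\varphi(\M,b)$. Then, with $d\varphi$ the defining scheme of $\tp{b/\mathbb{R}}$,
\[
D\cap\mathbb{R}^n=\{a\in\mathbb{R}^n:\ \M\models\varphi(a,b)\}=d\varphi(\mathbb{R}),
\]
so $D\cap\mathbb{R}^n$ is $\mathbb{R}$-definable.

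For definability in (2), I use the standard description of the standard part of a closed-ball test. For $x\in\mathbb{R}^n$, $x\in\st(D\cap V^n)$ iff for every $r\in\mathbb{R}^{>0}$ there is $y\in D$ with $|y-x|<r$. I will check this equivalence directly. If $x=\st(y)$ with $y\in D$, the condition holds trivially. Conversely, if the condition holds for every real $r$, then all the $\mathbb{R}$-definable sets $\{y\in D: |y-x|<r\}$ are nonempty; by saturation of $\M$ there is $y\in D$ with $|y-x|<r$ for all real $r$, so $y-x\in\mu$ and $x=\st(y)$. Now the set
\[
D'=\{(x,r)\in\M^{n+1}:\ r>0,\ \exists y\in D\ (|y-x|<r)\}
\]
is $\M$-definable, so $D'\cap\mathbb{R}^{n+1}$ is $\mathbb{R}$-definable by the first step. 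Hence $\st(D\cap V^n)=\{x\in\mathbb{R}^n:\forall r>0\,((x,r)\in D'\cap\mathbb{R}^{n+1})\}$ is $\mathbb{R}$-definable.

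For the dimension bound in (1), suppose a coordinate projection $\pi:\M^n\to\M^m$ satisfies that $\pi(D\cap\mathbb{R}^n)$ contains an open box of $\mathbb{R}^m$. Set $X=\pi(D)$; it suffices to show $\dim X=m$. I argue by induction on $m$ that an $\M$-definable $X\subseteq\M^m$ of dimension $<m$ cannot contain an $\mathbb{R}$-box. By the fiber dimension theorem and uniform finiteness, the set $Y=\{a'\in\M^{m-1}: X_{a'}\text{ is infinite}\}$ is definable of dimension $<m-1$. By induction, $Y$ contains no $\mathbb{R}$-box, so there is a real $a'$ in the projected box with $a'\notin Y$. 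Then $X_{a'}$ is finite, yet it contains a real interval, which is a contradiction.

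For the dimension bound in (2), I first reduce to $n=m$ by commuting a coordinate projection with $\st$ on bounded points. The remaining claim is: if $D\subseteq\M^m$ has $\dim D<m$, then $\st(D\cap V^m)$ has empty interior. Take a cell decomposition of $D$; a cell of dimension $<m$ is, after permuting coordinates, contained in the graph of a definable function $f$ on some $C\subseteq\M^{m-1}$ (possibly after passing to the closure, which does not increase dimension). By induction, $\st(C\cap V^{m-1})$ has empty interior or lower dimension, so it suffices to show the following. Off an $\mathbb{R}$-definable set of lower dimension in $\st(C\cap V^{m-1})$, each fiber of $\st(\Gamma(f)\cap V^m)$ over a real point $x'$ is finite. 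The intended argument treats the map $x'\mapsto\{\text{limit points of } f \text{ near } x'\}$ as $\mathbb{R}$-definable via the Marker--Steinhorn definability established above. Its fibers should then have uniformly bounded size, because the $o$-minimal monotonicity theorem forces $f$ to have only finitely many ``branches'' near $x'$. I expect this fiber-finiteness step to be the main obstacle; I would try first to handle it by working cell by cell with continuous $f$ on an open cell and using that $\mathbb{R}$-definable closed sets with finite fibers generically have lower dimension.
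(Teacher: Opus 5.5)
The paper does not prove this statement; it quotes it from van den Dries. So I am assessing your argument on its own merits.

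\textbf{What works.} Part (1) is correct. Marker--Steinhorn gives definability of $D\cap\mathbb{R}^n$. Your induction, showing that an $\mathbb{M}$-definable set of dimension $<m$ cannot contain the real points of an open box of $\mathbb{R}^m$, is also correct. The definability half of (2) is fine. Your saturation step uses that $\mathbb{M}$ is the monster. For an arbitrary $\mathbb{M}\succ\mathbb{R}$ you would instead use the definable function $x\mapsto\inf\{|y-x|:y\in D\}$; note also that when $\mathbb{M}=\mathbb{R}$ your criterion describes the closure of $D$. Your reductions for the dimension half of (2) are sound:
\begin{itemize}
\item to $n=m$, via $\pi(\mathrm{st}(D\cap V^n))\subseteq\mathrm{st}(\pi(D)\cap V^m)$;
\item then to a single cell $\Gamma(f)$ with $f\colon C\to\mathbb{M}$ and $C\subseteq\mathbb{M}^{m-1}$;
\item then to the claim that the set of real $x'$ over which the fiber of $\mathrm{st}(\Gamma(f)\cap V^m)$ is infinite has dimension at most $m-2$.
\end{itemize}

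\textbf{The gap.} That last claim is the entire content of the inequality $\dim\mathrm{st}(D\cap V^n)\le\dim D$, and your proposal gives no argument for it. The heuristic you sketch does not supply one.
\begin{itemize}
\item Uniform boundedness of the fibers is not the issue. It follows from $\mathbb{R}$-definability once finiteness is known. It also fails unless you allow an exceptional set: for $f(c)=c_1/\varepsilon$ with $\varepsilon>0$ infinitesimal, the fiber over every real $x'$ with $x'_1=0$ is all of $\mathbb{R}$.
\item The real task is to show the exceptional set is small, and monotonicity in $\mathbb{M}$ does not do this. It controls $f$ on $\mathbb{M}$-definable intervals, but the monad of $x'$ is not definable.
\item A single continuous monotone branch can already sweep out a whole real interval of standard parts inside one monad (e.g.\ $c\mapsto c/\varepsilon$ at $0$). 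So counting branches gives nothing.
\item For $m-1\ge 2$, monotonicity only applies along one-variable slices. Points infinitely close to $x'$ need not lie on standard slices through $x'$.
\end{itemize}

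\textbf{What is missing} is a generic $S$-continuity lemma. It asks for a partition of $\mathbb{R}^{m-1}$ into finitely many $\mathbb{R}$-definable sets, each open or with empty interior, such that on each open piece $P$ one of two things holds:
\begin{itemize}
\item $f(x)\notin V$ for all $x\in P^{\mathrm{hull}}$; or
\item $\mathrm{st}\,f(x)=g(\mathrm{st}\,x)$ for all $x\in P^{\mathrm{hull}}$, with $g$ continuous and $\mathbb{R}$-definable.
\end{itemize}
This is exactly Fact~\ref{fact-standard-part-map}. With it, fibers over open pieces have at most one point, and the rest lies over a set of dimension at most $m-2$, which completes your reduction. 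But that lemma is the substantial step in van den Dries' proof, not a routine consequence of Marker--Steinhorn.

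For $m=2$ your monotonicity idea can be completed. On a continuous monotone piece, an infinite fiber at $x'$ means a jump of positive real size across the monad of $x'$. The set of such $x'$ is $\mathbb{R}$-definable. If it contained an interval, uncountably many disjoint jumps of size $\ge 1/k$ would occur between two points where $f$ is finite. That is impossible, since $f$ stays between those two finite values there. The several-variable case needs a genuinely additional argument.
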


\begin{fact}[\cite{Dries-standard}]\label{fact-standard-part-map}
  Let $\M\succ \r$, $ f : \M^m \to \M $ an $ \M $-definable function. Then there is a finite partition $ \mathcal{P} $ of $ \r^m $ into definable sets, where each set in the partition is either open in $ \r^m $ or lacks interior. On each open set $ C \in \mathcal{P} $ we have:  
  \begin{enumerate}

  \item  either $ f(x) \notin V $ for all $ x \in C^\text{hull} $;  
  \item  or there is a continuous function $ g : C \to \r $, definable in $ \r $, such that $ f(x) \in V $ and $ \st(f(x)) = g(\st(x)) $, for all $ x \in C^\text{hull} $,      
  \end{enumerate}
where $ C^\text{hull} $ is the hull of $ C $ defined by  
\[
C^\text{hull} = \left\{ x \in \M^m \mid \exists y \in C \left( \bigwedge_{i=1}^m (x_i - y_i \in \mu) \right) \right\}.
\]   
\end{fact}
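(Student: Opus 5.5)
The plan is to reduce everything to pointwise behaviour of \(f\) on monads of standard points and to show that this behaviour is \(\mathbb{R}\)-definable. Two observations make this possible. For an open \(C\subseteq \mathbb{R}^m\), \(C^{\text{hull}}=\bigcup_{a\in C}(a+\mu^m)\), since every \(y\in C\) is standard. Hence conditions (1) and (2) only need to be checked on each monad \(a+\mu^m\) with \(a\in C\). For a standard point \(a\), say that \(a\) is \emph{infinite-good} if \(f(a+\mu^m)\cap V=\emptyset\). Say that \(a\) is \emph{finite-good} if \(f(a+\mu^m)\subseteq V\) and \(\st\) is constant on \(f(a+\mu^m)\); in that case put \(g(a)=\st(f(a))\). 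The partition \(\mathcal{P}\) will be a cell decomposition of \(\mathbb{R}^m\) adapted to the sets \(I\) of infinite-good points and \(G\) of finite-good points, and to the function \(g\) on \(G\).

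\emph{Step 1: definability.} I will show that \(I\), \(G\) and \(g\) are definable in \(\mathbb{R}\). Consider the \(\mathbb{M}\)-definable set
\[
T=\{(a,\delta,c,r)\in \mathbb{M}^{m+3} \mid f(B(a,\delta))\subseteq (c-r,c+r)\}.
\]
By part 1 of the first fact quoted from \cite{Dries-standard}, its trace \(T\cap\mathbb{R}^{m+3}\) is \(\mathbb{R}\)-definable. The key o-minimal point is the following. An \(\mathbb{M}\)-definable set of \(\delta>0\) that contains all positive infinitesimals contains an interval \((0,e)\) with \(e\) not infinitesimal, and hence contains a standard \(\delta>0\). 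It follows that \(f(a+\mu^m)\subseteq c+\mu\) (for standard \(a,c\)) holds iff for every standard \(r>0\) there is a standard \(\delta>0\) with \((a,\delta,c,r)\in T\). This is a first-order condition over \(\mathbb{R}\), so \(G\) and the graph of \(g\) are \(\mathbb{R}\)-definable. The same argument applied to the set \(\{(a,\delta,N)\mid |f|>N \text{ on } B(a,\delta)\}\) shows that \(I\) is \(\mathbb{R}\)-definable.

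\emph{Step 2: continuity of \(g\).} On the interior of \(G\), the map \(g\) is continuous. Indeed, if \(f(B(a,\delta))\subseteq(g(a)-r,g(a)+r)\) for a standard \(\delta\), then for every standard \(a'\in B(a,\delta)\) we have \(g(a')=\st f(a')\in[g(a)-r,g(a)+r]\). Given Step 3, I would take a cell decomposition of \(\mathbb{R}^m\) (in \(\mathbb{R}\)) compatible with \(I\) and \(G\), and refine it so that \(g\) is continuous on the open cells. Every open cell then lies inside \(I\) or inside \(G\), because the complement has empty interior, and this gives the required partition.

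\emph{Step 3: the bad set \(\mathbb{R}^m\setminus(I\cup G)\) has empty interior.} I expect this to be the main obstacle. My plan is as follows. Take an \(\mathbb{M}\)-cell decomposition of \(\mathbb{M}^m\) on whose open cells \(f\) is continuous, and let \(Z\) be the union of the lower-dimensional cells. By part 2 of that fact, \(\st(Z\cap V^m)\) has dimension \(<m\). For \(a\) outside this set, the monad \(a+\mu^m\) lies inside a single open cell, so \(f\) is continuous on the monad.

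Next, the fiberwise argument. The set \(F=\st(\Gamma_f\cap V^{m+1})\) has dimension \(\le m\), so for \(a\) off a set of dimension \(<m\) the fiber \(F_a\) is finite. Since \(f\) is continuous on the monad and the monad is convex, the image \(f(a+\mu^m)\) cannot jump. Concretely, its standard parts form an interval of \(\mathbb{R}\cup\{\pm\infty\}\) that meets \(\mathbb{R}\) only inside \(F_a\). So this image is either contained in a single monad \(c+\mu\), or it meets both \(V\) and its complement. In the second case \(a\) lies in \(\st\) of the frontier of \(\{x \mid f(x)\in(-N,N)\}\) for some standard \(N\) in a suitable sense. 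I would control this by applying the dimension bound of part 2 to the \(\mathbb{M}\)-definable family of these frontiers, uniformly in \(N\), to get dimension \(<m\).

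If the uniformity in \(N\) causes trouble, the fallback is an induction on \(m\) starting from the one-variable case. For \(m=1\), monotonicity of \(f\) on finitely many \(\mathbb{M}\)-intervals makes everything explicit. The case of general \(m\) would then be obtained by cell decomposition in the last coordinate.
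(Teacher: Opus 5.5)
The paper gives no argument for this statement. It imports it from \cite{Dries-standard} and then uses it to prove Lemma~\ref{lemma-st-part-cap-dim}. Your proposal is a correct, self-contained derivation from the two parts of the preceding fact (definability of traces, and $\dim\st(D\cap V^n)\le\dim D$) together with $o$-minimal cell decomposition. It never uses Lemma~\ref{lemma-st-part-cap-dim}, so there is no circularity. Steps 1 and 2 are fine; Step 2 in fact shows $g$ is continuous on all of $G$. In Step 3, the claim that the monad of $a\notin\st(Z\cap V^m)$ lies in a single open cell deserves its one-line justification. The monad is convex and misses $Z$, so a segment joining two of its points is definably connected and is covered by finitely many pairwise disjoint open cells, hence lies in one of them.

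The difficulty you anticipate in Step 3 is not real, and the dichotomy you state there is a slip. Once $F_a$ is finite and $f$ is continuous on the monad, your own observation finishes the job. Suppose the extended standard parts of $f(a+\mu^m)$ contained two points $s<t$ of $\mathbb{R}\cup\{\pm\infty\}$. Then the intermediate value property along a segment in the monad would put every real in $(s,t)$ into $F_a$, which is impossible. So that set of standard parts is a singleton. Equivalently, $f(a+\mu^m)$ lies in a single $c+\mu$, or entirely above $V$, or entirely below $V$, so $a\in G\cup I$. The case ``meets both $V$ and its complement'' never occurs, since an interval containing a real and $\pm\infty$ contains infinitely many reals. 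Hence the frontier argument, the uniformity in $N$, and the fallback induction on $m$ can all be dropped. The bad set is contained in $\st(Z\cap V^m)\cup\{a\in\mathbb{R}^m : F_a\text{ is infinite}\}$, and both pieces have dimension $<m$. Your Step 2 then completes the proof.
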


\begin{lemma}\label{lemma-st-part-cap-dim}
Let \( U \subseteq \mathbb{R}^n \) be an open cell such that \( U(\M) \subseteq V^n \). Let \( Y, Z \) form a definable partition of \( U(\mathbb{M}) \). Then 
\[
\dim\left( \st(Y) \cap \st(Z) \right) < n.
\]
\end{lemma}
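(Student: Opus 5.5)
Here $U\subseteq\mathbb{R}^n$ is an open cell, so $U(\M)\subseteq V^n$ means $U$ is bounded in $\mathbb{R}^n$. The sets $Y,Z$ are $\M$-definable with $Y\cup Z=U(\M)$, and $\st(Y),\st(Z)\subseteq \cl{U}$. By the first fact from \cite{Dries-standard}, $\st(Y)$ and $\st(Z)$ are definable in $\mathbb{R}$, so $\st(Y)\cap\st(Z)$ is a definable subset of $\mathbb{R}^n$.

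I will argue by contradiction. Suppose $\dim(\st(Y)\cap\st(Z))=n$. Then $\st(Y)\cap\st(Z)$ contains an open box. Shrinking it, we may take an open box $O\subseteq U$ with $\cl{O}\subseteq U$ and $O\subseteq \st(Y)\cap\st(Z)$.

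The key step is to apply Fact \ref{fact-standard-part-map} to the characteristic function $\chi_Y:\M^n\to\M$ of $Y$. This function is $\M$-definable and takes values in $\{0,1\}\subseteq V$. The fact gives a finite partition $\mathcal{P}$ of $\mathbb{R}^n$ into definable sets, each either open or without interior. On each open piece $C$ there is a continuous $g_C:C\to\mathbb{R}$ with $\st(\chi_Y(x))=g_C(\st(x))$ for all $x\in C^{\mathrm{hull}}$; case (1) of the fact cannot occur, since $\chi_Y$ takes values in $V$. Because $\chi_Y$ is $\{0,1\}$-valued, $g_C$ is $\{0,1\}$-valued. Since the pieces without interior have union of dimension $<n$, some open piece $C$ meets $O$ in a nonempty open set $O'$. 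Shrinking further, take a connected open box $O''\subseteq O'$. On $O''$ the function $g_C$ is continuous and $\{0,1\}$-valued, hence constant; say it equals $1$.

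Now I derive the contradiction. Every $x\in U(\M)$ with $\st(x)\in O''$ lies in $C^{\mathrm{hull}}$, since $x$ is infinitesimally close to $\st(x)\in C$. Hence $\chi_Y(x)=1$, that is, $x\in Y$. Pick any $y\in O''\subseteq\st(Z)$. There is $z\in Z$ with $\st(z)=y$, so $z\in C^{\mathrm{hull}}$ and therefore $z\in Y$. This contradicts $Y\cap Z=\emptyset$. If instead $g_C\equiv 0$ on $O''$, the same argument with $y\in\st(Y)$ produces a point of $Y$ that lies outside $Y$.

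The main obstacle is the bookkeeping that $\st(x)\in C$ implies $x\in C^{\mathrm{hull}}$, together with the claim that the pieces of $\mathcal{P}$ without interior cannot cover an open box. The first holds directly by the definition of the hull. The second holds because in the $o$-minimal structure $\mathbb{R}$ a finite union of definable sets with empty interior has empty interior.

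For the $p$-adic case the same argument applies, provided one has an analogue of Fact \ref{fact-standard-part-map} for $\Q$. Such an analogue follows from the cell decomposition and quantifier elimination of $\Q$, with $\{0,1\}$-valued continuous functions being locally constant. If a $p$-adic version is not available, I would restrict the statement to the $o$-minimal case, which is how the lemma appears to be used in this section.
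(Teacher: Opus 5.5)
Your argument is correct, and it takes a genuinely different and shorter route than the paper.

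\textbf{The paper's route.} It argues by induction on $n$. It reduces to the case where $Y$ is an open cell $\{(a,b): a\in D,\ f_1(a)<b<f_2(a)\}$ and applies Fact \ref{fact-standard-part-map} to the boundary functions $f_1,f_2$. It then covers $\st(Y)\cap\st(Z)$ by three kinds of sets:
\begin{enumerate}
\item the graphs of the continuous standard-part functions $g_{i,C_j}$;
\item cylinders over the partition pieces with empty interior;
\item the set $\{(a,b)\in U: a\in \st(E(\M)\setminus D)\cap\st(D)\}$, which is handled by the induction hypothesis.
\end{enumerate}

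\textbf{Your route.} You apply Fact \ref{fact-standard-part-map} once, to the characteristic function $\chi_Y$. For $C\subseteq\mathbb{R}^n$ one has $C^{\mathrm{hull}}=\st^{-1}(C)$, so the Fact says that on each open piece $C$, membership in $Y$ depends only on the standard part.

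\textbf{A simplification.} Your box, connectedness and contradiction steps are not needed. For any $y\in C$, every $x$ with $\st(x)=y$ satisfies $\chi_Y(x)=g_C(y)$. So $y$ cannot lie in both $\st(Y)$ and $\st(Z)$, and $\st(Y)\cap\st(Z)$ is contained in the union of the pieces of $\mathcal{P}$ with empty interior. The continuity of $g_C$ is never used.

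\textbf{What each approach buys.} Yours avoids cell decomposition and induction, and it proves a stronger statement: any two disjoint definable $Y,Z\subseteq V^n$ suffice, with no need for $U$ to be a cell or for $Y\cup Z=U(\M)$. The paper's proof gives more explicit geometric information, placing the overlap near finitely many graphs of standard parts of cell boundaries.

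\textbf{The $p$-adic paragraph.} It is unnecessary. The lemma is stated for $\M\succ\mathbb{R}$ with $U\subseteq\mathbb{R}^n$. It is used only in the real case of Proposition \ref{pro:muG-fsg}, since the $p$-adic case there uses an open definably compact subgroup instead. Your unsupported claim that a $p$-adic analogue of Fact \ref{fact-standard-part-map} follows from cell decomposition and quantifier elimination should simply be dropped.
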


\begin{proof}
We proceed by induction on \( n \). For \( n = 1 \), the statement is straightforward to verify. Suppose now \( n > 1 \). Assume \( U \) has the form
\[
U = \left\{ (a, b) \in \mathbb{R}^n \mid a \in E,\ h_1(a) < b < h_2(a) \right\},
\]
where \( E \subseteq \mathbb{R}^{n-1} \) is an open cell. Without loss of generality, let \( Y \) be an open cell of the form
\[
Y = \left\{ (a, b) \in \mathbb{R}^n \mid a \in D,\ f_1(a) < b < f_2(a) \right\},
\]
where \( D \subseteq \M^{n-1} \) is an open cell and \( f_1, f_2 \) are \( \M \)-definable functions. Then \( Z \) decomposes as:
\[
\begin{split}
Z = &\left\{ (a, b) \in U(\M) \mid a \in D,\ b \leq f_1(a) \right\} \\
&\cup \left\{ (a, b) \in U(\M) \mid a \in D,\ f_2(a) \leq b \right\} \\
&\cup \left\{ (a, b) \in U(\M) \mid a \in E(\M) \setminus D \right\}.
\end{split}
\]

By Fact \ref{fact-standard-part-map}, there exists a finite partition
\[
\mathcal{P} = \{ C_1, \ldots, C_l, C_{l+1}, \ldots, C_m \}
\]
of \( \mathbb{R}^{n-1} \) into definable sets, where \( C_1, \ldots, C_l \) are open in \( \mathbb{R}^{n-1} \) and \( C_{l+1}, \ldots, C_m \) have empty interior. For each \( 1 \leq j \leq l \) and \( i = 1, 2 \), there is a continuous \( \mathbb{R} \)-definable function \( g_{i, C_j} : C_j \to \mathbb{R} \) such that \( \st(f_i(x)) = g_{i, C_j}(\st(x)) \) for all \( x \in C_j^h   \).  

We observe that:
\[
\st(Y) \subseteq \bigcup_{j=1}^l \left\{ (a, b) \in U \mid a \in \st(D),\ g_{1, C_j}(a) \leq b \leq g_{2, C_j}(a) \right\} 
\cup \bigcup_{j'=l+1}^m \left( \st(C_{j'}) \times \mathbb{R} \right),
\]
and
\[
\begin{split}
\st(Z) \subseteq &\bigcup_{j=1}^l \left\{ (a, b) \in U \mid a \in \st(D),\ b \leq g_{1, C_j}(a) \right\} \\
&\cup \bigcup_{j=1}^l \left\{ (a, b) \in U \mid a \in \st(D),\ g_{2, C_j}(a) \leq b \right\} \\
&\cup \left\{ (a, b) \in U \mid a \in \st(E(\M) \setminus D) \right\} \cup \bigcup_{j'=l+1}^m \left( \st(C_{j'}) \times \mathbb{R} \right).   
\end{split}
\]

Taking their intersection, we derive:
\[
\begin{split}
\st(Y) \cap \st(Z) &\subseteq \bigcup_{j=1}^l \left\{ (a, b) \in U \mid a \in \st(D),\ b = g_{1, C_j}(a) \right\} \\
&\cup \bigcup_{j=1}^l \left\{ (a, b) \in U \mid a \in \st(D),\ b = g_{2, C_j}(a) \right\} \\
&\cup \left\{ (a, b) \in U \mid a \in \st(E(\M) \setminus D) \cap \st(D) \right\} 
\cup \bigcup_{j'=l+1}^m \left( \st(C_{j'}) \times \mathbb{R} \right).
\end{split}
\]

By the induction hypothesis, \( \dim\left( \st(E(\M) \setminus D) \cap \st(D) \right) < n-1 \), so 
\[
\dim\left( \left\{ (a, b) \in U \mid a \in \st(E(\M) \setminus D) \cap \st(D) \right\} \right) < n.
\]
All other sets in the union are either graphs of continuous functions (hence of dimension \( n-1 \)) or subsets of \( \st(C_{j'}) \times \mathbb{R} \) (with \( \dim(\st(C_{j'})) < n-1 \), so their product has dimension \( < n \)). Thus, \( \dim\left( \st(Y) \cap \st(Z) \right) < n \), as required.
\end{proof}

Note that for the \( k_0 \)-definable group \( G \), there exists a \( k_0 \)-definable open neighborhood \( O \) of \( \id_G \) and a $k_0$-definable homeomorphism \( \chi: O  \to \M^n \) (for some \( n \in \mathbb{N} \)) with $\chi(\id_G)=(0,\cdots,0)$. It is straightforward to verify that  \( \chi(\mu_G) = \mu^n \)  and  \( \chi(O(k_0)\mu_G) = V^n \). Moreover, the following diagram commutes: 
\[\xymatrix{O(k_0)\mu_G \ar[r]^-{\chi} \ar[d]_-{\st} & V^n \ar[d]^-{\st } \\O(k_0) \ar[r]^-{\chi } & k_0^n}.\]
Thus, for any \( L_{\mathbb{M}} \)-formula \( \tau(x) \) satisfying \( \tau(\mathbb{M}) \subseteq O(k_0)\mu_G \), we may further identify \( \tau(\mathbb{M}) \) with its image \( \chi(\tau(\mathbb{M})) \subseteq \mathbb{M}^n \). Keeping these notations, we have:
\begin{pro}\label{pro:muG-fsg}
The type-definable group \(\mu_G\) has the  fsg property. Specifically, there exists a global type \(r \in S_G(\M)\) such that:
\begin{enumerate}
    \item \(r \vdash \mu_G\) (i.e., every formula in \(\mu_G(x)\) is contained in \(r\));
    \item \(r\) is finitely satisfiable in \(M_0\);
    \item For all \(g \in \mu_G\), the left and right translates satisfy \(g  r = r  g = r\).
\end{enumerate}
\end{pro}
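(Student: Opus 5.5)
The plan is to build first a $\mu_G$-bi-invariant Keisler measure concentrated on $\mu_G$ that is finitely satisfiable in $M_0$, and then to extract from its support a type $r$ with properties (1)--(3).

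\emph{The $p$-adic case.} Here I would dispose of the statement directly. By Fact \ref{fact-open-compact-subgp} there is a $\Q$-definable, definably compact open subgroup $C\le G$. Since $C$ is open, $\mu_G=\mu_C$. By the discussion of fsg groups over $M_0$, $C$ has fsg and $C^{00}=\mu_C$. Take any global generic type $r$ of $C$ containing the formula for $C^{00}$; such an $r$ exists because $C^{00}$ has bounded index, so some coset of it carries a generic, and we translate. Then Fact \ref{fsg} gives (2) and (3), since $\stb[l]{r}=\stb[r]{r}=C^{00}=\mu_G$, and (1) holds by the choice of $r$.

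\emph{The $o$-minimal case.} Work in the chart $\chi:O\to\M^n$, so that $\mu_G$ corresponds to $\mu^n$ and $O(k_0)\mu_G$ to $V^n$. Let $\lambda$ be Lebesgue measure on $k_0^n=\r^n$. For a definable $X\subseteq G$ and a $k_0$-definable open box $W\ni 0$, I set
\[
\nu_W(X)=\lambda\bigl(\st(\chi(X\cap O(k_0)\mu_G))\cap W\bigr)/\lambda(W).
\]
\begin{enumerate}
\item \emph{Finite additivity.} If $X,X'$ partition a definable set, Lemma \ref{lemma-st-part-cap-dim} (applied on open cells) shows that $\st(X)\cap\st(X')$ has dimension $<n$, hence is $\lambda$-null. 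Together with $\st(X)\cup\st(X')=\st(X\cup X')$, this makes each $\nu_W$ finitely additive.
\item \emph{Passing to the limit.} Define $\nu$ as an ultralimit of the $\nu_W$ along a nonprincipal ultrafilter on a sequence of boxes shrinking to $0$. Then $\nu$ is a global Keisler measure, and $\nu(X)=1$ for every $k_0$-definable open neighbourhood $X$ of $\id_G$. Hence $\nu$ concentrates on $\mu_G$.
\item \emph{Bi-invariance.} For $\epsilon\in\mu_G$ we have $\st(\epsilon x)=\st(x)=\st(x\epsilon)$ on $O(k_0)\mu_G$. Thus $\st(\epsilon X)$, $\st(X\epsilon)$ and $\st(X)$ agree near $0$ up to sets of dimension $<n$, and $\nu$ is left and right $\mu_G$-invariant.
\item \emph{Finite satisfiability.} If $\nu(X)>0$, then $\st(X)$ has positive measure, so it has nonempty interior. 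By the van den Dries facts, $\st(X)$ and $X\cap G(k_0)$ differ by a set of dimension $<n$. Hence $X(k_0)\ne\emptyset$, and $\nu$ is finitely satisfiable in $M_0$.
\end{enumerate}

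\emph{From the measure to a type.} I expect this passage to be the main obstacle. A type $r$ in the support of $\nu$ is automatically finitely satisfiable in $M_0$ and concentrates on $\mu_G$, but need not be $\mu_G$-invariant. I would try to run the Hrushovski--Pillay fsg machinery for the type-definable group $\mu_G$, using $\nu$ as the witnessing measure. The aim is to show that every $r$ in the support of $\nu$ is generic in $\mu_G$, and that $\stb[l]{r}=\stb[r]{r}=\mu_G^{00}$.

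It then remains to show $\mu_G^{00}=\mu_G$. For this I would use the local group structure: $\mu_G$ is the infinitesimal neighbourhood of a local Lie group. A bounded-index type-definable subgroup of $\mu_G$ would produce, under the standard part map applied to scaled (rescaled by infinitesimals) versions of the subgroup, a proper closed subgroup of bounded index near the identity. That contradicts connectedness of small neighbourhoods in $G(k_0)$. If this rescaling argument resists, I would fall back on averaging: take $r$ to be a $\nu$-generic type and show directly that $\epsilon r$ is finitely satisfiable in $M_0$ and agrees with $r$ on formulas whose standard parts are measured by $\nu$. Then I would use NIP uniqueness of the $M_0$-finitely satisfiable extension to conclude $\epsilon r=r$.
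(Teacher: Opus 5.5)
Your $p$-adic case is the paper's argument. In the $o$-minimal case the measure $\nu$ is correctly built; for instance, step 4 follows from $\st(X)\setminus X(k_0)\subseteq \st(X)\cap\st(\neg X)$ together with Lemma \ref{lemma-st-part-cap-dim}. But $\nu$ only brings you to where the real content begins. You never produce a single type fixed by all of $\mu_G$ on both sides, and neither suggested route closes this.

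\emph{First route.} Running the fsg theory for $\mu_G$ with $\nu$ as witness reduces item (3) to $\mu_G^{00}=\mu_G$. Given the rest, that is equivalent to (3) itself. The ``rescaling by infinitesimals'' heuristic is not an argument: scaling in a chart does not respect the group law of a nonabelian $G$, and a general $o$-minimal expansion of $\r$ has no definable exponential map or one-parameter subgroups to rescale along.

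\emph{Second route.} It appeals to ``NIP uniqueness of the $M_0$-finitely satisfiable extension'', but NIP does not make coheirs unique. What gives uniqueness here is that every type over $\r$ is definable, equivalently that externally definable subsets of $\r^n$ are definable. More seriously, to see that $r$ and $\epsilon r$ agree on $k_0$-formulas you must decide whether $\epsilon^{-1}X\in r$ for $k_0$-definable $X$. Since $\epsilon^{-1}X$ is not $k_0$-definable, you need to control $r$ on arbitrary $\M$-formulas, and that is exactly the missing step.

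The idea that supplies it, and makes the measure unnecessary, is the paper's Claim. Take $r_0\in S_G(k_0)$ with $r_0\vdash\mu_G$ and $\dim(r_0)=n$, and let $r$ be its unique global coheir, so $\psi\in r$ iff $\psi(k_0)\in r_0$. Then for every $L_\M$-formula $\varphi$ with $\varphi(\M)$ inside the chart, $\varphi\in r$ iff $\st(\varphi(\M)\cap V_G)\in r_0$.
\begin{itemize}
\item One direction holds because $\varphi(k_0)\subseteq\st(\varphi(\M))$.
\item For the other, suppose $\st(\varphi(\M))\in r_0$ but $\neg\varphi\in r$. Then $\st(\varphi(\M))\cap\st(\neg\varphi(\M))\in r_0$. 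By Lemma \ref{lemma-st-part-cap-dim} this set has dimension $<n$, contradicting $\dim(r_0)=n$.
\end{itemize}
Since $\st(\epsilon\varphi(\M))=\st(\varphi(\M)\epsilon)=\st(\varphi(\M))$ for $\epsilon\in\mu_G$, bi-invariance is immediate, and (1), (2) hold by construction.

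Your fallback can be completed exactly this way. A type in the support of $\nu$ is finitely satisfiable in $k_0$ and concentrates on $\mu_G$. Its restriction to $k_0$ is full-dimensional, because $k_0$-definable sets of dimension $<n$ are $\nu$-null. So it is such an $r$, but then $\nu$ plays no role.
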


\begin{proof}
If \( k_0 = \Q \), then by Fact \ref{fact-open-compact-subgp}, \( G \) has an open \( \Q \)-definable subgroup \( Q \) such that \( Q(\Q) \) is compact. We observe that \( Q \) has fsg and \( \mu_G =\mu_Q= Q^0 =Q^{00}\). Let \( r \vdash \mu_G \) be a global fsg type of \( Q \); then \( r \) clearly satisfies the required properties.

If \( k_0 = \mathbb{R} \),  let \( r_0 \in S_G(\r) \) satisfy \( r_0 \vdash \mu_G \) and \( \dim(r_0) = \dim(G) \). Since \( r_0 \) is definable, it has a unique global coheir \( r \in S_G(\M) \). It is straightforward to verify that for any \( L_\M \)-formula \( \psi(x) \), \( \psi(x) \in r \) if and only if \( \psi(\r) \in r_0 \). We establish the following claim:

\begin{Claim}
Let \( \varphi(x) \) be an \( L_\M \)-formula. Then \( \varphi \in r \) if and only if \( \st(\varphi(\M) \cap V_G) \in r_0 \).
\end{Claim}

\begin{proof}
Let $\tau$ be an $L_{\M}$-formula such that $\mu_G\sq \tau(\M)\sq O(k_0)\mu_G$. Without loss of generality, assume that \( \varphi(\M)\sq\tau(\M)\) is an open cell.  Since \( \varphi(\r) \subseteq \st(\varphi(\M)) \), if \( \varphi \in r \), then \( \st(\varphi(\M)) \in r_0 \).

Conversely, suppose \( \st(\varphi(\M)) \in r_0 \). If \( \neg\varphi \in r \), then \( \st(\neg\varphi(\M) \cap \tau(\M) ) \in r_0 \), which implies
\[
\st(\varphi(\M)) \cap \st(\neg\varphi(\M) \cap \tau(\M)) \in r_0.
\]
By Lemma \ref{lemma-st-part-cap-dim},
\[
\dim\left( \st(\varphi(\M)) \cap \st(\neg\varphi(\M) \cap \tau(\M)) \right) < n,
\]
contradicting \( \dim(r_0) = n \).
\end{proof}

We now show that \( \epsilon \cdot r = r = r \cdot \epsilon \) for any \( \epsilon \in \mu_G \). Let \( \varphi(x) \in r \); it suffices to show \( \epsilon\varphi(x) \in r \) and \( \varphi(x)\epsilon \in r \). Assume that \( \varphi(\M) \sq O(k_0)\mu_G \). By the above claim, \( \varphi(x) \in r \) iff \( \st(\varphi(\M)) \in r_0 \). Since
\[
\st(\epsilon\varphi(\M)) = \st(\varphi(\M)\epsilon) = \st(\varphi(\M)),
\]
we have \( \varphi(x) \in r \iff \epsilon\varphi(x) \in r \iff \varphi(x)\epsilon \in r \). This completes the proof.
\end{proof}

\subsection{The Ellis group of $G$}\label{sec-Ellis Group-G}
In this section, we assume that $G$ is a $k_0$-definable group that is not definably amenable, and $B$ is a definably amenable component of $G$ that is definable over $k_0$. Consistent with the assumptions at the beginning of Section \ref{sec-Ellis Group-B}, $B$ admits a definable exact sequence as specified in \ref{equ-dss-H-B-C}; $f: C \to B$ is a definable section of $\pi: B \to C$; $\ec$, $\ih$, and $\eb = f(\ec) * \ih$ (see Proposition \ref{pro-III}) are the ideal groups of $S_C(M^\ext)$, $S_H(M^\ext)$, and $S_B(M^\ext)$, respectively. Let \( \mu_G \), \( \mu_B \), \( \mu_H \), and \( \mu_C \) denote the intersections of all \( k_0 \)-definable open neighborhoods of the identity elements of \( G \), \( B \), \( H \), and \( C \), respectively.

The proof technique employed herein follows the same logical framework as the computation of Ellis groups presented in \cite{BY-APAL}, while we have streamlined the proof steps for greater conciseness.

In Lemma 2.2.3 of  \cite{BY-APAL}, they proved that the following Fact \ref{kconjg} holds in $\mathbb{Q}_p$, and the same method can show that this Fact also holds in an $o$-minimal expansion of $\mathbb{R}$.
	\begin{fact}\label{kconjg}
		Let $g \in G$ and $\epsilon \in \mu_G$ such that $\tpo{g}{{M^{\ext}}, \epsilon}$ is finitely satisfiable in $k_0$, then $\epsilon^g \in \mu_G$. Particularly, if $M'\succ M$ and $g \in G$ such that $\tpo{g}{M'}$ is finitely satisfiable in $k_0$, then $(\mu_G(M'))^g \leq \mu_G$.
	\end{fact}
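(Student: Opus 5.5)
We must prove Fact \ref{kconjg}: if $g\in G$, $\epsilon\in\mu_G$ and $\tpo{g}{M^{\ext},\epsilon}$ is finitely satisfiable in $k_0$, then $\epsilon^g=g^{-1}\epsilon g\in\mu_G$. The plan is to argue by contradiction: if $\epsilon^g$ avoids some $k_0$-definable neighbourhood of the identity, we extract a formula in the type of $g$ with no realisation in $G(k_0)$.

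\textbf{Reduction to one neighbourhood.} Since $\mu_G$ is the intersection of the $k_0$-definable open neighbourhoods of $\id_G$, it suffices to fix such a neighbourhood $Z$ and show $g^{-1}\epsilon g\in Z$. Suppose instead that $g^{-1}\epsilon g\notin Z$. Then the $L(M^{\ext}\cup\{\epsilon\})$-formula
\[
\psi(x):=\ x\in G\wedge x^{-1}\epsilon x\notin Z
\]
lies in $\tpo{g}{M^{\ext},\epsilon}$. By finite satisfiability in $k_0$, there is $g_0\in G(k_0)$ with $g_0^{-1}\epsilon g_0\notin Z$.

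\textbf{Contradiction at a standard point.} Conjugation $c_{g_0}\colon y\mapsto g_0^{-1}yg_0$ is a $k_0$-definable homeomorphism of $G$ fixing $\id_G$, because the group operations are continuous in the definable manifold topology. Hence $g_0Zg_0^{-1}$ is a $k_0$-definable open neighbourhood of $\id_G$, so it contains $\mu_G$. Since $\epsilon\in\mu_G$, we get $g_0^{-1}\epsilon g_0\in Z$, contradicting the choice of $g_0$. Thus $\epsilon^g\in Z$ for every such $Z$, i.e.\ $\epsilon^g\in\mu_G$.

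\textbf{The ``particularly'' clause.} Let $M'\succ M$ and suppose $\tpo{g}{M'}$ is finitely satisfiable in $k_0$. Take $\epsilon\in\mu_G(M')$ and a $k_0$-definable neighbourhood $Z$ of $\id_G$. The formula $x^{-1}\epsilon x\notin Z$ has parameters $\epsilon$ (plus $k_0$-parameters), all in $M'$. So if it held of $g$, it would belong to $\tpo{g}{M'}$, and the same argument as above would yield $g_0\in G(k_0)$ satisfying it, a contradiction. Hence $(\mu_G(M'))^g\le\mu_G$.

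\textbf{Expected obstacle and a caveat.} The argument is uniform in $\Q$ and in the $o$-minimal case; it uses only that $\mu_G$ is an intersection of $k_0$-definable open sets and that conjugation by a standard element is a $k_0$-definable homeomorphism. The one point needing care is that $\psi$ must be admissible in the type: it uses $\epsilon$ as a parameter, and this is exactly why the hypothesis is stated over $M^{\ext}$ together with $\epsilon$, rather than over $M^{\ext}$ alone.
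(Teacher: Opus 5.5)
Your proof is correct, and it is essentially the argument the paper relies on: the paper gives no proof of its own and cites Lemma 2.2.3 of \cite{BY-APAL}. That argument transfers a failure $\epsilon^g\notin Z$ to some standard $g_0\in G(k_0)$ by finite satisfiability, then contradicts the fact that $G(k_0)$ normalizes $\mu_G$, since $g_0Zg_0^{-1}$ is again a $k_0$-definable open neighbourhood of the identity. Proving the ``particularly'' clause directly, via the $L$-formula over $M'$, rather than deducing it from the first clause, is also correct.
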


	\begin{lemma}\label{ctp}
		Let  $\epsilon_0, \epsilon \in \mu_G$ and  $b_0, b \in B$ such that $\epsilon_0b_0 = \epsilon b$, then $b_0, b$ are in the same coset of $B^{00}$.
	\end{lemma}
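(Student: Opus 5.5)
The plan is to reduce the statement to the inclusion \(\mu_B\le B^{00}\), and then prove that inclusion by a genericity/interior argument over \(k_0\).

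\textbf{Step 1: reduction.} From \(\epsilon_0 b_0=\epsilon b\) we get
\[
b_0^{-1}b \;=\; b_0^{-1}\bigl(\epsilon_0^{-1}\epsilon\bigr)^{-1}\dots
\]
more precisely, \(b b_0^{-1}=\epsilon^{-1}\epsilon_0\). The left side lies in \(B\) and the right side lies in \(\mu_G\), so \(bb_0^{-1}\in\mu_G\cap B\).

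Since \(B\) is a \(k_0\)-definable subgroup, it is closed, and its manifold topology is the subspace topology. Hence every \(k_0\)-definable open neighbourhood of \(\id\) in \(B\) has the form \(O\cap B\) with \(O\) a \(k_0\)-definable open neighbourhood of \(\id\) in \(G\). This gives \(\mu_G\cap B=\mu_B\).

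Then \(b_0^{-1}b=b_0^{-1}(bb_0^{-1})b_0\) is a \(B\)-conjugate of an element of \(\mu_B\). Since \(B^{00}\) is normal in \(B\), it suffices to show \(\mu_B\le B^{00}\).

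\textbf{Step 2: \(\mu_B\le B^{00}\).} Since \(B\) is defined over \(k_0\) and the theory is NIP, \(B^{00}\) is type-definable over \(k_0\) and of bounded index. Fix a \(k_0\)-definable set \(Y\supseteq B^{00}\). We need \(Y\supseteq\mu_B\).

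\begin{itemize}
\item By compactness, choose a \(k_0\)-definable \(Z\supseteq B^{00}\) with \(Z^{-1}Z\subseteq Y\).
\item Because \(B^{00}\) has bounded index, \(Z\) is generic: finitely many left translates of \(Z\) cover \(B\).
\item The statement ``\(n\) translates of \(Z\) cover \(B\)'' is first order with parameters from \(k_0\). So it transfers to \(M_0\): \(B(k_0)\) is covered by finitely many \(B(k_0)\)-translates of \(Z(k_0)\).
\item By dimension theory (or Baire category in \(B(k_0)\)), \(Z(k_0)\) then has nonempty interior in \(B(k_0)\). So it contains \(gU(k_0)\) for some \(g\in B(k_0)\) and some \(k_0\)-definable open neighbourhood \(U\) of \(\id\).
\item Transferring back, \(Z\supseteq gU\), hence \(Y\supseteq Z^{-1}Z\supseteq U^{-1}U\supseteq U\supseteq\mu_B\).
\end{itemize}

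Intersecting over all such \(Y\) gives \(\mu_B\subseteq B^{00}\).

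\textbf{Main obstacle.} The delicate point is Step 2, specifically the passage from genericity in \(\M\) to nonempty interior of \(Z(k_0)\) inside \(B(k_0)\). This needs the covering translates to be chosen in \(B(k_0)\), which works because \(Z\) and \(B\) are over \(k_0\) and \(M_0\prec\M\). It also needs the fact that a definable generic subset of a definable group has full dimension and therefore interior; this holds in both the \(o\)-minimal and \(p\)-adic settings.

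In the \(p\)-adic case there is a shortcut. There \(B^{00}=B^0\) is an intersection of \(k_0\)-definable finite-index subgroups. Each of these is open, since it has full dimension and a definable subgroup with interior is open. Hence each contains \(\mu_B\), and Step 2 follows directly.
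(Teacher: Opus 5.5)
Your proof is correct, but it reaches the key inclusion $\mu_B\le B^{00}$ by a different route from the paper.

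The paper's proof is a one-liner. It notes $bb_0^{-1}=\epsilon^{-1}\epsilon_0\in\mu_G\cap B=\mu_B$. It then argues that every definable finite-index subgroup of $B$ is open, hence contains $\mu_B$, and concludes $\mu_B\le B^{00}$. That last inference tacitly uses $B^{00}=B^0$.

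\begin{itemize}
\item Over $\mathbb{Q}_p$ this holds by the cited fact $G^0=G^{00}$, and the paper's argument is then exactly your ``$p$-adic shortcut''.
\item In the $o$-minimal case, $B^{00}$ can be strictly smaller than $B^0$, so the paper's argument as written only yields $\mu_B\le B^0$. For example, take $G=\SL(2,\M)\times\mathrm{SO}(2,\M)$. Its definably amenable component is (upper triangular matrices)$\times\mathrm{SO}(2,\M)$. On the $\mathrm{SO}(2)$ factor, the $00$-component is the infinitesimal subgroup, while the $0$-component is the whole factor.
\end{itemize}

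Your Step 2 is the standard argument that $\mu_B$ lies in every bounded-index subgroup type-definable over $k_0$. You choose a $k_0$-definable $Z\supseteq B^{00}$ with $Z^{-1}Z\subseteq Y$ and deduce genericity from bounded index. You then transfer the finite cover down to $B(k_0)$, get interior by dimension, and transfer a neighbourhood $gU$ back up. This handles the $p$-adic and $o$-minimal settings uniformly and fills that gap in the $o$-minimal case, at the cost of a few more lines than the paper's argument.
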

	\begin{proof}
		$bb_0^{-1} = \epsilon^{-1}\epsilon_0 \in \mu_G \cap B=\mu_B$. Since each $\emptyset$-definable  subgroup $U$ of $B$ with finite index is open, we have $\mu_B\leq U$, so $\mu_B\leq B^{00}$, which means $b_0$ and $ b$ are in the same coset of $B^{00}$.
	\end{proof}



  By Fact \ref{fact-definable-amenable} (iv), $\tau: p\mapsto p/B^{00}$ is an isomorphism from $\eb$ to $B/B^{00}$.  For each $b \in B$,  consider the map 
 \[
 l_b: \eb \to \eb,\ p \mapsto p_b*p,
 \]
 where $p_b$ is the unique element of $ \eb$ such that $p_b\vdash b /B^{00}$. Then $l_b$ is a bijection from $\eb$ to itself, and $l_{b_1}=l_{b_2}$ iff $b_1/B^{00}=b_2/B^{00}$.  Besides for any $b_1, b_2 \in B$, we have $l_{b_1} \circ l_{b_2} = l_{(b_1  b_2)}$. It is straightforward to verify that \( l_{b_1}(p) * l_{b_2}(p) = l_{(b_1  b_2)}(p) \) for any \( p \in \eb \) and \( b_1, b_2 \in B \). 

\begin{notations}
  We now fix \( p_B \) as the identity/idempotent of \( \eb \), i.e., \( p_B \vdash B^{00} \).
\end{notations}



\begin{lemma}\label{kb}
Let  \( q \in S_G(M^\ext) \) and \( b_0 \in B \). If \( p_B * q \vdash \mu_G b_0 B^{00} \), then \( l_b(p_B) * q \vdash \mu_G b b_0 B^{00} \) for any \( b \in B \).
\end{lemma}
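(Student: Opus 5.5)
Write $l_b(p_B)=p_b*p_B=p_b$, since $p_B$ is the identity of $\eb$. So the goal is $p_b*q\vdash \mu_G b b_0 B^{00}$. Because $p_b=p_b*p_B$, associativity gives $p_b*q=p_b*(p_B*q)$. The plan is to compute this product with realizations and push the infinitesimal factor past $b$.

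\textbf{Setting up realizations.} Put $q'=p_B*q$. By hypothesis there are $\epsilon_0\in\mu_G$ and $b_1\in b_0B^{00}$ with $h\models q'$ and $h=\epsilon_0 b_1$. Take $(g,h)\models p_b\otimes q'$, so that $g\models p_b$ and $h\models q'|(M^\ext,g)$. Then $p_b*q=\tp{gh/M^\ext}$ and
\[
gh=g\epsilon_0 b_1=\epsilon_0^{g^{-1}}\, g\, b_1 .
\]
Since $p_b\in\eb\sq S_B(M^\ext)$ and $p_b\vdash bB^{00}$, we have $g\in bB^{00}$. Hence $gb_1\in bB^{00}b_0B^{00}$.

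\textbf{Two things to establish.} First, $\epsilon_0^{g^{-1}}\in\mu_G$. Second, $bB^{00}b_0B^{00}=bb_0B^{00}$. The second holds because $B^{00}$ is normal in $B$, so $B^{00}b_0=b_0B^{00}$. Granting the first, $gh\in\mu_G bb_0B^{00}$. This is a property of the type $\tp{gh/M^\ext}$, since the set is type-definable and invariant. The invariance needs checking: the hypothesis is phrased as "$\vdash$", so I would read $\mu_G bb_0B^{00}$ as the type-definable set it is, with $b,b_0$ as parameters, and conclude $p_b*q\vdash\mu_G bb_0B^{00}$.

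\textbf{Main obstacle.} The hard step is showing $\epsilon_0^{g^{-1}}\in\mu_G$, i.e.\ that conjugation by $g$ preserves the relevant infinitesimals. I would try to swap the order of realization. Realize $h\models q'$ first and then $g\models p_b|(M^\ext,h)$, or arrange that the product is still computed correctly via the heir/coheir description. Then apply Fact \ref{kconjg}, which needs $\tp{g^{-1}/M^\ext,\epsilon_0}$ to be finitely satisfiable in $k_0$.

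That finite satisfiability is not available for arbitrary elements of $\eb$, so I would instead reorganize the computation. Use the structure $\eb=f(\ec)*\ih$ from Proposition \ref{pro-III}, and write $p_b=f(v)*s$ with $v\in\ec$ and $s\in\ih$. Then treat the two factors separately, as follows.
\begin{itemize}
\item For the $C$-part: $C$ is definably compact (fsg), and its generic types are finitely satisfiable in $k_0$ by Fact \ref{fsg}(i). This suggests choosing $f$ and realizations so that the conjugating element is in $V_G$. Alternatively, show directly that $\mu_G$ is normalized by $f(c)$ for generic $c$, via Fact \ref{kconjg}.
\item For the $H$-part: the type is definable, so it is an heir. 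I would use that $\mu_G$ is type-definable over $k_0$ together with an heir argument to move $\epsilon_0$ across $s$.
\end{itemize}

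If this becomes messy, the fallback is to avoid conjugation altogether. Since $\tp{h/M^\ext,g}$ is the heir of $q'$, the coset condition on $h$ is preserved. The claim can then be verified formula by formula: for each $k_0$-definable open neighborhood $O$ and each definable $Y\supseteq B^{00}$, show $gh\in O\,b b_0 Y$, using continuity of multiplication on $V_G$.
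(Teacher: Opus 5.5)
Your set-up is sound up to the point you flag. The identities $l_b(p_B)=p_b$ and $p_b*q=p_b*(p_B*q)$ hold, as do the rewriting $g\epsilon_0b_1=\epsilon_0^{g^{-1}}gb_1$ and $bB^{00}b_0B^{00}=bb_0B^{00}$. You also correctly see that everything hinges on Fact \ref{kconjg}, which needs the conjugator's type over $M^{\ext},\epsilon_0$ to be finitely satisfiable in $k_0$. Your handling of the $C$-factor is fine. \textbf{The gap is the $H$-factor}, and it is a real one.

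If $h'$ realizes $s\in\ih$ in product position, heir/coheir duality only makes $\tp{h'/M^{\ext},\epsilon_0}$ finitely satisfiable in $M$, not in $k_0$. An ``heir argument'' can then only push a failure $h'\epsilon_0h'^{-1}\notin O$ (with $O$ a $k_0$-definable neighbourhood of $\id_G$) down to $m\epsilon_0m^{-1}\notin O$ for some $m\in G(M)$. That is no contradiction, because $M$ is nonstandard and $G(M)$ does not normalize $\mu_G$. In $\SL_2$, take $t\in M$ a nonzero $k_0$-infinitesimal: conjugating $\left(\begin{smallmatrix}1&0\\ t&1\end{smallmatrix}\right)\in\mu_G$ by $\mathrm{diag}(t,t^{-1})$ gives $\left(\begin{smallmatrix}1&0\\ t^{-1}&1\end{smallmatrix}\right)$. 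Realizations of dfg types are no better, since they lie at infinity over $M$. Conjugating $\left(\begin{smallmatrix}1&0\\ \delta&1\end{smallmatrix}\right)$ by $\left(\begin{smallmatrix}a&x\\ 0&a^{-1}\end{smallmatrix}\right)$ produces the entry $-x^2\delta$, which is infinite when $x$ is infinite over $M$ and $0\neq\delta\in M$. Your fallback fails for the same reason: $g\notin V_G$ in general, so continuity of multiplication on $V_G$ says nothing about $g\epsilon_0$. Swapping the order of realization is not available either, since it computes a different product.

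The missing idea is that you never need to realize an element of $\ih$. Write $\epsilon b_0b^*\models p_B*q$ with $\epsilon\in\mu_G$ and $b^*\in B^{00}$, and choose $v\in\ec$ and $h\in H$ with $f(v)*\tp{h/M^{\ext}}\vdash bB^{00}$. Then replace $h$ by some $h_0\in hH^0$ whose type over $M^{\ext},\epsilon,b_0,b^*$ is finitely satisfiable in $k_0$. This is possible because $hH^0$ is cut out by cosets of $k_0$-definable finite-index subgroups, each of which meets $H(k_0)$.
\begin{enumerate}
\item Since $H^0\le B^{00}$, we still have $f(v)*\tp{h_0/M^{\ext}}\vdash bB^{00}$.
\item By Corollary \ref{cor-v*q*\eb=\eb}, $f(v)*\tp{h_0/M^{\ext}}*p_B\in\eb$. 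Since $p\mapsto p/B^{00}$ is injective on $\eb$, this type equals $l_b(p_B)$.
\item Hence $l_b(p_B)*q=f(v)*\tp{h_0/M^{\ext}}*(p_B*q)$, and this is realized by $f(c)h_0\epsilon b_0b^*$. Here $c$ realizes the generic extension of $v$ over a saturated $N$ containing $h_0$ and $\epsilon$; this extension is finitely satisfiable in $k_0$ by Fact \ref{fsg}.
\item Now the whole conjugator $f(c)h_0$ has type over $M^{\ext},\epsilon$ finitely satisfiable in $k_0$. Fact \ref{kconjg} gives $\epsilon^{(f(c)h_0)^{-1}}\in\mu_G$, and $f(c)h_0b_0b^*\in bb_0B^{00}$, which finishes the proof.
\end{enumerate}
Absorption by $p_B$, together with injectivity on $\eb$, is what lets you trade the dfg factor for a $k_0$-finitely satisfiable one. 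Without that exchange, the conjugation step in your proposal has no justification.
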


\begin{proof}
First, note that \( l_b(p_B) * q = l_b(p_B) * p_B * q \).

Let \( N \succ M^\ext \) be \( |M|^+ \)-saturated. Take  \( \epsilon \in \mu_G(N) \) and \( b^* \in B^{00}(N) \) such that  \(  \epsilon b_0 b^*\models  p_B * q\).

Let \( v \in \ec \) and $h\in H(N)$ such that $f(v)*\tp{h/M^\ext}\vdash b/B^{00}$. We now choose \( h_0 \in H(N) \) such that \( h_0 / H^0 = h / H^0 \) and \( \tp{h_0 / M^\ext, \epsilon, b_0, b^*} \) is finitely satisfiable in \( k_0 \). Since $H^0\leq B^{00}$,  we see that $f(v)*\tp{h_0/M^\ext}\vdash b/B^{00}$.

By Corollary \ref{cor-v*q*\eb=\eb}, $f(v)*\tp{h_0/M^\ext}*p_B\in \eb$. Since $f(v)*\tp{h_0/M^\ext}*p_B\vdash b/B^{00}$, we have  that $ f(v)*\tp{h_0/M^\ext}*p_B=l_b(p_B)$. Let \( c \in C(\mathbb{M}) \) realize the unique generic extension of \( v \) over \( N \), we have:
\[
\begin{split}
\tp{f(c) h_0 \epsilon b_0 b^* / M^\ext} 
&= f(u) * \tp{h_0 / M^\ext} * \tp{\epsilon b_0 b^* / M^\ext}\\
&=f(u) * \tp{h_0 / M^\ext} *p_B * q \\
&= l_b(p_B)*p_B*q
.    
\end{split}
\]
Next, rewrite the element \( f(c) h_0 \epsilon b_0 b^* \) using conjugation:
\[
f(c) h_0 \epsilon b_0 b^* = \epsilon^{(f(c) h_0)^{-1}} f(c) h_0 b_0 b^*.
\]
By Lemma \ref{kconjg},  \( \epsilon^{(f(c) h_0)^{-1}} \in \mu_G(N) \). Thus:
\[
f(c) h_0 \epsilon b_0 b^* \in \mu_G f(c) h_0 b_0 B^{00} = \mu_G b b_0 B^{00}.
\]
This completes the proof.
\end{proof}

\begin{notations}
Let \( r_G \in S_G(M^\ext) \) be an fsg type on \( \mu_G \) given by Proposition \ref{pro:muG-fsg}.    
\end{notations}
 Note that \( r_G \) is an idempotent. 
As \( p_B * r_G \) is \( B^{00}  \)-invariant, it follows from Corollary \ref{exchange} that there is some \(  b \in B \) such that \( p_B * r_G \vdash \mu_G b B^{00} \). 
\begin{notations}
   We fix an element $\mathfrak{b} \in B$ such that $p_B * r_G \vdash \mu_G \mathfrak{b} B^{00}$, and $q_B = l_{\mathfrak{b}^{-1}}(p_B)$.
\end{notations}

By Lemma \ref{kb}, we immediately obtain the following corollary:

\begin{cor}\label{left-action-on-idempotent}
For each \( b \in B \), \( l_b(q_B) * r_G \vdash \mu_G b B^{00} \). In particular, \( q_B * r_G \vdash \mu_G B^{00} \).
\end{cor}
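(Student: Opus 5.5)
The plan is to apply Lemma \ref{kb} directly, with $q = r_G$ and $b_0 = \mathfrak{b}$. By the choice of $\mathfrak{b}$ we have $p_B * r_G \vdash \mu_G \mathfrak{b} B^{00}$, so the hypothesis of Lemma \ref{kb} holds. The lemma then gives, for every $b' \in B$,
\[
l_{b'}(p_B) * r_G \vdash \mu_G b'\mathfrak{b} B^{00}.
\]

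Next I rewrite $l_b(q_B)$ in the form $l_{b'}(p_B)$. By definition $q_B = l_{\mathfrak{b}^{-1}}(p_B)$. The maps $l$ compose as $l_{b_1}\circ l_{b_2} = l_{b_1 b_2}$, so
\[
l_b(q_B) = l_b\bigl(l_{\mathfrak{b}^{-1}}(p_B)\bigr) = l_{b\mathfrak{b}^{-1}}(p_B).
\]
Taking $b' = b\mathfrak{b}^{-1}$ in the display above yields
\[
l_b(q_B) * r_G \vdash \mu_G\, b\mathfrak{b}^{-1}\mathfrak{b}\, B^{00} = \mu_G b B^{00}.
\]
This is the first assertion of the corollary.

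For the second assertion, take $b = \id_G$. Here $l_{\id}$ is left multiplication by $p_{\id} = p_B$, which is the identity of $\eb$, so $l_{\id}(q_B) = q_B$. Therefore $q_B * r_G \vdash \mu_G B^{00}$.

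The only step needing care is confirming that the composition law $l_{b_1}\circ l_{b_2} = l_{b_1b_2}$ applies as stated. This follows because $\tau$ is an isomorphism, which gives $p_{b_1} * p_{b_2} = p_{b_1 b_2}$, and because $*$ is associative. With that in hand there is no real obstacle: the corollary is a direct substitution into Lemma \ref{kb}.
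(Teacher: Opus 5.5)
Your proposal is correct and is essentially the paper's argument. The paper derives the corollary directly from Lemma~\ref{kb}, taking $q=r_G$ and $b_0=\mathfrak{b}$ and using $l_b(q_B)=l_{b\mathfrak{b}^{-1}}(p_B)$; your verification of the composition law via $p_{b_1}*p_{b_2}=p_{b_1b_2}$ fills in the one step the paper leaves implicit.
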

	
We now prove that $r_G * q_B$ is an idempotent in a minimal subflow of $S_G(M)$. 


\begin{lemma}\label{qpqp}
Suppose $p \in \eb$ and $b \in B$; then  
\[
r_G * l_b(q_B) * r_G * p = r_G * l_{b}(p).
\]
\end{lemma}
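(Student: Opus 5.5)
We aim to show $r_G * l_b(q_B) * r_G * p = r_G * l_b(p)$ by computing both sides with realizations. The key input is Corollary \ref{left-action-on-idempotent}: $l_b(q_B)*r_G \vdash \mu_G b B^{00}$. First we use $p\in\eb$ and the group structure of $\eb$ to write $p = q_B * l_{\mathfrak b}(p)$. Indeed, $q_B=l_{\mathfrak b^{-1}}(p_B)$, and $l_{\mathfrak b}(p)=p_{\mathfrak b}*p$, so $q_B*l_{\mathfrak b}(p)=p_{\mathfrak b^{-1}}*p_{\mathfrak b}*p=p_B*p=p$. The real task is then to understand $l_b(q_B)*r_G*q_B$ acting on the left of elements of $\eb$. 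It is cleaner, though, to work directly with $s:=l_b(q_B)*r_G$.

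\textbf{Step 1.} Take $N\succ M^\ext$ that is $|M|^+$-saturated. Realize $s$ as $\epsilon b b^*$ with $\epsilon\in\mu_G(N)$ and $b^*\in B^{00}(N)$, using the corollary. Then realize $p$ over $(N,\epsilon,b^*)$ by some $a$, via the heir/coheir sequence defining $*$. This gives
\[
s*p=\tp{\epsilon\, b b^* a/M^\ext}.
\]
Since $r_G$ is a coheir of a type in $\mu_G$ finitely satisfiable in $k_0$, and $r_G*(\cdot)$ absorbs left $\mu_G$-factors, we will have
\[
r_G*s*p = \tp{e\,\epsilon\, b b^* a/M^\ext}
\]
with $e\models r_G$ generic over everything. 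By Proposition \ref{pro:muG-fsg}(3), $e\epsilon$ again realizes $r_G$ over the relevant parameters, so $r_G*s*p = r_G*\tp{bb^*a/M^\ext}$. Some care is needed to make the independence order legitimate: $e$ is chosen realizing the unique coheir of $r_G$ over $(N,\epsilon,b^*,a)$.

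\textbf{Step 2.} Identify $\tp{bb^*a/M^\ext}$ with $l_b(p)$, or at least with an element having the same image under $r_G*(\cdot)$. Here $a\models p$ with $p\vdash$ some coset $a_0B^{00}$, and $b^*\in B^{00}$, so $bb^*a$ lies in $b a_0 B^{00}$. However, $\tp{bb^*a/M^\ext}$ need not be in $\eb$. So instead we write $p=p_B*p$, and realize $p_B$ first and then $p$. The product $b^*\cdot(\text{realization of }p_B)\cdot a$ is then a realization of a type in $\eb$ lying over $bB^{00}$-shifted coset, namely $l_b(p)$ via Fact \ref{fact-definable-amenable}(iv). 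To arrange this, we use the fact that $\eb=f(\ec)*\ih$ is closed under left multiplication by types of $B^{00}$-realizations appearing in this manner (Corollary \ref{cor-v*q*\eb=\eb}), together with the description of $l_b$ as the unique element of $\eb$ in a given $B^{00}$-coset.

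\textbf{Main obstacle.} The hardest step is justifying that $\tp{b\,b^*a/M^\ext}$, with $b\in B(\M)$ possibly nonstandard and $b^*$ a nonstandard element of $B^{00}$, yields exactly $l_b(p)$ after applying $r_G*$. Our first attempt will be to avoid $b$ altogether: replace $l_b(q_B)*r_G$ by $\tp{f(c)h_0/M^\ext}*q_B*r_G$, as in the proof of Lemma \ref{kb}, with $c,h_0$ finitely satisfiable in $k_0$. Fact \ref{kconjg} then lets us move $\mu_G$ factors past $f(c)h_0$, reducing the claim to the case $b=\id$, namely $r_G*q_B*r_G*p=r_G*p$. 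That case follows from $q_B*r_G\vdash \mu_GB^{00}$ together with the $\mu_G$-invariance of $r_G$ and the idempotence of $p_B$.
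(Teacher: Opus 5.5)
Your Step 1 matches the paper's opening move. Corollary \ref{left-action-on-idempotent} and the right $\mu_G$-invariance of $r_G$ reduce the left-hand side to $r_G*\tp{b'/M^{\ext}}*p$ with $b'\in bB^{00}$. The gap is in everything after that.

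The type $\tp{b'/M^{\ext}}*p$ lies in the minimal left ideal containing $\eb$, and it lies in the same $B^{00}$-coset as $l_b(p)$. In general, however, it is \emph{not} in $\eb$. Two elements of that left ideal with the same $B^{00}$-coset need not coincide: for the circle group over $\mathbb{R}$, with $0^{\pm}$ the types of positive/negative infinitesimal rotations, $0^-*0^+=0^-\neq 0^+$ although $0^-\vdash C^{00}$. The corollary $f(v)*q*\eb\subseteq\eb$ does not say that $\eb$ is closed under left multiplication by types concentrating on $B^{00}$; it needs a left factor $f(v)$ with $v\in\ec$.

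Rewriting $p=p_B*p$ does not help, because $p_B$ ends up on the wrong side. It is $p_B*\tp{b'/M^{\ext}}*p$, not $\tp{b'/M^{\ext}}*p_B*p$, that lies in $\eb$.

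Your fallback reduction to $b=\id$ hits the same wall. After absorbing $\epsilon$ you must show $r_G*\tp{b^*/M^{\ext}}*p=r_G*p$ for some $b^*\in B^{00}$. Right $\mu_G$-invariance of $r_G$ does not give this, since $B^{00}\supseteq H^0$ is not contained in $\mu_G$. Idempotence of $p_B$ does not give it either, since $\tp{b^*/M^{\ext}}\neq p_B$. So the base case is exactly as hard as the general case. The reduction itself is also only sketched: Fact \ref{kconjg} keeps conjugates inside $\mu_G$, but you would need them to realize $r_G$.

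The missing idea is the paper's device for making $r_G$ itself supply a left factor $f(v)$. Pick $v\in\ec$ and a realization $g\in f(C)\subseteq V_G$ of $f(v)$. Then $\st(g)^{-1}\cdot f(v)\vdash\mu_G$, so
\[ r_G=r_G*(\st(g)^{-1}\cdot f(v))=(r_G\cdot\st(g)^{-1})*f(v), \]
and hence
\[ r_G*\tp{b'/M^{\ext}}*p=r_G*\bigl(\st(g)^{-1}\cdot(f(v)*\tp{b'/M^{\ext}}*p)\bigr). \]
The argument then finishes in three steps:
\begin{itemize}
\item By the corollary, $f(v)*\tp{b'/M^{\ext}}*p\in\eb$.
\item Its translate by $\st(g)^{-1}$ stays in $\eb$. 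This is cleanest for $v=\mathfrak{u}$: then $\st(g)\in H(k_0)$, and left translation by elements of $H(M)$ preserves $\eb=f(\ec)*\ih$ by normality of $H$ in $B$ and Fact \ref{B(Mext)-has-dfg}.
\item Since $\st(g)^{-1}\cdot f(v)\vdash\mu_B\le B^{00}$, this translate lies in the $B^{00}$-coset of $l_b(p)$. Uniqueness of coset representatives in $\eb$ (Fact \ref{fact-definable-amenable}(iv)) identifies it with $l_b(p)$.
\end{itemize}
This yields $r_G*l_b(q_B)*r_G*p=r_G*l_b(p)$.
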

\begin{proof}
By Corollary \ref{left-action-on-idempotent}, $l_b(q_B) * r_G \vdash \mu_G b B^{00}$. Then  
\[
r_G * l_b(q_B) * r_G * p = r_G * \tp{\epsilon b'/{M^{\ext}}} * p
\]
for some $\epsilon \in \mu_G$ and $b' \in B$ such that $b'/B^{00}=b/B^{00}$. It is easy to see that 
\[
r_G * \tp{\epsilon b'/{M^{\ext}}} = r_G * \tp{b'/{M^{\ext}}}
\]
as $r_G$ is finitely satisfiable generic. Thus, 
\begin{equation}\label{equ-1-1}
r_G * l_b(q_B) * r_G * p = r_G * \tp{b'/{M^{\ext}}} * p.
\end{equation}
Take any $v \in \ec$; then $f(v)$ is realized by some $g \in f(C) \sq V_G$. It is easy to see that  
\[
r_G = r_G * (\st(g)^{-1} \cdot f(v)) = (r_G \cdot \st(g)^{-1}) * f(v)
\]
as $\st(g)^{-1} \cdot f(v) \vdash \mu_G$. So 
\begin{equation}\label{equ-1-2}
r_G * \tp{b'/{M^{\ext}}} * p = (r_G \cdot \st(g)^{-1}) * (f(v) * \tp{b'/{M^{\ext}}} * p).
\end{equation}
By Corollary \ref{cor-v*q*\eb=\eb}, $f(v) * \tp{b'/{M^{\ext}}} * p \in \eb$, and thus 
\[
\tp{\st(g)^{-1}/M^\ext} * f(v) * \tp{b'/{M^{\ext}}} * p = \st(g)^{-1} \cdot (f(v) * \tp{b'/{M^{\ext}}} * p) \in \eb.
\]
Since $(\st(g)^{-1} \cdot f(v)) \vdash \mu_B \leq B^{00}$, we have that 
\[
\big(\st(g)^{-1} \cdot (f(v) * \tp{b'/{M^{\ext}}} * p)\big)/B^{00} = \big(\tp{b'/{M^{\ext}}} * p\big)/B^{00} = l_b(p)/B^{00},
\]
which implies that $\st(g)^{-1} \cdot (f(v) * \tp{b'/{M^{\ext}}} * p) = l_b(p)$. Combining this with equations (\ref{equ-1-1}) and (\ref{equ-1-2}), we have that $r_G * l_b(q_B) * r_G * p = r_G * l_b(p)$, as required.
\end{proof}

By Lemma \ref{qpqp}, we see immediately that
\begin{cor}
For any \( p, q \in \eb \), we have \( r_G * p * r_G * q = r_G * p * q \). In particular,  \( r_G * q_B * r_G * q_B = r_G * q_B \), i.e. , \( r_G * q_B \) is an idempotent.
\end{cor}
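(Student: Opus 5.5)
The corollary follows from Lemma \ref{qpqp} by rewriting $p$ as an $l_b$-image of $q_B$. Fix $p,q\in \eb$. Since $\eb$ is a group isomorphic to $B/B^{00}$ via $\tau$, and $l_b$ is left multiplication in $\eb$ by the element $p_b$ lying over $b/B^{00}$, the map $b\mapsto l_b(q_B)$ is onto $\eb$. Hence we can choose $b\in B$ with $p=l_b(q_B)$; concretely, $b$ is any element with $b/B^{00}=(p*q_B^{-1})/B^{00}$, where the inverse is taken in $\eb$.

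We then apply Lemma \ref{qpqp} with this $b$ and with $q$ in place of $p$:
\[
r_G*p*r_G*q=r_G*l_b(q_B)*r_G*q=r_G*l_b(q).
\]
It remains to identify $l_b(q)$ with $p*q$. By definition $l_b(q)=p_b*q$ and $l_b(q_B)=p_b*q_B$. Since $q_B\in\eb$ and $p_B$ is the identity of $\eb$, we have $q_B*q=l_{\mathfrak b^{-1}}(p_B)*q$, and the relation $p=p_b*q_B$ then gives
\[
p*q=p_b*q_B*q .
\]
This is not literally $p_b*q$, so we must track $\mathfrak b$ and use the fact that $l_{b_1}\circ l_{b_2}=l_{b_1b_2}$.

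\textbf{Main obstacle.} Matching the group element correctly is the delicate step. Write $p=l_{b'}(p_B)$, so that $p=l_{b'\mathfrak b}(q_B)$ because $q_B=l_{\mathfrak b^{-1}}(p_B)$. Lemma \ref{qpqp} with $b=b'\mathfrak b$ then yields $r_G*l_{b'\mathfrak b}(q)$, whereas $p*q=l_{b'}(q)$. The two therefore differ by a factor of $\mathfrak b$.

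To resolve this I would re-examine the proof of Lemma \ref{qpqp}. There, $l_b(q_B)*r_G\vdash \mu_G bB^{00}$ by Corollary \ref{left-action-on-idempotent}, and this factor is absorbed into $r_G*\tp{b'/M^\ext}*p$. So the lemma's output should be read as "multiply by the coset $b$ on which $l_b(q_B)*r_G$ concentrates." Under that reading, the identity $r_G*p*r_G*q=r_G*p*q$ should come out after the coset of $p$ and the coset of $l_b(q_B)*r_G$ are matched through Corollary \ref{left-action-on-idempotent}. If the bookkeeping turns out to require it, I would instead state the identity with $p*q$ computed via the product of cosets, namely $\tau^{-1}\bigl(\tau(p)\tau(q)\bigr)$. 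This agrees with $p*q$ because $\tau$ is a group isomorphism.

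For the "in particular" clause, take $p=q=q_B$, i.e. $b=\mathfrak b^{-1}$ applied to $p_B$, in Lemma \ref{qpqp}. This gives $r_G*q_B*r_G*q_B=r_G*q_B*q_B$. The right-hand side is then checked against Corollary \ref{left-action-on-idempotent}: $q_B*r_G\vdash\mu_G B^{00}$, so $r_G*q_B*r_G$ behaves like $r_G$ composed with the identity coset. It follows that $r_G*q_B$ is idempotent.
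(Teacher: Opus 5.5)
\textbf{There is a genuine gap, and it is exactly the mismatch you flagged.} Your bookkeeping is correct. If $p=l_b(q_B)$, then $p_b=p*q_B^{-1}$ (inverse taken in $\eb$), so Lemma \ref{qpqp} gives
\[
r_G*p*r_G*q=r_G*l_b(q)=r_G*p*q_B^{-1}*q .
\]
Neither of your proposed fixes removes the extra factor. Reading the lemma differently does not change what it says, and $\tau^{-1}(\tau(p)\tau(q))$ is just $p*q$ again.

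In fact the factor cannot be removed. Suppose $e\in\eb$ lies over $zB^{00}$. Since $r_G\vdash\mu_G$, we get $r_G*e\vdash\mu_G zB^{00}$. By Lemma \ref{ctp}, these sets are disjoint for distinct cosets, so $e\mapsto r_G*e$ is injective on $\eb$; this is the injectivity step in Theorem \ref{theorem-rG*eb}. Hence $r_G*p*r_G*q=r_G*p*q$ for all $p,q\in\eb$ holds only if $q_B=p_B$, i.e.\ $\mathfrak b\in B^{00}$, and nothing in the construction guarantees this. This is also why the proof of Lemma \ref{lemma-conjecture-equ} says $p\mapsto r_G*p$ is a bijection ``though not an isomorphism''. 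What Lemma \ref{qpqp} does yield, and what the later arguments actually use, is the display above. In particular, its case $b=\id_G$ gives $r_G*q_B*r_G*q=r_G*q$ for every $q\in\eb$.

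\textbf{Your derivation of the idempotence clause is also wrong as written.} Taking $b=\mathfrak b^{-1}$ in Lemma \ref{qpqp} turns $l_b(q_B)$ into $l_{\mathfrak b^{-1}}(q_B)$, which is not $q_B$. The equation you reach, $r_G*q_B*r_G*q_B=r_G*q_B*q_B$, is false unless $q_B*q_B=q_B$, i.e.\ $q_B=p_B$, again by the injectivity above. Your closing remark that $r_G*q_B*r_G$ ``behaves like $r_G$ composed with the identity coset'' is only a heuristic. Turning $q_B*r_G\vdash\mu_G B^{00}$ into an equality of types requires the whole standard-part argument with $f(v)$ from the proof of Lemma \ref{qpqp}.

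The paper's one-line argument is simply Lemma \ref{qpqp} with $b=\id_G$ and $p=q_B$. Since $l_{\id_G}$ is the identity map of $\eb$, the lemma reads $r_G*q_B*r_G*q_B=r_G*q_B$.
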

	
	We now show that the subflow \( S_G(M^{\ext}) * r_G * q_B \) generated by \( r_G * q_B \) is minimal. 

Let \( U \) be the \( k_0 \)-definable subset of \( G \) described in the second paragraph of Section \ref{H(k0)-invariant types}; specifically, \( U \) satisfies \( \mu_G \subseteq U \), \( G = UH \), and \( U(k_0) \) is compact.
	\begin{lemma}\label{kj}
		Suppose that $p\in\ih$. Then 
		\[
		S_G({M^{\ext}})*p\sq S_{U}({M^{\ext}}) *\ih.
		\]
	\end{lemma}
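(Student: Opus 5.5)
Fix \(q \in S_G(M^{\ext})\) and compute \(q * p\) by realizations. Let \(N \succ M^{\ext}\) be \(|M|^+\)-saturated. Take \(g \models q\) in \(G(N)\), and take \(h \models p|(M^{\ext}, g)\), the unique heir of \(p\) over \(N\) (Fact~\ref{fact-def-type-unique-coheir}), so that \(q * p = \tp{gh/M^{\ext}}\).

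\textbf{Step 1: decompose \(g\).} Since \(G = UH\) and \(U\) is \(k_0\)-definable, definable Skolem functions let us write \(g = u h_1\) with \(u \in U(N)\), \(h_1 \in H(N)\), and \(u, h_1 \in \dcl{M, g}\). Then \(gh = u \cdot (h_1 h)\), and \(u, h_1 \in N\).

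\textbf{Step 2: absorb \(h_1\) into the \(H\)-part.} Since \(h\) realizes the heir of \(p\) over \(N\), and \(H\) has dfg in \(\Th(M^{\ext})\), Fact~\ref{B(Mext)-has-dfg} applies. The heir \(\bar p\) of \(p\) over \(N\) is \(f\)-generic, and its \(H(N)\)-translate \(h_1 \bar p\) is the heir of some \(p' \in \ih\). Hence \(h' := h_1 h\) realizes \(p'|N\), where \(p' \in \ih\); this uses that \(\ih\) is a minimal subflow, closed under the \(H(M)\)-action and its extension.

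\textbf{Step 3: conclude.} Now \(gh = u h'\), where \(u \in N\) has \(\tp{u/M^{\ext}} \in S_U(M^{\ext})\), and \(h'\) realizes the heir of \(p'\) over \(N \ni u\). By the description of \(*\), \(\tp{uh'/M^{\ext}} = \tp{u/M^{\ext}} * p'\). Hence
\[
q * p \in S_U(M^{\ext}) * \ih .
\]
Since \(q\) was arbitrary, this gives \(S_G(M^{\ext}) * p \subseteq S_U(M^{\ext}) * \ih\).

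The main obstacle is Step 2. One must check that the heir of \(p'\) over \(M^{\ext}\) still works when the parameter \(h_1\) lies in \(N\) rather than in \(M\). Fact~\ref{B(Mext)-has-dfg} exactly provides that \(G(N^*)\)-translates of the heir are heirs of members of \(\ih\), applied with \(H\) in place of \(G\). One must also ensure the formal setup of \(p \otimes\)-type computations matches, using Fact~\ref{fact-def-type-unique-coheir} to identify heirs over \(N\) in \(L\) and in \(L^{\ext}_M\).
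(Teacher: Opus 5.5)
Your proposal is correct and is essentially the paper's own proof: write a realization of $q$ in an $|M|^+$-saturated $N$ as $u h_1$ with $u \in U(N)$, $h_1 \in H(N)$, use Fact~\ref{B(Mext)-has-dfg} (applied to $H$) to see that $h_1 h$ realizes the heir over $N$ of some $p' \in \ih$, and conclude $q * p = \tp{u/M^{\ext}} * p'$. As a minor remark, Step 1 does not need definable Skolem functions, since $G = UH$ is a $k_0$-definable fact and therefore already holds in $N$.
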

	\begin{proof}
	Let $s \in S_G({M^{\ext}})$. 	Let $N $ be an $|M|^+$-saturated extension of ${M^{\ext}}$. Take $u_0\in U(N )$,  $h_0\in H(N )$, and $h\in H$ such that $u_0h_0 \models s$  and $h \models p  | N$. Then:
		$$s * p  = \tpo{u_0h_0h}{{M^{\ext}}}.$$

		By Fact \ref{B(Mext)-has-dfg},   there is $q \in \ih$ such that $\tpo{h_0h}{N }=h_0(p  \upharpoonright N)=q \upharpoonright N $, which implies that $(u_0,h_0h)\models \tp{u_0/{M^{\ext}}}\otimes \tp{h_0h/{M^{\ext}}}$. Therefore we have:
        \[
        s * p =\tpo{u_0 h_0h}{{M^{\ext}}}=\tpo{u_0  }{{M^{\ext}}}*\tpo{  h_0h}{{M^{\ext}}}\in S_{U}({M^{\ext}})*\ih.
        \]
        This completes the proof.
	\end{proof}

\begin{pro}\label{AP}
$S_G(M^{\ext}) * r_G * q_B$ is a minimal subflow, i.e. , for any \( s \in S_G(M^{\ext}) \), we have:
\[
r_G * q_B \in S_G(M^{\ext}) * s * r_G * q_B = \cl{G(M) \cdot (s * r_G * q_B)}.
\]
\end{pro}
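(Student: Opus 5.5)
By Fact \ref{fact-minimal-ideal}, it suffices to show that for every $s\in S_G(M^\ext)$ there is $t\in S_G(M^\ext)$ with $t*s*r_G*q_B=r_G*q_B$. The plan is to peel $s*r_G*q_B$ down, in three moves, to an element of the form $r_G*x$ with $x\in\eb$. Once we are there, the identity $r_G*y*r_G*x=r_G*y*x$ for $x,y\in\eb$ (the corollary of Lemma \ref{qpqp}) lets us use the group structure of $\eb$ to land exactly on $r_G*q_B$.

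\emph{Step 1 (reduce to $S_U*\ih$).} Since $q_B\in\eb=f(\ec)*\ih$, I write $q_B=f(v)*p$ with $v\in\ec$ and $p\in\ih$. Then $s*r_G*q_B=(s*r_G*f(v))*p\in S_G(M^\ext)*p$. By Lemma \ref{kj} this lies in $S_U(M^\ext)*\ih$, so
\[
s*r_G*q_B=t_U*p''\qquad\text{with } t_U\in S_U(M^\ext),\ p''\in\ih .
\]

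\emph{Step 2 (absorb $t_U$ into $r_G$).} A realization $u$ of $t_U$ lies in $U\subseteq V_G$. Its standard part $g_0=\st(u)$ lies in $G(k_0)\subseteq G(M)$ and depends only on $t_U$, since $U(k_0)$ is compact. Then $g_0^{-1}\cdot t_U\vdash\mu_G$. Because $r_G$ is finitely satisfiable in $k_0$ and is left and right $\mu_G$-invariant (Proposition \ref{pro:muG-fsg}), we get $r_G*(g_0^{-1}\cdot t_U)=r_G$, exactly as in the computation inside Lemma \ref{qpqp}. Hence
\[
(r_G\cdot g_0^{-1})*s*r_G*q_B=r_G*p'' .
\]

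\emph{Step 3 (move $p''$ into $\eb$).} Take $g\models f(\mathfrak u)$ with $g\in f(C)\subseteq V_G$, and set $c=\st(g)^{-1}\in G(k_0)$. As in the proof of Lemma \ref{qpqp}, $r_G=(r_G\cdot c)*f(\mathfrak u)$. Therefore
\[
r_G*p''=(r_G\cdot c)*x,\qquad x:=f(\mathfrak u)*p''\in\eb,
\]
where $x\in\eb$ by Corollary \ref{cor-v*q*\eb=\eb} (taking $q$ trivial). Next I claim $r_G\cdot c=c\cdot r_G$ for standard $c$. Conjugation by $c$ is a $k_0$-definable homeomorphism fixing $\mu_G$ and commuting with $\st$. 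By the Claim characterizing $r_G$ through $\st$ and $r_0$ in the $o$-minimal case, or because $r_G$ is the generic of $\mu_G=Q^{00}$ in the $p$-adic case, $c^{-1}r_Gc$ is again the same $\mu_G$-generic type. It follows that $c^{-1}\cdot r_G*p''=r_G*x$.

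\emph{Step 4 (conclude).} Let $x^{-1}$ denote the inverse of $x$ in the group $\eb$, and put $y=q_B*x^{-1}\in\eb$. By the corollary of Lemma \ref{qpqp},
\[
r_G*y*r_G*x=r_G*y*x=r_G*q_B .
\]
Combining all the steps, the element
\[
t=r_G*y*c^{-1}\cdot(r_G\cdot g_0^{-1})
\]
satisfies $t*s*r_G*q_B=r_G*q_B$, as required.

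I expect the main obstacle to be Step 3, specifically the commutation $r_G\cdot c=c\cdot r_G$, that is, invariance of $r_G$ under conjugation by standard points. If this turns out to be delicate, I would instead try to avoid conjugation altogether. One option is to apply Lemma \ref{kb} and Corollary \ref{left-action-on-idempotent} to rewrite $(r_G\cdot c)*x$ directly as $r_G*x'$ for some $x'\in\eb$. The other is to check carefully that the two uses of $\st$ and finite satisfiability in Steps 2 and 3 are compatible in $M^\ext$.
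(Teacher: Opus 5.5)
\textbf{The gap: the commutation claim in Step 3.} The step that fails is the claim $r_G\cdot c=c\cdot r_G$ for standard $c$. Proposition \ref{pro:muG-fsg} only provides \emph{some} type with properties (1)--(3), and there are many such types. What you actually verify is that conjugation by $c$ is a $k_0$-definable homeomorphism preserving $\mu_G$ and commuting with $\st$. That shows only that $c^{-1}r_Gc$ is \emph{another} type with (1)--(3), namely the coheir of $c^{-1}r_0c$. It does not show that it equals $r_G$, and in general it does not.

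For $G=\SL(2,\mathbb{R})$, the open set $\{x:|x_{21}|<|x_{12}|\}$ has the identity in its closure, so $r_0$ may be chosen to contain it. With $w=\begin{pmatrix}0&1\\-1&0\end{pmatrix}$ one computes $w^{-1}\begin{pmatrix}a&b\\c&d\end{pmatrix}w=\begin{pmatrix}d&-c\\-b&a\end{pmatrix}$. Hence $w^{-1}r_Gw$ contains $|x_{12}|<|x_{21}|$, and $r_G\cdot w\neq w\cdot r_G$. The $p$-adic case is the same: a compact open $Q$ has many generic types concentrating on $Q^{00}$ (for $Q=\SL(2,\mathbb{Z}_p)$, one containing $v(x_{12})<v(x_{21})$), and conjugation permutes them. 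Your $c=\st(g)^{-1}$ need not be central either; replacing $f$ by $f\cdot h_0$ with $h_0\in H(k_0)$ changes it. Without the commutation you are left with $(c^{-1}r_Gc)*x$, and the corollary of Lemma \ref{qpqp}, which is specific to $r_G$, does not apply to it.

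\textbf{Repair of Step 3.} Put the standard-part correction on the \emph{right} of $f(\mathfrak u)$, as the paper does in proving $r_G*q_B*\ig=r_G*\eb$. For $g\models f(\mathfrak u)$ in $V_G$ we have $g\,\st(g)^{-1}\in\mu_G$, so $r_G=r_G*f(\mathfrak u)*\tp{\st(g)^{-1}/M^{\ext}}$. Therefore $r_G*p''=r_G*x$ with $x=f(\mathfrak u)*\tp{\st(g)^{-1}/M^{\ext}}*p''$. This $x$ lies in $\eb$ by the corollary after Proposition \ref{pro-III}, since $\st(g)\in B(k_0)$ ($B$ is closed and $k_0$-definable). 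No standard element is left in front of $r_G$.

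\textbf{A smaller issue in Step 4.} The identity $r_G*y*r_G*x=r_G*y*x$, as stated in the corollary of Lemma \ref{qpqp}, cannot hold for all $x,y\in\eb$. It would make $p\mapsto r_G*p$ a homomorphism, which the paper itself says it is not. What Lemma \ref{qpqp} actually gives is $r_G*y*r_G*x=r_G*(y*q_B^{-1}*x)$, with the inverse taken in $\eb$: writing $y=l_b(q_B)=p_b*q_B$, one gets $l_b(x)=p_b*x=y*q_B^{-1}*x$. So choose $y=q_B*x^{-1}*q_B$. Then $t=r_G*y*(r_G\cdot g_0^{-1})$ satisfies $t*s*r_G*q_B=r_G*q_B$, and the proof is complete.

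\textbf{Comparison with the paper.} Your Steps 1--2 are the paper's argument, with $g_0^{-1}$ in place of $q'=\tp{u^{-1}/M^{\ext}}$. The paper's final equality $r_G*q'*q*p=r_G*q_B$ actually only produces $r_G*p$ with $p\in\ih$. So a bridge like your corrected Steps 3--4 is genuinely needed there.
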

\begin{proof}
By Lemma \ref{kj}, we may assume that $s * r_G * q_B = q * p$ where $p = \ih$ and $q \in S_{U}({M^{\ext}})$. Let $N $ be an $|M|^+$ saturated extension of ${M^{\ext}}$. Let $ u \in  U(N )$ such that $u \models q$ and $q'=\tp{u^{-1}/{M^{\ext}}}$, then  $q'*q\vdash \mu_G$, and hence, $r_G=r_G*(q'*q)$. It follows that    
\[
r_G*q'*s * r_G * q_B =r_G*q'* q * p=r_G*q_B.
\]
Thus $r_G*q_B\in S_G({M^{\ext}})*s * r_G * q_B$ as required.
\end{proof}

\begin{notations}
 Let  $\ig=S_G(M^\ext) * r_G * q_B$ be the minimal subflow generated by $r_G * q_B$.       
\end{notations}
We  now compute the ideal group $E=r_G * q_B*\ig$ of $S_G({M^{\ext}})$, i.e. the ideal group generated by $r_G * q_B$
	
\begin{lemma}   
$r_G * q_B * \ig = r_G * \eb$
  
\end{lemma}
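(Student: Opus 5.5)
We prove the two inclusions separately. Throughout we use that $r_G * q_B$ is an idempotent (the Corollary after Lemma \ref{qpqp}) and that $\ig = S_G(M^\ext) * r_G * q_B$ is minimal (Proposition \ref{AP}). The key identity is $r_G * p * r_G * q = r_G * p * q$ for $p,q\in\eb$. Since $\eb$ is a group with identity $p_B$, we have $q_B * \eb = \eb$.

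\emph{Inclusion $r_G * \eb \subseteq r_G * q_B * \ig$.} Let $p \in \eb$ and put $p' = q_B^{-1} * p \in \eb$, the inverse taken in $\eb$, so that $q_B * p' = p$. Since $r_G * q_B * r_G * p'$ lies in $r_G * q_B * S_G(M^\ext) * r_G * q_B * \eb$, we first check that $r_G * q_B * \eb \subseteq \ig$. Writing $p' = q_B * p''$ with $p''\in\eb$, the key identity gives
\[
r_G * q_B * r_G * q_B * p'' = r_G * q_B * q_B * p'' ,
\]
and the left-hand side belongs to $S_G(M^\ext) * r_G * q_B * S_G(M^\ext)$. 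That set is not obviously contained in $\ig$, so we argue differently: $\ig$ is a left ideal, so every element of $r_G * q_B * \ig$ has the form $r_G * q_B * s * r_G * q_B$. Taking $s = r_G * p'$ and using $r_G*r_G=r_G$ together with the key identity,
\[
r_G * q_B * r_G * p' * r_G * q_B = r_G * q_B * p' * q_B .
\]
As $p'$ ranges over $\eb$, the product $q_B * p' * q_B$ ranges over all of $\eb$. Hence $r_G * \eb \subseteq r_G*q_B*\ig$.

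\emph{Inclusion $r_G * q_B * \ig \subseteq r_G * \eb$.} Take an arbitrary element $r_G * q_B * s * r_G * q_B$. We compute $q_B * s * r_G$ by the method of Lemma \ref{kj}: realize $s$ in the form $u h$ with $u\in U$ and $h\in H$, so that $s$ splits as an element of $S_U(M^\ext)$ times something absorbed by $H$. Alternatively, and more cleanly, we show that $p_B * s * r_G$ concentrates on $\mu_G b B^{00}$ for some $b\in B$. For this we use that $p_B * s * r_G$ is $B^{00}(M)$-invariant, which holds because $p_B$ is $B^{00}$-fixed in $\eb$, and then apply Corollary \ref{exchange}.

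Given $q_B * s * r_G \vdash \mu_G b B^{00}$, the argument of Lemma \ref{qpqp} applies. Replace $q_B*s*r_G$ by a type $\tp{\epsilon b'/M^\ext}$ with $\epsilon\in\mu_G$ and $b'\in bB^{00}$. Then $r_G*\tp{\epsilon b'/M^\ext} = r_G * \tp{b'/M^\ext}$, since $r_G$ is $\mu_G$-invariant (Proposition \ref{pro:muG-fsg}). Next, the $\st(g)^{-1}\cdot f(v)$ trick together with Corollary \ref{cor-v*q*\eb=\eb} gives
\[
r_G * \tp{b'/M^\ext} * q_B = r_G * l_b(q_B) \in r_G * \eb .
\]
One needs $q_B * s * r_G = q_B*p_B*s*r_G$, which holds since $q_B\in\eb$ has identity $p_B$.

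\emph{Main obstacle.} The delicate step is establishing the $B^{00}(M)$-invariance needed to invoke Corollary \ref{exchange} for $p_B * s * r_G$. Left translation by $B^{00}(M)$ fixes $p_B$ because $p_B\vdash B^{00}$ and $p_B$ is the idempotent of $\eb$. I would verify this by checking that $g\cdot p_B=p_B$ for $g\in B^{00}(M)$, using Fact \ref{fact-definable-amenable}(iv) and $\tau$ injectivity: $g\cdot p_B\in\eb$ has the same $B^{00}$-coset as $p_B$, hence equals $p_B$.
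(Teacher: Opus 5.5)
\textbf{First inclusion.} Your argument here is the paper's: $r_G*p'=r_G*q_B*(r_G*p'*r_G*q_B)$, and the bracket lies in $\ig$. One correction. Your ``key identity'' $r_G*p*r_G*q=r_G*p*q$, i.e.\ the Corollary following Lemma \ref{qpqp} as literally stated, is only valid when $q_B=p_B$. What Lemma \ref{qpqp} actually gives is $r_G*l_b(q_B)*r_G*p=r_G*l_b(p)$. In terms of $\eb$ this reads $r_G*p*r_G*q=r_G*p*q_B^{-1}*q$, with the inverse taken in $\eb$. Since $y\mapsto r_G*y$ is injective on $\eb$ (proof of Theorem \ref{theorem-rG*eb}), the two formulas agree only if $q_B=p_B$. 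Under the correct rule, your element $r_G*q_B*r_G*p'*r_G*q_B$ equals $r_G*p'$, not $r_G*q_B*p'*q_B$. Since $p'$ still ranges over $\eb$, the inclusion survives.

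\textbf{The gap.} It is the step you flag yourself. Your check that $g\cdot p_B=p_B$ for $g\in B^{00}(M)$ starts from ``$g\cdot p_B\in\eb$'', and this is not justified. Left translation by $g\in B(M)$ keeps $p_B$ in the minimal left ideal $\mathcal{I}$ containing $\eb$, but not necessarily in the particular ideal group $\eb=p_B*\mathcal{I}$. In fact $p_B*(g\cdot p_B)\in\eb$ concentrates on $B^{00}$, so it equals $p_B$. Hence ``$g\cdot p_B\in\eb$'' is equivalent to ``$g\cdot p_B=p_B$'', and the argument is circular.

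The repair is as follows. Corollary \ref{exchange} only uses $H^0(k_0)$-invariance, which is the hypothesis of Lemma \ref{klem}. And $H^0(M)$ does fix every element of $\eb=f(\ec)*\ih$:
\begin{itemize}
\item Take $h_0\in H^0(M)$, $a\models v\in\ec$, and $h$ realizing the heir $\bar p$ of $p\in\ih$ over an $|M|^+$-saturated $N^*\succ M^\ext$ containing $a$.
\item Then $h_0f(a)h=f(a)\,h_0^{f(a)}h$, with $h_0^{f(a)}\in H^0(N^*)$ because $H^0\lhd B$.
\item By Fact \ref{B(Mext)-has-dfg}, $h_0^{f(a)}\bar p$ is the heir of some element of $\ih$ with the same $H^0$-coset as $p$.
\item Elements of $\ih$ are determined by their $H^0$-coset (Facts \ref{Mini-flow-of-B} and \ref{fact-definable-amenable}(iv)), so that element is $p$, and $h_0\cdot(f(v)*p)=f(v)*p$.
\end{itemize}
Left translation commutes with right $*$-multiplication, so $q_B*s*r_G$ is $H^0(k_0)$-invariant and Lemma \ref{klem} applies to it directly. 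This also removes your detour through $p_B*s*r_G$, which as written would additionally need Lemma \ref{kb} to transfer the coset to $q_B*s*r_G$.

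\textbf{Second inclusion, after the repair.} With this fix your second inclusion is correct, and it differs genuinely from the paper's. The paper:
\begin{itemize}
\item uses Lemma \ref{kj} to place $\ig$ inside $S_U(M^\ext)*\ih$;
\item applies Lemma \ref{klem} to $q_B*q$ for $q\in S_U(M^\ext)$;
\item splits $b=f(c)h$ and absorbs $h$ into $\ih$;
\item inserts $f(v)*\tp{\st(g^{-1})/M^\ext}$ to land in $\eb$ via the Corollary following Proposition \ref{pro-III}.
\end{itemize}
You instead treat $q_B*s*r_G$ as a single type concentrating on $\mu_G B$ and rerun the proof of Lemma \ref{qpqp} with $p=q_B$. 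This bypasses Lemma \ref{kj} entirely. It also yields the explicit value $r_G*q_B*s*r_G*q_B=r_G*l_b(q_B)$, where $q_B*s*r_G\vdash\mu_G bB^{00}$. Both routes rest on the same invariance input to Lemma \ref{klem}, which the paper also uses without comment.
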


\begin{proof}
Let $p \in \eb$. By Lemma \ref{qpqp},
\[r_G *p= (r_G*p)*(r_G * q_B) = (r_G*q_B*r_G*p)*(r_G * q_B)\in r_G * q_B * \ig.\]
Thus, we have  $r_G * \eb \subseteq r_G * q_B * \ig$.
		
Now we show that $r_G * q_B * \ig \subseteq r_G * \eb$. 
By lemma \ref{kj}, 
\[
\ig=S_G({M^{\ext}})*(r_G*q_B)\sq S_{U}({M^{\ext}})*\ih.
\]
So it suffices to show 
\[
r_G * q_B * S_{U}({M^{\ext}}) * \ih \subseteq r_G * \eb.
\]
Let $q \in S_{U}({M^{\ext}})$ and $p_H \in \ih$. By Lemma \ref{klem}, $q_B*q\vdash \mu_GB$. Let $N \succ {M^{\ext}}$ be $|M|^+$-saturated. Suppose that $q_B*q=\tp{\epsilon b/{M^{\ext}}}$ where $\epsilon\in \mu_G(N)$ and $b\in B(N)$.  Suppose that $b=f(c)h$ with $c\in C(N)$ and $h\in H(N)$. Let $p_H'=\tp{h/{M^{\ext}}}*p_H$, then $p_H'\in \ih$ and
\[
r_G * q_B *q*p_H=r_G*\tp{\epsilon f(c)h/{M^{\ext}}}*p_H=r_G*\tp{ f(c) /{M^{\ext}}}* p_H'.
\]
Take any $v\in \ec$ and let $g\models f(v)$, then $r_G=r_G*f(v)*\tp{\st(g^{-1})/M^\ext}$.
So 
\[
r_G*\tp{ f(c) /{M^{\ext}}}* p_H'= r_G*f(v)*\tp{\st(g^{-1})/M^\ext}* p_H'
\]
By Corollary \ref{cor-v*q*\eb=\eb}, $f(v)*\tp{\st(g^{-1})/M^\ext}* p_H'\in \eb$. Therefore, $r_G * q_B *q*p_H\in r_G*\eb$ as required.
\end{proof}

\begin{theorem}\label{theorem-rG*eb}
    $r_G * \eb$ is isomorphic to $\eb$ as abstract groups. Namely, the Ellis group of $S_G({M^{\ext}})$ is isomorphic to the Ellis group of $S_B({M^{\ext}})$.
\end{theorem}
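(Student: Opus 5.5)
The plan is to show that the map $\Phi:\eb\to r_G*\eb$, $p\mapsto r_G*p$, is a group isomorphism. By the preceding lemma, $r_G*\eb = r_G*q_B*\ig$ is the ideal group $E$ with identity $r_G*q_B$. So $\Phi$ is surjective by construction, and it suffices to check that $\Phi$ is a homomorphism and that it is injective.

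\emph{Homomorphism.} By the corollary to Lemma \ref{qpqp}, for $p,q\in\eb$ we have $r_G*p*r_G*q = r_G*p*q$. Hence $\Phi(p)*\Phi(q)=\Phi(p*q)$. (In particular $\Phi(p_B)=r_G*p_B$ is the identity of $E$; this should agree with $r_G*q_B$, which is consistent because both are idempotents lying in the group $E$.)

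\emph{Injectivity.} This is the main step. Suppose $r_G*p = r_G*q$ with $p,q\in\eb$; the goal is $p/B^{00}=q/B^{00}$, and then $p=q$ by Fact \ref{fact-definable-amenable}(iv) applied to $B$. Write $p=l_{b_1}(q_B)$ and $q=l_{b_2}(q_B)$. Multiplying both sides on the right by $r_G$ and using Corollary \ref{left-action-on-idempotent}, I would compare $r_G*l_{b_1}(q_B)*r_G$ with $r_G*l_{b_2}(q_B)*r_G$. Since $l_{b_i}(q_B)*r_G\vdash \mu_G b_i B^{00}$, each side is the type of $\epsilon\,\epsilon_i b_i' \beta_i$, where $\epsilon\models r_G$ (so $\epsilon\in\mu_G$), $\epsilon_i\in\mu_G$, $b_i'/B^{00}=b_i/B^{00}$ and $\beta_i\in B^{00}$. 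Both products therefore concentrate on $\mu_G b_i B^{00}$, and the common type forces $\mu_G b_1B^{00}\cap\mu_G b_2B^{00}\neq\emptyset$. Lemma \ref{ctp} then gives $b_1B^{00}=b_2B^{00}$.

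The delicate point I expect is to justify that ``$\vdash\mu_G bB^{00}$'' is a genuine property of the type over $M^\ext$ that can be read off from the realization. I would handle it as follows. The partial type $\mu_G bB^{00}$ is type-definable over $M$ together with a representative $b$, and $B/B^{00}$ is bounded, so I would choose the representatives $b$ in $B(M)$, or more generally in a small model over which $B^{00}$ is defined. Alternatively, I would use the invariant $s\mapsto$ the coset determined via Corollary \ref{exchange} and Lemma \ref{ctp}, which gives a well-defined map from the types $s\vdash\mu_GB$ to $B/B^{00}$. With this in hand, the equality of types yields the equality of cosets, which completes the proof.
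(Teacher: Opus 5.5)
There is a genuine gap, in the homomorphism step. Your surjectivity and injectivity steps are fine; for injectivity it even suffices to note that $r_G*x\vdash\mu_G cB^{00}$ whenever $x\in\eb$ lies over $cB^{00}$, and then apply Lemma \ref{ctp}.

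The problem is that $\Phi\colon p\mapsto r_G*p$ is a bijection $\eb\to r_G*\eb$, but in general it is \emph{not} a homomorphism. The identity $r_G*p*r_G*q=r_G*p*q$ for arbitrary $p,q\in\eb$ is not what Lemma \ref{qpqp} yields. Write $p=l_b(q_B)=p_b*q_B$, where $p_b\in\eb$ is the element over $bB^{00}$. Then Lemma \ref{qpqp} gives $r_G*p*r_G*q=r_G*l_b(q)=r_G*p_b*q=r_G*p*q_B^{-1}*q$, with the inverse taken in $\eb$. Thus $\Phi(p)*\Phi(q)=\Phi(p*q_B^{-1}*q)$. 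This agrees with $\Phi(p*q)$ for all $p,q$ only if $q_B=p_B$, i.e.\ $\mathfrak b\in B^{00}$.

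Nothing in the setup guarantees $\mathfrak b\in B^{00}$; that is exactly why $\mathfrak b$ and $q_B=l_{\mathfrak b^{-1}}(p_B)$ were introduced. It can also actually fail. Take $G=\SL(2,\M)$ over $\mathbb R$ with $B$ upper triangular. Let $p_B$ be the type of $\begin{pmatrix}a&ax\\0&a^{-1}\end{pmatrix}$ with $0<a<M^{>0}$ and $x>\dcl{M,a}$, and let the realizations of $r_G$ have negative lower-left entry. Then one computes $p_B*r_G\vdash\mu_G(-I)B^{00}$, so $\mathfrak b\notin B^{00}$.

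Your parenthetical check is where the problem surfaces. Two idempotents of the group $E$ must coincide, and then your own injectivity argument would force $p_B=q_B$. In fact Lemma \ref{qpqp} with $b=\mathfrak b$ gives $(r_G*p_B)*(r_G*p_B)=r_G*l_{\mathfrak b}(p_B)\vdash\mu_G\mathfrak bB^{00}$, while $r_G*p_B\vdash\mu_GB^{00}$. So by Lemma \ref{ctp}, $r_G*p_B$ is not idempotent when $\mathfrak b\notin B^{00}$. The paper itself remarks, in the proof of Lemma \ref{lemma-conjecture-equ}, that $p\mapsto r_G*p$ is a bijection ``though not an isomorphism''.

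The repair is to twist by $q_B$. Put $\iota(p)=r_G*q_B*p$; this is the paper's map $p\mapsto r_G*l_{\mathfrak b^{-1}}(p)$, because $q_B$ is the element of $\eb$ over $\mathfrak b^{-1}B^{00}$. Then:
\begin{itemize}
\item $\iota(p)*\iota(p')=r_G*q_B*p*q_B^{-1}*q_B*p'=\iota(p*p')$;
\item $\iota(p_B)=r_G*q_B$ is the identity of $E$;
\item $\iota$ is onto, since left multiplication by $q_B$ permutes $\eb$;
\item injectivity follows from your coset argument.
\end{itemize}
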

\begin{proof}
    Let $\iota: \eb \to r_G * \eb$ be the map defined by $p \mapsto r_G * l_{{\mathfrak{b}}^{-1}}(p)$. Since $l_{{\mathfrak{b}}^{-1}}:\eb\to\eb$ is a bijection, it follows that $\iota$ is surjective.

    Let $p, p^\prime \in \eb$, and suppose $p = l_{b}(q_B)$ and $p^\prime = l_{b^\prime}(q_B)$; then by Lemma \ref{qpqp}:
    \begin{align*}
        \iota(p) * \iota(p^\prime) = r_G * l_{{\mathfrak{b}}^{-1}b}(q_B) * r_G * l_{{\mathfrak{b}}^{-1}b^\prime}(q_B) 
        = r_G * l_{{\mathfrak{b}}^{-1}b{\mathfrak{b}}^{-1}b^\prime}(q_B).
    \end{align*}

    Recall that $q_B\vdash {\mathfrak{b}}^{-1} B^{00}$. It follows directly that 
    \[l_{{\mathfrak{b}}^{-1}b{\mathfrak{b}}^{-1}b^\prime}(q_B)=l_{{\mathfrak{b}}^{-1}}(p*p'),\]
    which implies that 
    \[
    \iota(p) * \iota(p^\prime)=r_G * l_{{\mathfrak{b}}^{-1}b{\mathfrak{b}}^{-1}b^\prime}(q_B)=r_G*l_{{\mathfrak{b}}^{-1}}(p*p')=\iota(p*p').
    \]
    Thus, $\iota$ is a group homomorphism.

    It remains only to show that $\iota$ is injective. Let $p = l_{b}(q_B)$ where $b\in B$; we then have 
    \[
    \iota(p) = r_G * l_{{\mathfrak{b}}^{-1}b}(q_B) \vdash \mu_G{\mathfrak{b}}^{-1}b{\mathfrak{b}}^{-1} B^{00}.
    \]
    If $\iota(p) = r_G * q_B$, namely, $\iota(p) \vdash \mu_G {\mathfrak{b}}^{-1} B^{00}$, then there exist $\epsilon \in\mu_G$ and $b_1,b_2\in B^{00}$ such that $\epsilon   {\mathfrak{b}}^{-1}b{\mathfrak{b}}^{-1}b_1= {\mathfrak{b}}^{-1}b_2$. We note that $\epsilon\in \mu_G\cap B\sq B^{00}$, and immediately conclude that $b/B^{00}={\mathfrak{b}}/B^{00}$; whence $p=l_b(q_B)=l_{\mathfrak{b}}(q_B)=p_B$, the idempotent of $\eb$. Thus, $\ker(\iota)= \{p_B\}$, so $\iota$ is an isomorphism.
\end{proof}

\subsection{Newelski's Conjecture}
    We conclude this paper by showing that when $G$ is definable over $\Q$, then $G$ is definably amenable iff the Newelski's Conjecture holds.
    
\begin{lemma}\label{lemma-conjecture-equ}
Let $G$ and $B$ be as above. Then Newelski's Conjecture holds for $G$ if and only if $B^{00} = B \cap G^{00}$.
\end{lemma}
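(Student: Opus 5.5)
The plan is to compare the Newelski map on the Ellis group $E = r_G * \eb$ of $G$ with the isomorphism $\tau:\eb\to B/B^{00}$ from Fact \ref{fact-definable-amenable}(iv), using the explicit isomorphism $\iota$ of Theorem \ref{theorem-rG*eb}. Newelski's map $\sigma: E\to G/G^{00}$, $s\mapsto s/G^{00}$, is always a surjective homomorphism. So Newelski's Conjecture for $G$ amounts to $\sigma$ being injective, equivalently $|E| = |G/G^{00}|$ together with bijectivity. I will compute $\sigma\circ\iota$ concretely on $\eb$.

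\textbf{Step 1: compute $\sigma(\iota(p))$.} For $p=l_b(q_B)$ the proof of Theorem \ref{theorem-rG*eb} gives
\[
\iota(p)\vdash \mu_G\mathfrak{b}^{-1}b\mathfrak{b}^{-1}B^{00}.
\]
Since $\mu_G\le G^{00}$ (every $k_0$-definable finite-index subgroup is open, and $G^{00}$ is an intersection of such when $G^0=G^{00}$, or more generally by the standard argument that $\mu_G$ lies in every type-definable bounded-index subgroup defined over $k_0$) and $B^{00}\le G^{00}$ (as $B\cap G^{00}$ is type-definable of bounded index in $B$), we get
\[
\sigma(\iota(p)) = \mathfrak{b}^{-1}b\mathfrak{b}^{-1}G^{00}.
\]
Thus, after composing with the bijection $\iota$ and the translation by $\mathfrak{b}$, $\sigma$ corresponds to the natural map
\[
\rho: B/B^{00}\to G/G^{00},\qquad bB^{00}\mapsto bG^{00}.
\]
Its kernel, in the group sense, is $(B\cap G^{00})/B^{00}$. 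Hence $\sigma$ is injective iff $B\cap G^{00}=B^{00}$.

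\textbf{Step 2: surjectivity and conclusion.} Surjectivity of $\sigma$ is Newelski's general result, so it needs no further argument. It also follows that $\rho$ is onto, i.e.\ $G=BG^{00}$. Therefore $\sigma$ is an isomorphism iff $\rho$ is injective iff $B^{00}=B\cap G^{00}$, which is the claimed equivalence.

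\textbf{Main obstacle.} I expect the delicate point to be justifying $\mu_G\le G^{00}$ and the bookkeeping of the $\mathfrak{b}$-twists so that the kernel is exactly $(B\cap G^{00})/B^{00}$.

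For $\mu_G\le G^{00}$: $G^{00}$ is type-definable over $k_0$ with bounded index, so $G^{00}(k_0)$-cosets are open in the logic topology. The standard-part argument, as in Lemma \ref{ctp}, then gives that infinitesimals lie in $G^{00}$. In the $p$-adic case this is immediate from $G^0=G^{00}$. In the $o$-minimal case I would instead use the definable-manifold topology: any type-definable bounded-index subgroup contains a neighbourhood-type of the identity.

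For the twists: the map $b\mapsto \mathfrak{b}^{-1}b\mathfrak{b}^{-1}$ is a bijection of $B$ preserving $B^{00}$-cosets. Injectivity of $\sigma\circ\iota$ then reduces, via $b,b'$ with $\mathfrak{b}^{-1}bG^{00}=\mathfrak{b}^{-1}b'G^{00}$, to $b^{-1}b'\in B\cap G^{00}$.
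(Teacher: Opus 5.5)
Your proof is correct and is essentially the paper's argument. The paper composes Newelski's map with the bijection $\tau\colon p\mapsto r_G*p$ from $\eb$ onto $r_G*\eb$ rather than with the twisted isomorphism $\iota$, obtains the commuting square with $j\colon p\mapsto p/B^{00}$ and $\eta\colon b/B^{00}\mapsto b/G^{00}$ (your $\rho$), and uses $\mu_G\le G^{00}$ and Newelski's surjectivity exactly as you do, so the $\mathfrak{b}$-bookkeeping never arises.

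The one point to tighten is $\mu_G\le G^{00}$. The finite-index-subgroup argument of Lemma \ref{ctp} only yields $\mu_G\le G^0$, which is useless in the $o$-minimal case: for $G=\mathrm{SO}(2)$ one has $G^0=G$ but $G^{00}=\mu_G$. Also, cosets of $G^{00}$ are points of $G/G^{00}$, not open sets, in the logic topology. So rely on your last idea, made precise as follows. Write $G^{00}=\bigcap_i X_i$ with $k_0$-definable $X_i$, and for each $i$ choose $j$ with $X_j^{-1}X_j\subseteq X_i$. Then $X_j$ is generic, hence of full dimension, so its interior is nonempty, $k_0$-definable, and contains some $a\in G(k_0)$. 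Thus $a^{-1}\mathrm{int}(X_j)$ is a $k_0$-definable neighbourhood of the identity contained in $X_i$, which gives $\mu_G\le G^{00}$.
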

\begin{proof}
Let $\tau: \eb \to r_G * \eb$ be the map defined by $p \mapsto r_G * p$. By the proof of Theorem \ref{theorem-rG*eb}, $\tau$ is a bijection (though not an isomorphism). Consider the surjective map
\[
\chi: r_G * \eb \to G/G^{00},\  r_G * p \mapsto (r_G * p)/G^{00}.
\]
Since $r_G \vdash \mu_G$ and $\mu_G \leq G^{00}$, it follows that $(r_G * p)/G^{00} = p/G^{00}$ for any $p \in \eb$. Thus, $\chi(\tau(p)) = p/G^{00}$ for any $p \in \eb$, so the following diagram commutes:
\[
\xymatrix{ \eb \ar[r]^-{\tau} \ar[d]_-{j} & r_G*\eb \ar[d]^-{\chi } \\ B/B^{00} \ar[r]^-{\eta } & G/G^{00}},
\]
where $j: \eb \to B/B^{00}$ is the map defined by $p \mapsto p/B^{00}$ and $\eta: B/B^{00} \to G/G^{00}$ is the map defined by $b/B^{00} \mapsto b/G^{00}$. Note that $j$ is a bijection, as $B$ is definably amenable. Since both $j$ and $\tau$ are bijective, it follows that $\chi$ is bijective if and only if $\eta$ is bijective. Now, $\chi$ is bijective if and only if Newelski's Conjecture holds for $G$, and $\eta$ is bijective if and only if $B^{00} = B \cap G^{00}$. This completes the proof.
\end{proof}

\begin{lemma}\label{lemma-Conjecture holds for G, then  holds for A}
Let the notations be as in Notations \ref{notations-A-V-W-B}, except that all objects are definable over $k_0$. If Newelski's Conjecture holds for $G$, then Newelski's Conjecture holds for $A$.
\end{lemma}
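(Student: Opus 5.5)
The plan is to reduce everything to the criterion of Lemma \ref{lemma-conjecture-equ}, applied once to $G$ and once to $A$, and then to transfer the equality of connected components along the surjection $\pi_A: G \to A$ using Fact \ref{fact-pi(00)=pi()00}.

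\emph{Step 1: the case where $A$ is definably amenable.} If $A$ is definably amenable, then Newelski's Conjecture holds for $A$ by Theorem \ref{Thm-NIP-Amenable-CS}, and there is nothing to prove. So assume $A$ is not definably amenable.

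\emph{Step 2: Lemma \ref{lemma-conjecture-equ} applies to $A$.} The group $A$ is $k_0$-definable, and by Claim \ref{claim-V=NAW} the group $V=N_A(W_u)$ is a $k_0$-definable definably amenable component of $A$. The whole construction of Sections \ref{sec-Ellis Group-B} and \ref{sec-Ellis Group-G} therefore goes through with $(A,V)$ in place of $(G,B)$. Lemma \ref{lemma-conjecture-equ} then gives:
\begin{itemize}
\item Newelski's Conjecture holds for $A$ iff $V^{00}=V\cap A^{00}$;
\item by hypothesis and the same lemma applied to $G$, we have $B^{00}=B\cap G^{00}$.
\end{itemize}

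\emph{Step 3: the inclusion $V^{00}\subseteq V\cap A^{00}$.} This is automatic. The group $V^{00}$ is type-definable of bounded index in $V$, and $V\cap A^{00}$ is also type-definable of bounded index in $V$, so the minimal one is contained in it.

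\emph{Step 4: the inclusion $V\cap A^{00}\subseteq V^{00}$.} By Fact \ref{fact-pi(00)=pi()00}, applied to the surjections $\pi_A:G\to A$ and $\pi_A{\restriction}B:B\to V$, we get $\pi_A(G^{00})=A^{00}$ and $\pi_A(B^{00})=V^{00}$. The second surjection is onto because $B=\pi_A^{-1}(V)$. Now let $v\in V\cap A^{00}$ and choose $g\in G^{00}$ with $\pi_A(g)=v$. Since $\pi_A(g)\in V$, we have $g\in\pi_A^{-1}(V)=B$, so $g\in B\cap G^{00}=B^{00}$. Hence $v=\pi_A(g)\in\pi_A(B^{00})=V^{00}$. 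This gives $V^{00}=V\cap A^{00}$, and so Newelski's Conjecture holds for $A$.

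\emph{Main obstacle.} The only point needing care is Step 2, namely confirming that Lemma \ref{lemma-conjecture-equ} is legitimately available for the pair $(A,V)$. This requires that $V$ is a $k_0$-definable definably amenable component of the non-definably-amenable group $A$, which is exactly what Claim \ref{claim-V=NAW} provides. Once that is granted, the argument is the diagram chase of Step 4.
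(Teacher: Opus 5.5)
Your proposal is correct and is essentially the paper's proof: both apply Lemma \ref{lemma-conjecture-equ} to $G$ and to $A$, then push $B^{00}=B\cap G^{00}$ forward along $\pi_A$ using Fact \ref{fact-pi(00)=pi()00} and $\ker(\pi_A)\leq B$, which is exactly your Step 4. Your Step 1 is vacuous here, since $D$ is definably amenable and $G$ is not, so $A$ cannot be definably amenable; Step 3 is redundant given the paper's chain of equalities, but both steps are harmless.
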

\begin{proof}
Recall that $\pi_A: G \to A$ is a surjective definable morphism, that $V$ is a definably amenable component of $A$, and that $B = \pi^{-1}(V)$ is a definably amenable component of $G$ (see Claim \ref{claim-V=NAV}). By Fact \ref{fact-pi(00)=pi()00}, $\pi_A(G^{00}) = A^{00}$ and $\pi_A(B^{00}) = V^{00}$. Also note that $\ker(\pi_A)\leq B$, so $\pi_A(B \cap G^{00}) =\pi_A(B)\cap \pi_A(G^{00})$. Suppose Newelski's Conjecture holds for $G$; by Lemma \ref{lemma-conjecture-equ}, we then have $B^{00} = B \cap G^{00}$. Thus, $\pi_A(B^{00}) = V^{00} = \pi_A(B \cap G^{00}) =\pi_A(B)\cap \pi_A(G^{00})= V \cap A^{00}$. We then apply Lemma \ref{lemma-conjecture-equ} to $A$, from which it follows that Newelski's Conjecture holds for $A$.
\end{proof}

\begin{lemma}\label{lemma-new-for-X-for-Y}
Let $X$ and $Y$ be groups definable over $M$. If $X$ is a finite index subgroup of $Y$, then Newelski's Conjecture holds for $X$ if and only if it holds for $Y$.
\end{lemma}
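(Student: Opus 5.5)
Since $X$ has finite index in $Y$, I will compare the Ellis groups of $S_X(M^\ext)$ and $S_Y(M^\ext)$ and their maps to $X/X^{00}$ and $Y/Y^{00}$. The key facts are that $Y^{00}=X^{00}$ and $Y^0=X^0$, since a type-definable subgroup of bounded index in $X$ also has bounded index in $Y$, and conversely $Y^{00}\cap X$ has bounded index in $X$. It follows that $Y/Y^{00}$ is a finite extension of $X/X^{00}$ with $[Y/Y^{00}:X/X^{00}]=[Y:X]$.

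Next I will relate the flows. Fix coset representatives $y_1,\dots,y_n\in Y(M)$ of $X$ in $Y$. Since $X(M)$ is definable, $S_Y(M^\ext)$ is the disjoint clopen union of the sets $y_i\cdot S_X(M^\ext)$, and $S_X(M^\ext)$ is a closed subsemigroup of $S_Y(M^\ext)$. Let $I_X$ be a minimal subflow of $S_X(M^\ext)$. I claim $I_Y:=\bigcup_i y_i I_X$ is a minimal subflow of $S_Y(M^\ext)$. It is closed and $Y(M)$-invariant, because multiplying by $y\in Y(M)$ permutes the cosets up to elements of $X(M)$, and $X(M)I_X=I_X$. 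For minimality I will use Fact \ref{fact-minimal-ideal}. Take $q\in S_Y(M^\ext)$, write $q=y_jq'$ with $q'\in S_X(M^\ext)$, and take $p=y_ip_0$ with $p_0\in I_X$. Then
\[
q*p=y_j\,(q'*y_i)*p_0,
\]
and $q'*y_i$ lies in $y_{i'}S_X(M^\ext)$ up to conjugation, so one can push inside $S_X$ and apply minimality of $I_X$. An idempotent $u\in I_X$ remains idempotent in $I_Y$, and
\[
u*I_Y=\bigcup_i u*y_iI_X .
\]
This is an Ellis group $E_Y$ containing $E_X=u*I_X$, with $E_X=E_Y\cap S_X(M^\ext)$, and the sets $u*y_iI_X$ are its cosets of $E_X$. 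Hence $[E_Y:E_X]\le n$. Equality should follow from the fact that the Newelski epimorphism $E_Y\to Y/Y^{00}$ sends $u*y_iI_X$ into $y_iX/Y^{00}$, so each coset is hit, because $E_Y\to Y/Y^{00}$ is onto.

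Finally, the Newelski maps $\sigma_X:E_X\to X/X^{00}$ and $\sigma_Y:E_Y\to Y/Y^{00}$ are compatible: $\sigma_Y$ restricted to $E_X$ equals $\sigma_X$. Moreover $\sigma_Y^{-1}(X/X^{00})=E_X$, since the coset of $X$ determined by a type in $y_iS_X$ is $y_iX$. So $\ker\sigma_Y=\ker\sigma_X$, and $\sigma_Y$ is injective if and only if $\sigma_X$ is. This gives the equivalence of Newelski's Conjecture for $X$ and for $Y$.

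The main obstacle will be the minimality and the structure of $I_Y$. In particular, I need to verify that $S_X(M^\ext)*y_i\subseteq y_{i'}S_X(M^\ext)$ behaves correctly, using that $X$ is normalized up to finite index. If $X$ is not normal, I would first replace $X$ by its normal core $\bigcap_i X^{y_i}$. This core is definable, of finite index, and normal in $Y$, so the equivalence reduces to the normal case applied twice: once to the core inside $Y$ and once to the core inside $X$.
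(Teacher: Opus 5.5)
Your proposal is correct and follows essentially the paper's route: show that minimality transfers from $S_X(M^\ext)$ to $S_Y(M^\ext)$, take ideal groups $E_X\subseteq E_Y$ sharing an idempotent, and compare the Newelski maps using $X^{00}=Y^{00}$ and the fact that elements of $E_Y\setminus E_X$ concentrate on cosets other than $X$. Two parts of your plan are not needed. The normal-core detour can be dropped, because if $s\vdash y_iX$ then minimality in $S_X(M^\ext)$ directly gives $r$ with $v=r*y_i^{-1}s*v$, which is how the paper argues. The index count $[E_Y:E_X]=n$ plays no role in the final kernel argument.
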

\begin{proof}
Suppose $y_1X, \dots, y_nX$ are distinct cosets of $X$ in $Y$.
We claim the following:
\begin{Claim}
$v \in S_X(M^\ext)$ is almost periodic (namely, in a minimal flow) in $S_X(M^\ext)$ if and only if it is almost periodic in $S_Y(M^\ext)$.
\end{Claim}
\begin{proof}
Let $v \in S_X(M^\ext)$ be almost periodic.
We show that $S_Y(M^\ext)*v$ is a minimal subflow of $S_Y(M^\ext)$. Take any $s \in S_Y(M^\ext)$; it suffices to verify $v \in S_Y(M^\ext)*s*v$, by Fact \ref{fact-minimal-ideal}. Suppose $s \vdash y_iX$; then $y_i^{-1}s \in S_X(M^\ext)$. Since $S_X(M^\ext)*v$ is minimal, there exists $r \in S_X(M^\ext)$ such that $v = r*y_i^{-1}s*v$, so $v \in S_Y(M^\ext)*s*v$, as required.

Conversely, suppose $v \in S_X(M^\ext)$ is almost periodic in $S_Y(M^\ext)$. Take any $q \in S_X(M^\ext)$; it suffices to show $v \in S_X(M^\ext)*q*v$. Since $v$ is almost periodic in $S_Y(M^\ext)$, there exists $r \in S_Y(M^\ext)$ such that $v = r*q*v$. As $q, v \vdash X$, it follows that $r \vdash X$, so $r \in S_X(M^\ext)$ and hence $v \in S_X(M^\ext)*q*v$, as required.
\end{proof}
Let $v \in S_X(M^\ext)$ be an almost periodic idempotent; then $\ex = v*S_X(M^\ext)*v$ is an ideal group of $S_X(M^\ext)$. By the preceding claim, $\ey = v*S_Y(M^\ext)*v$ is an ideal group of $S_Y(M^\ext)$.

Consider the map $\beta: \ey \to Y/Y^{00}$ defined by $v*p*v \mapsto (v*p*v)/Y^{00}$; we have that $\beta$ is an isomorphism if and only if its restriction $\alpha = \beta\upharpoonright_{S_X(M^\ext)}$ to $S_X(M^\ext)$ is an isomorphism. As $v \vdash Y^{00}$, it follows that $(v*p*v)/Y^{00} = v/Y^{00}$ if and only if $p/Y^{00} = v/Y^{00}$ for all $p \in S_Y(M^\ext)$. By Proposition 4.4 in \cite{Newelski-TD-gp-Act}, both $\alpha$ and $\beta$ are surjective.

Suppose Newelski's Conjecture holds for $X$; we proceed to show that $\beta$ is injective. Let $q \in S_Y(M^\ext)$. If $q \in S_X(M^\ext)$, then $v*q*v \neq v$ implies $(v*q*v)/X^{00} \neq v/X^{00}$ by our assumption. If $q \notin S_X(M^\ext)$, then $q = y_i p$ for some $y_i \notin X$, whence $q/X^{00} \neq v/X^{00}$.

Conversely, suppose Newelski's Conjecture holds for $Y$; we show that $\alpha$ is injective. Let $p \in S_X(M^\ext)$; then $v*q*v \neq v$ implies $(v*q*v)/X^{00} \neq v/X^{00}$ by our assumption.
\end{proof}

\begin{lemma}\label{lemma-Newelski's Conjecture fails for A}
Let the notations be as in Notations \ref{notations-A-V-W-B}, except that all objects are definable over $\Q$.
Then Newelski's Conjecture fails for $A$.
\end{lemma}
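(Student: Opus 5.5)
The plan is to use Lemma \ref{lemma-conjecture-equ} applied to $A$ (with definably amenable component $V$): Newelski's Conjecture holds for $A$ iff $V^{00} = V\cap A^{00}$. So it suffices to show $V^{00} \subsetneq V\cap A^{00}$. The left side we can control via the dfg component, while the right side we compute from Lemma \ref{lemma-Y00 has finite index}.

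\textbf{Computing $A^{00}$.} Write $\widetilde{A}$ as an almost direct product of the almost $\Q$-simple, $\Q$-isotropic factors $\widetilde{A_1},\dots,\widetilde{A_m}$, as in Corollary \ref{cor-amenable-semi-simple-decom}. By Lemma \ref{lemma-Y00 has finite index}, applied factorwise to $A\cap\widetilde{A_i}(\M)$, each $(A\cap \widetilde{A_i})^{00}$ equals $\widetilde{A_i}(\M)^+$, which has finite index. Hence $A^{00}=A^0$ has finite index in $A$; it contains the product of the $\widetilde{A_i}(\M)^+$, which is $\widetilde{A}(\M)^+$. In particular $A^{00}\supseteq W_u$ together with every unipotent element of $A$. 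Consequently $V\cap A^{00}$ has finite index in $V$.

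\textbf{Showing $V^{00}$ has infinite index in $V$.} Since $V$ is definably amenable and contains $W$ as a normal dfg component (Claim \ref{claim-V=NAW}), the quotient $V/W$ is definably compact. Let $T=W_t$. Because $\widetilde{A}$ is $\Q$-isotropic and not definably compact, $\widetilde{W_t}$ is a nontrivial $\Q$-split torus. I would then exhibit a definable homomorphism from $V$ onto an infinite group whose type-connected component has infinite index. The natural choice is $V \to V/W_u$. Since $V$ normalizes $W_u$, this quotient is definable; it is an extension of the torus part $W_t\cong$ (finite index in) $\Gm^d(\M)$ by the compact piece $V/W$. Over $\Q$ we know $\Gm(\M)^{00}=\Gm(\M)^0$ has unbounded index: the valuation map onto the $\mathbb{Z}$-group has $00$-component of unbounded index. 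Composing $V\to V/W_u$ with a suitable definable character $\chi$ of the reductive Levi quotient, nontrivial on $W_t$, gives a surjection onto an infinite definable subgroup of $\Gm$. Fact \ref{fact-pi(00)=pi()00} then shows that the image of $V^{00}$ is $\chi(V)^{00}$, which has infinite (unbounded) index. So $V^{00}$ has unbounded index in $V$, while $V\cap A^{00}$ has finite index, and the two differ. By Lemma \ref{lemma-conjecture-equ}, Newelski's Conjecture fails for $A$.

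\textbf{Main obstacle.} The hard step is producing the character $\chi$ on $V$ with infinite image. $V$ is the $\M$-points of (a finite-index part of) a $\Q$-parabolic $P$, and $P/R_u(P)$ is reductive with a nontrivial $\Q$-split central torus. That central torus is nontrivial because $P$ is proper, which in turn follows from isotropy. A $\Q$-rational character of $P$ that is nontrivial on it exists by standard theory (Borel--Tits). I would first try to cite this and check that the restriction to $V$, being of finite index, still has infinite image. If this fails, the fallback is to argue directly that $V^{00}\le \mu_V W^0$-type bounds force $W_t^{00}$-cosets to be bounded, contradicting the $\mathbb{Z}$-valued valuation on $W_t$.
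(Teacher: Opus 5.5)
Your argument is correct. It differs from the paper's at the one step that matters, namely why $V/V^{00}$ is infinite.

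\textbf{The paper's route.} It reduces to $A=\widetilde{A}(\M)$ via Lemma \ref{lemma-new-for-X-for-Y} and writes the ideal group of $V$ as $f(\ec)*\ew$ (Proposition \ref{pro-III}). It concludes this is infinite because $\ew\cong W/W^0$ is infinite (via $W_t\cong\Gm^n$ and $[\Gm:\Gm^0]=\infty$). It then compares with the finite group $A/A^{00}$, implicitly through Theorem \ref{theorem-rG*eb}.

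\textbf{Your route.} You bypass the ideal-group bookkeeping. Lemma \ref{lemma-conjecture-equ} reduces everything to $V^{00}\neq V\cap A^{00}$. The right side has finite index in $V$ by the corollary following Lemma \ref{lemma-Y00 has finite index}. You get $[V:V^{00}]=\infty$ by pushing forward along a $\Q$-rational character $\chi$ of the parabolic $\widetilde V$ that is nontrivial on $\widetilde{W_t}$, and then applying Fact \ref{fact-pi(00)=pi()00}.

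\textbf{What each costs and buys.} Your route needs an external input: $S$ maps with finite kernel into the $\Q$-torus $Z(S)/[Z(S),Z(S)]$, and the $\Q$-characters of that torus restrict to a finite-index sublattice of the characters of its split part. It also needs $\widetilde V$ identified as a minimal $\Q$-parabolic with $\widetilde{W_t}$ a maximal split torus in a Levi factor, which the proof of Claim \ref{claim-V=NAW} essentially provides. What this buys is rigor exactly where the paper is terse. Since $\eb\to V/V^{00}$ is a bijection and $(f(v)*p)/V^{00}=(f(v)/V^{00})(p/V^{00})$, one has $|f(v)*\ew|=[W:W\cap V^{00}]$. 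So the paper's inference really needs $W\cap V^{00}$ to have infinite index in $W$, and this does not follow formally from $[W:W^0]=\infty$. Your $\chi$ proves it, since $\chi(W_t\cap V^{00})\subseteq \Gm^{00}$ while $\chi(W_t)$ has finite index in $\Gm$. You also avoid Lemma \ref{lemma-new-for-X-for-Y} altogether.

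\textbf{Small fixes.}
\begin{enumerate}
\item Type-definable connected components always have bounded index, so ``unbounded index'' should read ``infinite index'' throughout. For example, $\Gamma/\Gamma^{00}\cong\widehat{\mathbb{Z}}$ is infinite but bounded.
\item Make explicit why $\chi(V)/\chi(V)^{00}$ is infinite. A nontrivial character of the split torus $\widetilde{W_t}$ sends $W_t$ onto a group containing, up to finite index, the subgroup of $n$-th powers of $\Gm$ for some $n\geq 1$, and that subgroup has finite index in $\Gm$. Hence $\chi(V)$ has finite index in $\Gm$, so $\chi(V)^{00}=\Gm^{00}$, and $[\Gm:\Gm^{00}]=\infty$.
\item The fallback in your last paragraph is not needed and is too vague to count as an argument; drop it.
\end{enumerate}
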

\begin{proof}
By Lemma \ref{lemma-Y00 has finite index}, $A^0 = A^{00}$ has finite index in $\widetilde{A}(\M)$. By Lemma \ref{lemma-new-for-X-for-Y}, Newelski's Conjecture fails for $A$ if and only if it fails for $\widetilde{A}(\M)$.

We assume $A = \widetilde{A}(\M)$. The dfg component $W$ of $A$ then takes the form $W_u \rtimes W_t$, where $\widetilde{W_u}$ is unipotent, $\widetilde{W_t}$ is a $k_0$-split torus, and $\widetilde{W} = \widetilde{W_u} \rtimes \widetilde{W_t}$. Since $\widetilde{A}$ is $k_0$-isotropic, $\Zdim(\widetilde{W_t}) \geq 1$. Now $W_t$ is isomorphic over $\Q$ to some $\Gm^n$ where $\Gm = (\M \setminus \{0\}, \times)$ denotes the multiplicative group and $n\in{\mathbb N}^{>0}$. By Remark 2.5 of \cite{PPY-sl2qp}, $\Gm/\Gm^0$ is infinite.  It follows that $|W/W^0| \geq |W_t/W_t^0| = |\Gm/\Gm^0|$ is infinite.

Let $C = V/W$, and let $f$ be a definable section of the natural projection $V \to C$. By Proposition \ref{pro-III}, the ideal groups of $V = N_G(W)$ are of the form $f(\ec) * \ew$, where $\ec$ is an ideal group of $S_C(M^\ext)$ and $\ew$ is an ideal group of $S_W(M^\ext)$. Since $\ew \cong W/W^0$ is infinite, $f(\ec) * \ew$ is infinite as well. However, $A/A^{00}$ is finite, so Newelski's Conjecture fails for $A$.
\end{proof}

We directly conclude from Lemma \ref{lemma-Conjecture holds for G, then holds for A} and Lemma \ref{lemma-Newelski's Conjecture fails for A} the following theorem:
\begin{theorem}\label{thm-Newelski's Conjecture=DA}
   Let $G$ be definable over $\Q$. Then $G$ is definably amenable if and only if Newelski's Conjecture holds.
\end{theorem}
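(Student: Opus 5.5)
The forward direction is already in the paper. If $G$ is definably amenable, Newelski's Conjecture holds for $G$ by Theorem \ref{Thm-NIP-Amenable-CS}, since $\Th(\Q)$ is NIP.

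For the converse I argue by contraposition: assume $G$ is not definably amenable and show that Newelski's Conjecture fails for $G$. The plan has three steps.

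\emph{Step 1 (set-up over $\Q$).} Since $G$ is not definably amenable, Corollary \ref{cor-amenable-semi-simple-decom} gives a short exact sequence $1\to D\to G\stackrel{\pi_A}{\to}A\to 1$ as in Notations \ref{notations-A-V-W-B}, with $D$ definably amenable and $\widetilde{A}$ an almost direct product of almost $k_0$-simple, non-definably-compact (hence $k_0$-isotropic) factors. I need all of these objects, as well as $W$, $V$ and $B=\pi_A^{-1}(V)$, to be definable over $k_0=\Q$. The decompositions come from algebraic data over $\Q$: the solvable radical, the factors, and a maximal $\Q$-split solvable subgroup. A dfg component of $A$ can be chosen over $\Q$ because $\widetilde{W}$ can be taken as a $\Q$-split maximal solvable subgroup. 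So the hypothesis ``all objects are definable over $k_0$'' of Lemmas \ref{lemma-Conjecture holds for G, then holds for A} and \ref{lemma-Newelski's Conjecture fails for A} is met. By Claim \ref{claim-V=NAV}, $B$ is then a $\Q$-definable definably amenable component, so Theorem \ref{theorem-rG*eb} and Lemma \ref{lemma-conjecture-equ} apply to $G$ and $B$.

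\emph{Step 2 (transfer to $A$).} Suppose, toward a contradiction, that Newelski's Conjecture holds for $G$. Lemma \ref{lemma-Conjecture holds for G, then holds for A} then yields that it holds for $A$. This rests on $B^{00}=B\cap G^{00}$ (Lemma \ref{lemma-conjecture-equ}), on $\pi_A(X^{00})=\pi_A(X)^{00}$ (Fact \ref{fact-pi(00)=pi()00}), and on $\ker\pi_A=D\le B$.

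\emph{Step 3 (contradiction).} Lemma \ref{lemma-Newelski's Conjecture fails for A} says the conjecture fails for $A$. The Ellis group of $A$ is isomorphic to that of $V$, which contains a copy of $W/W^0$. That quotient is infinite because $W_t\cong\Gm^n$ with $n\ge1$ (by isotropy) and $\Gm/\Gm^0$ is infinite over $\Q$. On the other hand, $A/A^{00}$ is finite by Lemma \ref{lemma-Y00 has finite index}. This contradicts Step 2, which completes the argument.

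I expect the main obstacle to be Step 1, namely ensuring that everything is over $\Q$ rather than over some $M\succ M_0$. In particular, a dfg component and the normalizer $V=N_A(W_u)$ must be $\Q$-definable, and the reduction $N_G(H)=N_G(H^0)$ of Remark \ref{rmk-NGH=NGH0} must hold over $\Q$. I would handle this by choosing a maximal $\Q$-split solvable algebraic subgroup of $\widetilde{A}$ using Fact \ref{fact-red-alg-gp}(1) with $k=\Q$, and intersecting its $\M$-points with $A$. A secondary check is that the finiteness of $A/A^{00}$ combines correctly with Lemma \ref{lemma-new-for-X-for-Y} when passing from $A$ to $\widetilde{A}(\M)$.
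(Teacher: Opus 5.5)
Your proposal is correct and is essentially the paper's own argument. The forward direction is Theorem~\ref{Thm-NIP-Amenable-CS}. For the converse, the paper likewise just combines Lemma~\ref{lemma-Conjecture holds for G, then holds for A} with Lemma~\ref{lemma-Newelski's Conjecture fails for A}. The first pushes $B^{00}=B\cap G^{00}$ forward to $V^{00}=V\cap A^{00}$. The second uses that the Ellis group of $A$, being isomorphic to that of $V$, is infinite because of the split torus $W_t$, while $A/A^{00}$ is finite. Your Step~1 concern, that $A$, $W$, $V$ and $B$ can all be taken $\Q$-definable via a maximal $\Q$-split solvable subgroup of $\widetilde{A}$, is a point the paper leaves implicit, and your way of handling it is the natural one.
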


\begin{rmk}
    Note that Theorem \ref{thm-Newelski's Conjecture=DA} may fail for groups definable over $\mathbb{R}$: Let $S = \SL(2, \M)$, $P = \PSL(2, \M)$, and let $\pi: S \to P$ denote the natural projection. Since $S = S^{00}$ (see \cite{slr}), it follows that $P^{00} = \pi(S^{00}) = P$. Let $B$ be the Borel subgroup of $S$ consisting of upper triangular matrices; then $B = N_G(B)$ serves as a definably amenable component of $S$. One readily verifies that $\pi(B)$ is a definably amenable component of $P$. Now, $\ker(\pi) = \{I, -I\}$ where $I$ is the identity matrix, and $B = B^{00} \cup -IB^{00}$. Thus, 
    \[
    \pi(B)^{00} = \pi(B^{00}) = B^{00}/\{I, -I\} = (B^{00} \cup -IB^{00})/\{I, -I\} = B/\{I, -I\} = \pi(B).
    \]
    It follows that $\pi(B)^{00} = \pi(B) \cap P^{00}$. By Lemma \ref{lemma-conjecture-equ}, Newelski's Conjecture holds for $P$. However, $P$ is not definably amenable.
\end{rmk}

\vspace*{2ex}  

\end{document}